\DeclareFontFamily{OT1}{rsfs}{}
\DeclareFontShape{OT1}{rsfs}{n}{it}{<-> rsfs10}{}
\DeclareMathAlphabet{\curly}{OT1}{rsfs}{n}{it}
\newcommand{\eqnum}{\refstepcounter{equation}\textup{\tagform@{\theequation}}}
\newcommand\beq[1]{\begin{equation}\label{#1}}
\newcommand\eeq{\end{equation}}
\newcommand\beqa{\begin{eqnarray*}}
\newcommand\eeqa{\end{eqnarray*}}
\title[Derived categories of Thaddeus pair moduli spaces]
{Derived categories of Thaddeus pair moduli spaces via d-critical flips}
\date{}
\author{Naoki Koseki}
\address{Graduate School of Mathematical Sciences, The University of Tokyo
3-8-1 Komaba, Meguro, Tokyo, 153-8914 Japan}
\email{koseki@ms.u-tokyo.ac.jp}
\author{Yukinobu Toda}
\address{Kavli Institute for the Physics and Mathematics of the Universe (WPI),The University of Tokyo Institutes for Advanced Study, The University of Tokyo, Kashiwa, Chiba 277-8583, Japan}
\email{yukinobu.toda@ipmu.jp}
\DeclareFontFamily{U}{rsfs}{%
\skewchar\font127}
\DeclareFontShape{U}{rsfs}{m}{n}{%
<-6>rsfs5<6-8.5>rsfs7<8.5->rsfs10}{}
\DeclareSymbolFont{rsfs}{U}{rsfs}{m}{n}
\DeclareRobustCommand*\rsfs{%
\@fontswitch\relax\mathrsfs}
\theoremstyle{plain}
\newtheorem{thm}{Theorem}[section]
\newtheorem{prop}[thm]{Proposition}
\newtheorem{lem}[thm]{Lemma}
\newtheorem{defi}[thm]{Definition}
\newtheorem{rmk}[thm]{Remark}
\newtheorem{cor}[thm]{Corollary}
\newtheorem{prop-defi}[thm]{Proposition-Definition}
\newtheorem{thm-defi}[thm]{Theorem-Definition}
\newtheorem{lem-defi}[thm]{Lemma-Definition}
\newtheorem{exam}[thm]{Example}
\newcommand{\sslash}{/\!\!/}
\newcommand{\cC}{\mathcal{C}}
\newcommand{\eE}{\mathcal{E}}
\newcommand{\fF}{\mathcal{F}}
\newcommand{\hH}{\mathcal{H}}
\newcommand{\mM}{\mathcal{M}}
\newcommand{\oO}{\mathcal{O}}
\newcommand{\pP}{\mathcal{P}}
\newcommand{\qQ}{\mathcal{Q}}
\newcommand{\sS}{\mathcal{S}}
\newcommand{\uU}{\mathcal{U}}
\newcommand{\vV}{\mathcal{V}}
\newcommand{\wW}{\mathcal{W}}
\newcommand{\xX}{\mathcal{X}}
\newcommand{\yY}{\mathcal{Y}}
\newcommand{\zZ}{\mathcal{Z}}
\newcommand{\Hom}{\mathop{\rm Hom}\nolimits}
\newcommand{\dR}{\mathbf{R}}
\newcommand{\dL}{\mathbf{L}}
\newcommand{\Pic}{\mathop{\rm Pic}\nolimits}
\newcommand{\id}{\textrm{id}}
\newcommand{\rk}{\mathop{\rm rk}\nolimits}
\newcommand{\Ext}{\mathop{\rm Ext}\nolimits}
\newcommand{\Spec}{\mathop{\rm Spec}\nolimits}
\newcommand{\rank}{\mathop{\rm rank}\nolimits}
\newcommand{\Coh}{\mathop{\rm Coh}\nolimits}
\newcommand{\cneq}{\mathrel{\raise.095ex\hbox{:}\mkern-4.2mu=}}
\newcommand{\eqcn}{\mathrel{=\mkern-4.5mu\raise.095ex\hbox{:}}}
\newcommand{\Cok}{\mathop{\rm Cok}\nolimits}
\newcommand{\ext}{\mathop{\rm ext}\nolimits}
\newcommand{\Aut}{\mathop{\rm Aut}\nolimits}
\newcommand{\SL}{\mathop{\rm SL}\nolimits}
\newcommand{\Tot}{\mathop{\rm Tot}\nolimits}
 \newcommand{\RHom}{\mathop{\dR\mathrm{Hom}}\nolimits}
\newcommand{\Ker}{\mathop{\rm Ker}\nolimits}
\newcommand{\GL}{\mathop{\rm GL}\nolimits}
\DeclarePairedDelimiter\ceil{\lceil}{\rceil}
\DeclarePairedDelimiter\floor{\lfloor}{\rfloor}
 \renewcommand{\theequation}{%
   \thesection.\arabic{equation}}
\begin{document}

\begin{abstract}
We show that the 
moduli spaces of Thaddeus pairs on smooth 
projective curves 
and those of dual pairs 
are related by d-critical flips, which are virtual 
birational transformations introduced by the 
second author. We then prove the 
existence of fully-faithful functors between derived 
categories of coherent sheaves on these moduli 
spaces. 
Our result gives an evidence of a d-critical analogue of 
Bondal-Orlov, Kawamata's D/K equivalence conjecture, and also 
a categorification of wall-crossing formula of Donaldson-Thomas 
type invariants on ADHM sheaves introduced by Diaconescu. 
\end{abstract}

\maketitle

\setcounter{tocdepth}{1}
\tableofcontents

\section{Introduction}
The purpose of this paper is to show the existence of 
fully-faithful functors between derived 
categories of coherent sheaves on moduli spaces of
Thaddeus pairs on smooth projective curves
and those of dual Thaddeus pairs. 
They are smooth projective varieties,
and higher rank generalizations of symmetric products
of curves. In this introduction, we first state the main result of this paper, 
then discuss the motivation of our result
from the view point of d-critical birational geometry, 
and finally give an outline of the proof of the main theorem.

\subsection{Main result}
Let $C$ be a smooth projective curve over $\mathbb{C}$
with genus $g$. 
By definition, a \textit{Thadeus pair} 
on $C$ is a pair~\cite{MR1273268} 
\begin{align}\label{Tpair}
(E, s), \ s \in H^0(C, E)
\end{align}
 where 
$E$ is a semistable vector bundle on $C$,
 satisfying the following stability 
condition: 
there is no non-zero subbundle $F \subsetneq E$
satisfying $\mu(F)=\mu(E)$ and $s \in H^0(C, F)$, 
where $\mu(F) \cneq \chi(F)/\rank(F)$.

We denote by 
\begin{align*}
M^{\rm{T}}(r, d)
\end{align*}
 the moduli space of 
Thaddeus pairs (\ref{Tpair}), such that 
the bundle $E$ satisfies the numerical 
condition 
\begin{align}\label{rank/chi}
(\rank(E), \chi(E))=(r, d).
\end{align}
The moduli space $M^{\rm{T}}(r, d)$ is a 
smooth projective variety 
with dimension $d+r^2(g-1)$. 
We consider the following diagram
\begin{align}\label{diagram:Tpair}
\xymatrix{
M^{\rm{T}}(r, d) \ar[rd]_-{\pi^+} & & \ar[ld]^-{\pi^-}
 M^{\rm{T}}(r, -d) \\
& M(r, d). &
}
\end{align}
Here 
$M(r, d)$ is the coarse moduli space of 
$S$-equivalence classes of 
semistable bundles on $C$ satisfying the condition (\ref{rank/chi}), 
and 
the morphisms $\pi^{\pm}$ are defined by
\begin{align*}
\pi^+(E, s)=[E], \ 
\pi^-(E', s')=[E'^{\vee} \otimes \omega_C]. 
\end{align*}

\begin{thm}\label{thm:Tpair}\emph{(Theorem~\ref{thm:dflip}, Theorem~\ref{thm:pair:window})}
Suppose that $d \ge 0$.

(i) The diagram (\ref{diagram:Tpair})
is a d-critical flip for $d>0$, 
a d-critical flop for $d=0$. 
 
(ii) We have a fully-faithful functor
\begin{align*}
\Phi_M \colon 
D^b(M^{\rm{T}}(r, -d)) \hookrightarrow 
D^b(M^{\rm{T}}(r, d)).
\end{align*}
Here both sides are derived categories of coherent sheaves, 
and the functor $\Phi_M$ is 
an equivalence for $d=0$. 
\end{thm}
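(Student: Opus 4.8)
The overall plan is to realize both moduli spaces as the two GIT quotients attached to a single wall-crossing and then to run a \emph{window} argument in the derived category of the ambient quotient stack, the d-critical flip of (i) supplying exactly the numerical input that controls the window. I treat (i) and (ii) in turn, the emphasis being on the derived-category statement.

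For (i), I would first present $M^{\rm{T}}(r,\pm d)$ by geometric invariant theory: after a sufficiently positive twist a Thaddeus pair $(E,s)$ is encoded by a point of a smooth Quot-type parameter space $\mathcal{P}$ carrying an action of $G=\GL_N$, and the numerical types $(r,d)$ and $(r,-d)$ correspond to two linearizations on opposite sides of one wall, so that $M^{\rm{T}}(r,\pm d)=\mathcal{P}^{\mathrm{ss}}_{\pm}\sslash G$ and (\ref{diagram:Tpair}) is the associated variation-of-GIT (VGIT) diagram over $M(r,d)$. Each side carries its canonical d-critical structure coming from the symmetric obstruction theory of pairs, and I would compute the two virtual canonical line bundles and compare them along the exceptional loci of $\pi^{\pm}$; verifying the positivity required by the definition of a d-critical flip, respectively the equality required for a d-critical flop, is what exhibits the diagram as a d-critical flip for $d>0$ and a d-critical flop for $d=0$.

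For (ii), I would introduce window subcategories. The strictly semistable locus on the wall has a Kempf--Ness stratification indexed by one-parameter subgroups $\lambda$, with $\mathbb{C}^{\ast}$-fixed centres $Z_\lambda$ and attracting/repelling normal directions $N_\lambda^{\pm}$; the pairs parametrized there degenerate, with associated graded pieces $E_1\oplus E_2$ satisfying $\mu(E_1)=\mu(E_2)$. For a window width $\delta$ let $\mathcal{W}_\delta\subset D^b([\mathcal{P}/G])$ be the full subcategory cut out by requiring the $\lambda$-weights along each $Z_\lambda$ to lie in an interval of length $\delta$. Once $\delta$ dominates the $\lambda$-weight of $\det N_\lambda^{\pm}$, the restriction functors $\mathcal{W}_\delta\xrightarrow{\ \sim\ }D^b(M^{\rm{T}}(r,\pm d))$ are equivalences, and I would define
\begin{align*}
\Phi_M\colon D^b(M^{\rm{T}}(r,-d))\xrightarrow{\ \sim\ }\mathcal{W}_{-}\hookrightarrow \mathcal{W}_{+}\xrightarrow{\ \sim\ }D^b(M^{\rm{T}}(r,d)),
\end{align*}
with the middle arrow the inclusion of windows. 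Such an inclusion is automatically fully faithful, and is an equality exactly when the two windows coincide.

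The heart of the matter is to deduce the inclusion $\mathcal{W}_{-}\subseteq\mathcal{W}_{+}$ from (i). This is a weight comparison along each centre $Z_\lambda$: the gap between the two windows is governed by the $\lambda$-weight of the relative virtual canonical bundle, which by the d-critical structure is the alternating sum of $\lambda$-weights of the hyper-$\Ext$ groups controlling deformations of a wall pair $(E_1\oplus E_2, s)$, the obstruction spaces contributing with the sign opposite to the classical normal bundle. The flip inequality of (i) translates into this weight being nonnegative, forcing $\mathcal{W}_{-}\subseteq\mathcal{W}_{+}$ and hence fully faithfulness of $\Phi_M$, while in the flop case $d=0$ the weight vanishes, the windows coincide, and $\Phi_M$ is an equivalence. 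I expect the principal obstacle to be precisely this computation: one must control the $\lambda$-weights of the \emph{virtual} normal directions, namely tangent minus obstruction, rather than of the honest normal bundle, and check that a grade-restriction window of the required width exists so that the window equivalences remain valid in the d-critical setting. This is where the symmetric obstruction theory of pairs, and not merely the smooth geometry of $M^{\rm{T}}(r,\pm d)$, enters decisively.
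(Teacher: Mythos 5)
Your overall strategy (GIT presentations plus window subcategories) points in the right direction, but two of its load-bearing steps fail as stated. First, you cannot present $M^{\rm{T}}(r,d)$ and $M^{\rm{T}}(r,-d)$ as the two VGIT quotients $\mathcal{P}^{\mathrm{ss}}_{\pm}\sslash G$ of a single smooth parameter space on opposite sides of a wall: such quotients are birational whenever both are nonempty, whereas here $\dim M^{\rm{T}}(r,d)=d+r^2(g-1)$ and $\dim M^{\rm{T}}(r,-d)=-d+r^2(g-1)$ differ for $d>0$ --- this non-birationality is the whole point of the ``d-critical'' framework. The paper instead realizes both spaces as the $L_l^{\pm}$-semistable loci of the \emph{critical locus} $\wW=\{dw=0\}$ inside the Landau--Ginzburg model $w\colon\yY=\fF_0\times_{\mM(r,d)}\fF^{-1}\to\mathbb{A}^1$ built from a two-term resolution of $\dR\mathrm{pr}_{\mM\ast}\eE$ (Proposition~\ref{prop:MW}). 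Consequently the windows must live in the equivariant derived factorization category $D_{\mathbb{C}^{\ast}}(\yY,w)$, not in the ordinary derived category of a quotient stack, and the passage from the factorization category over the semistable locus back to $D^b(M^{\rm{T}}(r,\pm d))$ requires Kn\"orrer periodicity (Theorem~\ref{thm:knoer}, Proposition~\ref{prop:equiv:CY}); this step is entirely absent from your proposal. Relatedly, for (i) the definition of a d-critical flip demands exhibiting relative d-critical charts whose ambient smooth models form an honest flip; the paper obtains these from the analytic local structure theorem of \cite{MR3811778} (Ext-quiver with superpotential) together with the quiver flip criterion, rather than from a comparison of virtual canonical bundles, which is a consequence of the definition and not a substitute for producing the charts.

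Second, the inclusion $\mathcal{W}_{-}\subseteq\mathcal{W}_{+}$ does not follow from a single weight inequality per Kempf--Ness centre. That argument works for a \emph{simple} (one-stratum, balanced) wall crossing, which is precisely the situation of \cite{Todsemi} that this paper is generalizing away from: here the KN stratifications for $L_l^{+}$ and $L_l^{-}$ have many strata with unrelated one-parameter subgroups, so there is no single $\lambda$ along which to compare the two windows. The actual mechanism is the magic window theorem for quasi-symmetric representations \cite{MR3698338, HLKSAM}: analytically locally on $M(r,d)$ at a polystable bundle $E$, the diagram is modelled on representations of the extended Ext-quiver $Q_E^{\dagger}$, whose subquiver $Q_0^{\dagger}$ is symmetric and hence yields a quasi-symmetric $G_E$-representation; one shows both windows contain the magic window $\mM(\delta+\overline{\nabla})$, that the minus-side window \emph{equals} it (using the explicit description of the minus-semistable locus in Lemma~\ref{lem:id:Rep}), and hence is contained in the plus-side window, with an additional Kn\"orrer comparison (Theorem~\ref{thm:magic}) to absorb the extra symmetric summand $W\oplus W^{\vee}$ coming from the choice of resolution. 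Your ``tangent minus obstruction'' weight count gestures at the correct numerics but does not supply this argument, and without it the claimed inclusion --- the heart of the theorem --- is unproven.
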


Here we refer to Definition~\ref{defi:A4} for the notion of 
\textit{d-critical flips, flops}. 
It is natural to interpret the moduli space $M^{\rm{T}}(r, -d)$ in terms 
of \textit{dual Thaddeus pairs}, i.e. it is the moduli space of pairs
\begin{align*}
(E'', s''), \ s'' \in \Hom(E'', \omega_C)
\end{align*}
where $E''$ is a semistable bundle on $C$ satisfying the condition (\ref{rank/chi}), 
and a stability condition similar to the pair (\ref{Tpair})
(see~Lemma~\ref{lem:dual}).  

In Theorem~\ref{globalkernel}, we will also show 
that the fully-faithful functor
$\Phi_M$ restricted 
to the stable part $M^{\rm{st}}(r, d) \subset M(r, d)$
is given by a Fourier-Mukai functor whose 
kernel is a line bundle on the fiber product of 
the diagram (\ref{diagram:Tpair})
over $M^{\rm{st}}(r, d)$. 
Therefore for $d=0$, the 
functor $\Phi_M$ gives a non-trivial autequivalence of 
$D^b(M^{\rm{T}}(r, 0))$. 
We also remark that
for $r=1$, we have
\begin{align*}
M^{\rm{T}}(1, d)
=S^{g+d-1}(C)
\end{align*}
and Theorem~\ref{thm:Tpair}
in this case is a corollary of the main result of~\cite{Todsemi}. 

\subsection{Motivation and Background}
In~\cite{Toddbir}, the second 
author introduced the notion of 
\textit{d-critical flips, flops,} 
for diagrams of Joyce's d-critical loci~\cite{JoyceD}, 
as an analogy of usual flips, flops, in birational 
geometry. In general they
 are not birational in the usual sense, and should be 
interpreted as virtual birational maps. 
Such a d-critical birational transformation 
typically occurs when we consider wall-crossing diagram of 
moduli spaces of stable objects on Calabi-Yau 3-folds. 

On the other hand if two smooth varieties are 
related by a flip (flop) in the usual sense, 
then Bondal-Orlov~\cite{B-O2} and 
Kawamata~\cite{Ka1}
conjectured
the existence of a fully-faithful 
functor (equivalence) of their derived categories of 
coherent sheaves. This conjecture is 
called a \textit{D/K conjecture}. 
We expect an analogy of D/K conjecture for d-critical loci, i.e. 
under a d-critical flip (flop),
there may exist categorifications of Donaldson-Thomas theory 
which are related by a fully-faithful functor (equivalence).  

In~\cite{Toddbir}, it turned out that wall-crossing 
diagrams of Pandharipande-Thomas stable pair moduli spaces
on Calabi-Yau 3-folds~\cite{PT}, 
used in showing the rationality of their generating 
series~\cite{Tolim2, Tsurvey}, form a d-critical minimal model program.
Based on this observation, we expected 
existence of fully-faithful functors of 
certain categorifications of Donaldson-Thomas theory 
under the above wall-crossing (see~\cite[Section~1]{Todsemi}). 
If this is true, then it gives a link between 
categorifications of wall-crossing formula of 
Donaldson-Thomas theory and 
a d-critical analogue of D/K equivalence conjecture.  

In the previous paper~\cite{Todsemi}, 
the second author studied the above expectation in the case of 
wall-crossing of stable pair moduli spaces
on Calabi-Yau 3-folds, under the assumption that 
the curve class is irreducible and the relevant moduli 
spaces are non-singular.  
More precisely in~\cite{Todsemi}, 
we proved 
the existence of fully-faithful functors of derived categories 
of coherent sheaves on 
stable pair moduli spaces in a wall-crossing diagram 
similar to (\ref{diagram:Tpair}), under the assumption mentioned 
above. 
In this case 
the wall-crossing diagram is a simple d-critical flip, 
and 
the situation is much easier than the diagram (\ref{diagram:Tpair}). 
However if the curve class is not irreducible, then the wall-crossing diagram 
is no more simple, and we cannot apply the above strategy used in~\cite{Todsemi}.  

The result of Theorem~\ref{thm:Tpair} is motivated 
by extending the result of~\cite{Todsemi} for non-irreducible 
curve classes. 
Indeed we will see in Section~\ref{sec:ADHM}
 that the diagram (\ref{diagram:Tpair}) is 
a $\mathbb{C}^{\ast}$-fixed locus of 
a wall-crossing diagram which appeared in~\cite{Tolim2},
for a non-compact Calabi-Yau 3-fold $X$ of the form 
\begin{align}\label{intro:loccurve}
X=\mathrm{Tot}_C(M_1^{\vee} \oplus M_2^{\vee}), \ 
M_1 \otimes M_2 \cong \omega_C^{\vee}. 
\end{align}
Here $M_1$, $M_2$ are line bundles on $C$. 
The relevant stable objects on the above $X$
 are described in terms of 
ADHM sheaves introduced by Diaconescu~\cite{Dia1, Dia}. 
The rank $r$ of our vector bundle corresponds to the curve 
class $r[C]$ where $[C]$ is the class of the zero section 
of $X \to C$. Thus for $r\ge 2$, 
the result of Theorem~\ref{thm:Tpair} gives a
certain extension of the main result of~\cite{Todsemi} 
for non-irreducible (indeed non-primitive) curve
classes, when $X$ is of the form 
(\ref{intro:loccurve}). 

\subsection{Strategy of the proof of Theorem~\ref{thm:Tpair} (ii)}
In~\cite{Todsemi}, we constructed a fully-faithful functor 
of derived categories of stable pair moduli spaces 
by the following steps: we first constructed a fully-faithful 
functor locally on the base, then described the kernel object
explicitly, and finally used the description of the kernel object
to construct a global fully-faithful functor. In our situation
the diagram (\ref{diagram:Tpair}) is not a simple d-critical flip (flop), 
and it is much harder to employ the above strategy, e.g. the description 
of the kernel object is difficult. 
Instead we 
borrow an idea from~\cite{HalpK3}, 
using window subcategories developed in~\cite{MR3327537, MR3895631},
magic window theorem~\cite{MR3698338, HLKSAM}, 
together 
with analytic local description of the diagram (\ref{diagram:Tpair})
proved in~\cite{MR3811778}. 

Here we give an outline of the proof of Theorem~\ref{thm:Tpair} (ii). 
Let $\mM(r, d)$ be the moduli stack of semistable bundles $E$
on $C$
satisfying the condition (\ref{rank/chi2}), 
and $\eE \to C \times \mM(r, d)$ the universal bundle. 
We write $\dR \mathrm{pr}_{\mM\ast}\eE$
as a two term complex of vector bundles
\begin{align*}
\dR \mathrm{pr}_{\mM\ast}\eE=(\fF_0 \stackrel{\psi}{\to} \fF_1)
\end{align*}
where $\mathrm{pr}_{\mM} \colon C \times \mM(r, d) \to \mM(r, d)$
is the projection. We then define
\begin{align*}
w \colon 
\yY \cneq \fF_0 \times_{\mM(r, d)} \fF_1^{\vee}
\to \mathbb{A}^1
\end{align*}
where $w$ is naturally defined using the map $\psi$. 
We show that there exist open immersions
\begin{align}\label{intro:open}
M^{\rm{T}}(r, \pm d) \subset \wW\cneq \{dw=0\}
\end{align}
which are realized as GIT semistable loci 
with respect to some $\mathbb{Q}$-line bundles $L^{\pm}$
on 
$\yY$ restricted to $\wW$. 

We then use the window theorem~\cite{MR3327537, MR3895631}
for the $\mathbb{C}^{\ast}$-equivariant 
derived factorization category $D_{\mathbb{C}^{\ast}}(\yY, w)$, 
and construct subcategories together with equivalences 
\begin{align*}
\cC_{\yY}^{\pm} \subset D_{\mathbb{C}^{\ast}}(\yY, w), \ 
\cC_{\yY}^{\pm} \stackrel{\sim}{\to} 
D_{\mathbb{C}^{\ast}}(\yY_{L^{\pm}\rm{\mathchar`-ss}}, w). 
\end{align*}
Moreover 
by (\ref{intro:open}), 
a version of 
Kn\"orrer periodicity~\cite{MR3071664, MR2982435, MR3631231}
implies the equivalences
\begin{align*}
D^b(M^{\rm{T}}(r, \pm d)) \stackrel{\sim}{\to}
D_{\mathbb{C}^{\ast}}(\yY_{L^{\pm}\rm{\mathchar`-ss}}, w). 
\end{align*}

We are reduced to showing the inclusion 
$\cC_{\yY}^- \subset \cC_{\yY}^+$, which 
is now a local statement on $M(r, d)$. 
The result of~\cite{MR3811778} shows 
that the diagram (\ref{diagram:Tpair})
 is analytic locally on $M(r, d)$
described as moduli spaces of representations of 
a quiver $Q^{\dag}$ with a super-potential. 
The quiver $Q^{\dag}$ is not necessary symmetric, 
but we can find its subquiver $Q_0^{\dag}$
which is symmetric, thus its representation space
is a symmetric representation of a reductive group. 
Then applying the magic window theorem~\cite{MR3698338, HLKSAM} for 
 quasi-symmetric representations
of reductive groups, 
we conclude the inclusion $\cC_{\yY}^- \subset \cC_{\yY}^+$. 

\subsection{Related works}
This paper is regarded as a sequel of the second 
author's previous paper~\cite{Todsemi}, 
where we proved the existence of a fully-faithful 
functor between (simpler) d-critical flips of stable pair 
moduli spaces. 

In~\cite{NP}, it is proved that under wall-crossing of 
pair moduli spaces $(\oO_C \to E)$
there exist fully-faithful functors of 
derived categories. The moduil spaces of stable pairs 
considered in \textit{loc.~cit.~} are birational under wall-crossing, 
and different from the wall-crossing considered here. 
In terms of $\delta$-stability of ADHM sheaves
(see Remark~\ref{rmk:adhm}), the wall-crossing in~\cite{NP} considers 
walls located in $\delta>0$, while our wall-crossing corresponds 
to the wall at $\delta=0$. 

There exist some other works
proving the existence of fully-faithful functors 
between derived categories of stable objects. 
In~\cite{MR3652079}, Ballard proved the existence of 
full-faithful functor for wall-crossing of stable 
sheaves on some rational surfaces. 
In~\cite{HalpK3}, Halpern-Leistner announces the result that 
the derived categories of stable objects on K3 surfaces
are equivalent under wall-crossing. 
In~\cite{MR3652079, HalpK3}, they use window subcategories 
to show their results. 
The moduli spaces in \textit{loc.~cit.~}
are birational under 
wall-crossing, so the situation is different from ours.
However we are much influenced by their works, 
and borrowed several ideas from them.

 \subsection{Acknowledgements}
We are grateful to Yuki Hirano and
Daniel Halpern-Leistner
for valuable discussions.
N.~K.~is supported by the program 
for Leading Graduate Schools, MEXT, Japan, 
and by Grant-in-Aid for JSPS Research Fellow 17J00664.
Y.~T.~is supported by World Premier International Research Center
Initiative (WPI initiative), MEXT, Japan, and Grant-in Aid for Scientific
Research grant (No. 26287002) from MEXT, Japan.

\subsection{Notation and convention}
In this paper, all the schemes and stacks are defined over $\mathbb{C}$. 
For a scheme or a stack $M$, we always denote by $D^b(M)$ the bounded
derived category of coherent sheaves on $M$.

\section{Derived factorization categories under variation of GIT quotients}
In this section, we recall some necessary background 
on derived factorization categories and their window 
subcategories. We will also show some inclusion of 
window subcategories as an application of magic window 
theorem~\cite{MR3698338, HLKSAM}. 

\subsection{Kempf-Ness stratification}\label{subsec:KN}
Let $G$ be a reductive algebraic group, 
with maximal torus $T \subset G$. 
We always denote by $M$ the character lattice of $T$
and $N$ the cocharacter lattice of $T$, i.e. 
\begin{align*}
M=\Hom_{\mathbb{Z}}(T, \mathbb{C}^{\ast}), \ 
N=\Hom_{\mathbb{Z}}(\mathbb{C}^{\ast}, T). 
\end{align*}
The subspace $M_{\mathbb{R}}^W \subset M_{\mathbb{R}}$ is 
defined to be the 
Weyl-invariant subspace. 

Below we follow the convention of~\cite[Section~2.1]{MR3327537} for 
Kempf-Ness stratification associated with GIT quotients. 
Let $X$ be a smooth variety, projective over an affine 
variety, with a $G$-action. 
For a $G$-linearized ample line bundle $L$ on $X$, 
we have the open subset of $L$-semistable points
\begin{align*}
X_{L\rm{\mathchar`-ss}} \subset X.
\end{align*}
By the Hilbert-Mumford criterion, 
$X_{L\rm{\mathchar`-ss}}$ is characterized 
by the set of points $x \in X$ such that 
for any one parameter subgroup $\lambda \colon \mathbb{C}^{\ast} \to G$
for which 
the limit 
\begin{align*}
y=\lim_{t\to 0}\lambda(t)(x) \in X
\end{align*}
exists, we have 
$\mathrm{weight}_{\lambda}(L|_{y}) \ge 0$. 
Also by fixing a Weyl-invariant 
norm $\lvert \ast \rvert$ on $N_{\mathbb{R}}$, 
we have the associated Kempf-Ness (KN) stratification 
\begin{align}\label{KN:strata}
X=X_{L\rm{\mathchar`-ss}} \sqcup S_{1} \sqcup S_{2} \sqcup \ldots. 
\end{align}
Here for each $\alpha$ there exists a 
one parameter subgroup $\lambda_{\alpha} \colon \mathbb{C}^{\ast} \to T$, a connected component (called center)
$Z_{\alpha}$ of the 
$\lambda_{\alpha}$-fixed part 
of 
$X \setminus \cup_{\alpha'<\alpha} S_{\alpha}$
such that 
\begin{align*}
S_{\alpha}=G \cdot Y_{\alpha}, \ 
Y_{\alpha}\cneq \{ x \in X: 
\lim_{t \to 0}\lambda_{\alpha}(t)(x) \in Z_{\alpha}\}. 
\end{align*}
Moreover by setting 
\begin{align*}
\mu_{\alpha}=-\frac{\mathrm{weight}_{\lambda_{\alpha}}(L|_{Z_{\alpha}})}{
\lvert \lambda_{\alpha} \rvert} \in \mathbb{R}
\end{align*}
we have 
the inequalities
$\mu_1>\mu_2>\cdots>0$.

By taking the quotient stacks of the 
stratification (\ref{KN:strata}), we have the 
stratification of the quotient stack $\xX=[X/G]$
\begin{align}\label{Theta:strata}
\xX=\xX_{L\rm{\mathchar`-ss}} \sqcup \sS_{1} \sqcup \sS_2 \sqcup \ldots.
\end{align} 

\begin{rmk}
The stratification (\ref{Theta:strata}) is the $\Theta$-stratification
introduced in~\cite{Halpinstab}, 
which is intrinsic to the stack $\xX$, uniquely 
determined by $L \in \Pic(\xX)$ and some 
$b \in H^4(BG)$
pulled back via $\xX \to BG$
(see~\cite[Example~4.13]{Halpinstab}). 
\end{rmk}

\subsection{Derived factorization categories}\label{subsec:dfac}
Suppose that there exists a subtorus 
$\mathbb{C}^{\ast} \subset G$ which is contained in 
the center of $G$. 
We 
set $\mathbb{P}(G) \cneq G/\mathbb{C}^{\ast}$ and 
define 
\begin{align}\label{Gtilde}
\widetilde{G} \cneq G \times_{\mathbb{P}(G)} G. 
\end{align}
Here $G \to \mathbb{P}(G)$ is the natural quotient map. 
We have the exact sequence of 
algebraic groups
\begin{align}\label{exact:G}
1 \to G \stackrel{\Delta}{\to} \widetilde{G} \stackrel{\tau}{\to} \mathbb{C}^{\ast} \to 1.
\end{align}
Here $\Delta$ is the diagonal embedding, and $\tau$
sends $(g_1, g_2)$ to $g_1 g_2^{-1}$. Below we regard $G$ as a subgroup of $\widetilde{G}$
by the diagonal $\Delta$, and 
whenever we have a $\widetilde{G}$-action 
we also regard it as a $G$-action by the 
embedding $\Delta$. 

The exact sequence (\ref{exact:G}) splits non-canonically, 
i.e. for any $k \in \mathbb{Z}$ the map 
\begin{align}\label{split:G}
\gamma_k \colon 
\mathbb{C}^{\ast} \to \widetilde{G}, \ 
t \mapsto (t^{k}, t^{k-1})
\end{align}
gives a splitting 
of (\ref{exact:G}), and each choice of $k$ 
gives an isomorphism $G\times \mathbb{C}^{\ast} \stackrel{\cong}{\to} \widetilde{G}$. 

Suppose that the $G$-action on $X$ extends 
to a $\widetilde{G}$-action on it. 
By choosing a splitting (\ref{split:G}), 
this is equivalent to giving an auxiliary
$\mathbb{C}^{\ast}$-action on $X$ which commutes with the 
$G$-action.
In particular, such a $\widetilde{G}$-action must preserve 
 the KN stratification (\ref{KN:strata}). 
Let $w \in \Gamma(X, \oO_X)$ be $\tau$-semi invariant, i.e. 
$g^{\ast}w=\tau(g) w$ for any $g \in \widetilde{G}$. 
Equivalently $w$ is $G$-invariant, 
so is a map of stacks 
\begin{align*}
w \colon \xX=[X/G] \to \mathbb{A}^1
\end{align*}
and $\mathbb{C}^{\ast}$-weight one for any 
choice of splitting (\ref{split:G}). 
Given data as above, the \textit{derived factorization category}
 \begin{align}\label{der:fact}
 D_{\mathbb{C}^{\ast}}(\xX, w)
 \end{align}
 is defined to be the triangulated category, 
whose objects consist of $\widetilde{G}$
\textit{-equivariant factorizations of }$w$, i.e. 
sequences of $\widetilde{G}$-equivariant morphisms of 
$\widetilde{G}$-equivariant coherent sheaves $\pP_0$, $\pP_1$ on $X$
\begin{align}\label{factorization}
\pP_0 \stackrel{f}{\to} \pP_1 \stackrel{g}{\to} \pP_0 
\langle 1 \rangle
\end{align}
satisfying the following: 
\begin{align*}
f \circ g=\cdot w, \ g \circ f=\cdot w.
\end{align*}
Here $\langle n \rangle$ 
means the twist by the $\widetilde{G}$-character
$\tau^n$. 
The category (\ref{der:fact}) is defined to be
 the localization 
of the homotopy category of 
the factorizations (\ref{factorization})
by its subcategory of acyclic factorizations. 
For details, see~\cite{MR3366002}. 

\subsection{{K}n\"orrer periodicity}
For the later use, we recall a version of 
Kn\"orrer periodicity of derived factorization 
categories proved in~\cite{MR3071664, MR2982435, MR3631231}
in a general setting. 
Let $Y$ be a smooth variety with a $G$-action, 
and $\fF \to Y$ be an algebraic $G$-equivariant 
vector bundle. Let 
$\psi \colon Y \to \fF$ be a $G$-equivariant 
regular section of it, i.e. its zero locus 
\begin{align*}
Z\cneq (\psi=0) \subset Y
\end{align*}
has codimension equals to the rank of $\fF$. 
The section $\psi$ naturally defines the morphism
\begin{align*}
w_{\psi} \colon 
\fF^{\vee} \to \mathbb{A}^1
\end{align*}
by sending $(y, v)$ for $y \in Y$ and $v \in \fF|_{y}^{\vee}$ 
to $\langle \psi(y), v \rangle$. 
We have the following diagram
\begin{align*}
\xymatrix{
\fF|_{Z}^{\vee} \ar@<-0.3ex>@{^{(}->}[r]^-{i} \ar[d]_-{p} & \fF^{\vee} \ar[r]^-{w_{\psi}} \ar[d] & \mathbb{A}^1 \\
Z \ar@<-0.3ex>@{^{(}->}[r] & Y. &
}
\end{align*}
We have the following lemma 
which is obvious from the definition of $w_{\psi}$: 
\begin{lem}\label{lem:Z}
For a regular section $\psi$, suppose that $Z$ is 
non-singular. 
Then we have $\{dw_{\psi}=0\} =Z$. 
Here $Z$ is embedded into $\fF^{\vee}$ by the zero 
section $Y \to \fF^{\vee}$. 
\end{lem}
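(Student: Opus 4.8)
The plan is to verify the equality $\{dw_\psi=0\}=Z$ by a direct local computation; the only input beyond the definition of $w_\psi$ is that the regularity of $\psi$ together with the non-singularity of $Z$ forces the Jacobian of $\psi$ to have full rank along $Z$. Since the assertion is local on $Y$ and both sides are manifestly $G$-invariant, I would first pass to an open subset $U\subset Y$ on which $\fF|_U\cong U\times\mathbb{A}^r$ with $r=\rank(\fF)$, equipped with étale coordinates $y_1,\dots,y_n$ on $U$ and the induced fibre coordinates $v_1,\dots,v_r$ on $\fF^\vee|_U\cong U\times\mathbb{A}^r$. In this trivialization $\psi$ is given by regular functions $\psi_1,\dots,\psi_r$ on $U$, and by the definition of $w_\psi$ we have
\begin{align*}
w_\psi=\sum_{i=1}^r \psi_i(y)\,v_i.
\end{align*}

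Next I would differentiate and separate the vertical and horizontal parts. Computing
\begin{align*}
dw_\psi=\sum_{i=1}^r \psi_i\,dv_i+\sum_{j=1}^n\Bigl(\sum_{i=1}^r v_i\,\frac{\partial\psi_i}{\partial y_j}\Bigr)dy_j,
\end{align*}
one sees that $dw_\psi=0$ is equivalent to the two systems of equations: (a) $\psi_i=0$ for all $i$, coming from the $dv_i$-coefficients, and (b) $\sum_{i=1}^r v_i\,\partial\psi_i/\partial y_j=0$ for all $j$, coming from the $dy_j$-coefficients. Condition (a) says exactly that the point lies over $Z=(\psi=0)$, so it remains to analyse (b) along $Z$.

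The key step is then to use the hypotheses to eliminate the fibre coordinate. Because $\psi$ is a regular section with non-singular zero locus $Z$ of codimension $r$, the functions $\psi_1,\dots,\psi_r$ cut out $Z$ with independent differentials; equivalently the Jacobian $J=(\partial\psi_i/\partial y_j)$ has rank $r$ at every point of $Z$, so that the conormal map $\fF^\vee|_Z\hookrightarrow\Omega_Y|_Z$ is injective. Hence along $Z$ the linear system (b) says precisely that $v$ lies in the left kernel of $J$, which is trivial, forcing $v=0$; this gives $\{dw_\psi=0\}\subseteq Z$ embedded by the zero section. The reverse inclusion is immediate, since at any point of $Z$ with $v=0$ both families of coefficients in $dw_\psi$ vanish. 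There is no serious obstacle here: the entire content lies in the observation that non-singularity of $Z$ is exactly what rules out spurious critical points with $v\neq 0$ over the singular locus of $(\psi=0)$, where $J$ would drop rank and acquire a nonzero left kernel in the fibre direction.
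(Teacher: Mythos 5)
Your proof is correct, and it is precisely the direct local computation that the paper has in mind: the paper in fact omits the proof entirely, stating the lemma is ``obvious from the definition of $w_{\psi}$''. Your write-up correctly identifies the one non-trivial input, namely that non-singularity of $Z$ of codimension $r$ forces the Jacobian of $\psi$ to have full rank along $Z$, killing the fibre coordinate in the critical-point equations.
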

The following is the version of {K}n\"orrer periodicity
we use:  
\begin{thm}\emph{(\cite{MR3071664, MR2982435, MR3631231})}\label{thm:knoer}
Suppose that the $G$-action on $\fF^{\vee}$ 
extends to a $\widetilde{G}$-action 
such that, for some splitting $\gamma_k$
in (\ref{split:G}),
the $\mathbb{C}^{\ast}$ acts on fibers of $\fF^{\vee} \to Y$ with 
weight one. 
Then 
we have the equivalences
\begin{align*}
\xymatrix{
 D^b([Z/G])  \ar[r]^-{\sim}_-{i_{\ast} \circ p^{\ast}} &
 D^b([\fF^{\vee}/(G \times \mathbb{C}^{\ast})], w_{\psi}) \ar[r]^-{\sim}_-{\gamma_k} & 
 D_{\mathbb{C}^{\ast}}([\fF^{\vee}/G], w_{\psi}).
 }
\end{align*}
\end{thm}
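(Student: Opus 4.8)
The plan is to treat the two arrows separately. The second arrow, labelled $\gamma_k$, is formal: the splitting (\ref{split:G}) gives an isomorphism $G \times \mathbb{C}^{\ast} \stackrel{\cong}{\to} \widetilde{G}$, under which a $\widetilde{G}$-equivariant factorization of $w_{\psi}$ is exactly a $(G \times \mathbb{C}^{\ast})$-equivariant one on which the auxiliary $\mathbb{C}^{\ast}$ acts on $w_{\psi}$ with weight one. Thus $D_{\mathbb{C}^{\ast}}([\fF^{\vee}/G], w_{\psi})$ and $D^b([\fF^{\vee}/(G \times \mathbb{C}^{\ast})], w_{\psi})$ are the same category read through $\gamma_k$, and all the content lies in the first arrow. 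I would therefore reduce to proving that $\Psi \cneq i_{\ast} \circ p^{\ast}$ is an equivalence
\[
D^b([Z/G]) \stackrel{\sim}{\to} D^b([\fF^{\vee}/(G \times \mathbb{C}^{\ast})], w_{\psi}),
\]
which is the gauged Kn\"orrer periodicity of \cite{MR3071664, MR2982435, MR3631231}.

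First I would verify that $\Psi$ is well defined. Since $Z = (\psi = 0)$ is the scheme-theoretic zero locus, $\psi$ restricts to zero on $Z$, so $w_{\psi}$ vanishes identically on the closed subscheme $\fF|_{Z}^{\vee} \subset \fF^{\vee}$. Hence for $\sS \in D^b([Z/G])$ the sheaf $i_{\ast} p^{\ast}\sS$ is annihilated by $w_{\psi}$ and, carrying the tautological weight-one $\mathbb{C}^{\ast}$-structure along the fibres of $\fF^{\vee} \to Y$, defines an object of the factorization category with vanishing differential. As an object of this category, $\Psi(\oO_Z)$ is moreover represented by the $G$-equivariant Koszul factorization $K = (\wedge^{\bullet}\pi^{\ast}\fF, \iota_{v} + \psi \wedge (-))$, where $\pi \colon \fF^{\vee} \to Y$ is the projection and $v$ the tautological section of $\pi^{\ast}\fF^{\vee}$; the identity $(\iota_{v} + \psi \wedge (-))^2 = \langle \psi, v \rangle = w_{\psi}$ shows $K$ is a factorization, and it is the form in which morphisms are computable.

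The heart of the argument is that $\Psi$ is fully faithful and essentially surjective. For full faithfulness I would compute the factorization $\RHom(\Psi\sS, \Psi\sS')$ through the Koszul model: regularity of $\psi$ turns the relevant Koszul-type differentials into honest resolutions, so the computation collapses onto $\RHom_{[Z/G]}(\sS, \sS')$. The decisive point is that the weight-one $\mathbb{C}^{\ast}$-grading breaks the $2$-periodicity that the ungraded matrix factorization category would impose, so one recovers the full graded $\Ext$-algebra of $[Z/G]$ rather than its periodicization. For essential surjectivity I would use that every object of the factorization category has singularity support inside the critical locus $\{dw_{\psi} = 0\}$, which equals $Z$ by Lemma~\ref{lem:Z}; since $Z$ is smooth, the generation results underlying the Orlov/window machinery show these objects are generated by the Koszul factorizations, i.e. by the image of $\Psi$. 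As $\psi$, the bundle $\fF$, the Koszul differential and the weight-one $\mathbb{C}^{\ast}$-action are all equivariant, the whole construction is $G$- (respectively $\widetilde{G}$-) equivariant and descends to the quotient stacks.

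The step I expect to be the main obstacle — and the reason I would invoke the cited theorems rather than reprove them — is the control of morphisms in the derived factorization category, i.e. showing that factorization $\RHom$ computes precisely the $\Ext$-groups on $Z$ with the weight-one grading annihilating all higher periodic contributions. This is exactly the substance of the Orlov-type statements, and it is what forces the use of the \emph{gauged} version of \cite{MR3631231} in place of the absolute results \cite{MR3071664, MR2982435}, since every construction must be carried out $G$-equivariantly. The regularity hypothesis on $\psi$ — ensuring $Z$ is smooth of the expected codimension, so that the Koszul complex is a genuine resolution — is precisely the input that makes the argument valid.
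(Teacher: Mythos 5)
The paper gives no proof of Theorem~\ref{thm:knoer} at all: it is imported verbatim from \cite{MR3071664, MR2982435, MR3631231}, so there is nothing internal to compare against. Your sketch — Koszul factorization kernel, full faithfulness via the Koszul resolution of $\oO_Z$ for a regular section, essential surjectivity via support in $\{dw_{\psi}=0\}=Z$, and the purely formal identification of the two factorization categories through the splitting $\gamma_k$ — is a correct and faithful outline of how those cited results are actually proved, and I see no errors in it.
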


\subsection{Window subcategories}\label{subsec:window}
We return to the situation in Section~\ref{subsec:KN}, \ref{subsec:dfac}. 
For a KN-stratification (\ref{KN:strata}), 
let $\eta_{\alpha} \in \mathbb{Z}_{\ge 0}$ be defined by 
\begin{align}\label{eta:alpha}
\eta_{\alpha}=\mathrm{weight}_{\lambda_{\alpha}}(\det(N_{S_{\alpha}/X}^{\vee})).\end{align}
\begin{defi}\label{defi:window}
For $\delta \in \Pic_G(X)_{\mathbb{R}}$, 
the \textit{window subcategory}
\begin{align*}
\cC_{\delta} \subset D_{\mathbb{C}^{\ast}}(\xX, w)
\end{align*}
is defined to be the 
triangulated subcategory consisting of factorizations (\ref{factorization})
such that each derived restriction
$\pP_{\bullet}|_{Z_{\alpha}}$ is isomorphic to 
a factorization 
$\qQ_{\bullet, \alpha}$ of $w|_{Z_{\alpha}}$
satisfying the condition
\begin{align*}
\mathrm{weight}_{\lambda_{\alpha}}(\qQ_{\bullet, \alpha})
\subset \mathrm{weight}_{\lambda_{\alpha}}(\delta|_{Z_{\alpha}})+
\left[-\frac{\eta_{\alpha}}{2}, 
\frac{\eta_{\alpha}}{2} \right). 
\end{align*}
Here we note that $\mathrm{weight}_{\lambda_{\alpha}}(\qQ_{\bullet, \alpha})$
is a set of integers. 
\end{defi}
The following is a version of window theorem 
for derived categories of GIT quotients. 
\begin{thm}\emph{(\cite{MR3327537, MR3895631})}\label{thm:window}
The composition
\begin{align*}
\cC_{\delta} \subset D_{\mathbb{C}^{\ast}}(\xX, w)
\stackrel{\rm{res}}{\to} D_{\mathbb{C}^{\ast}}(\xX_{\rm{ss}}, w|_{X_{\rm{ss}}})
\end{align*}
is an equivalence of triangulated categories. 
Here the right arrow is the restriction functor 
to the open substack $\xX_{\rm{ss}} \subset \xX$.
\end{thm}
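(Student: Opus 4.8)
The plan is to prove the equivalence by induction on the Kempf-Ness strata, peeling off one stratum at a time and reducing the global statement to a local computation near each center $Z_\alpha$. Concretely, I would form the decreasing chain of open substacks $\xX=\xX^{(0)} \supset \xX^{(1)} \supset \cdots \supset \xX_{\rm{ss}}$ obtained by successively removing the closed strata $\sS_1, \sS_2, \ldots$ in the order dictated by the slopes $\mu_1>\mu_2>\cdots>0$, so that at each step $\xX^{(i-1)}=\xX^{(i)} \sqcup \sS_i$ with $\sS_i$ closed in $\xX^{(i-1)}$. The theorem follows once I establish, for a single elementary step, that the appropriate intermediate window restricts isomorphically from $D_{\mathbb{C}^{\ast}}(\xX^{(i-1)}, w)$ onto $D_{\mathbb{C}^{\ast}}(\xX^{(i)}, w)$; composing these elementary equivalences and matching the weight conditions at all strata simultaneously yields $\cC_{\delta} \simto D_{\mathbb{C}^{\ast}}(\xX_{\rm{ss}}, w)$.

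For the elementary step the essential input is the local structure of the stack near the stratum. Using Luna's slice theorem together with the description of $\sS_\alpha=G\cdot Y_\alpha$ as the attracting locus of the $\lambda_\alpha$-fixed center $Z_\alpha$, a formal (or analytic) neighborhood of $\sS_\alpha$ is modeled on a quotient of the total space of the normal bundle $N_{S_\alpha/X}$, and the factorization category there splits according to $\lambda_\alpha$-weights. I would use this to produce a weight (baric) decomposition of the subcategory of factorizations supported on $\sS_\alpha$, with pieces indexed by the $\lambda_\alpha$-weight of the restriction to $Z_\alpha$. The central point is that, relative to the integer $\eta_\alpha=\mathrm{weight}_{\lambda_\alpha}(\det(N_{S_\alpha/X}^{\vee}))$ of $(\ref{eta:alpha})$, there is a semiorthogonal decomposition in which the objects whose $Z_\alpha$-weights lie in the half-open window of width $\eta_\alpha$ around $\mathrm{weight}_{\lambda_\alpha}(\delta|_{Z_\alpha})$ form the subcategory on which restriction to $\xX^{(i)}$ is an equivalence.

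Fully faithfulness then reduces to a vanishing statement: for $\pP_\bullet, \pP'_\bullet$ in the window, the cone of $\RHom_{\xX^{(i-1)}}(\pP_\bullet, \pP'_\bullet) \to \RHom_{\xX^{(i)}}(\pP_\bullet, \pP'_\bullet)$ is computed by local cohomology along $\sS_\alpha$, and the weight condition forces the relevant $\lambda_\alpha$-weight spaces to vanish. This is exactly where the half-open interval $[-\eta_\alpha/2, \eta_\alpha/2)$ of Definition~\ref{defi:window} is needed: the Koszul complex $\wedge^{\bullet}N_{S_\alpha/X}^{\vee}$ resolving the structure sheaf of the stratum carries $\lambda_\alpha$-weights whose total spread is $\eta_\alpha$, so a window of precisely this width leaves no room for nonzero maps factoring through the stratum. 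Essential surjectivity is obtained by the standard grade-restriction procedure: given an object on $\xX_{\rm{ss}}$, I extend it arbitrarily to $\xX^{(i-1)}$ and apply the baric truncation functors along each $\lambda_\alpha$ to push its weights into the prescribed window without altering its restriction to $\xX_{\rm{ss}}$.

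The main obstacle I anticipate is carrying out this weight analysis in the \emph{factorization} setting rather than for ordinary coherent sheaves. The potential $w$ and the auxiliary $\mathbb{C}^{\ast}$-action encoded by $\widetilde{G}$ and the splitting $\gamma_k$ of $(\ref{split:G})$ must be tracked through the local model and the Koszul resolutions, and one must verify that the twists $\langle n \rangle$ by $\tau^n$ interact correctly with the $\lambda_\alpha$-weight bookkeeping, that acyclic factorizations are preserved, and that the truncation functors descend to the localized derived factorization category. Establishing this factorization analogue of the local quantization theorem, with the precise $\eta_\alpha$-width window, is the technical heart of the argument; once it is in place, the inductive assembly of the elementary equivalences is formal.
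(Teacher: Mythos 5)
Your proposal reconstructs a proof of the theorem from first principles, whereas the paper does not prove it at all: its ``proof'' of Theorem~\ref{thm:window} consists of citing \cite[Corollary~3.2.2, Proposition~3.3.2, Remark~3.2.10]{MR3895631} and \cite[Proposition~5.5, Example~5.7]{MR3327537}, and then making the single observation that those arguments go through unchanged in the presence of the auxiliary $\mathbb{C}^{\ast}$-action coming from $\widetilde{G}$. What you sketch --- stratum-by-stratum induction along the KN filtration ordered by $\mu_1>\mu_2>\cdots$, a baric decomposition of objects supported on a stratum indexed by $\lambda_{\alpha}$-weights on $Z_{\alpha}$, fully-faithfulness via vanishing of local cohomology forced by the width-$\eta_{\alpha}$ window of \eqref{eta:alpha}, and essential surjectivity by extension plus baric truncation --- is in substance the argument of the cited references, so your route is sound in outline and correctly identifies why the half-open interval of Definition~\ref{defi:window} has exactly the width of the Koszul resolution of the stratum. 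Two remarks on the comparison. First, the one point where the paper actually says something is precisely the point you defer to the end as ``the main obstacle'': tracking the potential $w$, the twists $\langle n\rangle$ by $\tau^{n}$, and the splitting $\gamma_k$ of \eqref{split:G} through the factorization-category version of the local quantization theorem. The paper's position is that this requires no modification because the auxiliary $\mathbb{C}^{\ast}$ commutes with $G$ and preserves the KN strata, so all weight bookkeeping is with respect to $\lambda_{\alpha}$ landing in $T\subset G$ and is insensitive to the extra grading; your proposal would be complete once you make that observation rather than listing it as an open difficulty. Second, a small technical caveat: the local model near $\sS_{\alpha}$ is not obtained from Luna's slice theorem (which applies at closed orbits) but from the structure $S_{\alpha}=G\cdot Y_{\alpha}\cong G\times_{P_{\alpha}}Y_{\alpha}$ and deformation to the normal bundle of the stratum; this does not affect the logic of your argument but is the mechanism actually used in the references. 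In short, your approach buys a self-contained proof at the cost of redoing the factorization-category quantization theorem, while the paper buys brevity by outsourcing everything except the compatibility with the auxiliary $\mathbb{C}^{\ast}$-action.
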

\begin{proof}
A version of Theorem~\ref{thm:window}
without an auxiliary $\mathbb{C}^{\ast}$-action is stated 
in~\cite[Corollary~3.2.2, Proposition~3.3.2]{MR3895631}
together with~\cite[Remark~3.2.10]{MR3895631}
(also see~\cite[Proposition~5.5, Example~5.7]{MR3327537} for a version
of singularity categories). 
The same arguments apply in the presence of 
$\mathbb{C}^{\ast}$-action without any modification. 
\end{proof}

\subsection{Variation of GIT quotients for linear representations}\label{subsec:VGIT}
In the above situation, suppose 
furthermore that 
$X$ is a 
linear representation of $\widetilde{G}$
which decomposes into
\begin{align*}
X=X_0 \oplus X_1
\end{align*}
as $\widetilde{G}$-representations such that 
$X_0$ is a quasi-symmetric $G$-representation.
Here a $G$-representation $X_0$ is called \textit{quasi-symmetric} 
if $\beta_i \in M$ for $1\le i\le \dim X_0$
are the $T$-weights of $X_0^{\vee}$, then 
for any line 
$l \subset M_{\mathbb{R}}$ we have 
\begin{align*}
\sum_{\beta_i \in l} \beta_i=0.
\end{align*}
In particular, a symmetric representation is quasi-symmetric. 
Let $\overline{\Sigma} \subset M_{\mathbb{R}}$ be 
the convex hull of the $T$-characters of 
$\bigwedge^{\ast}(X_0^{\vee})$. 
Then an element $\chi \in M_{\mathbb{R}}$ is called 
\textit{generic} if it is contained in the linear span of 
$\overline{\Sigma}$ but is not parallel to any face of $\overline{\Sigma}$. 

Let $\chi_0 \colon G \to \mathbb{C}^{\ast}$
be a $G$-character. 
We consider KN stratifications of $X$
and $\xX=[X/G]$
with respect to 
$\oO(\chi_0^{\pm 1})$
\begin{align}\label{KN:X}
X=X_{\rm{ss}}^{\pm} \sqcup S_1^{\pm} \sqcup S_2^{\pm} \sqcup \cdots, \quad 
\xX=\xX_{\rm{ss}}^{\pm} \sqcup \sS_1^{\pm} \sqcup \sS_2^{\pm} \sqcup \cdots 
\end{align}
and take the associated window 
subcategories for $\delta \in \Pic_G(X)_{\mathbb{R}}=M_{\mathbb{R}}^W$
\begin{align*}
\cC_{\delta}^{\pm} \subset D_{\mathbb{C}^{\ast}}(\xX, w).
\end{align*}
We denote by 
$(X_0)_{\rm{ss}}^{\pm} \subset X_0$
the open subsets of $\oO(\chi_0^{\pm 1})$-semistable 
points in $X_0$. 
Note that by the definition of GIT stability, we have 
\begin{align*}
(X_0)_{\rm{ss}}^{\pm} \times X_1 \subset X_{\rm{ss}}^{\pm}. 
\end{align*}
We have the following: 
\begin{prop}\label{prop:magic}
Suppose that 
$\chi_0 \in M_{\mathbb{R}}^W$ is generic
and 
the following condition holds:
\begin{align}\label{ss:minus}
X_{\rm{ss}}^{-} = (X_0)_{\rm{ss}}^{-} \times X_1.
\end{align}
Then for 
any $\delta_0 \in M_{\mathbb{R}}^W$
and $0<\varepsilon \ll 1$, 
by setting $\delta_1=\delta_0+\varepsilon \cdot \chi_0$
we have 
$\cC_{\delta_1}^{-} \subset 
\cC_{\delta_1}^+$. 
\end{prop}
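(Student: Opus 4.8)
The plan is to deduce the inclusion from the magic window theorem~\cite{MR3698338, HLKSAM} applied to the quasi-symmetric factor $X_0$, using the single ``magic'' window subcategory as an intermediary between $\cC_{\delta_1}^-$ and $\cC_{\delta_1}^+$. Concretely, I would introduce the subcategory $\mathbb{W}_{\delta_1} \subset D_{\mathbb{C}^{\ast}}(\xX, w)$ cut out by imposing the weight condition of Definition~\ref{defi:window} associated to the $X_0$-conormal weights for \emph{every} one-parameter subgroup $\lambda \colon \mathbb{C}^{\ast} \to T$, not merely for those appearing in a single chamber. Since $\chi_0 \in M_{\mathbb{R}}^W$ is generic and $\delta_0 \in M_{\mathbb{R}}^W$, the element $\delta_1 = \delta_0 + \varepsilon \cdot \chi_0$ is generic for $0 < \varepsilon \ll 1$, so the magic window theorem applies to $X_0$ and shows that $\mathbb{W}_{\delta_1}$ restricts to an equivalence onto the derived factorization category of the semistable locus of $X_0$ simultaneously in both chambers. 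The goal is then to establish the two statements $\cC_{\delta_1}^- = \mathbb{W}_{\delta_1}$ and $\mathbb{W}_{\delta_1} \subset \cC_{\delta_1}^+$, which together give the proposition.

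For the first identity I would exploit the assumption (\ref{ss:minus}). The equality $X_{\rm{ss}}^- = (X_0)_{\rm{ss}}^- \times X_1$ forces every one-parameter subgroup $\lambda_{\alpha}$ occurring in the KN stratification for $\oO(\chi_0^{-1})$ to act on $X_1$ with non-negative weights only; otherwise a negative-weight $X_1$-direction would obstruct the limits defining the strata $S_{\alpha}^-$ over the $X_1$-factor. Hence $X_1$ is a spectator for the minus-stratification: it contributes nothing to the conormal weights, so each $\eta_{\alpha}$ in~(\ref{eta:alpha}) is computed entirely from $X_0$, and the centers $Z_{\alpha}^-$ together with the destabilizing $\lambda_{\alpha}$ are pulled back from the KN stratification of the quasi-symmetric representation $X_0$ with respect to $\oO(\chi_0^{-1})$. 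Therefore the defining conditions of $\cC_{\delta_1}^-$ are exactly the $X_0$-conditions for the minus-chamber, giving $\mathbb{W}_{\delta_1} \subseteq \cC_{\delta_1}^-$; and since the restriction functor of Theorem~\ref{thm:window} is an equivalence on $\cC_{\delta_1}^-$ while the magic window theorem identifies the subcategory $\mathbb{W}_{\delta_1}$ isomorphically with the same target via the same restriction, the inclusion is forced to be an equality.

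For the second inclusion $\mathbb{W}_{\delta_1} \subset \cC_{\delta_1}^+$ I would compare, stratum by stratum, the window defining $\mathbb{W}_{\delta_1}$ with that defining $\cC_{\delta_1}^+$. For a plus-stratum with one-parameter subgroup $\lambda_{\alpha}^+$ and center $Z_{\alpha}^+$, the conormal weight splits as $\eta_{\alpha}^+ = \eta_{\alpha}^{+, X_0} + \eta_{\alpha}^{+, X_1}$ with $\eta_{\alpha}^{+, X_1} \ge 0$, since (\ref{ss:minus}) is not imposed on the plus side and the $X_1$-directions may now genuinely destabilize. Consequently the window $\mathrm{weight}_{\lambda_{\alpha}^+}(\delta_1|_{Z_{\alpha}^+}) + [-\eta_{\alpha}^+/2, \eta_{\alpha}^+/2)$ of Definition~\ref{defi:window} contains the narrower $X_0$-window $\mathrm{weight}_{\lambda_{\alpha}^+}(\delta_1|_{Z_{\alpha}^+}) + [-\eta_{\alpha}^{+, X_0}/2, \eta_{\alpha}^{+, X_0}/2)$; objects of $\mathbb{W}_{\delta_1}$ satisfy the latter by construction, because the magic window condition is imposed for $\lambda_{\alpha}^+$ among all one-parameter subgroups, so they a fortiori satisfy the former and hence lie in $\cC_{\delta_1}^+$. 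The perturbation $\varepsilon \cdot \chi_0$ is what aligns the half-open endpoints of these intervals, ensuring that no boundary weight escapes the plus-window.

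The main obstacle I anticipate is making the intermediary $\mathbb{W}_{\delta_1}$ rigorous inside the factorization category of the non-quasi-symmetric total space $X = X_0 \oplus X_1$: the magic window theorem is stated for $X_0$ alone, so I must verify that imposing its weight conditions defines a subcategory of $D_{\mathbb{C}^{\ast}}(\xX, w)$ compatible with the $\tau$-semi-invariant $w$, and that the equivalences furnished by Theorem~\ref{thm:window} and by the magic window theorem are induced by the \emph{same} restriction functor once the product structure $X_{\rm{ss}}^- = (X_0)_{\rm{ss}}^- \times X_1$ is taken into account. The careful bookkeeping of the conormal weights $\eta_{\alpha}$ across the two distinct stratifications, and in particular confirming that the $X_1$-directions are genuine spectators on the minus side while only enlarging the windows on the plus side, is the technical heart of the argument.
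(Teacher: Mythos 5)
Your overall architecture is the same as the paper's: introduce a single ``magic'' window as an intermediary $\mathbb{W}_{\delta_1}$, prove $\cC_{\delta_1}^{-}=\mathbb{W}_{\delta_1}$ and $\mathbb{W}_{\delta_1}\subset\cC_{\delta_1}^{+}$, and conclude. Your second inclusion is exactly the paper's first step: the paper sets $\overline{\eta}_{\lambda}=\langle\lambda,\mathbb{L}^{\lambda>0}\rangle$ with $\mathbb{L}=[X_0^{\vee}]-[\mathfrak{g}^{\vee}]$, notes $\overline{\eta}_{\lambda_{\alpha}^{\pm}}\le\eta_{\alpha}^{\pm}$ because omitting $[X_1^{\vee}]$ can only decrease the positive-weight pairing, and uses the perturbation $\varepsilon\cdot\chi_0$ to fit the closed zonotope interval into the half-open KN window. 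Note that this inequality gives $\mM(\delta_1+\overline{\nabla})\subset\cC_{\delta_1}^{\pm}$ for \emph{both} signs at once, so your separate ``spectator'' analysis of the minus-stratification (claiming each $\lambda_{\alpha}^{-}$ acts on $X_1$ with non-negative weights and the strata are products) is unnecessary for the containment $\mathbb{W}_{\delta_1}\subseteq\cC_{\delta_1}^{-}$ — and as stated it is not airtight, since the equality of semistable loci in (\ref{ss:minus}) does not by itself force the KN stratification of $X$ to be the product of the $X_0$-stratification with $X_1$.

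The genuine gap is in the equality $\cC_{\delta_1}^{-}=\mathbb{W}_{\delta_1}$. You reduce it to the claim that $\mathbb{W}_{\delta_1}$ restricts to an equivalence onto $D_{\mathbb{C}^{\ast}}(\xX_{\rm{ss}}^{-},w)$, but the magic window theorem you invoke is a statement about the quasi-symmetric representation $X_0$ alone, not about $X=X_0\oplus X_1$ or about factorizations of $w$; you flag this as ``the main obstacle'' without resolving it, and resolving it is the technical heart of the proof. The paper handles it by defining the intermediary concretely as $\mM(\delta_1+\overline{\nabla})$, the subcategory of factorizations whose components are $V\otimes_{\mathbb{C}}\oO_X$ with $T$-weights in $\delta_1+\overline{\nabla}$ (not by weight conditions on restrictions), and then proving that its restriction to $\xX_{\rm{ss}}^{-}$ is essentially surjective in three steps: (i) the magic window theorem for $X_0$ gives the equivalence $\mM'(\delta_1+\overline{\nabla})\to D^b([(X_0)_{\rm{ss}}^{-}/\widetilde{G}])$, where genericity of $\chi_0$ guarantees $\partial(\delta_1+\overline{\nabla})\cap M=\emptyset$ and the Deligne--Mumford hypothesis of~\cite[Proposition~3.11]{HLKSAM}; (ii) since $(X_0)_{\rm{ss}}^{-}\times X_1$ is the total space of a vector bundle over $(X_0)_{\rm{ss}}^{-}$, its derived category is generated by pullbacks, which upgrades (i) to an equivalence $\mM''(\delta_1+\overline{\nabla})\to D^b([((X_0)_{\rm{ss}}^{-}\times X_1)/\widetilde{G}])$ on the full space, using assumption (\ref{ss:minus}); (iii) an arbitrary factorization on $X_{\rm{ss}}^{-}$ is lifted by resolving each component by complexes of bundles from $\mM''$, lifting the differentials through the equivalence, and totalizing. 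Without steps (ii)--(iii) your proof does not close, and the choice to define $\mathbb{W}_{\delta_1}$ by weight conditions for all one-parameter subgroups rather than as a category generated by $V\otimes\oO_X$ makes the lifting argument harder to even formulate.
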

\begin{proof}
The proposition is proved by applying the argument of 
Magic window theorem for quasi-symmetric 
representations~\cite{HLKSAM}, 
proved using combinatorial arguments
in~\cite{MR3698338}. 
Following~\cite[Section~2.2]{HLKSAM}, 
we define 
\begin{align*}
\mathbb{L} \cneq [X_0^{\vee}]-[\mathfrak{g^{\vee}}] 
\in K_0(\mathrm{Rep}(T))=\mathbb{Z}[M].
\end{align*} 
For any one 
parameter subgroup $\lambda \colon \mathbb{C}^{\ast} \to T$, 
we define $\mathbb{L}^{\lambda>0}$
to be the projection of this class onto the subspace
spanned by weights which pair positively with $\lambda$. 
We define
\begin{align*}
\overline{\eta}_{\lambda} \cneq 
\langle \lambda, \mathbb{L}^{\lambda>0} \rangle
\in \mathbb{Z}. 
\end{align*}
Then we define 
$\overline{\nabla} \subset M_{\mathbb{R}}$ by 
\begin{align}\label{defi:nabla}
\overline{\nabla}\cneq \left\{ \chi \in M_{\mathbb{R}} : 
\langle \lambda, \chi \rangle \in
\left[-\frac{\overline{\eta}_{\lambda}}{2}, 
\frac{\overline{\eta}_{\lambda}}{2}  \right] 
\mbox{ for all } \lambda \colon \mathbb{C}^{\ast} \to T  
\right\}. 
\end{align}
For a region $\Omega \subset M_{\mathbb{R}}$, 
we denote by 
\begin{align}\label{def:MOmega}
\mM(\Omega) \subset
D_{\mathbb{C}^{\ast}}(\xX, w)
\end{align}
the triangulated subcategory 
consisting of
factorizations (\ref{factorization})
such that each $\pP_i$ is
isomorphic to $V \otimes_{\mathbb{C}} \oO_X$
for a $\widetilde{G}$-representation $V$
whose $T$-weights are contained in $\Omega$. 
We first claim the inclusion 
\begin{align}\label{M:C}
\mM(\delta_1+\overline{\nabla}) \subset \cC_{\delta_1}^{\pm}
\end{align}
for $\delta_1=\delta_0+\varepsilon \cdot \chi_0$
with $0<\varepsilon \ll 1$. 

Let $\lambda_{\alpha}^{\pm} \colon \mathbb{C}^{\ast} \to T$
be one parameter subgroups for the KN stratifications (\ref{KN:X}), 
and $Z_{\alpha}^{\pm} \subset S_{\alpha}^{\pm}$ the centers. 
For a $\widetilde{G}$-representation $V$
whose $T$-weights are contained in $\delta_1+\overline{\nabla}$, 
we have 
\begin{align*}
\mathrm{weight}_{\lambda_{\alpha}^{\pm}}(V \otimes_{\mathbb{C}}
\oO_X|_{Z_{\alpha}^{\pm}})
\subset \langle \delta_1, \lambda_{\alpha}^{\pm}\rangle+
\left[-\frac{\overline{\eta}_{\lambda_{\alpha}^{\pm}}}{2}, 
\frac{\overline{\eta}_{\lambda_{\alpha}^{\pm}}}{2}
  \right]
\end{align*} 
by the definition of $\overline{\nabla}$. 
We have the inequality
\begin{align*}
\overline{\eta}_{\lambda_{\alpha}^{\pm}} \le 
\langle \lambda_{\alpha}^{\pm}, 
(X^{\vee})^{\lambda_{\alpha}^{\pm}>0} 
-(\mathfrak{g}^{\vee})^{\lambda_{\alpha}^{\pm}>0}\rangle
=\eta_{\alpha}^{\pm}
\end{align*}
where the latter is defined as in (\ref{eta:alpha})
for the KN stratifications (\ref{KN:X}).  
For the second equality, see~\cite[Equation~(4)]{MR3327537}. 
On the other hand, we have 
$\langle \chi_0^{\pm 1}, \lambda_{\alpha}^{\pm}\rangle <0$
by the property of KN stratifications. 
Therefore by taking $0<\varepsilon \ll 1$ so
that $\langle \delta_1, \lambda_{\alpha}^{\pm} \rangle + \eta_{\alpha}^{\pm}/2$
are not integers, we have 
\begin{align*}
\mathrm{weight}_{\lambda_{\alpha}^{\pm}}(V \otimes_{\mathbb{C}}
\oO_X|_{Z_{\alpha}^{\pm}})
\subset \langle \delta_1, \lambda_{\alpha}^{\pm}\rangle+
\left[-\frac{\eta_{\alpha}^{\pm}}{2}, 
\frac{\eta_{\alpha}^{\pm}}{2} 
  \right). 
\end{align*} 
Therefore the inclusion (\ref{M:C}) holds. 

We consider the following commutative diagram
\begin{align}\label{dia:MW}
\xymatrix{
\cC_{\delta_1}^- \ar@<-0.3ex>@{^{(}->}[r]  
& D_{\mathbb{C}^{\ast}}(\xX, w) 
\ar@{=}[d]
\ar[r]^-{\rm{res}} & D_{\mathbb{C}^{\ast}}(\xX_{\rm{ss}}^{-}, w) 
\ar[d]^-{\cong}_-{\rm{res}} \\
\mM(\delta_1+\overline{\nabla}) 
\ar@<-0.3ex>@{^{(}->}[u] \ar@<-0.3ex>@{^{(}->}[r]
& D_{\mathbb{C}^{\ast}}(\xX, w)
\ar[r]^-{\rm{res}} & 
D_{\mathbb{C}^{\ast}}([((X_{0})_{\rm{ss}}^{-} \times X_1)/G], w).
} 
\end{align}
The top composition is an equivalence by Theorem~\ref{thm:window}
and the right restriction functor is an equivalence by 
the assumption (\ref{ss:minus}).

Below we show that 
 the bottom composition of the diagram (\ref{dia:MW})
is essentially surjective. 
Let 
\begin{align*}
\mM'(\delta_1+\overline{\nabla})
\subset D^b([X_0/\widetilde{G}]), \ 
\mM''(\delta_1+\overline{\nabla})
\subset D^b([X/\widetilde{G}])
\end{align*}
be the triagulated subcategories generated by 
$\widetilde{G}$-equivariant vector bundles 
on $X_0$, $X$ whose $T$-weights are contained in $\delta_1+\overline{\nabla}$. 
By~\cite[Corollary~2.9]{HLKSAM}, any facet of 
$\overline{\nabla}$ is parallel to some facet
of $\overline{\Sigma}$. 
Therefore the genericity condition of $\chi_0$ implies that 
$\partial(\delta_1+\overline{\nabla}) \cap M=\emptyset$, 
and $[(X_0)_{T-{\rm{ss}}}^{-}/T]$ is Deligne-Mumford 
by~\cite[Proposition~2.1]{HLKSAM}. 
Therefore the assumption of~\cite[Proposition~3.11]{HLKSAM}
is satisfied, which shows that 
the following composition is an equivalence
(Magic window theorem in~\cite[Theorem~3.2]{HLKSAM})
\begin{align*}
\mM'(\delta_1+\overline{\nabla})
\hookrightarrow D^b([X_0/\widetilde{G}])
\to D^b([(X_0)_{\rm{ss}}^-/\widetilde{G})]).
\end{align*}
The above result is stated in \textit{loc.~cit.~} without 
an auxiliary $\mathbb{C}^{\ast}$-action, but the 
same argument applies in the presence of an auxiliary $\mathbb{C}^{\ast}$-action, see~\cite[Corollary~5.2]{HLKSAM}. 

Also  
$D^b([((X_0)_{\rm{ss}}^- \times X_1)/\widetilde{G}])$
is generated by images of the pull-backs by 
the projection 
\begin{align*}
[((X_0)_{\rm{ss}}^- \times X_1)/\widetilde{G}] 
\to [(X_0)_{\rm{ss}}^-/\widetilde{G}]
\end{align*}
as it is a total space of a vector bundle on 
$[(X_0)_{\rm{ss}}^-/\widetilde{G}]$. It follows that 
$D^b([((X_0)_{\rm{ss}}^- \times X_1)/\widetilde{G}])$
is generated by $V \otimes_{\mathbb{C}} \oO_{X}$
for $\widetilde{G}$-representation $V$ whose $T$-weights are 
contained in $\delta_1+\overline{\nabla}$. 
In other words, the composition 
\begin{align}\label{compose:M''}
\mM''(\delta_1+\overline{\nabla}) \hookrightarrow D^b([X/\widetilde{G}])
\to D^b([(X_0)_{\rm{ss}}^- \times X_1/\widetilde{G}])
\end{align}
is essentiallly surjective. 
The above composition is also fully-faithful by
a version of the diagram (\ref{dia:MW}) without $w$, 
so the functor (\ref{compose:M''}) is an equivalence. 

By the equivalence (\ref{compose:M''}), we see that 
any $\widetilde{G}$-equivariant factorization 
on $(X_0)_{\rm{ss}}^- \times X_1$
is a restriction of
some object in $\mM(\delta_1+\overline{\nabla})$. 
Indeed for a factorization 
\begin{align}\label{fact:Pss}
\pP_0 \stackrel{f}{\to} \pP_1 \stackrel{g}{\to} \pP_0\langle 1\rangle
\end{align}
on $(X_0)_{\rm{ss}}^- \times X_1$, 
each $\pP_i$ is resolved by a complex of 
vector bundles $\vV_i^{\bullet}$ 
in 
$\mM''(\delta_1+\overline{\nabla})$ restricted to 
$(X_0)_{\rm{ss}}^- \times X_1$. 
By the equivalence (\ref{compose:M''}), 
the morphisms $f$, $g$ can be lifted to morphisms of complexes 
$\vV_0^{\bullet} \to \vV_1^{\bullet} \to \vV_0^{\bullet}\langle 1 \rangle$
on $X$. 
By taking the totalizations, we obtain a factorization in 
$\mM(\delta_1+\overline{\nabla})$ whose restriction 
to $(X_0)_{\rm{ss}}^- \times X_1$ gives (\ref{fact:Pss}). 
Therefore 
the bottom arrow of (\ref{dia:MW}) is essentially surjective. 
The above arguments show that
$
\cC_{\delta_1}^{-}=\mM(\delta_1+\overline{\nabla}) \subset 
\cC_{\delta_1}^+$
as desired. 
%
\end{proof}

We will show that a similar result of Proposition~\ref{prop:magic}
also holds after taking the direct sum with some 
symmetric representations. 
Let $W$ be a finite dimensional $G$-representation
whose weights of $\mathbb{C}^{\ast} \subset G$ are one. 
We set
\begin{align*}
X_W=X \oplus W \oplus W^{\vee}, \ 
\xX_W=[X_W/G]. 
\end{align*}
We have the KN stratification 
for the $G$-action on $X_W$ 
with respect to $\oO(\chi_0^{\pm 1})$
\begin{align}\label{KN:W}
X_W=(X_W)_{\rm{ss}}^{\pm} \sqcup S_{W, 1}^{\pm} \sqcup S_{W, 2}^{\pm}\cdots, \ 
\xX_W=(\xX_W)_{\rm{ss}}^{\pm} \sqcup \sS_{W, 1}^{\pm} \sqcup \sS_{W, 2}^{\pm} \cdots. 
\end{align}
The $G$-action on $X$ is naturally extended to a $\widetilde{G}$-action 
on 
$X_W$ given by 
\begin{align*}
(g, g')(x, v, v')=((g, g')(x), g(v), g'(v'))
\end{align*}
for $x \in X$, $v \in W$ and $v' \in W^{\vee}$. 
Let $w^{\star}$ be given by 
\begin{align*}
w^{\star} \colon X_W \to \mathbb{C}, \ 
w^{\star}(x, v, v')=w(x)+\langle v, v' \rangle. 
\end{align*}
For $\delta \in M_{\mathbb{R}}^W$, 
we have the associated window subcategories 
with respect to the stratifications (\ref{KN:W})
\begin{align*}
\cC_{W, \delta}^{\pm} \subset D_{\mathbb{C}^{\ast}}(\xX_W, w^{\star}). 
\end{align*}
\begin{thm}\label{thm:magic}
Under the same assumption of Proposition~\ref{prop:magic}, 
we have 
$\cC_{W, \delta_2}^- \subset \cC_{W, \delta_2}^+$
for $\delta_2=\varepsilon \cdot \chi_0$ with $0<\varepsilon \ll 1$. 
\end{thm}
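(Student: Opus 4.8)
The plan is to run the proof of Proposition~\ref{prop:magic} verbatim for the representation $X_W=X\oplus W\oplus W^{\vee}$ in place of $X$, using the decomposition $X_W=X_0'\oplus X_1$ with $X_0'\cneq X_0\oplus W\oplus W^{\vee}$ and the same $X_1$. First I would record that $X_0'$ is again quasi-symmetric: the $T$-weights of $(W\oplus W^{\vee})^{\vee}$ occur in pairs $\{\gamma,-\gamma\}$, so $W\oplus W^{\vee}$ is symmetric and adjoining it to the quasi-symmetric $X_0$ preserves the vanishing $\sum_{\beta_i\in l}\beta_i=0$ along every line $l\subset M_{\mathbb{R}}$. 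Since the central $\mathbb{C}^{\ast}\subset G$ acts on $W$ with weight one, the summand $(W\oplus W^{\vee},\langle v,v'\rangle)$ inside $w^{\star}=w+\langle v,v'\rangle$ is precisely the input of Kn\"orrer periodicity (Theorem~\ref{thm:knoer}), and I will use this to compare factorization categories on $X_W$ with those on $X$ whenever the $W\oplus W^{\vee}$ directions can be split off.

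Next I would transfer the two structural inputs of Proposition~\ref{prop:magic}. The inclusion $\mM_W(\delta_2+\overline{\nabla}_W)\subset\cC_{W,\delta_2}^{\pm}$, the analogue of (\ref{M:C}), goes through unchanged: for the one-parameter subgroups $\lambda_{W,\alpha}^{\pm}$ of the KN stratifications (\ref{KN:W}) one still has $\overline{\eta}_{\lambda_{W,\alpha}^{\pm}}\le\eta_{W,\alpha}^{\pm}$ and, automatically, $\langle\chi_0^{\pm1},\lambda_{W,\alpha}^{\pm}\rangle<0$, so taking $0<\varepsilon\ll1$ pushes the boundary weights off the integers exactly as before. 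For the semistability input I would check the analogue of (\ref{ss:minus}), namely $(X_W)_{\mathrm{ss}}^{-}=(X_0')_{\mathrm{ss}}^{-}\times X_1$. This should follow because adjoining $W\oplus W^{\vee}$ to the ``detecting'' factor $X_0$ does not alter the role of $X_1$: the linearization is pulled back from the $G$-character $\chi_0$, so $(x_0,v,v',x_1)$ is $\oO(\chi_0^{-1})$-semistable if and only if $(x_0,v,v')$ is, and $X_1$ remains a free direction. This reduces the claim to $\cC_{W,\delta_2}^{-}=\mM_W(\delta_2+\overline{\nabla}_W)\subset\cC_{W,\delta_2}^{+}$.

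The main obstacle is the magic window step for the enlarged representation $X_0'$. In Proposition~\ref{prop:magic} one applies the magic window theorem of~\cite{MR3698338, HLKSAM} to $X_0$, which requires the genericity of $\chi_0$ relative to $\overline{\Sigma}$ (concretely $\partial(\delta_2+\overline{\nabla}_W)\cap M=\emptyset$ and the Deligne-Mumford property of $[(X_0')_{T\text{-}\mathrm{ss}}^{-}/T]$). For $X_0'$ the relevant polytope is $\overline{\Sigma}_W=\overline{\Sigma}+\overline{\Sigma}'$, the Minkowski sum with the centrally symmetric zonotope $\overline{\Sigma}'\cneq\sum_k[-\gamma_k,\gamma_k]$ coming from $W\oplus W^{\vee}$; this can acquire new facet directions, and since $(X_W)_{\mathrm{ss}}^{\pm}$ need not equal $X_{\mathrm{ss}}^{\pm}\times W\times W^{\vee}$ the GIT quotients do not match on the nose, so genericity of $\chi_0$ for $\overline{\Sigma}$ does not formally yield genericity for $\overline{\Sigma}_W$. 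I expect this to be the delicate point. The way I would get around it is to feed the quadratic potential $\langle v,v'\rangle$ into the magic window equivalence itself: using the weight-one condition on $W$ together with the auxiliary-$\mathbb{C}^{\ast}$ version of the magic window theorem (\cite[Corollary~5.2]{HLKSAM}), Kn\"orrer periodicity should identify the magic window subcategory for $(X_0',\langle v,v'\rangle)$ with the one for $X_0$, so that the only genericity ever invoked is the given genericity of $\chi_0$ relative to $\overline{\Sigma}$.

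With that identification in hand the rest is formal. Running the diagram (\ref{dia:MW}) for $X_W$---its top row an equivalence by the window theorem (Theorem~\ref{thm:window}) and its right vertical an equivalence by the transferred (\ref{ss:minus})---together with the essential surjectivity coming from the magic window equivalence and the vector-bundle/totalization argument for the free factor $X_1$, yields $\cC_{W,\delta_2}^{-}=\mM_W(\delta_2+\overline{\nabla}_W)$. Combined with the inclusion $\mM_W(\delta_2+\overline{\nabla}_W)\subset\cC_{W,\delta_2}^{+}$ established above, this gives $\cC_{W,\delta_2}^{-}\subset\cC_{W,\delta_2}^{+}$, as desired.
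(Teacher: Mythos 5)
Your overall skeleton (reduce to showing $\cC_{W,\delta_2}^-=\mM(\delta_2+\overline{\nabla}_W)$, and import the magic-window statement from $X_0$ via Kn\"orrer periodicity rather than reproving it for the enlarged representation) points in the same direction as the paper, and your identification of the genericity problem for the enlarged polytope is exactly right. But the argument as written rests on a step that fails: you assert the analogue of (\ref{ss:minus}), namely $(X_W)_{\rm ss}^{-}=(X_0\oplus W\oplus W^{\vee})_{\rm ss}^{-}\times X_1$, on the grounds that a point $(x_0,v,v',x_1)$ is $\oO(\chi_0^{-1})$-semistable if and only if $(x_0,v,v')$ is. That is not how GIT semistability behaves for products: only the inclusion $(X_0\oplus W\oplus W^{\vee})_{\rm ss}^{-}\times X_1\subset (X_W)_{\rm ss}^{-}$ is automatic, and adjoining the coordinates $W\oplus W^{\vee}$ can strictly enlarge the semistable locus (a one-parameter subgroup destabilizing $(x_0,x_1)$ may acquire positive weights on the $W^{\vee}$-directions, which carry central weight $-1$). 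The paper states at the outset of its proof that this decomposition does not hold in general; in the quiver application the original (\ref{ss:minus}) is established by a concrete module-generation argument (Lemma~\ref{lem:id:Rep}) that does not survive the enlargement. Since your final paragraph invokes ``the right vertical an equivalence by the transferred (\ref{ss:minus})'' in the diagram (\ref{dia:MW}) for $X_W$, the proof collapses at that point.

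The paper replaces this step by two ingredients you would need to supply. First, although $(X_W)_{\rm ss}^{-}$ need not decompose, one has $\{dw^{\star}=0\}=\{dw=0\}\times\{0\}\times\{0\}$, hence $\{dw^{\star}=0\}\cap(X_W)_{\rm ss}^{-}\subset X_{\rm ss}^{-}\times W\times W^{\vee}$; since a derived factorization category depends only on a neighborhood of the critical locus, this already makes the Kn\"orrer functor descend to an equivalence $D_{\mathbb{C}^{\ast}}(\xX_{\rm ss}^{-},w)\simeq D_{\mathbb{C}^{\ast}}((\xX_W)_{\rm ss}^{-},w^{\star})$ --- equality of semistable loci is never needed. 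Second, your hope that Kn\"orrer periodicity ``should identify'' the magic windows has to be made quantitative: the Kn\"orrer kernel is the Koszul factorization $\bigwedge^{\bullet}W\otimes\oO_{X_W}$, whose weights along a one-parameter subgroup $\lambda$ range over $[\langle\lambda,W^{\lambda<0}\rangle,\langle\lambda,W^{\lambda>0}\rangle]$, and only after the asymmetric shift $\delta_0=-\tfrac{1}{2}\det(W)$ (so $\delta_1=\delta_0+\varepsilon\chi_0$ on the source and $\delta_2=\varepsilon\chi_0$ on the target) does $\Phi$ carry $\mM(\delta_1+\overline{\nabla})$ into $\mM(\delta_2+\overline{\nabla}_W)$. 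Without this weight computation the windows do not line up, and without the critical-locus argument the commutative diagram forcing $\Phi(\cC_{\delta_1}^{-})=\cC_{W,\delta_2}^{-}$ cannot be closed.
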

\begin{proof}
Note that if $(X_W)_{\rm{ss}}^-=(X_0 \times W \times W^{\vee})_{\rm{ss}}^- \times X_1$, 
then the theorem follows from Proposition~\ref{prop:magic}. 
However this is not the case in general, and 
we will prove the theorem by comparing 
magic windows under 
the {K}n\"orrer periodicity equivalence. 
Below, we use the notation in the proof of Proposition~\ref{prop:magic}.

Let $\mathbb{L}_W$ be defined by 
\begin{align*}
\mathbb{L}_W=[X_0^{\vee}]+[W]+[W^{\vee}]-[\mathfrak{g}^{\vee}]
\in K(\mathrm{Rep}(T)). 
\end{align*}
Then for each one parameter subgroup 
$\lambda \colon \mathbb{C}^{\ast} \to T$, 
we set
\begin{align}\notag
\overline{\eta}_{W, \lambda} &\cneq 
\langle \lambda, \mathbb{L}_W^{\lambda>0} \rangle \\
\label{eqn:etaW}
&=\overline{\eta}_{\lambda}+\langle \lambda, W^{\lambda>0}\rangle
-\langle \lambda, W^{\lambda<0}\rangle. 
\end{align}
We set $\overline{\nabla}_{W}$ to be 
\begin{align}\label{defi:nabla}
\overline{\nabla}_W\cneq \left\{ \chi \in M_{\mathbb{R}} : 
\langle \lambda, \chi \rangle \in
\left[-\frac{\overline{\eta}_{W, \lambda}}{2}, 
\frac{\overline{\eta}_{W, \lambda}}{2}  \right] 
\mbox{ for all } \lambda \colon \mathbb{C}^{\ast} \to T  
\right\}. 
\end{align}
For $\delta \in M_{\mathbb{R}}^W$, 
we have the subcategory
\begin{align*}
\mM(\delta+\overline{\nabla}_W) \subset D_{\mathbb{C}^{\ast}}(\xX_W, w^{\star})
\end{align*}
similarly to (\ref{def:MOmega}). 
By the argument of Proposition~\ref{prop:magic}, 
we always have the inclusion 
$\mM(\delta+\overline{\nabla}_W) \subset
\cC_{W, \delta}^{\pm}$. 
It is enough to show that 
$\mM(\delta_2+\overline{\nabla}_W) =\cC_{W, \delta_2}^-$ for 
$\delta_2=\epsilon \cdot \chi_0$ with $0<\varepsilon \ll 1$. 

We have the following commutative diagram
\begin{align*}
\xymatrix{
X \times W^{\vee} \ar@<-0.3ex>@{^{(}->}[r]^-{i} \ar[d]_-{p}
 & X_W \ar[rd]^-{w^{\star}} \ar[d] & \\
X \times \{0\} \ar@/_20pt/[rr]_-{w}
\ar@<-0.3ex>@{^{(}->}[r] & X \times W & \mathbb{C}
}
\end{align*}
Here each vertical arrow is a projection. 
The above diagram induces the 
{K}n\"orrer periodicity
equivalence (see~\cite[Theorem~4.2]{MR3631231})
\begin{align}\label{period}
\Phi \cneq i_{\ast} \circ p^{\ast} \colon 
D_{\mathbb{C}^{\ast}}(\xX, w) \stackrel{\sim}{\to} D_{\mathbb{C}^{\ast}}(\xX_W, w^{\star}).
\end{align}
The above equivalence is 
nothing but taking the tensor product 
over $\oO_X$ with 
the factorization 
\begin{align*}
\oO_{X \times W^{\vee}} \to 0 \to \oO_{X \times W^{\vee}}
\langle 1 \rangle
\end{align*}
of the function $w' \colon 
(x, v, v') \mapsto \langle v, v' \rangle$
on $X_W$.
The above factorization is 
isomorphic to 
the Koszul factorization 
on $X_W$ of $w'$ of the form (see~\cite[Proposition~3.20]{MR3270588})
\begin{align}\label{Koszul:fact}
\bigwedge^{\mathrm{even}}W \otimes \oO_{X_W} \to 
\bigwedge^{\mathrm{odd}}W \otimes \oO_{X_W} \to 
\bigwedge^{\mathrm{even}}W \otimes \oO_{X_W}\langle 1 \rangle. 
\end{align}
The Koszul factorization (\ref{Koszul:fact})
 has the minimus $T$-weight $\langle \lambda, W^{\lambda<0}\rangle$
and the maximum $T$-weight $\langle \lambda, W^{\lambda>0}\rangle$. 
Therefore for each $\eE \in \mM(\delta_1+\overline{\nabla})$
in $D_{\mathbb{C}^{\ast}}(\xX, w)$, 
where $\delta_1=\delta_0+\varepsilon \cdot \chi_0$, 
a $T$-weight $\chi$ of $\Phi(\eE)$ satisfies that 
\begin{align*}
-\frac{1}{2}\overline{\eta}_{\lambda}+\langle \lambda, W^{\lambda<0}\rangle
\le \langle \chi-\delta_1, \lambda \rangle 
\le \frac{1}{2}\overline{\eta}_{\lambda}+\langle \lambda, W^{\lambda>0}\rangle. 
\end{align*}
By the identity (\ref{eqn:etaW}), 
we have 
\begin{align*}
\frac{1}{2}\overline{\eta}_{\lambda}+\langle \lambda, W^{\lambda>0}\rangle
&=\frac{1}{2}\overline{\eta}_{W, \lambda}+\frac{1}{2}\langle \lambda, W^{\lambda>0}\rangle
+\frac{1}{2}\langle \lambda, W^{\lambda<0}\rangle \\
&=\frac{1}{2}\overline{\eta}_{W, \lambda}+
\frac{1}{2}\mathrm{weight}_{\lambda}(\det W).
\end{align*}
Similarly we have 
\begin{align*}
-\frac{1}{2}\overline{\eta}_{\lambda}+\langle \lambda, W^{\lambda<0}\rangle
&=-\frac{1}{2}\overline{\eta}_{W, \lambda}+
\frac{1}{2}\mathrm{weight}_{\lambda}(\det W).
\end{align*}
We take $\delta_0=-\frac{1}{2}\det(W) \in \Pic(\xX)_{\mathbb{R}}=M_{\mathbb{R}}^W$ . 
Then the above argument implies that 
\begin{align*}
-\frac{1}{2}\overline{\eta}_{W, \lambda} \le \langle \chi-\delta_2, \lambda\rangle
\le \frac{1}{2}\overline{\eta}_{W, \lambda}. 
\end{align*}
This means that the functor $\Phi$ restricts to the 
functor 
\begin{align*}
\Phi \colon \mM(\delta_1+\overline{\nabla})
\to \mM(\delta_2+\overline{\nabla}_W). 
\end{align*}
We have the commutative diagram
\begin{align}\label{dia:MW2}
\xymatrix{
\mM(\delta_1+\overline{\nabla})
\ar[d]_-{\Phi}
\ar@{=}[r] & 
\cC_{\delta_1}^- \ar@<-0.3ex>@{^{(}->}[r]   \ar@/^20pt/[rr]^-{\Psi_1}
& D_{\mathbb{C}^{\ast}}(\xX, w) 
\ar[d]_-{\Phi}
\ar[r]^-{\rm{res}} & D_{\mathbb{C}^{\ast}}(\xX_{\rm{ss}}^{-}, w) 
\ar[d]_-{\Phi'} \\
\mM(\delta_2+\overline{\nabla}_W) \ar@<-0.3ex>@{^{(}->}[r] &
\cC_{W, \delta_2}^- \ar@<-0.3ex>@{^{(}->}[r] \ar@/_20pt/[rr]_-{\Psi_2}
& D_{\mathbb{C}^{\ast}}(\xX_W, w^{\star})
\ar[r]^-{\rm{res}} & 
D_{\mathbb{C}^{\ast}}((\xX_W)_{\rm{ss}}^-, w^{\star}).
} 
\end{align}
Here the top left identity is proved 
in the proof of Proposition~\ref{prop:magic}, and 
$\Psi_1$, $\Psi_2$ are equivalences by Theorem~\ref{thm:window}. 

We see that the equivalence $\Phi$ descends to the equivalence $\Phi'$ in the 
diagram (\ref{dia:MW2}). 
Since we have 
\begin{align*}
\{dw^{\star}=0\}=\{dw=0\} \times \{0\} \times \{0\}
\subset X \times W \times W^{\vee}
\end{align*} 
we have 
\begin{align}\label{crit:w}
\{dw^{\star}=0\} \cap (X_W)_{\rm{ss}}^- \subset X_{\rm{ss}}^- \times W \times W^{\vee}. 
\end{align}
The equivalence $\Phi'$ is given by the composition of equivalences
\begin{align*}
\Phi' \colon D_{\mathbb{C}^{\ast}}(\xX_{\rm{ss}}^-, w) \stackrel{\sim}{\to}
D_{\mathbb{C}^{\ast}}( [X_{\rm{ss}}^- \times W \times W^{\vee}/G], w^{\star})
\stackrel{\sim}{\to} D_{\mathbb{C}^{\ast}}((\xX_W)_{\rm{ss}}^-, w^{\star}).
\end{align*}
Here the first equivalence is the {K}n\"orrer periodicity
similar to (\ref{period}), 
and the 
second equivalence follows from 
(\ref{crit:w}) together with 
 the fact that 
the derived factorization category 
only depends on an open neighborhood of the critical locus 
(for example see~\cite[Lemma~5.5]{HLKSAM}). 
The resulting equivalence $\Phi'$ fits into the 
commutative diagram (\ref{dia:MW2}) by the construction. 

By the diagram (\ref{dia:MW2}), 
we have the inclusion 
$\Phi(\cC_{\delta_1}^-) \subset \cC_{W, \delta_2}^-$. 
Since $\Psi_1$, $\Psi_2$ and $\Phi'$ are equivalences, 
it follows that 
$\Phi(\cC_{\delta_1}^-)=\cC_{W, \delta_2}^-$. 
Again using the diagram (\ref{dia:MW2}), 
we conclude that 
$\cC_{W, \delta_2}^-=\mM(\delta_2+\overline{\nabla}_W)$ as desired. 
\end{proof}

We have the obvious corollary of Theorem~\ref{thm:magic}:
\begin{cor}\label{cor:magic}
Suppose that
$\chi_0 \in M_{\mathbb{R}}^W$ is generic and
the following condition holds:
\begin{align}\notag
X_{\rm{ss}}^{\pm}= (X_0)_{\rm{ss}}^{\pm} \times X_1.
\end{align}
Then for any finite dimensional $G$-representation $W$, 
we have 
$\cC_{W, \delta_2}^{-} = \cC_{W, \delta_2}^+$
for $\delta_2=\varepsilon \cdot \chi_0$ with $0<\varepsilon \ll 1$.
\end{cor}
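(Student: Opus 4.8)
The plan is to deduce the equality from the stronger fact actually proved inside Theorem~\ref{thm:magic}, rather than from its bare statement: namely that each window subcategory is identified with one and the same magic window subcategory, which is visibly sign-independent. Recall that the proof of Theorem~\ref{thm:magic} does more than establish the inclusion $\cC_{W, \delta_2}^- \subset \cC_{W, \delta_2}^+$; it shows
\begin{align*}
\cC_{W, \delta_2}^- = \mM(\delta_2 + \overline{\nabla}_W), \quad \delta_2 = \varepsilon \cdot \chi_0, \ 0 < \varepsilon \ll 1.
\end{align*}
The inclusion $\mM(\delta_2 + \overline{\nabla}_W) \subset \cC_{W, \delta}^{\pm}$ is automatic for both signs and every $\delta$; the reverse inclusion is precisely where the hypothesis $X_{\rm{ss}}^- = (X_0)_{\rm{ss}}^- \times X_1$ enters, via the Magic window theorem~\cite{MR3698338, HLKSAM} applied to $(X_0)_{\rm{ss}}^-$. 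A naive attempt to conclude from the statement alone by replacing $\chi_0$ with $\chi_0^{-1}$ fails, because that interchanges the two signs but also moves the center to $-\varepsilon \cdot \chi_0$, which may lie in a different chamber; it is the proof, not the statement, that one must symmetrize.

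First I would observe that the whole construction is symmetric under interchanging the two signs: replacing $\oO(\chi_0)$ by $\oO(\chi_0^{-1})$ exchanges the stratifications (\ref{KN:W}) for $+$ and $-$, while the magic window $\mM(\delta_2 + \overline{\nabla}_W)$ is intrinsic to $X_0$ and $W$ and is therefore unchanged. Accordingly I would run the proofs of Proposition~\ref{prop:magic} and Theorem~\ref{thm:magic} verbatim with the symbol $+$ in place of $-$ throughout. Only two inputs are sign-sensitive. Genericity of $\chi_0$ ensures, exactly as on the minus side, that $\partial(\delta_1 + \overline{\nabla}) \cap M = \emptyset$ and that $[(X_0)_{T-{\rm{ss}}}^{+}/T]$ is Deligne-Mumford, so the hypotheses of~\cite[Proposition~3.11]{HLKSAM} are met for $(X_0)_{\rm{ss}}^{+}$; the factorization hypothesis, now $X_{\rm{ss}}^{+} = (X_0)_{\rm{ss}}^{+} \times X_1$, is exactly the extra assumption of the Corollary. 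Transporting through the same Kn\"orrer periodicity equivalence $\Phi$ and the plus-analogue of the diagram (\ref{dia:MW2}), I obtain
\begin{align*}
\cC_{W, \delta_2}^+ = \mM(\delta_2 + \overline{\nabla}_W)
\end{align*}
at the same center $\delta_2 = \varepsilon \cdot \chi_0$.

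Combining the two identifications yields $\cC_{W, \delta_2}^- = \mM(\delta_2 + \overline{\nabla}_W) = \cC_{W, \delta_2}^+$, which is the desired equality. The one point needing attention—already handled in the proof of Proposition~\ref{prop:magic}—is that the single perturbation $\delta_2 = \varepsilon \cdot \chi_0$ must keep the window boundaries off the integer lattice for both stratifications at once. This holds because on the plus side $\langle \chi_0, \lambda_{\alpha}^{+} \rangle < 0$ and on the minus side $\langle \chi_0, \lambda_{\alpha}^{-} \rangle > 0$, both consequences of the Kempf-Ness property, so the half-open window in Definition~\ref{defi:window} closes up correctly for either sign with the same $\delta_2$, and no separate choice of center is required. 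This shared-center check is the only genuine (and minor) obstacle; it is the sense in which the Corollary is immediate from the proof of Theorem~\ref{thm:magic}.
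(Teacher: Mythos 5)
Your proof is correct and is precisely the argument the paper leaves implicit: the paper gives no proof beyond calling the corollary ``obvious'', and the right way to make it so is, as you do, to rerun the proofs of Proposition~\ref{prop:magic} and Theorem~\ref{thm:magic} with the roles of $+$ and $-$ exchanged but at the \emph{same} center $\delta_2=\varepsilon\cdot\chi_0$, identifying both $\cC_{W,\delta_2}^{\pm}$ with the sign-independent magic window $\mM(\delta_2+\overline{\nabla}_W)$. Your remark that symmetrizing only the statement (replacing $\chi_0$ by $\chi_0^{-1}$) would move the center to $-\varepsilon\cdot\chi_0$ and hence not close the argument, together with the check that the single perturbation works for both stratifications because $\langle\chi_0,\lambda_{\alpha}^{+}\rangle<0$ and $\langle\chi_0,\lambda_{\alpha}^{-}\rangle>0$, is exactly the point the paper glosses over.
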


\section{Derived categories of representations of quivers}
In this section, we study derived factorization categories associated 
with certain quivers with super-potentials, and show 
an inclusion of the window subcategories under wall-crossing. 
The result of this section is a local version of Theorem~\ref{thm:Tpair}. 

\subsection{Representations of quivers}\label{subsec:quiver}
Let $Q$ be a quiver 
\begin{align*}
Q=(V(Q), E(Q), s, t)
\end{align*}
where $V(Q)$ is the set of vertices, 
$E(Q)$ is the set of edges, and 
$s, t \colon E(Q) \to V(Q)$
are maps which 
correspond to sources and targets of 
edges. 
For $e \in E(Q)$ with $s(e)=i$, $t(e)=j$, 
we write $e=(i \to j)$. 
Below we assume that $Q$ is symmetric, i.e. 
\begin{align*}
\sharp(i \to j)=\sharp(j \to i), \ i, j \in V(Q).
\end{align*}
We fix finite dimensional vector spaces 
$V_i$ for each $i \in V(Q)$, and 
set
\begin{align*}
\mathrm{Rep}(Q)=\prod_{(i \to j) \in E(Q)}
\Hom(V_i, V_j),
\end{align*}
The algebraic group 
$G$ defined by
\begin{align*}
G=\prod_{i \in V(Q)} \GL(V_i)
\end{align*}
acts on $\mathrm{Rep}(Q)$ by the conjugation, 
which descends to the action 
of $\mathbb{P}(G) = G/\mathbb{C}^{\ast}$
where $\mathbb{C}^{\ast} \subset G$ is the diagonal torus. 
We have the quotient stack together with its 
good moduli space
\begin{align}\label{map:pQ}
p_Q \colon \mathrm{Rep}(Q) \to 
\left[\mathrm{Rep}(Q)/G  \right] 
\stackrel{\iota_Q}{\to} \mathrm{Rep}(Q)\sslash G.
\end{align}
The stack $\left[\mathrm{Rep}(Q)/G  \right]$
is the moduli stack of $Q$-representations 
with dimension vector $(\dim V_i)_{i \in Q}$, 
and $\mathrm{Rep}(Q)\sslash G$ parametrizes
semisimple $Q$-representations 
with the above dimension vector. 

\subsection{Representations of extended quivers}
For each $i \in V(Q)$,
let us take finite sets
\begin{align}\label{set:E}
\mathbf{E}_{i0} \subset \mathbf{E}_{0i}.
\end{align}
We define the extended quiver $Q^{\dag}$ by setting 
\begin{align*}
V(Q^{\dag})=V(Q) \sqcup \{0\}, \ 
E(Q^{\dag})=E(Q) \sqcup_{i \in V(Q)} \mathbf{E}_{0i} \sqcup_{i\in V(Q)} \mathbf{E}_{i0}. 
\end{align*}
Here 
for $e \in \mathbf{E}_{0i}$ (resp.~$e \in \mathbf{E}_{i0}$), its source and target are $0$, $i$
(resp.~$i$, $0$) respectively.  
We set
\begin{align*}
\mathrm{Rep}(Q^{\dag})=
\mathrm{Rep}(Q) \times \prod_{(0 \to i) \in \mathbf{E}_{0i}} V_i
\times \prod_{(i \to 0) \in \mathbf{E}_{i0}}V_i^{\vee}. 
\end{align*}
The space $\mathrm{Rep}(Q^{\dag})$ parametrizes 
$Q^{\dag}$-representations with dimension 
vector $((\dim V_i)_{i \in V(Q)}, 1)$. 

As defined in (\ref{Gtilde}), let 
 $\widetilde{G} = G \times_{\mathbb{P}(G)} G$. 
There is a natural $\widetilde{G}$-action on 
$\mathrm{Rep}(Q^{\dag})$
given by 
\begin{align*}
(g, g')\cdot (x, u, u')=(g(x), g(u), g'(u'))=(g'(x), g(u), g'(u')).
\end{align*}
Here $(g, g') \in \widetilde{G}$ and 
the second identity follows as 
the $G$-action on $\mathrm{Rep}(Q)$ descends to the 
$\mathbb{P}(G)$-action. 

Let $\chi_0 \colon G \to \mathbb{C}^{\ast}$ be the 
character of $G$ defined by
\begin{align}\label{chi0}
\chi_0((g_i)_{i \in V(Q)}) \mapsto \prod_{i \in Q_0} \det(g_i). 
\end{align}
We have the open substacks of 
semistable locus with respect to 
 $\oO(\chi_0^{\pm 1})$
\begin{align*}
M_{Q^{\dag}}^{\pm} \cneq 
[\mathrm{Rep}(Q^{\dag})_{\rm{ss}}^{\pm}/G]
\subset [\mathrm{Rep}(Q^{\dag})/G]. 
\end{align*}
\begin{prop}\label{prop:flip}
The stacks 
$M_{Q^{\dag}}^{\pm}$ are smooth varieties and, 
if $M_{Q^{\dag}}^{-} \neq \emptyset$, then  
the diagram
\begin{align*}
\xymatrix{
M_{Q^{\dag}}^{+} \ar[rd]_-{\pi_Q^+} & & \ar[ld]^-{\pi_Q^-} M_{Q^{\dag}}^- \\
& \mathrm{Rep}(Q^{\dag})\sslash G
}
\end{align*}
is a flip if $\sharp \mathbf{E}_{i0} <\sharp \mathbf{E}_{0i}$ for all $i \in V(Q)$.
It is a flop if $\sharp \mathbf{E}_{i0} =\sharp \mathbf{E}_{0i}$ for all $i \in V(Q)$.
\end{prop}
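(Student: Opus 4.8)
The plan is to treat the diagram as a variation of GIT (VGIT) across the single wall in $\Pic_G(\mathrm{Rep}(Q^\dag))_{\mathbb{R}}$ spanned by $\chi_0$, and to read off the flip/flop type from a toric local model produced by Luna's slice theorem. Throughout set $\mathbf{d}=(\dim V_i)_{i\in V(Q)}$ and assume $\mathbf{d}\neq 0$. First I would describe the two semistable loci by King's criterion for the dimension vector $(\mathbf{d},1)$: one of $\oO(\chi_0^{\pm 1})$-stability is equivalent to the condition that $(V_i)$ contains no nonzero $Q$-subrepresentation annihilated by all arrows in $\mathbf{E}_{i0}$ (cogeneration by the maps $u'$), and the other to the condition that $(V_i)$ is generated by the images of the arrows in $\mathbf{E}_{0i}$ (generation by $u$). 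Because the framing vertex has dimension $1$, no proper nonzero subrepresentation can have $\chi_0$-weight $0$, so on each side semistability coincides with stability; hence the stabilizer of every semistable point is trivial, including the central $\mathbb{C}^\ast\subset G$, which acts with weights $+1$ and $-1$ on the out- and in-framings and so acts freely once some framing is nonzero (which stability forces). Thus $\mathrm{Rep}(Q^\dag)^\pm_{\mathrm{ss}}$ is smooth with free $G$-action, and $M_{Q^\dag}^\pm$ is a smooth variety of dimension $\dim\mathrm{Rep}(Q^\dag)-\dim G$.

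Next, writing $R=\mathbb{C}[\mathrm{Rep}(Q^\dag)]$, the identifications $M_{Q^\dag}^\pm=\Proj\bigoplus_{n\ge 0}R^{G,\chi_0^{\pm n}}$ over $\mathrm{Rep}(Q^\dag)\sslash G=\Spec R^G$ show that $\pi_Q^\pm$ are projective. Over the open locus $U\subset\mathrm{Rep}(Q^\dag)\sslash G$ of simple $Q^\dag$-representations, which are simultaneously generated and cogenerated and have closed $G$-orbit with trivial stabilizer, both $\pi_Q^\pm$ are isomorphisms onto $U$; since $M_{Q^\dag}^\pm$ are irreducible of equal dimension, the induced map $M_{Q^\dag}^+\dashrightarrow M_{Q^\dag}^-$ is birational (this is where $M_{Q^\dag}^-\neq\emptyset$ enters).

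The core step analyses the non-isomorphism loci. For a point $p=(x_0,0,0)$ of the affine quotient with $x_0$ a stable $Q$-representation of dimension vector $\mathbf{d}'$, the $G$-stabilizer is the scalar torus $H\cong\mathbb{C}^\ast$, and Luna's slice theorem models the VGIT \'etale-locally on $[N/H]$. The slice splits $H$-equivariantly as $N=N_0\oplus\mathbb{C}^A\oplus\mathbb{C}^B$, where the $\mathrm{Rep}(Q)$-directions $N_0$ carry $H$-weight $0$, while $H$ acts with weight $+1$ on $\mathbb{C}^A$ with $A=\sum_i\sharp\mathbf{E}_{0i}\,d'_i$ (out-framings) and with weight $-1$ on $\mathbb{C}^B$ with $B=\sum_i\sharp\mathbf{E}_{i0}\,d'_i$ (in-framings). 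Hence locally the wall-crossing is the product of a smooth factor with the standard toric model $[(\mathbb{C}^A_{+1}\oplus\mathbb{C}^B_{-1})/\mathbb{C}^\ast]$, whose two quotients are the total spaces of $\oO(-1)^{\oplus B}$ over $\mathbb{P}^{A-1}$ and of $\oO(-1)^{\oplus A}$ over $\mathbb{P}^{B-1}$, with exceptional loci of codimensions $B$ and $A$.

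Finally I would read off the type. If $\sharp\mathbf{E}_{i0}<\sharp\mathbf{E}_{0i}$ for all $i$, then $A>B$ for every $\mathbf{d}'\neq 0$; the contractions are small (one checks the exceptional codimensions $A,B$ are $\ge 2$ in the relevant range), and adjunction gives that the canonical class restricted to a line in $\mathbb{P}^{A-1}$ has degree $B-A<0$ while on the other resolution it is $A-B>0$, with uniform sign over all strata, so the diagram is a flip. If $\sharp\mathbf{E}_{i0}=\sharp\mathbf{E}_{0i}$ for all $i$, then $A=B$, both canonical classes are relatively trivial, and the diagram is a flop. I expect the main obstacle to be the Luna-slice reduction: identifying the slice representation precisely and verifying that the resulting toric model, and in particular the sign of $A-B$, is uniform across all strata $\mathbf{d}'$, so that the per-vertex numerical hypothesis translates into a globally consistent flip (resp. flop) direction and the contractions are genuinely small; granting this, the toric computation itself is routine.
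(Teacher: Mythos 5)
Your opening steps (semistability coincides with stability because the framing vertex has dimension one, hence free $G$-action and smoothness of $M_{Q^{\dag}}^{\pm}$; projectivity of $\pi_Q^{\pm}$ via the $\mathrm{Proj}$ description; birationality over the locus of simple $Q^{\dag}$-representations) are correct and consistent with the stability translation the paper also performs. But the paper's proof then stops: it converts $\oO(\chi_0^{\pm1})$-stability into King's $\theta^{\pm}$-stability and invokes \cite[Proposition~7.13]{Toddbir}, which is stated precisely for these extended quivers. You are attempting to reprove that result from scratch, and the core of your argument has a genuine gap.

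The gap is in the Luna-slice reduction. You only analyze points $p=(x_0,0,0)$ of $\mathrm{Rep}(Q^{\dag})\sslash G$ whose underlying semisimple $Q^{\dag}$-representation is $x_0\oplus S_0$ with $x_0$ a \emph{simple} $Q$-representation, where the stabilizer is the scalar torus and the slice is indeed $N_0\oplus\mathbb{C}^A_{+1}\oplus\mathbb{C}^B_{-1}$. A general point of the non-isomorphism locus corresponds to $R_0\oplus\bigoplus_j S_j^{\oplus m_j}$ with $R_0$ the simple summand through the framing vertex and $S_j$ distinct simple $Q$-representations; there the stabilizer is $\prod_j \GL(m_j)$, which is nonabelian, and the Luna slice is the representation space of the Ext-quiver of $\{R_0,S_j\}$ --- i.e.\ another framed-quiver wall-crossing of the same type, not a torus representation. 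So the assertion that ``locally the wall-crossing is the product of a smooth factor with the standard toric model'' fails exactly at the strata you would need. Since both smallness and the relative (anti-)ampleness of $K_{M_{Q^{\dag}}^{\pm}}$ must be verified on \emph{every} fiber of $\pi_Q^{\pm}$ (relative ampleness is a fiberwise condition for projective morphisms), checking the toric model over the deepest stratum does not suffice; your ``main obstacle'' paragraph worries about the uniformity of the sign of $A-B$ over strata, but the real problem is that your local model is not valid on the other strata at all. A secondary issue pointing in the same direction: as a $G$-character, $K$ corresponds to $\prod_i\det(V_i)^{\sharp\mathbf{E}_{i0}-\sharp\mathbf{E}_{0i}}$, which is \emph{not} a rational multiple of $\chi_0$ unless $\sharp\mathbf{E}_{0i}-\sharp\mathbf{E}_{i0}$ is independent of $i$; so translating the per-vertex hypothesis into $\pi_Q^{\pm}$-(anti-)ampleness of $K$ requires controlling the relative nef cone over all strata, which is precisely the content of the cited \cite[Proposition~7.13]{Toddbir} and is not recovered by the single toric chart you compute.
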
 
\begin{proof}
By a correspondence of GIT stability and King's $\theta$-stability~\cite{Kin},
we see that a $Q^{\dag}$-representation in $\mathrm{Rep}(Q^{\dag})$
is $\oO(\chi_0^{\pm 1})$-semistable 
if and only if it is
$\theta^{\pm}$-semistable where
\begin{align}\label{theta}
\theta^{\pm}=(\theta_i^{\pm})_{i \in V(Q^{\dag})}, \ 
\theta_i^{\pm}=\mp 1 \ \mbox{ for } i \in V(Q), \ \theta_0^{\pm}
=\pm \sum_{i\in V(Q)}\dim V_i.
\end{align}
Here a $Q^{\dag}$-representation $E$ is $\theta^{\pm}$-semistable 
if for any subrepresentation $F \subset E$, we have 
$\theta^{\pm}(\underline{\dim} F) \ge \theta^{\pm}(\underline{\dim}E)=0$, 
where $\underline{\dim}$ is the dimension vector. 
Then the proposition follows from~\cite[Proposition~7.13]{Toddbir}.
\end{proof}

\subsection{Super-potentials on quiver representations}
Suppose that we are given a $G$-equivariant 
morphism of $G$-equivariant vector bundles
on $\mathrm{Rep}(Q)$
\begin{align}\label{mor:s}
\vartheta \colon \left(\prod_{(0 \to i) \in \mathbf{E}_{0i}}V_i\right)
 \otimes \oO_{\mathrm{Rep}(Q)}
\to \left(\prod_{(i \to 0) \in \mathbf{E}_{i0}} V_i\right)
 \otimes \oO_{\mathrm{Rep}(Q)}. 
\end{align}
Then the above morphism induces the 
function
\begin{align}\label{w:repQ}
w \colon \mathrm{Rep}(Q^{\dag}) \to \mathbb{C}, \ 
w(x, u, u')=\langle \vartheta|_{x}(u), u' \rangle. 
\end{align}
for $x \in \mathrm{Rep}(Q)$, 
$u \in \prod_{(0 \to i) \in \mathbf{E}_{0i}}V_i$, 
$u' \in \prod_{(i \to 0) \in \mathbf{E}_{i0}}V_i^{\vee}$. 
The function $w$ is $G$-invariant, so descends to the 
map
\begin{align*}
w^0 \colon \mathrm{Rep}(Q^{\dag})\sslash G \to \mathbb{C}.
\end{align*}
We define $w^{\pm}$ by the commutative diagram
\begin{align}\label{dia:pm}
\xymatrix{
M_{Q^{\dag}}^{+} \ar[r]^-{\pi_Q^+} \ar[dr]_-{w^+} 
& \mathrm{Rep}(Q^{\dag})\sslash G 
\ar[d]^-{w^0}& 
\ar[l]_-{\pi_Q^-} \ar[ld]^-{w^-} M_{Q^{\dag}}^- \\
& \mathbb{C}. &
}
\end{align}
We define the following d-critical loci
\begin{align*}
M_{(Q^{\dag}, w)}^{\pm} \cneq \{dw^{\pm}=0\} \subset M_{Q^{\dag}}^{\pm}. 
\end{align*}
\begin{prop}\label{prop:dcrit}
Suppose that the followings hold:
\begin{align}\label{M:inclu}
&M_{(Q^{\dag}, w)}^+ \subset \left[\left(\mathrm{Rep}(Q) \times 
\prod_{(0 \to i) \in \mathbf{E}_{0i}}V_i \times \{0\}\right)/G  \right], \\ 
\notag
 & M_{(Q^{\dag}, w)}^- \subset \left[\left(\mathrm{Rep}(Q) \times \{0\} \times 
\prod_{(i \to 0) \in \mathbf{E}_{i0}}V_i^{\vee}\right) /G\right].
\end{align}
Then we have the diagram
\begin{align*}
\xymatrix{
M_{(Q^{\dag}, w)}^{+} 
\ar[rd]_-{\pi_Q^+} & & \ar[ld]^-{\pi_Q^-} M_{(Q^{\dag}, w)}^- \\
& \mathrm{Rep}(Q)\sslash G
}
\end{align*}
which is a d-critical 
flip if $\sharp \mathbf{E}_{i0} <\sharp \mathbf{E}_{0i}$ for all $i \in V(Q)$.
It is a d-critical 
flop if $\sharp \mathbf{E}_{i0} =\sharp \mathbf{E}_{0i}$ for all $i \in V(Q)$.
\end{prop}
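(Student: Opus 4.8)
The plan is to realize this diagram as the loci $\{dw^\pm=0\}$ sitting inside the ordinary flip/flop of Proposition~\ref{prop:flip}, and then to verify the conditions of Definition~\ref{defi:A4} term by term. First I would equip each $M_{(Q^\dag,w)}^\pm=\{dw^\pm=0\}$ with its canonical d-critical structure $s^\pm$: since the $M_{Q^\dag}^\pm$ are smooth varieties by Proposition~\ref{prop:flip} and $w^\pm$ are the regular functions obtained by descent from the $G$-invariant $w$ through the diagram (\ref{dia:pm}), Joyce's construction~\cite{JoyceD} produces a natural d-critical structure on the critical locus of each $w^\pm$. These are the structures with respect to which I would check that the diagram is a d-critical flip/flop.

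Next I would establish virtual birationality and properness. For the former, by Proposition~\ref{prop:flip} there is an open locus $U\subset\mathrm{Rep}(Q^\dag)\sslash G$ over which the ambient flip/flop restricts to an isomorphism $M_{Q^\dag}^+\cong M_{Q^\dag}^-$; over $U$ the functions $w^\pm$, the critical loci, and the induced d-critical structures are all identified, so $\pi_Q^\pm$ restrict to isomorphisms matching $s^+$ and $s^-$ over a common dense open subset of $\mathrm{Rep}(Q)\sslash G$. For properness, I would use that $\oO(\chi_0^{\pm 1})$-semistability is King's framing stability $\theta^\pm$ of (\ref{theta}), under which the $M_{Q^\dag}^\pm$ are projective over $\mathrm{Rep}(Q^\dag)\sslash G$; moreover, by the inclusions (\ref{M:inclu}) every $G$-invariant function built from the framing coordinates $u,u'$ vanishes on the critical loci, so the image of $M_{(Q^\dag,w)}^\pm$ in $\mathrm{Rep}(Q^\dag)\sslash G$ is finite over $\mathrm{Rep}(Q)\sslash G$. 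Composing, the maps $\pi_Q^\pm$ are proper.

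The remaining, and decisive, point is the numerical condition of Definition~\ref{defi:A4} separating a d-critical flip from a d-critical flop, which compares the dimensions (and induced canonical classes) of $M_{(Q^\dag,w)}^\pm$ and of the fibers of $\pi_Q^\pm$ over the center. Here the inclusions (\ref{M:inclu}) are indispensable: they identify $M_{(Q^\dag,w)}^+$ with the $\oO(\chi_0)$-semistable part of $\{(x,u,0):\vartheta|_x(u)=0\}$ modulo $G$, and dually $M_{(Q^\dag,w)}^-$ with the critical locus inside $\{u=0\}$. Since $\dim\left(\prod_{(0\to i)}V_i\right)=\sum_i\sharp\mathbf{E}_{0i}\cdot\dim V_i$ and $\dim\left(\prod_{(i\to 0)}V_i^\vee\right)=\sum_i\sharp\mathbf{E}_{i0}\cdot\dim V_i$, the comparison demanded by Definition~\ref{defi:A4} reduces to the comparison of $\sharp\mathbf{E}_{0i}$ with $\sharp\mathbf{E}_{i0}$, reproducing the dichotomy of the ambient flip/flop: a d-critical flip when $\sharp\mathbf{E}_{i0}<\sharp\mathbf{E}_{0i}$ for all $i$, and a d-critical flop when $\sharp\mathbf{E}_{i0}=\sharp\mathbf{E}_{0i}$ for all $i$.

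I expect the main obstacle to be precisely this last verification. One must compute the fiber dimensions of $\pi_Q^\pm$ over a point of the center in terms of the ranks of $\vartheta$ and the edge counts, and check the inequality in the exact form required by Definition~\ref{defi:A4} rather than merely a total-dimension comparison; one must also confirm that the change of base from $\mathrm{Rep}(Q^\dag)\sslash G$ to $\mathrm{Rep}(Q)\sslash G$ is harmless, i.e. that after restricting to the critical loci the vanishing of the framing invariants genuinely makes the two d-critical loci proper and virtually birational over $\mathrm{Rep}(Q)\sslash G$ with the induced structures $s^\pm$. Once this bookkeeping is complete, the flip/flop dichotomy transfers verbatim from Proposition~\ref{prop:flip}.
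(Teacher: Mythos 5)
Your opening move --- realizing $M_{(Q^\dag,w)}^{\pm}=\{dw^{\pm}=0\}$ inside the ambient flip/flop of Proposition~\ref{prop:flip} --- is exactly the paper's, but from there you verify the wrong conditions. Definition~\ref{defi:A4} contains no ``numerical condition'' comparing dimensions or canonical classes of the d-critical loci $M_{(Q^\dag,w)}^{\pm}$ or of the fibers of $\pi_Q^{\pm}$, nor does it ask for properness or virtual birationality of $\pi_Q^{\pm}$ as separate hypotheses. It asks only that one exhibit $\pi^{\pm}$-relative d-critical charts (Definition~\ref{def:A3}) over $A=\mathrm{Rep}(Q)\sslash G$, with $j$ and $g$ independent of $\pm$, whose \emph{ambient} varieties form a usual flip (resp.\ flop). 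The flip/flop dichotomy is therefore settled entirely by $M_{Q^\dag}^{+}\to\mathrm{Rep}(Q^\dag)\sslash G\leftarrow M_{Q^\dag}^{-}$ being a usual flip or flop, which is Proposition~\ref{prop:flip} and depends only on $\sharp\mathbf{E}_{i0}$ versus $\sharp\mathbf{E}_{0i}$; no fiber-dimension bookkeeping on the critical loci is required (they could even be empty and the definition would still be satisfied). The ``decisive point'' you anticipate is a non-issue, and the checks you propose in its place would not establish the statement as the paper defines it.

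What actually needs to be checked --- and what you relegate to a side remark about invariant functions --- is that the charts are \emph{relative} over $\mathrm{Rep}(Q)\sslash G$ rather than merely over $\mathrm{Rep}(Q^\dag)\sslash G$. This is precisely where (\ref{M:inclu}) enters: it forces all $G$-invariants mixing the framing coordinates $u,u'$ to vanish on the critical loci, so that $\pi_Q^{\pm}|_{M_{(Q^\dag,w)}^{\pm}}$ factors through the closed immersion $\mathrm{Rep}(Q)\sslash G\hookrightarrow\mathrm{Rep}(Q^\dag)\sslash G$ given by the zero section (note this is stronger than your ``finite over'' claim, and finiteness alone would not give the commutative square required by Definition~\ref{def:A3} with $j$ the inclusion of $U$). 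Once that factorization is in place, the diagram $(\pi_Q^{\pm})^{-1}(U)\hookrightarrow M_{Q^\dag}^{\pm}\to\mathrm{Rep}(Q^\dag)\sslash G\xrightarrow{w^0}\mathbb{C}$ is a $\pi_Q^{\pm}$-relative d-critical chart and the conclusion follows verbatim from Proposition~\ref{prop:flip}.
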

\begin{proof}
The assumption (\ref{M:inclu}) implies that 
$\pi_{Q}^{\pm}$ restricted to 
$M_{(Q, w)}^{\pm}$ factors through the 
closed immersion 
\begin{align*}
\mathrm{Rep}(Q)\sslash G \hookrightarrow 
\mathrm{Rep}(Q^{\dag})\sslash G
\end{align*}
induced by the zero section of the projection 
$\mathrm{Rep}(Q^{\dag}) \to \mathrm{Rep}(Q)$. 
Therefore we have the following diagram
\begin{align*}
\xymatrix{
M_{(Q^{\dag}, w)}^{\pm} \ar[d]_-{\pi_Q^{\pm}}
\ar@<-0.3ex>@{^{(}->}[r] & M_{Q^{\dag}}^{\pm} 
\ar[d]_-{\pi_Q^{\pm}} \ar[rd]^-{w^{\pm}} & \\
\mathrm{Rep}(Q)\sslash G \ar@<-0.3ex>@{^{(}->}[r] & 
\mathrm{Rep}(Q^{\dag})\sslash G \ar[r]_-{w_0} & \mathbb{C} 
}
\end{align*}
giving relative d-critical charts
of $M_{(Q^{\dag}, w)}^{\pm}$ (see Definition~\ref{def:A3}). 
Then the proposition follows from Proposition~\ref{prop:flip}. 
\end{proof}

\subsection{Relations with symmetric quiver representations}
We define the subquiver 
\begin{align*}
Q^{\dag}_0 \subset Q^{\dag}
\end{align*} 
defined by 
$V(Q^{\dag}_0)=V(Q^{\dag})$, 
the set of edges from $i \in V(Q^{\dag}_0)$
to $j \in V(Q^{\dag}_0)$ in $Q^{\dag}_0$ is the same 
as that in $Q^{\dag}$ except 
the case $i=0$ and $j \in V(Q)$: 
\begin{align*}
\{ 0 \to i \mbox{ in } Q^{\dag}_0 \} \cneq \mathbf{E}_{i0}
\subset \mathbf{E}_{0i}=
\{ 0 \to i \mbox{ in } Q^{\dag} \}. 
\end{align*}
The embedding (\ref{set:E}) realizes $Q_0^{\dag}$
as a subquiver of $Q^{\dag}$. 
From its construction, the quiver 
$Q_0^{\dag}$ is symmetric. 
Let $\mathrm{Rep}(Q_0^{\dag})$ be its 
representation space with dimension vector 
$((\dim V_i)_{i \in V(Q)}, 1)$. 
We have the following decomposition
as $\widetilde{G}$-representation
\begin{align*}
\mathrm{Rep}(Q^{\dag})=\mathrm{Rep}(Q^{\dag}_0)
\times \prod_{(0 \to i) \in \mathbf{E}_{0i} \setminus \mathbf{E}_{i0}}V_i. 
\end{align*}
Note that $\mathrm{Rep}(Q^{\dag}_0)$ is
a symmetric $G$-representation. 

\begin{lem}\label{lem:generic}
The element $\chi_0 \in M_{\mathbb{R}}^W$
defined by (\ref{chi0}) is generic 
with respect to the symmetric $G$-representation $\mathrm{Rep}(Q^{\dag}_0)$.
\end{lem}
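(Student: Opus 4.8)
The plan is to make the genericity condition fully explicit in terms of $T$-weights and reduce it to a combinatorial statement about one-parameter subgroups. Fix the maximal torus $T=\prod_{i\in V(Q)}(\mathbb{C}^{\ast})^{d_i}$ with $d_i\cneq\dim V_i$, so that $M=\bigoplus_{i\in V(Q)}\mathbb{Z}^{d_i}$ has the standard coordinate weights $\{e_{i,a}:i\in V(Q),\,1\le a\le d_i\}$ of the spaces $V_i$. The $T$-weights of $\mathrm{Rep}(Q_0^{\dag})$ are then $e_{j,b}-e_{i,a}$ for each edge $(i\to j)$ of $Q$ and all $a,b$, together with $\pm e_{i,a}$ for each vertex $i$ with $\mathbf{E}_{i0}\neq\emptyset$; denote this weight set by $B$ and note $B=-B$, reflecting that $Q_0^{\dag}$ is symmetric. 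The character $\chi_0$ has weight $\sum_{i,a}e_{i,a}$, which is manifestly $W$-invariant. Since the $T$-characters of $\bigwedge^{\ast}\mathrm{Rep}(Q_0^{\dag})^{\vee}$ are the subset sums of the weights of $\mathrm{Rep}(Q_0^{\dag})^{\vee}$, whose weight set is $-B=B$, the polytope $\overline{\Sigma}$ is the centrally symmetric zonotope generated by $B$.

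Recall that $\chi_0$ is \emph{generic} when it lies in the linear span $L\cneq\mathrm{span}_{\mathbb{R}}(B)$ of $\overline{\Sigma}$ and is parallel to no proper face. Since any proper face is contained in a facet and hence has direction space contained in that of the facet, it suffices to avoid facets. For a zonotope every facet is the face $F_{\lambda}$ on which some primitive cocharacter $\lambda\colon\mathbb{C}^{\ast}\to T$ attains its maximum, and its direction space is $\mathrm{span}\{\beta\in B:\langle\lambda,\beta\rangle=0\}$. I would first check that $B$ spans $M_{\mathbb{R}}$ --- equivalently, that every connected component of $Q$ contains a framed vertex, which holds for the quivers produced by the local description of~\cite{MR3811778} --- so that $\chi_0\in L=M_{\mathbb{R}}$ and the direction space of a facet is the whole hyperplane $\lambda^{\perp}$. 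Then $\chi_0$ is parallel to $F_{\lambda}$ exactly when $\langle\lambda,\chi_0\rangle=0$, and the lemma is reduced to the following: whenever $\lambda\neq0$ and $\{\beta\in B:\langle\lambda,\beta\rangle=0\}$ spans a hyperplane, one has $\langle\lambda,\chi_0\rangle=\sum_{i,a}\lambda_{i,a}\neq0$.

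To prove this I would stratify the index set $\{(i,a)\}$ into level sets $I_c\cneq\{(i,a):\lambda_{i,a}=c\}$, and consider the graph on $\{(i,a)\}$ joining $(i,a)$ to $(j,b)$ whenever $Q$ has an edge between $i$ and $j$. A difference $e_{j,b}-e_{i,a}$ is annihilated by $\lambda$ precisely when $\lambda_{j,b}=\lambda_{i,a}$, so it lies in a single level set, whereas an annihilated framing weight $e_{i,a}$ forces $\lambda_{i,a}=0$ and so lies in the zero level. Decomposing the span of the annihilated weights along level sets and counting dimensions --- within $I_c$ the differences contribute $|I_c|$ minus the number of connected components of the induced subgraph, and the framings promote each framed component of the zero level to full dimension --- shows that this span has codimension $k_0^{\mathrm{unf}}+\sum_{c\neq0}k_c$ in $M_{\mathbb{R}}$, where $k_c$ is the number of components of the level-$c$ subgraph and $k_0^{\mathrm{unf}}$ counts those components of the zero level containing no framed index. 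Requiring codimension one forces this sum to equal $1$; since $\lambda\neq0$ produces at least one nonzero level, the only possibility is that $\lambda$ takes a single nonzero value $c$ on a connected set $I_c$ while the zero level is entirely framed. For such $\lambda$ one computes $\langle\lambda,\chi_0\rangle=c\,|I_c|\neq0$, as required.

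The main obstacle is the dimension count in the third step: correctly accounting for the interaction between the framing weights and the quiver differences inside the zero level, and confirming that the degenerate ``single nonzero value'' configuration is the \emph{only} one yielding a facet. The subsidiary claim that $B$ spans $M_{\mathbb{R}}$ --- needed both for $\chi_0$ to lie in the linear span of $\overline{\Sigma}$ and for the facet normals $\lambda$ to be honest cocharacters --- is not automatic and must be read off from the connectivity and framing of $Q_0^{\dag}$ supplied by the construction.
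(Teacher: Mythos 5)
Your route is genuinely different from the paper's. The paper never touches the zonotope $\overline{\Sigma}$ or its facets: it quotes from \cite{HLKSAM} the criterion that genericity of $\chi_0$ is equivalent to the statement that for every proper subspace $H \subsetneq M_{\mathbb{R}}$ there is a cocharacter $\lambda$ with $\langle\lambda,\beta_j\rangle=0$ for every weight $\beta_j$ lying in $H$ and $\langle\lambda,\chi_0\rangle\neq 0$, and then simply takes $\lambda$ to be the indicator cocharacter of $\{e_{ik}\notin H\}$. Since every nonzero weight of $\mathrm{Rep}(Q_0^{\dag})^{\vee}$ is $\pm e_{ik}$ or $e_{ik}-e_{i'k'}$, any weight lying in $H$ has either both of its coordinate vectors in $H$ or both outside, so this $\lambda$ annihilates it, while $\langle\lambda,\chi_0\rangle=\#\{e_{ik}\notin H\}>0$ because $H$ is proper. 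That is the entire proof.

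The genuine gap in your argument is precisely the spanning hypothesis you flag at the end, and it fails in cases to which the lemma is applied. Framing weights $\pm e_{i,a}$ occur in $\mathrm{Rep}(Q_0^{\dag})$ only at vertices with $\mathbf{E}_{i0}\neq\emptyset$, and in the Ext-quiver setting $\mathbf{E}_{i0}$ is a basis of $H^1(E_i)$. Whenever all $H^1(E_i)=0$ --- e.g.\ for $g=1$ and $d>0$, or for any genus once $\mu(E_i)=d/r>g-1$ so that $\Hom(E_i,\omega_C)=0$, or at points of $M(r,0)$ where $h^0(E_i)=0$ --- the quiver $Q_0^{\dag}$ has no framing arrows at all, the weight set $B$ consists only of differences $e_{j,b}-e_{i,a}$, and these lie in the total-trace-zero hyperplane. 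Then $B$ does not span $M_{\mathbb{R}}$, $\chi_0=\sum e_{i,a}$ is not in the linear span of $\overline{\Sigma}$, and your reduction to facets (which needs a facet's direction space to be the full hyperplane $\lambda^{\perp}$) collapses; no refinement of the level-set dimension count can recover it. The condition that actually matters downstream (in Proposition~\ref{prop:magic}) and that the paper verifies is the subspace/cocharacter criterion above, which survives in exactly this degenerate situation: for $H\supseteq\mathrm{span}(B)$ one takes $\lambda$ to be the all-ones cocharacter. Your proof is anchored to the ``lies in the span of $\overline{\Sigma}$ and avoids its faces'' phrasing rather than to that criterion, and this is what makes it break. (For what it is worth, your dimension count itself --- codimension $k_0^{\mathrm{unf}}+\sum_{c\neq0}k_c$ and the resulting classification of facet normals as single-valued on one connected level set --- is correct granted the spanning.)
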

\begin{proof}
Let $\beta_1, \ldots, \beta_{d} \in M$ for 
$d=\dim \mathrm{Rep}(Q_0^{\dag})$ be the $T$-weights of 
$\mathrm{Rep}(Q_0^{\dag})^{\vee}$. 
By~\cite[Proposition~2.1]{HLKSAM},
the genericity of $\chi_0$ is equivalent to that 
for any proper subspace $H \subsetneq M_{\mathbb{R}}$, 
there is a one parameter subgroup $\lambda \colon \mathbb{C}^{\ast} \to T$
such that $\langle \lambda, \beta_j \rangle =0$
for any $\beta_j \in H$ and 
$\langle \lambda, \chi_0 \rangle \neq 0$.   
Note that $T=\prod_{i \in V(Q)}T_i$
for the maximal torus $T_i \subset \GL(V_i)$. 
By choosing basis 
$\{e_{i1}, \ldots, e_{id_i}\}$
of $V_i$, 
where $d_i=\dim V_i$, 
we can write $M_{\mathbb{R}}$ as
\begin{align*}
M_{\mathbb{R}}=\bigoplus_{i \in V(Q)} \bigoplus_{1\le k\le d_i}
\mathbb{R}e_{ik}. 
\end{align*}
Then any non-zero $T$-character $\beta_j$ 
of $\mathrm{Rep}(Q_0^{\dag})^{\vee}$
is either of the form $\pm e_{ik}$ or
$e_{ik}-e_{i' k'}$. 
For a proper linear subspace $H \subsetneq M_{\mathbb{R}}$, 
let $\lambda\in N$ be 
the cocharacter defined by
\begin{align*}
\lambda=(\lambda_{ik})_{i \in V(Q), 1\le k \le d_i}, \ 
\lambda_{ik}=\left\{\begin{array}{cc}
0 & e_{ik} \in H \\
1 & e_{ik} \notin H
\end{array}   \right. 
\end{align*}
Then $\langle \lambda, \beta_j \rangle=0$ for any 
$\beta_j \in H$.
As $\chi_0=\sum_{i, k}e_{ik}$, we have 
\begin{align*}
\langle \lambda, \chi_0\rangle=
\sharp\{(i, k): i\in V(Q), 1\le k\le d_i\, e_{ik} \notin H\}
>0
\end{align*}
where the latter inequality holds as  
$H$ is a proper subspace. 
Therefore $\chi_0$ is generic. 
\end{proof}

We denote by 
\begin{align*}
\mathrm{Rep}(Q^{\dag}_0)^{\pm}_{\rm{ss}}
\subset \mathrm{Rep}(Q^{\dag}_0)
\end{align*}
the semistable locus with respect to the
$G$-action and $G$-linearizations 
$\oO(\chi_0^{\pm 1})$. 
\begin{lem}\label{lem:id:Rep}
We have the identity
\begin{align}\label{id:Rep}
\mathrm{Rep}(Q^{\dag})^{-}_{\rm{ss}}=\mathrm{Rep}(Q^{\dag}_0)^{-}_{\rm{ss}}
\times \prod_{(0 \to i) \in \mathbf{E}_{0i} \setminus \mathbf{E}_{i0}}V_i. 
\end{align}
\end{lem}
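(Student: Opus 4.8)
The plan is to reduce (\ref{id:Rep}) to the fact that $\oO(\chi_0^{-1})$-semistability depends only on the arrows pointing \emph{into} the framing vertex $0$. By Proposition~\ref{prop:flip}, $\mathrm{Rep}(Q^{\dag})^-_{\rm{ss}}$ is the $\oO(\chi_0^{-1})$-semistable locus, equivalently the $\theta^-$-semistable locus for $\theta^-$ as in (\ref{theta}). First I would pin down the destabilizing objects. Writing a point as $(x, (u_e)_{e \in \mathbf{E}_{0i}}, (u'_e)_{e \in \mathbf{E}_{i0}})$ with $x \in \mathrm{Rep}(Q)$, and applying the Hilbert--Mumford criterion of Section~\ref{subsec:KN}, the central one-parameter subgroup $t \mapsto (t\cdot\mathrm{id}_{V_i})_{i}$ shows that every point with all $u'_e = 0$ is $\oO(\chi_0^{-1})$-unstable, the linearization having negative weight at the resulting limit at the origin. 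This fixes the direction of King's correspondence: $\oO(\chi_0^{-1})$-semistability is the condition that the maps $(u'_e)_{e \in \mathbf{E}_{i0}}$ cogenerate, not that the $(u_e)_{e \in \mathbf{E}_{0i}}$ generate. Unwinding $\theta^-$-semistability in terms of subrepresentations then gives the characterization: $(x, u, u')$ is semistable if and only if there is no nonzero $x$-invariant collection of subspaces $(F_i)_{i \in V(Q)}$ with $F_i \subseteq \bigcap_{e \in \mathbf{E}_{i0}} \ker(u'_e)$; equivalently, the only $\theta^-$-destabilizing subrepresentations are those with $F_0 = 0$.

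The point I would then stress is that this condition involves only the datum $(x, (u'_e)_{e \in \mathbf{E}_{i0}})$ and is entirely independent of the vectors $(u_e)_{e \in \mathbf{E}_{0i}}$. Indeed, for a candidate subrepresentation with $F_0 = 0$ the constraint coming from an arrow $(0 \to i) \in \mathbf{E}_{0i}$ reads $u_e(F_0) = u_e(0) = 0 \subseteq F_i$, which holds automatically, whereas an arrow $(i \to 0) \in \mathbf{E}_{i0}$ imposes the genuine constraint $u'_e(F_i) = 0$. Hence none of the coordinates $u_e$ enters the $\oO(\chi_0^{-1})$-semistability condition.

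With this established, (\ref{id:Rep}) follows from the decomposition $\mathrm{Rep}(Q^{\dag}) = \mathrm{Rep}(Q_0^{\dag}) \times \prod_{(0 \to i) \in \mathbf{E}_{0i} \setminus \mathbf{E}_{i0}} V_i$. The complementary factor consists precisely of the vectors $(u_e)_{e \in \mathbf{E}_{0i} \setminus \mathbf{E}_{i0}}$, which by the previous paragraph are irrelevant to stability, while the datum $(x, (u'_e)_{e \in \mathbf{E}_{i0}})$ governing stability is carried by the $\mathrm{Rep}(Q_0^{\dag})$-factor. Since $Q_0^{\dag}$ retains exactly the arrows $\mathbf{E}_{i0}$ into $0$, applying the same analysis to $Q_0^{\dag}$ shows that its $\oO(\chi_0^{-1})$-semistable locus is cut out by the identical condition on $(x, (u'_e)_{e \in \mathbf{E}_{i0}})$. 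Therefore a point of $\mathrm{Rep}(Q^{\dag})$ is $\oO(\chi_0^{-1})$-semistable if and only if its image in $\mathrm{Rep}(Q_0^{\dag})$ is, with the remaining $V_i$-coordinates unconstrained, which is exactly (\ref{id:Rep}).

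The step I expect to be the main obstacle is identifying the destabilizing objects correctly, i.e.\ getting the direction of King's correspondence right so that $\oO(\chi_0^{-1})$-semistability becomes the cogeneration condition on the maps $u'_e$ into $0$ rather than a generation condition on the maps $u_e$ out of $0$. This is the crux, because the analogous product decomposition is \emph{false} for the opposite linearization $\oO(\chi_0^{+1})$: there the stability condition is controlled by the vectors $u_e$, so the discarded factor $\prod_{\mathbf{E}_{0i}\setminus\mathbf{E}_{i0}} V_i$ genuinely affects semistability and $\mathrm{Rep}(Q^{\dag})^+_{\rm{ss}}$ does not factor through $\mathrm{Rep}(Q_0^{\dag})$. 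Once the destabilizers are correctly identified, the remainder is the elementary bookkeeping above.
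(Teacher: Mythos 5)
Your proof is correct and follows essentially the same route as the paper: pass to $\theta^-$-semistability via King's correspondence, observe that it is a cogeneration condition on the data $(x,(u'_e)_{e\in\mathbf{E}_{i0}})$ alone (the paper phrases this dually, as generation of $\bigoplus_i V_i^{\vee}$ by the images of the $x_e^{\vee}$, citing Lemma~7.10 of the d-critical birational geometry paper, whereas you derive the equivalent characterization directly and pin down the direction with the central one-parameter subgroup), and conclude the product decomposition since $Q_0^{\dag}$ retains exactly the arrows $\mathbf{E}_{i0}$ into the framing vertex. Your closing remark that the decomposition fails for $\oO(\chi_0^{+1})$ correctly identifies why the lemma is stated only on the minus side.
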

\begin{proof}
Let a $Q^{\dag}$-representation $E$ be represented by 
a point 
\begin{align*}
x=(x_e)_{e \in E(Q^{\dag})} \in \mathrm{Rep}(Q^{\dag}), \ 
x_e \colon V_{s(e)} \to V_{t(e)}
\end{align*}
where $V_0=\mathbb{C}$. 
We set
\begin{align*}
T=\bigoplus_{i \in V(Q)}V_i^{\vee}.
\end{align*}
Then $T$ has a  
$\mathbb{C}[Q]$-module structure 
given by $x_e^{\vee}$ for $e \in E(Q)$, 
which makes sense as $Q$ is symmetric.
By the proof of Proposition~\ref{prop:flip}, 
$E$ is semistable with respect to $\oO(\chi_0^{-1})$ if 
and only if it is $\theta^{-}$-semistable, 
where $\theta^-$ is given in (\ref{theta}).  
Then by~\cite[Lemma~7.10]{Toddbir}, we see that $E$ 
is $\theta^{-}$-semistable if and only if 
$T$ is 
 generated as a $\mathbb{C}[Q]$-module
by the images of 
the maps
\begin{align*}
x_e^{\vee} \colon \mathbb{C} \to 
V_i^{\vee}, \ e \in \mathbf{E}_{i0}.
\end{align*}
In particular, the $\theta^-$-stability 
does not impose any 
constraint on the maps 
$x_e \colon \mathbb{C} \to V_i$ for $e \in \mathbf{E}_{0i}$. 
The same statement applies to 
$\theta^-$-semistable 
$Q^{\dag}_0$-representations. 
Therefore (\ref{id:Rep}) holds. 
\end{proof}

\subsection{Window subcategories for quiver representations}
Let $W$ be a finite dimensional $G$-representation. 
We set
\begin{align*}
\mathrm{Rep}(Q^{\dag})_W
\cneq \mathrm{Rep}(Q^{\dag}) \times W \times W^{\vee}. 
\end{align*}
The $\widetilde{G}$-action on 
$\mathrm{Rep}(Q^{\dag})$ extends 
to the action on $\mathrm{Rep}(Q^{\dag})_W$ 
by 
\begin{align}\label{tilde:act}
(g, g') \cdot (x, u, u', v, v')=
(g(x), g(u), g'(u'), g(v), g'(v')). 
\end{align}
Here $(g, g') \in \widetilde{G}$ and 
\begin{align*}
(x, u, u', v, v') \in \mathrm{Rep}(Q) \times 
\left(\prod_{(0 \to i) \in \mathbf{E}_{0i}} V_i \right)
\times \left(\prod_{(i \to 0) \in \mathbf{E}_{i0}}V_i^{\vee}\right)
 \times W \times W^{\vee}.
\end{align*}  
We have the function $w^{\star}$ defined by
\begin{align}\label{w:tilde}
w^{\star} \colon 
\mathrm{Rep}(Q^{\dag})_W \to \mathbb{C}, \ 
w^{\star}(x, u, u', v, v')=w(x, u, u')
+\langle v, v' \rangle. 
\end{align}
Here $w$ is defined in (\ref{w:repQ}). 
Then $w^{\star}$ is semi-invariant 
for the character $\tau \colon \widetilde{G} \to \mathbb{C}^{\ast}$, 
$\tau((g_1, g_2))=g_1 g_2^{-1}$, which appeared in (\ref{exact:G}).   

We consider the induced $G$-action 
on $\mathrm{Rep}(Q^{\dag})_W$ and
the KN stratifications 
\begin{align}\label{KN:quiver}
\mathrm{Rep}(Q^{\dag})_W=
(\mathrm{Rep}(Q^{\dag})_W)_{\rm{ss}}^{\pm}
\sqcup S_{1}^{\pm} \sqcup S_{2}^{\pm} \sqcup \ldots. 
\end{align}
with respect to linearizations $\oO(\chi_0^{\pm 1})$, 
with centers $Z_{\alpha}^{\pm} \subset S_{\alpha}^{\pm}$
and one parameter subgroups 
$\lambda_{\alpha}^{\pm} \colon \mathbb{C}^{\ast} \to T$. 

We take 
\begin{align}\label{delta:chi}
\delta=\varepsilon \cdot \chi_0 \in M_{\mathbb{R}}^{W}, \ 
0<\varepsilon \ll 1
\end{align} 
where $\chi_0$ is the character (\ref{chi0}). 
We have 
the associated window subcategories
(see Definition~\ref{defi:window})
\begin{align}\label{window:quiver}
\cC_{\delta}^{\pm} \subset D_{\mathbb{C}^{\ast}}([\mathrm{Rep}(Q^{\dag})_W/G], 
w^{\star})
\end{align}
with respect to the KN stratifications (\ref{KN:quiver}). 
\begin{prop}\label{prop:window:quiver}
We have $\cC_{\delta}^- \subset \cC_{\delta}^+$. 
Moreover if $\mathbf{E}_{0i}=\mathbf{E}_{i0}$, we have 
$\cC_{\delta}^-=\cC_{\delta}^+$. 
\end{prop}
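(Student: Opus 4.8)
The plan is to recognize Proposition~\ref{prop:window:quiver} as a direct instance of the abstract variation-of-GIT framework of Section~\ref{subsec:VGIT}, so that both assertions follow by quoting Theorem~\ref{thm:magic} and Corollary~\ref{cor:magic}. First I would match the data: take
\begin{align*}
X=\mathrm{Rep}(Q^{\dag}), \quad
X_0=\mathrm{Rep}(Q^{\dag}_0), \quad
X_1=\prod_{(0 \to i) \in \mathbf{E}_{0i} \setminus \mathbf{E}_{i0}} V_i,
\end{align*}
so that $X=X_0\oplus X_1$ as $\widetilde{G}$-representations and $X_0$ is symmetric, hence quasi-symmetric. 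With $\chi_0$ as in (\ref{chi0}), the potential $w$ as in (\ref{w:repQ}), and the same auxiliary representation $W$, we have $\mathrm{Rep}(Q^{\dag})_W=X\oplus W\oplus W^{\vee}=X_W$, and the function $w^{\star}$ of (\ref{w:tilde}) is exactly the function $w(x)+\langle v,v'\rangle$ appearing in Theorem~\ref{thm:magic}; it is $\tau$-semi-invariant as noted after (\ref{w:tilde}). Hence the window subcategories $\cC^{\pm}_{\delta}$ of (\ref{window:quiver}) are literally the categories $\cC^{\pm}_{W,\delta_2}$ of that theorem with $\delta_2=\delta=\varepsilon\cdot\chi_0$.

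Next I would check the two hypotheses that Theorem~\ref{thm:magic} inherits from Proposition~\ref{prop:magic}. The genericity of $\chi_0\in M_{\mathbb{R}}^W$ with respect to $\mathrm{Rep}(Q^{\dag}_0)$ is supplied by Lemma~\ref{lem:generic}, and the required semistability identity (\ref{ss:minus}),
\begin{align*}
X_{\rm{ss}}^{-}=(X_0)_{\rm{ss}}^{-}\times X_1,
\end{align*}
is exactly Lemma~\ref{lem:id:Rep}. With both in hand, Theorem~\ref{thm:magic} immediately gives the inclusion $\cC^{-}_{\delta}\subset\cC^{+}_{\delta}$.

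For the equality I would specialize to $\mathbf{E}_{0i}=\mathbf{E}_{i0}$ for all $i$. Then $\mathbf{E}_{0i}\setminus\mathbf{E}_{i0}=\emptyset$, so $X_1$ is a point and $X=X_0=\mathrm{Rep}(Q^{\dag})$; consequently $X_{\rm{ss}}^{\pm}=(X_0)_{\rm{ss}}^{\pm}\times X_1$ holds trivially for both signs, and Corollary~\ref{cor:magic} yields $\cC^{-}_{\delta}=\cC^{+}_{\delta}$.

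Since the heavy lifting (the magic window theorem, genericity of $\chi_0$, and the Kn\"orrer-periodicity comparison embedded in Theorem~\ref{thm:magic}) is already done, the only point requiring care is the sign asymmetry: Lemma~\ref{lem:id:Rep} splits off the factor $X_1$ only on the $L^-$-side, because $\theta^{-}$-stability imposes no condition on the arrows in $\mathbf{E}_{0i}\setminus\mathbf{E}_{i0}$, whereas $\theta^{+}$-stability does constrain them. This is precisely why I expect only an inclusion in general, with equality forced exactly in the balanced case $\mathbf{E}_{0i}=\mathbf{E}_{i0}$.
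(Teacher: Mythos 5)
Your proposal is correct and follows essentially the same route as the paper, which proves the proposition by invoking Lemma~\ref{lem:id:Rep} together with Theorem~\ref{thm:magic} and Corollary~\ref{cor:magic}; you have simply spelled out the identification $X_0=\mathrm{Rep}(Q_0^{\dag})$, $X_1=\prod_{(0\to i)\in\mathbf{E}_{0i}\setminus\mathbf{E}_{i0}}V_i$ and the role of Lemma~\ref{lem:generic} more explicitly than the paper does.
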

\begin{proof}
By Lemma~\ref{lem:id:Rep}, 
the proposition follows from Theorem~\ref{thm:magic}
and Corollary~\ref{cor:magic}. 
\end{proof}

\subsection{Complex analytic version}\label{subsec:analytic}
We consider the pull-backs of the 
following commutative diagram by an analytic open neighborhood
$0 \in \uU \subset \mathrm{Rep}(Q)\sslash G$
\begin{align}\label{dia:quiver:analytic}
\xymatrix{
\mathrm{Rep}(Q^{\dag})_W \ar[r] \ar[rd] \ar[rrd]^-{h_Q} & \mathrm{Rep}(Q^{\dag}) \ar[r] \ar[rd]^-{q_Q}
& \mathrm{Rep}(Q^{\dag})\sslash G \ar[d]^-{r_Q} \\
& \mathrm{Rep}(Q) \ar[r]_-{p_Q} & \mathrm{Rep}(Q)\sslash G. 
}
\end{align}
Here each horizontal arrows are either projections or quotient morphisms. 
Instead of the morphism (\ref{mor:s}), suppose that 
we are given a 
$G$-equivariant morphism of analytic 
vector bundles on $p_Q^{-1}(\uU)$
\begin{align}\label{mor:s2}
\vartheta \colon \left(\prod_{(0 \to i) \in \mathbf{E}_{0i}}V_i\right) \otimes \oO_{p_Q^{-1}(\uU)}
\to \left(\prod_{(i \to 0) \in \mathbf{E}_{i0}} V_i \right)
\otimes \oO_{p_Q^{-1}(\uU)}. 
\end{align}
Similarly to (\ref{w:repQ}), we have the $G$-invariant analytic map
\begin{align*}
w \colon q_Q^{-1}(\uU) \to \mathbb{C}
\end{align*}
which descends to the function $w^0$ on 
$r_{Q}^{-1}(\uU)$. 
By pulling back the diagram (\ref{dia:pm}) 
to $r_Q^{-1}(\uU) \subset \mathrm{Rep}(Q^{\dag})\sslash G$, 
we have the diagram
\begin{align}\notag
\xymatrix{
M_{Q^{\dag}, \uU}^{+} \ar[r]^-{\pi_Q^+} \ar[dr]_-{w^+} 
& r_Q^{-1}(\uU)
\ar[d]^-{w^0}& 
\ar[l]_-{\pi_Q^-} \ar[ld]^-{w^-} M_{Q^{\dag}, \uU}^- \\
& \mathbb{C}. &
}
\end{align}
We have the following analytic d-critical loci
\begin{align}\label{MU:dcrit}
M_{(Q^{\dag}, w), \uU}^{\pm}=\{dw^{\pm}=0\} \subset 
M_{Q^{\dag}, \uU}^{\pm}.
\end{align}
Similarly to Proposition~\ref{prop:dcrit}, we have the following: 
\begin{prop}\label{prop:dcrit2}
Suppose that the followings hold:
\begin{align}\notag
&M_{(Q^{\dag}, w), \uU}^+ \subset \left[\left(p_Q^{-1}(\uU) \times 
\prod_{(0 \to i) \in \mathbf{E}_{0i}}V_i \times \{0\}\right)/G  \right], \\ 
\notag
 & M_{(Q^{\dag}, w)}^- \subset \left[\left(p_Q^{-1}(\uU) \times \{0\} \times 
\prod_{(i \to 0) \in \mathbf{E}_{i0}}V_i^{\vee}\right) /G\right].
\end{align}
Then we have the diagram
\begin{align*}
\xymatrix{
M_{(Q^{\dag}, w), \uU}^{+} 
\ar[rd]_-{\pi_Q^+} & & \ar[ld]^-{\pi_Q^-} M_{(Q^{\dag}, w), \uU}^- \\
& \uU
}
\end{align*}
which is an analytic d-critical 
flip if $\sharp \mathbf{E}_{i0} <\sharp \mathbf{E}_{0i}$ for all $i \in V(Q)$.
It is an analytic d-critical 
flop if $\sharp \mathbf{E}_{i0} =\sharp \mathbf{E}_{0i}$ for all $i \in V(Q)$.
\end{prop}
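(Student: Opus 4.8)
The plan is to transcribe the proof of Proposition~\ref{prop:dcrit} into the complex-analytic category, the only change in the input being that the morphism $\vartheta$ in~(\ref{mor:s2}) is now a $G$-equivariant map of \emph{analytic} vector bundles on $p_Q^{-1}(\uU)$. The guiding observation, which I would state at the outset, is that the smooth ambient stacks $M_{Q^{\dag}, \uU}^{\pm}$ and the birational picture relating them are completely independent of $\vartheta$ and of the potential $w$: they are nothing but the restrictions to $r_Q^{-1}(\uU) \subset \mathrm{Rep}(Q^{\dag})\sslash G$ of the GIT quotient stacks $M_{Q^{\dag}}^{\pm}$ appearing in Proposition~\ref{prop:flip}. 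The analyticity of $\vartheta$ enters only through $w^{\pm}$ and hence only through the d-critical loci~(\ref{MU:dcrit}), not through the ambient flip.

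First I would use the two inclusion hypotheses to pin down the location of the analytic d-critical loci. The assumption on $M_{(Q^{\dag}, w), \uU}^+$ forces $\pi_Q^+$ restricted to it to factor through the closed immersion $\uU \hookrightarrow r_Q^{-1}(\uU)$ induced by the zero section of the projection $\mathrm{Rep}(Q^{\dag}) \to \mathrm{Rep}(Q)$ in~(\ref{dia:quiver:analytic}), and symmetrically for $M_{(Q^{\dag}, w), \uU}^-$; this is exactly what makes the diagram in the statement meaningful over $\uU$. Assembling, as in Proposition~\ref{prop:dcrit}, the squares
\begin{align*}
\xymatrix{
M_{(Q^{\dag}, w), \uU}^{\pm} \ar[d] \ar@<-0.3ex>@{^{(}->}[r] & M_{Q^{\dag}, \uU}^{\pm} \ar[d] \ar[rd]^-{w^{\pm}} & \\
\uU \ar@<-0.3ex>@{^{(}->}[r] & r_Q^{-1}(\uU) \ar[r]^-{w^0} & \mathbb{C}
}
\end{align*}
I would then observe that these furnish analytic relative d-critical charts for $M_{(Q^{\dag}, w), \uU}^{\pm}$ in the sense of Definition~\ref{def:A3}: the smooth ambient space is $M_{Q^{\dag}, \uU}^{\pm}$, the regular function is $w^{\pm}$, and the critical locus is precisely $\{dw^{\pm}=0\}$. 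This records the d-critical structure on both sides and reduces everything to the classical birational comparison of the ambient stacks.

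The hard part, and essentially the only point requiring verification rather than transcription, will be that the flip/flop of Proposition~\ref{prop:flip}, stated over the affine GIT base $\mathrm{Rep}(Q^{\dag})\sslash G$, genuinely restricts to an analytic flip/flop over the analytic open $r_Q^{-1}(\uU)$. Here I would argue that the defining features of the transformation are local on the base: the contractions $\pi_Q^{\pm}$ are projective over $\mathrm{Rep}(Q^{\dag})\sslash G$, hence remain proper over $r_Q^{-1}(\uU)$, smoothness and the smallness of the exceptional loci are preserved under analytic localization, and the sign of the relative canonical class, which is governed by the comparison of $\sharp \mathbf{E}_{i0}$ with $\sharp \mathbf{E}_{0i}$, is inherited by the preimage of any open neighborhood of $0 \in \mathrm{Rep}(Q)\sslash G$. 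Combining the analytic relative d-critical charts above with this restricted flip/flop then yields the analytic d-critical flip (resp.\ flop) in the sense of Definition~\ref{defi:A4}, with the case distinction $\sharp \mathbf{E}_{i0} < \sharp \mathbf{E}_{0i}$ versus $\sharp \mathbf{E}_{i0} = \sharp \mathbf{E}_{0i}$ carried over unchanged from Proposition~\ref{prop:flip}.
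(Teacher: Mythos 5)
Your proposal is correct and follows essentially the same route as the paper, which gives no separate proof of this statement and simply asserts it "similarly to Proposition~\ref{prop:dcrit}": the hypotheses force the d-critical loci onto the zero section so that the squares become $\pi_Q^{\pm}$-relative d-critical charts, and the ambient flip/flop is inherited from Proposition~\ref{prop:flip} restricted over $r_Q^{-1}(\uU)$. Your extra remark that the flip/flop property is local on the base and survives analytic restriction is exactly the point the paper leaves implicit.
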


We have 
the analytic map
\begin{align*}
w^{\star} \colon h_Q^{-1}(\uU) \to \mathbb{C}
\end{align*}
defined as in (\ref{w:tilde}), 
using (\ref{mor:s2}) instead of (\ref{mor:s}). 
The analytic open subset 
$h_Q^{-1}(\uU) \subset \mathrm{Rep}(Q^{\dag})_W$
is preserved by the $\widetilde{G}$-action, and
the above map $w^{\star}$ is $\tau$ semi-invariant. 
The derived factorization category 
$D_{\mathbb{C}^{\ast}}([h_Q^{-1}(\uU)/G], w^{\star})$
and the window subcategories
\begin{align}\label{window:analytic}
\cC_{\delta, \uU}^{\pm} \subset 
D_{\mathbb{C}^{\ast}}([h_Q^{-1}(\uU)/G], w^{\star})
\end{align}
are defined similarly to (\ref{window:quiver}), 
using $\delta$ as in (\ref{delta:chi})
and KN stratifications (\ref{KN:quiver}) restricted to 
$h_Q^{-1}(\uU)$. 
Then the same argument of Proposition~\ref{prop:window:quiver}
shows the following: 
\begin{prop}\label{prop:window:analytic}
We have $\cC_{\delta, \uU}^- \subset \cC_{\delta, \uU}^+$. 
Moreover if $\mathbf{E}_{0i}=\mathbf{E}_{i0}$, we have 
$\cC_{\delta, \uU}^-=\cC_{\delta, \uU}^+$. 
\end{prop}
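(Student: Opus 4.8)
The plan is to run the argument of Proposition~\ref{prop:window:quiver} essentially verbatim, after checking that each of its three inputs---the genericity of $\chi_0$, the product decomposition of the $\oO(\chi_0^{-1})$-semistable locus, and the magic window machinery of Theorem~\ref{thm:magic} and Corollary~\ref{cor:magic}---survives the two new features of the analytic setting: the restriction to the $\widetilde{G}$-invariant analytic open $h_Q^{-1}(\uU)$, and the fact that $w$, and hence $w^{\star}$, is now only an analytic function, built from the analytic morphism (\ref{mor:s2}).

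First I would observe that $h_Q^{-1}(\uU)$ is the preimage of $\uU$ under the map $h_Q$ to the good moduli space $\mathrm{Rep}(Q)\sslash G$ in the diagram (\ref{dia:quiver:analytic}), hence is saturated and $\widetilde{G}$-invariant. Consequently the $\oO(\chi_0^{\pm 1})$-semistable loci and the full Kempf--Ness stratifications (\ref{KN:quiver}) simply restrict to $h_Q^{-1}(\uU)$: each stratum, its center $Z_{\alpha}^{\pm}$, and its one parameter subgroup $\lambda_{\alpha}^{\pm}$ are pulled back from the global picture. In particular the numerical data $\eta_{\alpha}^{\pm}$, the polytope $\overline{\nabla}_W$, and the weight windows defining $\cC_{\delta,\uU}^{\pm}$ coincide with their global counterparts. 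The genericity of $\chi_0$ is a statement about the $T$-weights of $\mathrm{Rep}(Q_0^{\dag})$, so it is unaffected and Lemma~\ref{lem:generic} applies unchanged; moreover the decomposition of Lemma~\ref{lem:id:Rep} restricts over $\uU$ to give the analytic analogue of the key hypothesis (\ref{ss:minus}) on the $-$ side.

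Next I would transfer the magic window machinery. The proofs of the window theorem (Theorem~\ref{thm:window}), of Kn\"orrer periodicity (Theorem~\ref{thm:knoer}), and of the magic window equivalence of~\cite{HLKSAM} are all local over the affine good moduli space, so restricting the base to the saturated analytic open $\uU$ leaves them intact; the generation of the relevant derived categories by $\widetilde{G}$-equivariant vector bundles with $T$-weights in $\delta+\overline{\nabla}_W$ is likewise fibrewise over the quotient. The passage from an algebraic to an analytic superpotential is harmless, because the derived factorization category depends only on an analytic neighbourhood of the critical locus $\{dw^{\star}=0\}$, a fact already used in the proof of Theorem~\ref{thm:magic} (cf.~\cite[Lemma~5.5]{HLKSAM}). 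With these in place, the argument of Theorem~\ref{thm:magic} yields $\cC_{\delta,\uU}^{-}=\mM(\delta+\overline{\nabla}_W)\subset\cC_{\delta,\uU}^{+}$, and Corollary~\ref{cor:magic} upgrades this to an equality when $\mathbf{E}_{0i}=\mathbf{E}_{i0}$, since then the hypothesis (\ref{ss:minus}) holds on both sides.

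The main obstacle is precisely this transfer step: one must ensure that the magic window theorem of~\cite{HLKSAM}, proved for global linear representations of reductive groups with a polynomial superpotential, continues to identify the window subcategory with $\mM(\delta+\overline{\nabla}_W)$ after restricting to $h_Q^{-1}(\uU)$ with an analytic $w^{\star}$. I expect this to cause no genuine difficulty, since every equivalence entering the proof is built locally over the GIT quotient and the factorization category is insensitive to shrinking to an analytic neighbourhood of the critical locus; nonetheless this is the only point at which the analytic and algebraic arguments truly differ, and it is where the verification must be made carefully.
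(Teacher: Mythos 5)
Your proposal is correct and follows the same route as the paper, which simply asserts that ``the same argument of Proposition~\ref{prop:window:quiver} shows'' the statement, i.e. reduction to Theorem~\ref{thm:magic} and Corollary~\ref{cor:magic} via Lemma~\ref{lem:generic} and Lemma~\ref{lem:id:Rep}. You have merely spelled out, more carefully than the paper does, why the saturation of $h_Q^{-1}(\uU)$ over the good moduli space and the insensitivity of derived factorization categories to shrinking around the critical locus let the algebraic argument transfer to the analytic setting.
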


\section{Geometry of moduli spaces of Thaddeus pairs}
In this section, we describe Thaddeus pair moduli spaces in terms of 
critical locus on some smooth stack. 
Then combined with the main result of~\cite{MR3811778}, 
we show that the diagram of Thaddus pair moduli space
and dual pair moduli space is a d-critical flip (flop), i.e. 
prove Theorem~\ref{thm:Tpair} (i). 

\subsection{Moduli spaces of semistable bundles}
Let $C$ be a smooth projective curve over $\mathbb{C}$
with genus $g$. 
For a vector bundle $E$ on $C$, 
its slope is defined by
\begin{align*}
\mu(E) \cneq \frac{\chi(E)}{\rank(E)} \in \mathbb{Q}. 
\end{align*}
Recall that $E$ is called \textit{(semi)stable} if 
for any subbundle $F \subsetneq E$, we have 
$\mu(F) <(\le) \mu(E)$. 

We denote by $\mM(r, d)$ the moduli stack of 
semistable bundles $E$ on $C$
satisfying the condition
\begin{align}\label{rank/chi2}
(\rank(E), \chi(E))=(r, d).
\end{align}
It is well-known that $\mM(r, d)$ is written as a global 
quotient stack, which we will review below. 
We take $m \gg 0$ and 
fix $p \in C$. 
Let $\mathbb{V}$ be a $\mathbb{C}$-vector space
such that 
\begin{align}\label{dim:V}
\dim \mathbb{V}=d+mr
\end{align} 
Let $\mathrm{Quot}(r, d)$ be the Grothendieck 
quot scheme parameterizing quotients
\begin{align}\label{quot}
\phi \colon 
\mathbb{V} \otimes \oO_C(-mp) \twoheadrightarrow E
\end{align}
such that $E$ satisfies (\ref{rank/chi2}). 
We have an open subset
\begin{align*}
\qQ \subset \mathrm{Quot}(r, d)
\end{align*}
corresponding to quotients (\ref{quot})
such that $E$ is 
a semistable bundle and
the induced map 
\begin{align*}
H^0(\phi(mp)) \colon \mathbb{V} \to H^0(E(mp))
\end{align*}
is an isomorphism. 
Note that both sides of the above map 
have the same dimension by (\ref{dim:V}). 
The natural $\GL(\mathbb{V})$-action on 
$\mathrm{Quot}(r, d)$ 
preserves the open subset $\qQ$, and 
we have 
\begin{align}\label{M=Q}
\mM(r, d)=[\qQ/\GL(\mathbb{V})]. 
\end{align}
Note that $\qQ$ is a smooth quasi-projective variety 
as $\mM(r, d)$ is a smooth stack. 

For $l\gg m$, let $L_l'$ be the line bundle on $\mathrm{Quot}(r, d)$
given by
\begin{align*}
L_{l}'|_{(\mathbb{V} \otimes \oO_C(-mp) \twoheadrightarrow E)}
=\det(H^0(C, E(lp))). 
\end{align*}
The line bundle $L_l'$ is a $\GL(\mathbb{V})$-linearized 
ample line bundle. 
Let 
$\chi_0$ be the determinant character of 
$\GL(\mathbb{V})$
\begin{align*}
\chi_0 \colon \GL(\mathbb{V}) \to \mathbb{C}^{\ast}, \ 
g \mapsto \det(g).
\end{align*}
We take a twist of $L_l'$ by a $\GL(\mathbb{V})$-character
to obtain the ample $\GL(\mathbb{V})$-linearized $\mathbb{Q}$-line bundle
on $\mathrm{Quot}(r, d)$
\begin{align}\label{Ll}
L_l \cneq L_l' \otimes \oO\left(-\frac{d+lr}{d+mr} \cdot \chi_0 \right). 
\end{align}
Then $\qQ$ is the semistable locus 
in the closure $\overline{\qQ}$ in $\mathrm{Quot}(r, d)$
with respect to the $\GL(\mathbb{V})$-linearization $L_l$
(see~\cite{MR1450870}). 
\begin{rmk}
The twist in $L_l$ is taken so that the $\GL(\mathbb{V})$-linearizion 
on $L_l$ is trivial on the diagonal torus 
$\mathbb{C}^{\ast} \subset \GL(\mathbb{V})$. Then 
the  
GIT $L_l$-stability for the $\GL(\mathbb{V})$-action 
is equivalent to 
the GIT $L_l'$-stability for the $\SL(\mathbb{V})$-action.
\end{rmk}
By taking the GIT quotient, we obtain the good moduli 
space for $\mM(r, d)$
\begin{align}\label{M:good}
\iota_M \colon \mM(r, d) \to M(r, d)=\qQ\sslash \GL(\mathbb{V}). 
\end{align}
The GIT quotient $M(r, d)$ is the 
coarse moduli space of $S$-equivalence classes of 
semistable bundles on $C$ satisfying (\ref{rank/chi2}). 
For a point $p \in M(r, d)$, 
by taking the associated graded 
of the Jordan-H\"older filtration, 
it corresponds to a polystable 
bundle $E$ of the form
\begin{align}\label{polystable}
E=\bigoplus_{i=1}^k V_i \otimes E_i. 
\end{align}
Here $V_i$ is a finite dimensional 
vector space, each $E_i$ is a stable bundle with 
$\mu(E_i)=\mu(E)$ and 
$E_i \not\cong E_j$ for $i\neq j$.

\subsection{Moduli spaces of Thaddeus pairs}
We recall the definition of Thaddeus pairs~\cite{MR1273268} and introduce 
the notion of dual Thaddeus pairs. 
Below we denote by $\mathrm{Bun}(C)$ the category 
of vector bundles on $C$. 
\begin{defi}\label{def:T}
(i) A pair 
\begin{align*}
(E, s), \ E \in \mathrm{Bun}(C), \ s \colon \oO_C \to E
\end{align*}
is called a Thaddeus pair if $E$ is semistable, 
there is no
non-zero subbundle $F \subsetneq E$ 
with $\mu(F)=\mu(E)$
such that $s$ factors through $F$. 

(ii) A pair 
\begin{align*}
(E', s'), \ E' \in \mathrm{Bun}(C), \ 
s' \colon E' \to \omega_C
\end{align*}
is called a dual 
Thaddeus pair if $E'$ is semistable, 
there is no non-zero subbundle
$F' \subsetneq E'$ 
with $\mu(F')=\mu(E')$ such that  
$s'|_{F'}=0$. 
\end{defi}
\begin{lem}\label{lem:dual}
Giving a dual Thaddeus pair $(E', s')$
with $[E'] \in \mM(r, d)$ is equivalent to giving 
a Thaddeus pair $(E, s)$ with $[E] \in \mM(r, -d)$. 
\end{lem}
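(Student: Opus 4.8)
The plan is to establish the duality by a Serre-duality argument on the curve $C$. The statement to prove is that giving a dual Thaddeus pair $(E', s')$ with $s' \colon E' \to \omega_C$ and $[E'] \in \mM(r, d)$ is equivalent to giving a Thaddeus pair $(E, s)$ with $s \colon \oO_C \to E$ and $[E] \in \mM(r, -d)$. The natural guess is to set $E \cneq E'^{\vee} \otimes \omega_C$, which is exactly the bundle appearing in the morphism $\pi^-$ in the diagram (\ref{diagram:Tpair}). First I would verify the numerical bookkeeping: if $[E'] \in \mM(r, d)$, meaning $(\rank E', \chi(E')) = (r, d)$, then by Serre duality and Riemann--Roch one computes $(\rank E, \chi(E)) = (r, -d)$, so that $[E] \in \mM(r, -d)$. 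Concretely $\rank(E'^{\vee} \otimes \omega_C) = r$ and $\chi(E'^{\vee} \otimes \omega_C) = -\chi(E') = -d$ using $\chi(F^{\vee} \otimes \omega_C) = -\chi(F)$ on a curve.

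Next I would match up the sections. A morphism $s' \colon E' \to \omega_C$ is the same datum as a morphism $\oO_C \to E'^{\vee} \otimes \omega_C = E$, simply by tensoring with $\omega_C$ and dualizing, i.e. $\Hom(E', \omega_C) \cong \Hom(\oO_C, E'^{\vee}\otimes\omega_C) = H^0(C, E)$. So $s'$ corresponds to a section $s$ of $E$, and this correspondence is a bijection. I would also note that semistability is preserved under the operation $E' \mapsto E'^{\vee}\otimes\omega_C$, since dualizing reverses inclusions of subbundles and negates slopes while tensoring by a line bundle shifts all slopes equally, so the (semi)stability inequalities are preserved (with the same slope equality $\mu(F) = \mu(E)$ translating correctly under the correspondence).

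The main step is to match the two stability conditions of Definition~\ref{def:T}. For the dual pair, the condition forbids a nonzero subbundle $F' \subsetneq E'$ with $\mu(F') = \mu(E')$ and $s'|_{F'} = 0$; for the ordinary pair, it forbids a nonzero subbundle $F \subsetneq E$ with $\mu(F) = \mu(E)$ through which $s$ factors. The key observation is that subbundles $F' \subseteq E'$ with $s'|_{F'} = 0$ correspond, under the duality $E = E'^{\vee} \otimes \omega_C$, to quotient bundles of $E$, and the condition $s'|_{F'} = 0$ translates into $s$ factoring through the complementary subbundle. More precisely, a saturated subbundle $F' \subset E'$ on which $s'$ vanishes is the same as a subbundle $F = (E'/F')^{\vee}\otimes\omega_C \subset E$ through which $s$ factors; the slope condition $\mu(F') = \mu(E')$ is equivalent to $\mu(F) = \mu(E)$ by the numerical computation above applied to the quotient $E'/F'$. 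I expect this translation between ``subbundles killed by $s'$'' and ``subbundles containing the image of $s$'' to be the only subtle point, requiring care that one works with saturated subbundles (subbundles, not merely subsheaves) so that the quotients are again bundles and the dualization stays within $\mathrm{Bun}(C)$; once this dictionary is set up, the equivalence of the two stability conditions is immediate, and the bijection on moduli is functorial in families, giving the claimed identification $M^{\rm{T}}(r, -d) \cong M^{\rm{T}}(r, d)$ interpreted via dual pairs.
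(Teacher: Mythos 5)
Your proof is correct and takes essentially the same approach as the paper, which simply exhibits the map $\mathbb{D}\colon (E'\stackrel{s'}{\to}\omega_C)\mapsto(\oO_C\stackrel{s'^{\vee}}{\to}E'^{\vee}\otimes\omega_C)$ and asserts it gives the desired one-to-one correspondence. Your write-up supplies the numerical bookkeeping and the dictionary between subbundles killed by $s'$ and subbundles through which $s$ factors, which the paper leaves as ``easy to see.''
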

\begin{proof}
It is easy to see that the following 
map gives a desired one to one correspondence 
\begin{align}\label{map:D}
\mathbb{D} \colon 
(E' \stackrel{s'}{\to} \omega_C) \mapsto 
(\oO_C \stackrel{s'^{\vee}}{\to} E'^{\vee} \otimes \omega_C). 
\end{align}
\end{proof}
We denote by $M^{\rm{T}}(r, d)$ the moduli space
of Thaddeus pairs $(E, s)$ with $[E] \in \mM(r, d)$. 
The moduli space $M^{\rm{T}}(r, d)$ is a smooth projective 
variety 
such that 
\begin{align}\label{dim:Tpair}
\dim M^{\rm{T}}(r, d)=\chi((\oO_C \to E), E)=d+r^2(g-1). 
\end{align}
Similarly we denote by $M^{\mathrm{DT}}(r, d)$ the moduli space 
of dual Thaddeus pairs $(E', s')$ such that 
$[E'] \in \mM(r, d)$. 
By Lemma~\ref{lem:dual}, the 
correspondence (\ref{map:D}) gives the isomorphism
\begin{align}\label{isom:dual}
\mathbb{D} \colon 
M^{\rm{DT}}(r, d) \stackrel{\cong}{\to} M^{\rm{T}}(r, -d). 
\end{align}
In particular, 
$M^{\rm{DT}}(r, d)$ is a smooth projective variety
 of dimension 
$-d+r^2(g-1)$. 
We have the natural morphisms
\begin{align}\label{diagram:MT}
\xymatrix{
	M^{\rm{T}}(r, d) \ar[r] \ar[dr]_-{\pi^+}& \mM(r, d) \ar[d]_-{\iota_M} & M^{\rm{DT}}(r, d) \ar[l] \ar[ld]^{\pi^-} \\
& M(r, d). &
}
\end{align}
Here $\pi^{\pm}$ are given by
$\pi^+(E, s)=\iota_M(E)$, $\pi^-(E', s')=\iota_M(E')$. 

\subsection{GIT descriptions of Thaddeus pair moduli spaces}
Let $\eE$ be the universal bundle
\begin{align*}
\eE \in \mathrm{Bun}(C \times \mM(r, d)). 
\end{align*}
Let $\mathrm{pr}_{\mM} \colon C \times \mM(r, d) \to \mM(r, d)$
be the projection. 
As $C$ is a curve, we can write 
$\dR \mathrm{pr}_{\mM\ast}\eE$ by a two term complex 
of vector bundles on $\mM(r, d)$
\begin{align}\label{resol:E}
\dR \mathrm{pr}_{\mM\ast}\eE=(\fF_0 \stackrel{\psi}{\to} \fF_1). 
\end{align}
Here $\fF_0$ is located in degree zero. 

By (\ref{M=Q}), the map $\psi$ is regarded as a
$\GL(\mathbb{V})$-equivariant morphism of 
$\GL(\mathbb{V})$-equivariant vector bundles 
$\widetilde{\psi} \colon \widetilde{\fF}_0 \to \widetilde{\fF}_1$
on $\qQ$. 
Since the diagonal torus $\mathbb{C}^{\ast} \subset \GL(\mathbb{V})$ 
acts on $\qQ$ trivially, 
it acts on fibers of $\widetilde{\fF}_{\bullet}$. 
\begin{lem}\label{F:wone}
We can take a resolution (\ref{resol:E})
so that 
the diagonal torus $\mathbb{C}^{\ast}$ acts 
on fibers of $\widetilde{\fF}_{\bullet}$ with weight one. 
\end{lem}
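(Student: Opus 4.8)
The plan is to build the resolution (\ref{resol:E}) out of a relative pushforward that visibly carries a weight-one action, rather than starting from an arbitrary two-term complex. First I would pin down the weight of the universal bundle itself. On $C \times \qQ$ the universal quotient is a $\GL(\mathbb{V})$-equivariant surjection $\mathbb{V} \otimes \oO_C(-mp) \twoheadrightarrow \widetilde{\eE}$, where $\widetilde{\eE}$ denotes the pullback of $\eE$ to $C \times \qQ$. The diagonal torus $\mathbb{C}^{\ast} \subset \GL(\mathbb{V})$ acts on $\mathbb{V}$ with weight one, and acts trivially on $C$ and on $\qQ$; since the surjection is equivariant, the automorphism of $\widetilde{\eE}$ induced by a scalar $t \in \mathbb{C}^{\ast}$ is multiplication by $t$. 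Hence $\mathbb{C}^{\ast}$ acts on the fibers of $\widetilde{\eE}$, and so on those of $\eE$, with weight one.

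Next I would produce the complex by a divisor device that preserves this weight. Fix $N \gg 0$ and tensor the structure sequence of the divisor $Np \subset C$ with $\eE$ to obtain, on $C \times \mM(r, d)$,
\begin{align*}
0 \to \eE \to \eE(Np) \to \eE(Np)|_{Np} \to 0.
\end{align*}
For $N$ large we have $R^1 \mathrm{pr}_{\mM\ast}\eE(Np)=0$ with $\mathrm{pr}_{\mM\ast}\eE(Np)$ locally free, while $\eE(Np)|_{Np}$ is flat over $\mM(r, d)$ with support finite over the base, so $\mathrm{pr}_{\mM\ast}(\eE(Np)|_{Np})$ is locally free as well. Applying $\dR\mathrm{pr}_{\mM\ast}$ and reading off the long exact sequence, the cokernel of the induced map is $R^1\mathrm{pr}_{\mM\ast}\eE$ and its kernel is $\mathrm{pr}_{\mM\ast}\eE$, so $\dR\mathrm{pr}_{\mM\ast}\eE$ is represented by the two-term complex of vector bundles
\begin{align*}
\fF_0 = \mathrm{pr}_{\mM\ast}\eE(Np) \stackrel{\psi}{\to} \fF_1 = \mathrm{pr}_{\mM\ast}(\eE(Np)|_{Np})
\end{align*}
in degrees $0$ and $1$, which is a resolution of the form (\ref{resol:E}).

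Finally I would read off the weights. Because $\oO_C(Np)$ and $\oO_{Np}$ are pulled back from $C$, they carry the trivial $\GL(\mathbb{V})$-action, so $\eE(Np)$ and $\eE(Np)|_{Np}$ both retain the weight-one diagonal $\mathbb{C}^{\ast}$-action established above. Pushforward is $\mathbb{C}^{\ast}$-equivariant and preserves weights, so the fibers of $\fF_0$ and $\fF_1$---equivalently those of the corresponding $\GL(\mathbb{V})$-equivariant bundles $\widetilde{\fF}_0, \widetilde{\fF}_1$ on $\qQ$---are weight-one $\mathbb{C}^{\ast}$-representations, which is exactly the claim. The remaining points are routine: the construction is $\GL(\mathbb{V})$-equivariant because the divisor $Np$ lives on $C$ and is fixed by $\GL(\mathbb{V})$, and $N$ can be chosen uniformly over the quasi-compact stack $\mM(r, d)$ so that the vanishing and local-freeness statements hold globally. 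I do not expect a genuine obstacle here; the only real content is the weight-one computation for $\eE$ in the first step, after which the divisor resolution transports the weight to both terms automatically.
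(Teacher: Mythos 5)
Your proof is correct, but it takes a genuinely different route from the paper's. The paper starts from an \emph{arbitrary} two-term resolution, decomposes $\widetilde{\psi}\colon\widetilde{\fF}_0\to\widetilde{\fF}_1$ into diagonal $\mathbb{C}^{\ast}$-weight summands, observes that the cohomology sheaves $R^i\mathrm{pr}_{\mM\ast}\eE|_{[E]}=H^i(C,E)$ carry pure weight one (the scalar in $\Aut(E)$ acts by scaling), deduces that $\widetilde{\psi}_n$ is an isomorphism for every $n\neq 1$, and then cancels those acyclic summands by a quasi-isomorphism. You instead build one specific resolution, $\mathrm{pr}_{\mM\ast}\eE(Np)\to\mathrm{pr}_{\mM\ast}(\eE(Np)|_{Np})$, whose terms visibly inherit the weight-one action from $\eE$ via the equivariant universal quotient. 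Both arguments ultimately rest on the same fact --- that the diagonal scalar acts on the universal bundle, hence on its (derived) pushforward, by weight one --- but the paper's version is shorter and shows that any resolution can be corrected, while yours is constructive and hands you explicit bundles $\fF_0$, $\fF_1$ without having to justify that the off-weight pieces of an abstract resolution split off and are acyclic. One small gloss in your write-up: matching the kernel and cokernel of $\psi$ with $R^0$ and $R^1$ is not by itself a derived-category identification; the clean way to say it is that applying $\dR\mathrm{pr}_{\mM\ast}$ to the triangle $\eE\to\eE(Np)\to\eE(Np)|_{Np}$ exhibits $\dR\mathrm{pr}_{\mM\ast}\eE$ as the shifted cone of a map of sheaves, i.e.\ as the stated two-term complex. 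This is immediate from what you wrote, so it is a presentational point rather than a gap.
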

\begin{proof}
We have the decomposition of $\widetilde{\psi}$ into 
$\mathbb{C}^{\ast}$-weight spaces
\begin{align*}
\bigoplus_{n} \widetilde{\psi}_n \colon 
\bigoplus_{n} (\widetilde{\fF}_0)_n \to \bigoplus_{n} (\widetilde{\fF}_1)_n
\end{align*}
where the diagonal torus $\mathbb{C}^{\ast} \subset \GL(\mathbb{V})$
acts on 
$(\widetilde{\fF}_{\bullet})_n$ with weight $n$. 
Since the diagonal $\mathbb{C}^{\ast}$-actions on 
$R^i \mathrm{pr}_{\mM\ast}\eE|_{[E]}=H^i(C, E)$ are
given by the scaling action for $\mathbb{C}^{\ast} \subset \Aut(E)$, 
they have weight one. 
Therefore
the morphism 
$\widetilde{\psi}_n$ is an isomorphism for all $n\neq 1$.
Then we can 
replace the RHS of (\ref{resol:E}) by a quasi-isomorphism 
so that  
$(\widetilde{\fF}_{\bullet})_n=0$ for all $n\neq 1$. 
\end{proof}
Below, we take 
a resolution (\ref{resol:E}) as in 
Lemma~\ref{F:wone}. 
By setting $\fF^{-i}=\fF_i^{\vee}$,
the Grothendieck 
duality implies  
\begin{align*}
\dR \mathrm{pr}_{\mM\ast}(\eE^{\vee}\boxtimes \omega_C)
\cong
(\dR \mathrm{pr}_{\mM\ast}\eE)^{\vee}[-1]
=(\fF^{-1} \stackrel{\psi^{\vee}}{\to} \fF^0).
\end{align*}
Here $\fF^{-1}$ is located in degree zero.
The map $\psi^{\vee}$ is 
regarded as 
a $\GL(\mathbb{V})$-equivariant 
morphism
$(\widetilde{\fF}^{-1} \stackrel{\widetilde{\psi}^{\vee}}{\to} 
\widetilde{\fF}^0)$
on $\qQ$, where 
for $\widetilde{\fF}^{-i}=(\widetilde{\fF}_i)^{\vee}$.  
By regarding $\fF_{\bullet}$, $\fF^{\bullet}$
as total spaces of vector bundles on $\mM(r, d)$, we have the 
following diagrams of stacks over $\mM(r, d)$
\begin{align}\label{dia:stacks}
\xymatrix{
& 
\fF_0 \times_{\mM(r, d)} \fF_1 \ar[r]  \ar@<1.0ex>[d] & \fF_1 \ar[d] \\
\zZ \cneq (\psi=0) \ar@<-0.3ex>@{^{(}->}[r] & 
\fF_0 \ar[r]\ar@<1.0ex>[u]^-{\psi} & \mM(r, d),
}\\
\notag
\xymatrix{
& 
\fF^{-1} \times_{\mM(r, d)} \fF^0 \ar[r]  \ar@<1.0ex>[d] & \fF^0 \ar[d] \\
\zZ^{\vee} \cneq (\psi^{\vee}=0) \ar@<-0.3ex>@{^{(}->}[r] & 
\fF^{-1} \ar[r]\ar@<1.0ex>[u]^-{\psi^{\vee}} & \mM(r, d).
}
\end{align}
In the above diagrams, $\psi$ and $\psi^{\vee}$ are regarded as sections of 
vector bundles on $\fF_0$ and $\fF^{-1}$. 
Let $\fF_0^{\ast} \subset \fF_0$, 
$\fF^{-1\ast} \subset \fF^{-1}$ be the complement of the
zero sections of the projections 
$\fF_0 \to \mM(r, d)$, $\fF^{-1} \to \mM(r, d)$ respectively. 
\begin{lem}\label{lem:Zpair}
(i) The stack $\zZ$ is isomorphic to the stack of 
pairs $(E, s)$ for $[E] \in \mM(r, d)$
and a morphism $s \colon \oO_C \to E$. 
Its open subset $\zZ \cap \fF_0^{\ast}$ is smooth 
such that
\begin{align*}
\dim (\zZ \cap \fF_0^{\ast})=\dim M^{\rm{T}}(r, d)=
d+r^2(g-1). 
\end{align*}

(ii) The stack $\zZ^{\vee}$ is isomorphic to the 
stack of pairs $(E', s')$
for $[E'] \in \mM(r, d)$
and a morphism $s' \colon E' \to \omega_C$. 
Its open subset $\zZ^{\vee} \cap \fF^{-1\ast}$ is smooth 
such that
\begin{align*}
\dim (\zZ^{\vee} \cap \fF^{-1\ast})=\dim M^{\rm{DT}}(r, d)=
-d+r^2(g-1). 
\end{align*}
\end{lem}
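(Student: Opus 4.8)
The plan is to establish (i) first, by a functor-of-points identification followed by a transversality (regularity) computation, and then to deduce (ii) from (i) through the duality $E \mapsto E^{\vee}\otimes\omega_C$.

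For the identification in (i), I would check that $\zZ$ represents the moduli functor of pairs directly. Given a test scheme $T \to \mM(r,d)$ with associated family $\eE_T$ on $C \times T$, cohomology and base change shows that $\dR\mathrm{pr}_{T\ast}\eE_T = (\fF_0|_T \xrightarrow{\psi_T} \fF_1|_T)$, so the sheaf $\Ker(\psi_T)$ is $R^0\mathrm{pr}_{T\ast}\eE_T$. A $T$-point of the total space $\fF_0$ is a section $v \in \Gamma(T, \fF_0|_T)$, and it lies in $\zZ$ exactly when $\psi_T(v)=0$, i.e. when $v \in \Gamma(T, \Ker\psi_T) = H^0(C\times T, \eE_T) = \Hom(\oO_{C\times T}, \eE_T)$. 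This is the same datum as a section $s\colon \oO_C \to E$ in families, giving the claimed isomorphism of stacks. The statement in (ii) is formally identical, with $\Ker\psi^{\vee}_T = R^0\mathrm{pr}_{T\ast}(\eE_T^{\vee}\boxtimes\omega_C)$, whose sections are $\Hom(E', \omega_C)$.

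For smoothness and the dimension count I would view $\psi$ as a section of $\mathrm{pr}^{\ast}\fF_1$ on the total space $\fF_0$ and show it is a regular section over $\fF_0^{\ast}$. At a point $x_0=(E,s)$ with $s\neq 0$, the vanishing of $\psi$ makes $d\psi$ a well-defined map $T_{x_0}\fF_0 \to \fF_1|_{[E]}$: along the fibre direction $\fF_0|_{[E]}$ it is the linear map $\psi_{[E]}$, whose cokernel is $H^1(E)$, while along the base direction $\Ext^1(E,E)$ it is the composition $\theta \mapsto \theta\circ s \in \Ext^1(\oO_C, E) = H^1(E)$, the standard obstruction to lifting the section along a deformation of $E$. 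Hence $\Cok(d\psi) = \Cok\bigl(\Ext^1(E,E) \xrightarrow{\circ s} H^1(E)\bigr)$, and regularity is equivalent to surjectivity of this map. Since a nonzero section $s$ is injective as a morphism of sheaves, there is a short exact sequence $0 \to \oO_C \xrightarrow{s} E \to Q \to 0$; applying $\Hom(-,E)$ produces the exact sequence $\Ext^1(E,E) \xrightarrow{\circ s} \Ext^1(\oO_C,E) \to \Ext^2(Q,E)$, and $\Ext^2(Q,E)=0$ because $C$ is a smooth curve. Therefore $\psi$ is regular on $\fF_0^{\ast}$, so $\zZ \cap \fF_0^{\ast}$ is smooth of dimension $\dim\fF_0 - \rk\fF_1 = \dim\mM(r,d) + (\rk\fF_0 - \rk\fF_1) = r^2(g-1) + \chi(E) = d + r^2(g-1)$, matching $\dim M^{\rm{T}}(r,d)$.

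For (ii), I would invoke the duality $E \mapsto E^{\vee}\otimes\omega_C$ underlying Lemma~\ref{lem:dual}, which gives an isomorphism $\mM(r,d) \xrightarrow{\cong} \mM(r,-d)$ (dualizing preserves semistability and sends $(r,\chi)$ to $(r,-\chi)$) and identifies $\dR\mathrm{pr}_{\mM\ast}(\eE^{\vee}\boxtimes\omega_C)$ with the degree-zero complex $(\fF_0 \to \fF_1)$ on $\mM(r,-d)$. Under this identification $\zZ^{\vee}$ corresponds to $\zZ$ on $\mM(r,-d)$, so the smoothness and the dimension $-d+r^2(g-1) = \dim M^{\rm{DT}}(r,d)$ follow from (i); alternatively one runs the same transversality argument, now requiring surjectivity of $\Ext^1(E',E') \xrightarrow{s'\circ -} \Ext^1(E',\omega_C)$. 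I expect the main obstacle to be the transversality step: correctly identifying the base-direction derivative of $\psi$ with the composition $\theta \mapsto \theta\circ s$ and confirming that the only contribution to the cokernel is $H^1(E)$, so that surjectivity reduces cleanly to the vanishing $\Ext^2(Q,E)=0$ on the curve. The functorial identification and the numerical dimension count are comparatively routine once this regularity is in place.
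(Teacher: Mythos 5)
Your proof is correct and follows essentially the same route as the paper: both reduce smoothness of $\zZ\cap\fF_0^{\ast}$ to surjectivity of the composition map $\Ext^1(E,E)\to H^1(C,E)$, $\theta\mapsto\theta\circ s$, for $s\neq 0$ (the paper phrases this as vanishing of the obstruction space $\hH^1(\dR\Gamma((\oO_C\to E),E))$ via deformation theory of pairs, you as regularity of the section $\psi$ over $\fF_0^{\ast}$, which is the same computation). The only difference is the final step: the paper proves surjectivity by Serre duality, showing injectivity of $\Hom(E,\omega_C)\to\Hom(E,E\otimes\omega_C)$, while you use the long exact sequence of $0\to\oO_C\xrightarrow{s}E\to Q\to 0$ together with $\Ext^2(Q,E)=0$ on a smooth curve; both are valid.
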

\begin{proof}
By noting that $\Ker(\psi)=\mathrm{pr}_{\mM\ast}\eE$
and $\Ker(\psi^{\vee})=\mathrm{pr}_{\mM\ast}(\eE^{\vee}\boxtimes \omega_C)$, 
the first statements in (i), (ii) are straightforward to prove. 
As for the second statements, we only show (i) as (ii) is similar. 

By the first statement, the stack $\zZ \cap \fF_0^{\ast}$ is the 
stack of pairs $(E, s)$ for $[E] \in \mM(r, d)$
and $s \colon \oO_C \to E$ is a non-zero morphism. 
By the deformation-obstruction theory of pairs, it is enough 
to show the vanishing 
\begin{align}\label{Gamma:vanish}
\hH^1(\dR \Gamma((\oO_C \to E), E))=0
\end{align}
where 
$\oO_C$ is located in degree zero. 
By applying $\RHom(-, E)$ to the distinguished triangle
\begin{align*}
E[-1] \to (\oO_C \to E) \to \oO_C
\end{align*}
we obtain the exact sequence
\begin{align}\label{exact:E}
\Ext^1(E, E) \to H^1(C, E) \to \hH^1(\dR \Gamma((\oO_C \to E), E))
\to 0. 
\end{align}
The first arrow is Serre dual to 
the map \begin{align*}
\Hom(E, \omega_C) \to \Hom(E, E \otimes \omega_C)
\end{align*}
induced by the map $s \colon \oO_C \to E$. 
As $s$ is non-zero, the above map is injective, 
so the first arrow in (\ref{exact:E})
is surjective. 
Therefore (\ref{Gamma:vanish}) holds. 
\end{proof}

By defining $\widetilde{\zZ}$, $\widetilde{\zZ}^{\vee}$ 
similarly as 
\begin{align}\label{dia:tilde}
\xymatrix{
\widetilde{\zZ} \cneq (\widetilde{\psi}=0) \ar@<-0.3ex>@{^{(}->}[r] \ar[rd] & \widetilde{\fF}_0 \ar[d]^-{p_0} \\
& \qQ, 
}
\quad 
\xymatrix{
\widetilde{\zZ}^{\vee} \cneq (\widetilde{\psi}^{\vee}=0) \ar@<-0.3ex>@{^{(}->}[r] \ar[rd] & \widetilde{\fF}^{-1} \ar[d]^-{p^{-1}} \\
& \qQ, 
}
\end{align}
the stacks 
$\zZ$, $\zZ^{\vee}$ are written as 
\begin{align*}
\zZ=[\widetilde{\zZ}/\GL(\mathbb{V})], \ 
\zZ^{\vee}=[\widetilde{\zZ}^{\vee}/\GL(\mathbb{V})]. 
\end{align*}
Note that $\widetilde{\zZ}$, $\widetilde{\zZ}^{\vee}$
parametrize diagrams
\begin{align}\label{dia:Z}
\xymatrix{
 & \mathbb{V} \otimes \oO_C(-mp) \ar@{->>}[d]\\
 \oO_C \ar[r]^-{s} & E, 
} \quad 
\xymatrix{
 \mathbb{V} \otimes \oO_C(-mp) \ar@{->>}[d] & \\
 E' \ar[r]^-{s'} & \omega_C
}
\end{align}
such that $[E] \in \mM(r, d)$, $[E'] \in \mM(r, d)$
respectively. 
In the diagram (\ref{dia:tilde}), 
we define the following $\GL(\mathbb{V})$-equivariant 
$\mathbb{Q}$-line bundles 
\begin{align}\label{line:Lpm}
L_l^+&=p_0^{\ast}L_l \otimes \oO(\chi_0^{\varepsilon}) \in \Pic_{\GL(\mathbb{V})}(\widetilde{\fF}_0)_{\mathbb{Q}}, \\
\notag
L_l^-&=p^{-1\ast}L_l \otimes \oO(\chi_0^{-\varepsilon}) \in \Pic_{\GL(\mathbb{V})}(\widetilde{\fF}^{-1})_{\mathbb{Q}}
\end{align}
for a rational number $0<\varepsilon \ll 1$. 
\begin{lem}
(i) A left diagram of (\ref{dia:Z}) is $L_l^+$-semistable if 
and only if the pair $(\oO_C \stackrel{s}{\to}E)$ is a Thaddeus pair. 

(ii) A right diagram of (\ref{dia:Z}) is $L_l^-$-semistable if and only 
if the pair $(E' \stackrel{s'}{\to} \omega_C)$ is 
a dual Thaddeus pair.  
\end{lem}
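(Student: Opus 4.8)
The plan is to run the Hilbert--Mumford criterion for the $\GL(\mathbb{V})$-action on the total space $\widetilde{\fF}_0$ (compactified over $\overline{\qQ}$) with the linearization $L_l^+=p_0^{\ast}L_l\otimes\oO(\chi_0^{\varepsilon})$ from (\ref{line:Lpm}), and to match the resulting numerical condition with Definition~\ref{def:T}. The starting observation is that, via the identification $\mathbb{V}\cong H^0(E(mp))$, a section $s\colon\oO_C\to E$ is an element $s\in H^0(E)\subset\mathbb{V}$, so $\GL(\mathbb{V})$ acts on the pair $(\phi,s)$ diagonally through its action on $\mathbb{V}$. Thus a one parameter subgroup $\lambda$ decomposes $\mathbb{V}=\bigoplus_n\mathbb{V}_n$, induces a filtration $E_{\le n}=\phi(\mathbb{V}_{\le n}\otimes\oO_C(-mp))\subset E$ with graded pieces $E^{(n)}$, and assigns to $s$ the weight $w(s)=\min\{n:s\in\mathbb{V}_n\text{-part}\}$; the limit $\lim_{t\to0}\lambda(t)(\phi,s)$ exists in $\widetilde{\fF}_0$ exactly when $w(s)\ge 0$.

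Next I would compute the Hilbert--Mumford weight of $L_l^+$ at the limit point. Writing $r^{(n)}=\rank E^{(n)}$, the $p_0^{\ast}L_l$-contribution is the classical one for the Quot-scheme GIT of sheaves and, after the cancellation already built into (\ref{Ll}), is (with the sign conventions of \cite{MR1450870}) proportional to $\sum_n n\,r^{(n)}\bigl(\mu(E)-\mu(E^{(n)})\bigr)$; by \cite{MR1450870} this is $\ge 0$ for every $\lambda$ precisely because $\phi\in\qQ$ has $E$ semistable, and it vanishes exactly when every step $E_{\le n}$ of the filtration is a subbundle of slope $\mu(E)$. The $\oO(\chi_0^{\varepsilon})$-contribution is $\varepsilon\langle\chi_0,\lambda\rangle=\varepsilon\sum_n n\dim\mathbb{V}_n$, which on an equal-slope filtration equals $\varepsilon(\mu(E)+m)\sum_n n\,r^{(n)}$. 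Because the sheaf term is bounded away from $0$ on the finitely many non-equal-slope strata while the character term is $O(\varepsilon)$, for $0<\varepsilon\ll1$ the total weight can fail to be $\ge 0$ only along equal-slope one parameter subgroups, where it reduces to the character term alone.

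I would then analyse these equal-slope directions. Testing the diagonal $\lambda(t)=t^{-1}\cdot\mathrm{id}$, for which $L_l$ is trivial by the normalisation of (\ref{Ll}), shows that the zero section is unstable, so every $L_l^+$-semistable point has $s\neq0$, reproducing the requirement $s\neq0$ in a Thaddeus pair. For $s\neq0$, an equal-slope subbundle $F\subsetneq E$ with $s\in H^0(F)$ produces a two-step filtration $0\subset F\subset E$ together with an admissible one parameter subgroup along which the character term has negative sign, contradicting semistability; conversely, if no such $F$ exists, every admissible equal-slope $\lambda$ has character term of the correct sign, so $(\phi,s)$ is semistable. This is exactly the clause of Definition~\ref{def:T}~(i), which proves~(i). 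Part~(ii) follows from the identical computation on $\widetilde{\fF}^{-1}$ with $\psi$ replaced by $\psi^{\vee}$, the linearisation $L_l^-=p^{-1\ast}L_l\otimes\oO(\chi_0^{-\varepsilon})$ of (\ref{line:Lpm}), and a cosection $s'\colon E'\to\omega_C$, i.e.\ an element of $\ker\psi^{\vee}$; here admissibility of $s'$ together with the $-\varepsilon$ twist turns the numerical condition into the dual one, namely that there is no equal-slope $F'\subsetneq E'$ with $s'|_{F'}=0$.

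The main obstacle is the sign bookkeeping in the last two paragraphs: one must pin down the Mumford conventions (the direction of the filtration attached to $\lambda$, whether admissibility of $s$ requires $w(s)\ge0$ or $w(s)\le0$, and the sign of the weight of $L_l$ at the $t\to0$ limit) by calibrating against the sheaf case of \cite{MR1450870}, and then verify that the twist $+\varepsilon\chi_0$ (resp.\ $-\varepsilon\chi_0$) with $0<\varepsilon\ll1$ selects the Thaddeus chamber rather than the opposite one. Concretely, the delicate point is to check that the equal-slope subbundles $F$ through which $s$ factors are in bijection with the admissible destabilising one parameter subgroups, so that the GIT condition collapses to precisely the single clause of Definition~\ref{def:T}; once the conventions are fixed, the remainder is routine GIT combined with the already-established semistability of $E$ on $\qQ$.
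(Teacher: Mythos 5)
Your proposal is the standard Hilbert--Mumford computation that the paper itself invokes: its proof of this lemma consists only of the sentence ``the lemma follows from a standard argument applying the Hilbert--Mumford criterion'' together with references, so your sketch is an execution of exactly the intended argument (the $L_l$-term reproducing semistability of $E$ and vanishing on equal-slope filtrations, the $\pm\varepsilon\chi_0$-perturbation selecting the Thaddeus, respectively dual Thaddeus, chamber). The ingredients and the flagged sign calibrations are the right ones, and the argument is consistent with the anti-diagonal weight computation the authors do carry out explicitly in the proof of Proposition~\ref{prop:MW}.
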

\begin{proof}
The lemma follows from a standard argument
applying the Hilbert-Mumford criterion. 
See~\cite{MR2869309, MR3187656, MR3743369} for related arguments. 
\end{proof}
By the above lemma, 
we have
the following descriptions of Thaddeus pair moduli spaces
\begin{align}\label{Tpair:GIT}
&(\fF_0)_{{L_l^+}\rm{\mathchar`-ss}}
\cap \zZ=\zZ_{{L_l^+}\rm{\mathchar`-ss}}=M^{\rm{T}}(r, d), \\
\notag
&(\fF^{-1})_{{L_l^-}\rm{\mathchar`-ss}} \cap \zZ^{\vee}
=\zZ_{{L_l^-}\rm{\mathchar`-ss}}^{\vee}=M^{\rm{DT}}(r, d). 
\end{align}

\subsection{Descriptions by critical locus}
Let $\widetilde{\yY}$ be the variety defined by
\begin{align*}
\widetilde{\yY} \cneq \widetilde{\fF}_0 \times_{\qQ} \widetilde{\fF}^{-1}
\end{align*}
and define the stack $\yY$ to be 
\begin{align*}
\yY \cneq \fF_0 \times_{\mM(r, d)}\fF^{-1}
=[\widetilde{\yY}/\GL(\mathbb{V})]. 
\end{align*}
We define the function 
$w$ on $\widetilde{\yY}$ to be
\begin{align}\label{funct:w}
w \colon \widetilde{\fF}_0 \times_{\qQ} \widetilde{\fF}^{-1}
\to \mathbb{A}^1, \ 
(x, u, u') \mapsto \langle \widetilde{\psi}|_{x}(u), u' \rangle. 
\end{align}
Here $x \in \qQ$, $u \in \widetilde{\fF}_0|_{x}$ and $u' \in 
\widetilde{\fF}^{-1}|_{x}$. 
Then $w$ is $\GL(\mathbb{V})$-invariant, 
so it descends to the morphism
\begin{align*}
w \colon \yY \to \mathbb{A}^1. 
\end{align*}
We define $\widetilde{\wW}$, $\wW$ to be
\begin{align*}
\widetilde{\wW}  \cneq \{dw=0\} \subset 
\widetilde{\yY}, \
\wW \cneq [\widetilde{\wW}/\GL(\mathbb{V})]=\{dw=0\}  \subset \yY.
\end{align*}
Let $p_{\yY} \colon \widetilde{\yY} \to \qQ$ be the 
projection, and define the following 
$\GL(\mathbb{V})$-linearized 
ample 
$\mathbb{Q}$-line 
bundles on $\widetilde{\yY}$
\begin{align*}
L_l^{\pm} \cneq p_{\yY}^{\ast}L_l \otimes 
\oO(\chi_0^{\pm \varepsilon}), \ 
\varepsilon \in \mathbb{Q}, \ 
0<\varepsilon \ll 1. 
\end{align*}
The above $\mathbb{Q}$-line bundles restrict to
the $\mathbb{Q}$-line bundles (\ref{line:Lpm}) by the 
closed immersions
\begin{align*}
i_0 \colon \fF_0 \hookrightarrow \yY, \ 
i^{-1} \colon \fF^{-1} \hookrightarrow \yY
\end{align*}
given by 
zero sections of the projections 
$\yY \to \fF_0$, $\yY \to \fF^{-1}$ respectively. 
By (\ref{Tpair:GIT}), we have the embeddings
\begin{align}\label{MT:emb}
&M^{\rm{T}}(r, d) =\zZ_{L_l^+\rm{\mathchar`-ss}} \subset
\fF_0 \stackrel{i_0}{\hookrightarrow} \yY, \\
\notag 
&M^{\rm{DT}}(r, d) =\zZ_{L_l^-\rm{\mathchar`-ss}}^{\vee} \subset
\fF^{-1} \stackrel{i^{-1}}{\hookrightarrow} \yY.
\end{align}
\begin{prop}\label{prop:MW}
The embeddings (\ref{MT:emb}) induce the
isomorphisms
\begin{align*}
M^{\rm{T}}(r, d) \stackrel{\cong}{\to} \wW_{L_l^+\rm{\mathchar`-ss}}, \ 
M^{\rm{DT}}(r, d) \stackrel{\cong}{\to} \wW_{L_l^-\rm{\mathchar`-ss}}.
\end{align*}
\end{prop}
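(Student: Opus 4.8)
The plan is to identify the semistable critical locus $\wW_{L_l^{+}\rm{\mathchar`-ss}} = \wW \cap \yY_{L_l^{+}\rm{\mathchar`-ss}}$ with the zero section $i_0(\zZ_{L_l^{+}\rm{\mathchar`-ss}}) = M^{\rm{T}}(r,d)$, and likewise on the dual side. Since the duality isomorphism $\mathbb{D}$ of (\ref{isom:dual}) interchanges the two pictures --- swapping $\fF_0 \leftrightarrow \fF^{-1}$, the section variable with the cosection variable, and $L_l^{+} \leftrightarrow L_l^{-}$ --- it suffices to establish the first isomorphism. By (\ref{Tpair:GIT}) and (\ref{MT:emb}) we already know $M^{\rm{T}}(r,d) = \zZ_{L_l^{+}\rm{\mathchar`-ss}}$, embedded into $\yY$ through $i_0$, i.e. along the locus $\{u'=0\} = \fF_0$. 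I would therefore prove the two inclusions $\wW \cap \yY_{L_l^{+}\rm{\mathchar`-ss}} \subset \fF_0$ and $\wW \cap \fF_0 = \zZ$, and check that the two GIT stabilities match on $\fF_0$.

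The first inclusion is the crux and I would obtain it in two moves. First, I claim $L_l^{+}$-semistability forces $u \neq 0$. Working on the atlas $\widetilde{\yY}$ with the $\GL(\mathbb{V})$-action, take the central antidiagonal cocharacter $\lambda \colon t \mapsto t^{-1}\cdot \mathrm{id}_{\mathbb{V}}$. By Lemma~\ref{F:wone} the diagonal $\mathbb{C}^{\ast}$ acts with weight $+1$ on the fibres of $\widetilde{\fF}_0$ and with weight $-1$ on those of $\widetilde{\fF}^{-1} = \widetilde{\fF}_1^{\vee}$, and trivially on $\qQ$; hence for a point with $u = 0$ the limit $\lim_{t\to 0}\lambda(t)(x,0,u') = (x,0,0)$ exists. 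Since $L_l$ is normalised to be trivial on the diagonal $\mathbb{C}^{\ast}$, the $L_l^{+}$-weight at this limit equals $\varepsilon\langle \lambda, \chi_0\rangle = -\varepsilon \dim\mathbb{V} < 0$, so by the Hilbert--Mumford criterion of Section~\ref{subsec:KN} the point is $L_l^{+}$-unstable. Thus $\yY_{L_l^{+}\rm{\mathchar`-ss}} \subset \fF_0^{\ast} \times_{\mM(r,d)} \fF^{-1}$, where $\fF_0^{\ast} \subset \fF_0$ is the complement of the zero section.

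Second, I would recognise $w$ as a Kn\"orrer function and invoke Lemma~\ref{lem:Z}. Regarding $\psi$ as the tautological section of the pullback of $\fF_1$ to the total space $\fF_0$, the formula (\ref{funct:w}) exhibits $w$ as the associated function $w_{\psi}$ on the dual bundle, whose total space is exactly $\yY = \fF_0 \times_{\mM(r,d)} \fF^{-1}$. Over the open locus $\fF_0^{\ast}$ the zero locus of this section is $\zZ \cap \fF_0^{\ast}$, which by Lemma~\ref{lem:Zpair}(i) is smooth of dimension $d + r^2(g-1)$; as this coincides with the expected dimension $\dim\mM(r,d) + \rank\fF_0 - \rank\fF_1$, the section is regular there. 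Lemma~\ref{lem:Z} then gives $\{dw=0\} = \zZ \cap \fF_0^{\ast}$, embedded by the zero section $u'=0$, over $\fF_0^{\ast}$; combined with the previous move this yields $\wW \cap \yY_{L_l^{+}\rm{\mathchar`-ss}} \subset \fF_0$. Finally, along the $\GL(\mathbb{V})$-invariant zero section $\fF_0 = \{u'=0\}$ the equation $dw = 0$ reduces to $\psi(u) = 0$, so $\wW \cap \fF_0 = \zZ$, and since $\fF_0$ is $\GL(\mathbb{V})$-invariant and $i_0^{\ast}L_l^{+}$ is the linearisation of (\ref{line:Lpm}), the limits relevant to Hilbert--Mumford stay in $\fF_0$ and $\yY_{L_l^{+}\rm{\mathchar`-ss}} \cap \fF_0 = (\fF_0)_{L_l^{+}\rm{\mathchar`-ss}}$. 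Hence $\wW_{L_l^{+}\rm{\mathchar`-ss}} = \zZ_{L_l^{+}\rm{\mathchar`-ss}} = M^{\rm{T}}(r,d)$ by (\ref{Tpair:GIT}); the second isomorphism follows by the symmetric argument with the diagonal cocharacter $t\mapsto t\cdot\mathrm{id}_{\mathbb{V}}$, the bundle $\fF^{-1}$, and Lemma~\ref{lem:Zpair}(ii), or directly from $\mathbb{D}$. I expect the genuine difficulty to lie in the second move: one must ensure that $w$ is truly of Kn\"orrer type, i.e. that $\psi$ restricts to a \emph{regular} section over $\fF_0^{\ast}$ with smooth zero locus, which is precisely where the dimension count of Lemma~\ref{lem:Zpair}(i) is used; away from $u=0$ regularity can fail, so the semistability input of the first move is indispensable for staying in the good locus.
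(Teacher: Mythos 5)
Your proposal follows the paper's own proof essentially step for step: the paper likewise first uses the anti-diagonal cocharacter and the Hilbert--Mumford criterion to show $\wW_{L_l^+\rm{\mathchar`-ss}}\subset \fF_0^{\ast}\times_{\mM(r,d)}\fF^{-1}$, and then applies Lemma~\ref{lem:Z} over $\fF_0^{\ast}$, where Lemma~\ref{lem:Zpair} guarantees $\psi$ is a regular section with smooth zero locus, to conclude $\wW_{L_l^+\rm{\mathchar`-ss}}\subset \zZ$ and hence $\wW_{L_l^+\rm{\mathchar`-ss}}=\zZ_{L_l^+\rm{\mathchar`-ss}}=M^{\rm{T}}(r,d)$. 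The argument is correct and the additional remarks (the explicit dimension count for regularity, the compatibility of the two GIT stabilities on $\fF_0$) only make explicit what the paper leaves implicit.
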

\begin{proof}
We only show the 
first isomorphism
as the latter is similarly proved. 
By the definition of $w$, 
the image of $\zZ \subset \fF_0 \stackrel{i_0}{\hookrightarrow} \yY$
lies in the critical locus $\wW \subset \yY$. 
As $\zZ \subset \wW$ is a closed immersion, 
we have 
\begin{align*}
M^{\rm{T}}(r, d)=\zZ_{L_l^+\rm{\mathchar`-ss}} \hookrightarrow
\wW_{L_l^+\rm{\mathchar`-ss}}.
\end{align*}
It is enough to show that 
$\wW_{L_l^+\rm{\mathchar`-ss}} \subset \zZ$, as if this is the case
we have
\begin{align*}
\wW_{L_l^+\rm{\mathchar`-ss}} \subset \zZ_{L_l^+\rm{\mathchar`-ss}}
=M^{\rm{T}}(r, d). 
\end{align*}
Below we show that $\wW_{L_l^+\rm{\mathchar`-ss}} \subset \zZ$. 
By the $L_l^+$-stability, 
we have 
\begin{align}\label{L+:stab}
\wW_{L_l^+\rm{\mathchar`-ss}}
\subset (\fF_0^{\ast} \times_{\mM(r, d)}\fF^{-1}) \cap \wW. 
\end{align}
Indeed for 
the anti-diagonal 
$\lambda \colon \mathbb{C}^{\ast} \to \GL(\mathbb{V})$
and 
a point 
$x \in \widetilde{\fF}^{-1} \stackrel{i^{-1}}{\hookrightarrow} 
\widetilde{\yY}$, 
as $\lambda$ acts on $\widetilde{\fF}^{-1}$ with weight one, 
we have
\begin{align*}
y \cneq \lim_{t \to 0} \lambda(t)(x) \in 
\qQ \subset \widetilde{\yY}
\end{align*}
where the latter is the zero section of the 
projection $\widetilde{\yY} \to \qQ$. 
Then we have 
\begin{align*}
\mathrm{weight}_{\lambda}(L_l^+|_{y})=
\varepsilon \cdot \langle \lambda, \chi_0\rangle=
-\varepsilon \cdot \dim \mathbb{V}<0.
\end{align*}
Therefore $x$ is not $L_l^+$-semistable, and 
the inclusion (\ref{L+:stab}) holds. 

By Lemma~\ref{lem:Zpair},
the map 
\begin{align*}
\psi \colon \fF_0^{\ast} \to 
\fF_0^{\ast} \times_{\mM(r, d)} \fF_1
\end{align*}
is a regular section 
of the vector bundle 
$\fF_0^{\ast} \times_{\mM(r, d)}\fF_1 \to \fF_0^{\ast}$, 
whose zero locus 
is smooth. 
Therefore by Lemma~\ref{lem:Z}, we have 
\begin{align*}
(\fF_0^{\ast} \times_{\mM(r, d)}\fF^{-1}) \cap \wW=
\zZ \cap \fF_0^{\ast} \subset \zZ. 
\end{align*}
\end{proof}

\subsection{Analytic local descriptions}
For a point $p \in M(r, d)$, corresponding to 
a polystable bundle $E$ as in (\ref{polystable}), 
we have 
the associated \textit{Ext-quiver} $Q_{E}$. 
Its vertex set is given by $\{1, 2, \ldots, k\}$, and 
the number of edges from $i$ to $j$ is 
\begin{align*}
\sharp(i \to j)=\dim \Ext^1(E_i, E_j)=\left\{ \begin{array}{ll}
(g-1)r_i r_j, & i\neq j, \\
(g-1)r_i^2 +1, & i=j.
\end{array} \right. 
\end{align*}
Here $r_i=\rank(E_i)$. 
Note that $\sharp(i \to j)=\sharp(j \to i)$, i.e. 
$Q_E$ is a symmetric quiver.
Then we have
\begin{align*}
\Ext^1(E, E)=\mathrm{Rep}(Q_E) \cneq 
\prod_{i \to j } \Hom(V_i, V_j).
\end{align*}
Let $G_E$ be the algebraic group defined by 
\begin{align*}
G_E=\Aut(E)=\prod_{i=1}^k \GL(V_i). 
\end{align*}
The quotient of $\mathrm{Rep}(Q_E)$ by the 
conjugate action of $G_E$ is the moduli 
stack of $Q_E$-representations with dimension vector 
$(\dim V_i)_{1\le i\le k}$. 
We have the morphism to the good moduli space
\begin{align*}
\iota_Q \colon [\mathrm{Rep}(Q_E)/G_E]
\to \mathrm{Rep}(Q_E) \sslash G_E.
\end{align*}
The following is a special case of the main result of~\cite{MR3811778}. 
\begin{thm}\emph{(\cite{MR3811778})}\label{thm:analytic}
There exist analytic open neighborhoods
$p \in \vV \subset M(r, d)$, 
$0 \in \uU \subset \mathrm{Rep}(Q_E) \sslash G_E$
and commutative isomorphisms
\begin{align*}
\xymatrix{
\iota_Q^{-1}(\uU) \ar[r]^-{\cong}_-{\alpha} \ar[d]^-{\iota_Q}& \iota_M^{-1}(\vV) 
\ar[d]^-{\iota_M} \\
\uU \ar[r]^-{\cong} & \vV. 
}
\end{align*}
Here the map $\alpha$ sends $0 \in \iota_Q^{-1}(\uU)$ to 
$[E] \in \iota_M^{-1}(\vV)$. 
\end{thm}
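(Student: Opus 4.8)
The plan is to obtain the local model by combining the deformation theory of the polystable bundle $E$ with an analytic Luna-type slice theorem for the good moduli space morphism $\iota_M$, which is precisely the general structure theorem of \cite{MR3811778} specialized to the case of a curve. Since $C$ is a curve, the deformation theory of $E$ is unobstructed, so the superpotential appearing in the general statement vanishes identically and the local model degenerates to the bare quotient stack $[\mathrm{Rep}(Q_E)/G_E]$. The real content is therefore to produce a $G_E$-equivariant analytic germ of the versal deformation of $E$, identify its base with a neighborhood of $0$ in $\mathrm{Rep}(Q_E)$, and descend the resulting equivalence to the good moduli spaces.

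First I would record the relevant deformation theory. Because each $E_i$ is stable of slope $\mu(E)$ and the $E_i$ are pairwise non-isomorphic, we have $\Hom(E_i,E_j)=\mathbb{C}\cdot\delta_{ij}$, whence $\Aut(E)=\prod_{i=1}^k\GL(V_i)=G_E$ is reductive and $[E]\in M(r,d)$ is a closed point (a polystable object has closed orbit). The tangent space of $\mM(r,d)$ at $[E]$ is
\begin{align*}
\Ext^1(E,E)=\bigoplus_{i,j}\Hom(V_i,V_j)\otimes\Ext^1(E_i,E_j),
\end{align*}
which, by the very definition of the Ext-quiver $Q_E$ (the number of arrows $i\to j$ being $\dim\Ext^1(E_i,E_j)$), is canonically the representation space $\mathrm{Rep}(Q_E)$ with its natural $G_E$-action. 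The obstruction space is $\Ext^2(E,E)$, and since $C$ is a curve $\Ext^{\ge 2}(E,E)=0$; hence the deformation functor of $E$ is unobstructed and its versal base is smooth of dimension $\dim\mathrm{Rep}(Q_E)$.

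Next I would invoke the equivariant analytic slice theorem. The key input is that the versal deformation of $E$ can be chosen $G_E$-equivariantly, with smooth base analytically isomorphic, $G_E$-equivariantly, to a neighborhood of $0\in\mathrm{Rep}(Q_E)$; this is where \cite{MR3811778} does the real work, in general producing a $G_E$-invariant superpotential $W$ on $\mathrm{Rep}(Q_E)$ whose critical locus models the moduli, which here is everything since the higher $A_\infty$-products vanish and $W=0$. Applying the \'etale/analytic local structure of good moduli space morphisms at the closed point $[E]$ with reductive stabilizer $G_E$, one obtains analytic neighborhoods $p\in\vV\subset M(r,d)$ and $0\in\uU\subset\mathrm{Rep}(Q_E)\sslash G_E$ together with a $G_E$-equivariant identification of the slice with the quiver germ. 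This yields the isomorphism $\alpha\colon\iota_Q^{-1}(\uU)\stackrel{\cong}{\to}\iota_M^{-1}(\vV)$ of stacks; by construction $\alpha$ is a morphism over the good moduli spaces, so the square commutes, and it sends the trivial representation $0$, i.e.\ the undeformed bundle, to $[E]$.

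The main obstacle is the equivariant analytic slice step, not the bookkeeping: one must know that the Kuranishi family of $E$ carries a genuine $G_E=\Aut(E)$-action compatible with the gauge action on $\Ext^1(E,E)$, and that the induced map of germs descends to an isomorphism of the good moduli spaces $\mathrm{Rep}(Q_E)\sslash G_E$ and $M(r,d)$ near the respective base points. In the algebraic setting this is the Luna \'etale slice theorem for stacks; promoting it to the stated analytic isomorphism and verifying compatibility with $\iota_M$ and $\iota_Q$ is exactly the technical heart supplied by \cite{MR3811778}. The curve hypothesis is what makes the slice smooth, rather than cut out by $dW$, so that no further analysis of the superpotential is required.
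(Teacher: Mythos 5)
The paper offers no proof of this statement---it is quoted directly as a special case of the main theorem of \cite{MR3811778}---and your proposal correctly reconstructs the ingredients of that cited result: the identification $\Ext^1(E,E)\cong \mathrm{Rep}(Q_E)$ as $G_E$-representations, the reductivity of $\Aut(E)=G_E$ for the polystable $E$, the vanishing of $\Ext^2(E,E)$ on a curve (so the superpotential/relations appearing in the general statement disappear and the local model is the bare quotient stack), and a $G_E$-equivariant analytic slice descending to the good moduli spaces. Since you, like the paper, ultimately defer the genuinely technical step (the equivariant Kuranishi family and its compatibility with $\iota_M$ and $\iota_Q$) to \cite{MR3811778} itself, your account is consistent with how the paper treats the theorem and there is nothing to fault.
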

Let $p_Q$ be the quotient morphism 
\begin{align*}
p_Q \colon \mathrm{Rep}(Q_E)
\to \mathrm{Rep}(Q_E) \sslash G_E. 
\end{align*}
We take $0 \in \uU$ to be a Stein analytic open 
neighborhood, so 
that $p_Q^{-1}(\uU)$ is also Stein. 
Then we have the following lemma: 
\begin{lem}\label{lem:decompose}
For the isomorphism $\alpha$ in Theorem~\ref{thm:analytic},
the complex 
\begin{align*}
\alpha^{\ast}(\fF_0 \stackrel{\psi}{\to} \fF_1)|_{\iota_M^{-1}(\vV)}
\end{align*}
is isomorphic to the direct sum
\begin{align*}
(H^0(E) \otimes \oO_{p_Q^{-1}(\uU)}
 \stackrel{\vartheta}{\to} H^1(E) \otimes \oO_{p_Q^{-1}(\uU)})
\oplus (W \otimes \oO_{p_Q^{-1}(\uU)}
 \stackrel{\mathrm{id}}{\to} W \otimes \oO_{p_Q^{-1}(\uU)})
\end{align*}
for some 
$G_E$-equivariant morphism $\vartheta$ and 
a 
finite dimensional $G_E$-representation $W$. 
\end{lem}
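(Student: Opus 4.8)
The plan is to reduce the statement to the existence of a $G_E$-equivariant minimal model of the two-term complex at the fixed point $0$, and then to strip off the invertible part of the differential. First I would pull back $(\fF_0 \xrightarrow{\psi} \fF_1)$ along the isomorphism $\alpha$ of Theorem~\ref{thm:analytic}. Since $\iota_Q^{-1}(\uU) = [p_Q^{-1}(\uU)/G_E]$, this pullback is a morphism of vector bundles on the quotient stack, i.e. a $G_E$-equivariant morphism of $G_E$-equivariant vector bundles on the Stein space $p_Q^{-1}(\uU)$, which I again denote $\psi \colon \fF_0 \to \fF_1$. The origin $0 \in p_Q^{-1}(\uU)$ is $G_E$-fixed and $\alpha(0) = [E]$, so by cohomology and base change the restriction of the complex to $0$ computes $\dR\Gamma(C, E)$. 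Writing $\psi_0 \cneq \psi|_0$, this gives $G_E$-equivariant identifications $\Ker \psi_0 = H^0(E)$ and $\Cok \psi_0 = H^1(E)$.

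Second, I would split off the invertible part of $\psi$ at $0$. Set $W \cneq \im \psi_0$, a $G_E$-representation, and use reductivity of $G_E$ to choose equivariant splittings $\fF_0|_0 = \Ker\psi_0 \oplus W'$ with $\psi_0|_{W'} \colon W' \xrightarrow{\sim} W$ and $\fF_1|_0 = W \oplus H^1(E)$. Because $p_Q^{-1}(\uU)$ is Stein and $G_E$ is reductive, the evaluation maps from $G_E$-equivariant global sections of $\fF_i$ onto the fibres $\fF_i|_0$ are surjective, so the inclusion $(\psi_0|_{W'})^{-1} \colon W \hookrightarrow \fF_0|_0$ extends to a $G_E$-equivariant bundle map $j_0 \colon W \otimes \oO \to \fF_0$. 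Setting $j_1 \cneq \psi \circ j_0$ produces a map of complexes $(j_0, j_1) \colon (W\otimes\oO \xrightarrow{\mathrm{id}} W\otimes\oO) \to (\fF_0 \xrightarrow{\psi} \fF_1)$ which is fibrewise injective at $0$, hence (after shrinking $\uU$) a subbundle inclusion in each degree. I would then split it off by choosing a $G_E$-equivariant retraction $\rho_1$ of $j_1$ and setting $\rho_0 \cneq \rho_1 \circ \psi$, which is a chain retraction; this yields a $G_E$-equivariant decomposition
\[
(\fF_0 \xrightarrow{\psi} \fF_1) \cong (W\otimes\oO \xrightarrow{\mathrm{id}} W\otimes\oO) \oplus (\fF_0^{\min} \xrightarrow{\psi^{\min}} \fF_1^{\min}).
\]
A direct computation of the restriction to $0$, using $\rho_0|_0 = \rho_1|_0 \circ \psi_0$, gives $\psi^{\min}|_0 = 0$ together with $\fF_0^{\min}|_0 = H^0(E)$ and $\fF_1^{\min}|_0 = H^1(E)$ as $G_E$-representations.

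Finally, I would trivialize the minimal part. Choosing $\uU$ to be stable under the contracting scaling $\mathbb{C}^{\ast}$-action on $\mathrm{Rep}(Q_E)\sslash G_E$, the space $p_Q^{-1}(\uU)$ becomes $G_E$-equivariantly contractible to $0$ via the scaling $x \mapsto tx$, which commutes with the linear $G_E$-action. Hence the $G_E$-equivariant bundles $\fF_i^{\min}$ are equivariantly trivial, identified with their fibres, so $\fF_0^{\min} \cong H^0(E)\otimes\oO$ and $\fF_1^{\min}\cong H^1(E)\otimes\oO$. Setting $\vartheta \cneq \psi^{\min}$ under these identifications yields the claimed decomposition.

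The main obstacle is the equivariant minimal-model step in the analytic, Stein setting: one must produce $G_E$-equivariant global sections evaluating onto the fibre at $0$, verify that the resulting map of complexes is a subbundle inclusion in a neighborhood, and split it off equivariantly, each of which relies on reductivity of $G_E$ together with the Stein property of $p_Q^{-1}(\uU)$. The equivariant triviality of $\fF_i^{\min}$ likewise requires an equivariant Oka--Grauert type input, which I arrange by passing to a scaling-invariant neighborhood so that equivariant bundles descend from the fibre at the fixed point.
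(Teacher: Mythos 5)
Your proposal is correct and rests on the same two pillars as the paper's proof --- reductivity of $G_E$ and the Stein property of $p_Q^{-1}(\uU)$ --- but it executes the reduction in the opposite order. The paper first \emph{asserts} that the pulled-back complex can be written as $(R_0\otimes\oO \xrightarrow{\phi} R_1\otimes\oO)$ for $G_E$-representations $R_i$ (i.e.\ it takes the global equivariant trivialization of both bundles as given), identifies $\Ker(\phi|_0)=H^0(E)$ and $\Cok(\phi|_0)=H^1(E)$, splits $R_i$ using reductivity, and then kills the off-diagonal blocks by row and column operations after shrinking $\uU$ so that $\phi_{22}$ is invertible. You instead build the acyclic summand $(W\otimes\oO\xrightarrow{\id}W\otimes\oO)$ by hand, extending the fibre data at $0$ to equivariant sections and splitting off via an equivariant chain retraction, and only afterwards trivialize the remaining minimal complex. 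What your route buys is that it actually justifies the trivialization the paper leaves implicit; what it costs is the final step, where you invoke an equivariant Oka--Grauert/contraction argument that is heavier than needed --- the same evaluation-surjectivity argument you already used (extend the identifications $\fF_i^{\min}|_0\cong H^i(E)$ to equivariant maps $H^i(E)\otimes\oO\to\fF_i^{\min}$, which are isomorphisms on a saturated neighborhood of $p_Q^{-1}(0)$) would finish the proof without any appeal to a scaling-invariant neighborhood. One small point worth making explicit in both your shrinking steps: the locus where your maps are fibrewise injective (resp.\ isomorphisms) is a $G_E$-invariant open set containing the fixed point $0$, and since $p_Q$ is a good quotient this set automatically contains $p_Q^{-1}(\uU')$ for a smaller open $0\in\uU'\subset\uU$; this is needed because $p_Q^{-1}(0)$ is strictly larger than $\{0\}$.
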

\begin{proof}
We can write  
\begin{align*}
\alpha^{\ast}(\fF_0 \stackrel{\psi}{\to} \fF_1)|_{\iota_M^{-1}(\vV)}
\cong (R_0 \otimes \oO_{p_Q^{-1}(\uU)}
\stackrel{\phi}{\to} R_1 \otimes \oO_{p_Q^{-1}(\uU)})
\end{align*}
for some finite dimensional 
$G_E$-representations $R_0$, $R_1$
and a $G_E$-equivariant morphism $\phi$. 
As the map $\alpha$ sends $0 \in p_Q^{-1}(\uU)$ to 
$[E] \in \iota_M^{-1}(\vV)$, 
we have 
\begin{align*}
\Ker(\phi|_{0})=H^0(E), \ \Cok(\phi|_{0})=H^1(E)
\end{align*}
as $G_E$-representations. 
As $G_E$ is reductive, 
we have
\begin{align*}
R_0=H^0(E) \oplus W, \ 
R_1=H^1(E) \oplus W
\end{align*}
for some finite dimensional 
$G_E$-representation $W$, and 
the map $\phi|_0$ is written as
\begin{align*}
(H^0(E) \stackrel{0}{\to} H^1(E)) \oplus 
(W \stackrel{\id}{\to} W).
\end{align*}
Then $\phi$ is written as
\begin{align*}
\phi=\left(\begin{array}{cc}
\phi_{11} & \phi_{12} \\
\phi_{21} & \phi_{22}
\end{array} \right) \colon 
(H^0(E) \oplus W) \otimes \oO_{p_Q^{-1}(\uU)}
\to (H^1(E) \oplus W) \otimes \oO_{p_Q^{-1}(\uU)}. 
\end{align*}
Here $\phi_{ij}$ are $G_E$-equivariant morphisms
\begin{align*}
\phi_{ij} \colon U_i \otimes \oO_{p_Q^{-1}(\uU)}
\to U_j' \otimes \oO_{p_Q^{-1}(\uU)}
\end{align*}
for $U_1=H^0(E)$, $U_1'=H^1(E)$, 
$U_2=U_2'=W$, such that 
\begin{align*}
\phi_{ij}|_{0}=0, \ (i, j) \neq (2, 2), \ 
\phi_{22}|_{0}=\id. 
\end{align*}
By shrinking $\uU$ if necessary, we may assume that 
$\phi_{22}$ is an isomorphism. 
Then by replacing $\phi$ by an automorphism of 
$W \otimes \oO_{p_Q^{-1}(\uU)}$, we 
may assume that $\phi_{22}=\id$. 
Then by the following replacement of $\phi$
by automorphisms of both sides
\begin{align*}
\left(\begin{array}{cc}
\id & -\phi_{12} \\
0 & \id
\end{array} \right)
\left(\begin{array}{cc}
\phi_{11} & \phi_{12} \\
\phi_{21} & \phi_{22}
\end{array} \right)
\left(\begin{array}{cc}
1 & 0 \\
-\phi_{21} & 1
\end{array} \right)
\end{align*}
we may assume that $\phi_{12}=0$, $\phi_{21}=0$. 
By setting $\vartheta=\phi_{11}$, we obtain the lemma. 
\end{proof}

In what follows, we assume that $d\ge 0$. 
Then we have 
\begin{align*}
h^0(E_{i}) - h^1(E_i)
=\chi(E_i)=r_i \cdot \frac{d}{r} \ge 0
\end{align*}
for 
$1\le i\le k$. For each $1\le i\le k$, let 
 us fix finite subsets and an injective map
\begin{align*}
\mathbf{E}_{0i} \subset H^0(E_i), \ \mathbf{E}_{i0} \subset H^1(E_i), \
\mathbf{E}_{i0} \hookrightarrow \mathbf{E}_{0i}
\end{align*}
where $\mathbf{E}_{0i}$, $\mathbf{E}_{i0}$ give 
basis of $H^0(E_i)$, $H^1(E_i)$
respectively. 
Let $Q_E^{\dag}$ be the extended quiver as in Section~\ref{subsec:quiver}, 
constructed from $Q=Q_{E}$ and $\mathbf{E}_{0i}$, $\mathbf{E}_{i0}$
as above. 
Then we have
\begin{align*}
H^0(E)=\prod_{(0 \to i) \in \mathbf{E}_{0i}}V_i, \quad 
H^1(E)=\prod_{(0 \to i) \in \mathbf{E}_{i0}}V_i. 
\end{align*}
Therefore in the notation of Section~\ref{subsec:quiver}, we have 
\begin{align*}
&\mathrm{Rep}(Q_E^{\dag})=\mathrm{Rep}(Q_E)
\times H^0(E) \times H^1(E)^{\vee}, \\
&\mathrm{Rep}(Q_E^{\dag})_W=\mathrm{Rep}(Q_E)
\times H^0(E) \times H^1(E)^{\vee} \times W \times W^{\vee}.
\end{align*} 
Here $W$ is a $G_E$-representation in Lemma~\ref{lem:decompose}. 
As in the diagram (\ref{dia:quiver:analytic}), 
we have the map
\begin{align*}
h_{Q} \colon \mathrm{Rep}(Q_E^{\dag})_W
\to \mathrm{Rep}(Q_E)\sslash G_E
\end{align*}
by composing the projection and the quotient map. 
So we have
\begin{align}\label{stack:analytic}
[h_Q^{-1}(\uU)/G_E]
=\left[ \left(p_Q^{-1}(\uU) \times H^0(E) \times H^1(E)^{\vee}
\times W \times W^{\vee}\right)/G_E \right].
\end{align}
Let $\pi_{\yY}$ be the composition of the natural maps
\begin{align*}
\pi_{\yY} \colon 
\yY \to \mM(r, d) \to M(r, d).
\end{align*}
Then Theorem~\ref{thm:analytic}
together with Lemma~\ref{lem:decompose}
imply that 
we have the following commutative isomorphisms
\begin{align}\label{dia:pair:analytic}
\xymatrix{
[h_{Q}^{-1}(\uU)/G_E] 
\ar[r]_-{\xi}^-{\cong} \ar[d]^-{h_Q} & \pi_{\yY}^{-1}(\vV) 
\ar[d]^-{\pi_{\yY}} \\
\uU \ar[r]^-{\cong} & \vV.
}
\end{align}
Under the isomorphism $\xi$, 
the line bundles $L_l^{\pm}$ on $\yY$ 
are pulled back
to the 
$G_E$-equivariant $\mathbb{Q}$-line bundles
\begin{align}\label{line:pullback}
\oO(\chi_0^{\pm \varepsilon})
\in \Pic_{G_E}(h_Q^{-1}(\uU))_{\mathbb{Q}}
\end{align}
where $\chi_0 \colon G_E \to \mathbb{C}^{\ast}$ is given by (\ref{chi0})
for $Q=Q_E$. 
Moreover  
the composition
\begin{align*}
w^{\star} \colon 
[h_Q^{-1}(\uU)/G_E] \stackrel{\xi}{\to} \pi_{\yY}^{-1}(\vV)
\hookrightarrow \yY \stackrel{w}{\to} \mathbb{A}^1
\end{align*}
is given by 
\begin{align}\label{wQ}
w^{\star}(x, u, u', v, v')=
\langle \vartheta|_{x}(u), u' \rangle +\langle v, v' \rangle.
\end{align}
Here $x \in p_Q^{-1}(\uU)$, $u \in H^0(E)$, $u' \in H^1(E)^{\vee}$, 
$v \in W$, $v' \in W^{\vee}$, and $\vartheta$ is 
given in Lemma~\ref{lem:decompose}. 
Then we show the following: 
\begin{thm}\label{thm:dflip}
For $d>0$, the diagram 
\begin{align*}
\xymatrix{
	M^T(r, d)  \ar[dr]_-{\pi^+}&  & M^{\rm{DT}}(r, d)  \ar[ld]^{\pi^-} \\
& M(r, d). &
}
\end{align*}
is an analytic d-critical flip. 
For $d=0$, it is an analytic d-critical flop. 
\end{thm}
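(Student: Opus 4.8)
The plan is to recognize the diagram as a single variation-of-GIT problem on one critical locus, push it into the analytic quiver model supplied by Theorem~\ref{thm:analytic} and Lemma~\ref{lem:decompose}, and then invoke the quiver d-critical flip/flop statement of Proposition~\ref{prop:dcrit2}. Since being an \emph{analytic} d-critical flip (flop) is a condition that can be tested analytic-locally on the base (Definition~\ref{defi:A4}), it suffices to verify it in a neighborhood $\vV$ of an arbitrary point $p \in M(r,d)$, corresponding to a polystable bundle $E=\bigoplus_{i=1}^k V_i \otimes E_i$ as in (\ref{polystable}).

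First I would reorganize the global picture. By Proposition~\ref{prop:MW} we have $M^{\rm{T}}(r,d)=\wW_{L_l^+\rm{\mathchar`-ss}}$ and $M^{\rm{DT}}(r,d)=\wW_{L_l^-\rm{\mathchar`-ss}}$, so the diagram in the theorem is nothing but the wall-crossing of the two GIT quotients of the critical locus $\wW=\{dw=0\}\subset \yY$ attached to the $\mathbb{Q}$-line bundles $L_l^{\pm}=p_{\yY}^{\ast}L_l \otimes \oO(\chi_0^{\pm\varepsilon})$, both mapping to $M(r,d)$ via $\pi_{\yY}$. Over $\vV$, the isomorphism $\xi$ of diagram~(\ref{dia:pair:analytic}) identifies $\pi_{\yY}^{-1}(\vV)$ with $[h_Q^{-1}(\uU)/G_E]$, carries $w$ to the superpotential $w^{\star}$ of the shape (\ref{wQ}) on the extended quiver $Q_E^{\dag}$, and carries the linearizations $L_l^{\pm}$ to $\oO(\chi_0^{\pm\varepsilon})$. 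The summand $W\oplus W^{\vee}$ of Lemma~\ref{lem:decompose} enters $w^{\star}$ only through the nondegenerate pairing $\langle v,v'\rangle$, whose differential forces $v=v'=0$ on $\{dw^{\star}=0\}$; hence this summand contributes only a Kn\"orrer-periodicity factor and the critical locus sits in the slice $\mathrm{Rep}(Q_E^{\dag})$. Thus, in the notation of Section~\ref{subsec:analytic} applied to $Q=Q_E$ and $\vartheta$ of Lemma~\ref{lem:decompose}, the restrictions of $M^{\rm{T}}(r,d)$ and $M^{\rm{DT}}(r,d)$ over $\vV$ are exactly the analytic d-critical loci $M_{(Q_E^{\dag},w),\uU}^{+}$ and $M_{(Q_E^{\dag},w),\uU}^{-}$ of (\ref{MU:dcrit}).

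To apply Proposition~\ref{prop:dcrit2} I must check its two inclusion hypotheses: the $+$-semistable part of the critical locus must lie in the slice $\{u'=0\}$ and the $-$-semistable part in $\{u=0\}$. This is precisely the content of the argument of Proposition~\ref{prop:MW}: using the anti-diagonal one parameter subgroup, $L_l^+$-stability forces the $\fF^{-1}$-component to vanish, so $\wW_{L_l^+\rm{\mathchar`-ss}}\subset \zZ\subset \fF_0$, and symmetrically $\wW_{L_l^-\rm{\mathchar`-ss}}\subset \zZ^{\vee}\subset \fF^{-1}$ (with smoothness of the zero loci coming from Lemma~\ref{lem:Z}). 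Transported through $\xi$, these are exactly the inclusions required, so the hypotheses of Proposition~\ref{prop:dcrit2} hold near $p$. The numerical input is then immediate: $\mathbf{E}_{0i}$ and $\mathbf{E}_{i0}$ are bases of $H^0(E_i)$ and $H^1(E_i)$, whence
\begin{align*}
\sharp \mathbf{E}_{0i}-\sharp \mathbf{E}_{i0}=h^0(E_i)-h^1(E_i)=\chi(E_i)=r_i\cdot \frac{d}{r}.
\end{align*}
For $d>0$ this gives $\sharp \mathbf{E}_{i0}<\sharp \mathbf{E}_{0i}$ for every $i$, and for $d=0$ it gives $\sharp \mathbf{E}_{i0}=\sharp \mathbf{E}_{0i}$. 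Proposition~\ref{prop:dcrit2} then yields that the diagram over $\vV$ is an analytic d-critical flip when $d>0$ and an analytic d-critical flop when $d=0$; as $p$ was arbitrary, the theorem follows.

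I expect the genuine content to be already packaged in the results being invoked, so the main obstacle in assembling the proof is bookkeeping: confirming that the slice (inclusion) conditions of Proposition~\ref{prop:dcrit2} transport correctly through the analytic isomorphism $\xi$, and that the auxiliary $W\oplus W^{\vee}$ factor and the $\mathbb{Q}$-linearizations $\oO(\chi_0^{\pm\varepsilon})$ match up with the quiver character $\chi_0$ of (\ref{chi0}). The one point requiring care is that the slice conditions are a statement about the full critical locus and not merely its semistable part, which is why the stability argument of Proposition~\ref{prop:MW} (rather than a purely infinitesimal computation) is needed; once this is in hand, the flip/flop dichotomy is forced by the sign of $\chi(E_i)$, i.e.\ by the sign of $d$.
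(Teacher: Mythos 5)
Your proposal is correct and follows essentially the same route as the paper: reduce to the analytic local quiver model via Theorem~\ref{thm:analytic} and Lemma~\ref{lem:decompose}, identify the semistable critical loci over $\vV$ with $M^{\pm}_{(Q_E^{\dag},w),\uU}$ using Proposition~\ref{prop:MW} and the diagram (\ref{dia:pair:analytic}), verify the slice inclusions required by Proposition~\ref{prop:dcrit2}, and conclude via the sign of $\chi(E_i)=r_i d/r$. Your added remarks on discarding the $W\oplus W^{\vee}$ factor and on why the inclusion hypotheses need the stability argument rather than an infinitesimal one match the paper's (terser) reasoning.
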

\begin{proof}
Let $p \in M(r, d)$ corresponds to a polystable 
bundle $E$ as in (\ref{polystable}), 
and take $p \in \vV \subset M(r, d)$, $0 \in \uU \subset 
\mathrm{Rep}(Q_E)\sslash G$
as in Theorem~\ref{thm:analytic}. 
Then we have the diagram (\ref{dia:pair:analytic}) as discussed above. 
The critical locus of the function (\ref{wQ})
is the same as the critical locus of 
\begin{align*}
w_Q \colon \left[ \left(p_Q^{-1}(\uU) \times H^0(E) \times H^1(E)^{\vee} \right)/G_E \right] \to \mathbb{C} 
\end{align*}
defined by 
$w_Q(x, u, u')=\langle \vartheta|_{x}(u), u' \rangle$.
Together with the compatibility of linearizations (\ref{line:pullback}), 
in the notation of Section~\ref{subsec:analytic}
we have the commutative isomorphisms
\begin{align}\label{dia:T:dflip}
\xymatrix{
M_{(Q_{E}^{\dag}, w_Q), \uU}^{\pm} \ar[r]^-{\cong} \ar[d]^-{\pi_{Q}^{\pm}} & 
\wW_{L_l^{\pm}\rm{\mathchar`-ss}} \cap \pi_{\yY}^{-1}(\vV) 
\ar[d]^-{\pi_{\yY}} &
(\pi^{\pm})^{-1}(\vV) \ar[l]_-{\cong} \ar[d]^-{\pi^{\pm}}\\
\uU \ar[r]^-{\cong} & \vV  & \vV \ar[l]_-{\id}. 
}
\end{align}
Here the right diagram follows from 
Proposition~\ref{prop:MW}, and 
\begin{align*}
M_{(Q_{E}^{\dag}, w_Q), \uU}^{\pm}=\left[\left((p_Q^{-1}(\uU) \times H^0(E) \times H^1(E)^{\vee})_{\oO(\chi_0^{\pm \varepsilon})\rm{\mathchar`-ss}}\cap \{dw_{Q}=0\}\right)/G_E \right]
\end{align*}
as given in (\ref{MU:dcrit}) for $Q^{\dag}=Q_E^{\dag}$. 
The above isomorphisms imply that 
the assumption in Proposition~\ref{prop:dcrit2}
is satisfied, 
so the result follows from Proposition~\ref{prop:dcrit2}. 
\end{proof}

\section{Derived functors between Thaddeus pair moduli spaces}
In this section, we combine the arguments so far and 
finish the proof of Theorem~\ref{thm:Tpair} (ii). 
We also investigate the fully-faithful functor $\Phi_M$
on the stable locus, and give an explicit description of the 
kernel object. 

\subsection{Window subcategories for Thaddeus pair moduli spaces}
Let $\widetilde{\GL}(\mathbb{V})$ be the algebraic group defined 
in (\ref{Gtilde}) for $G=\GL(\mathbb{V})$
and the diagonal torus $\mathbb{C}^{\ast} \subset \GL(\mathbb{V})$. 
Then $\widetilde{\GL}(\mathbb{V})$ acts on $\widetilde{\yY}$
by
\begin{align*}
(g_1, g_2) \cdot (x, u, u')&=(g_1(x), g_1(u), g_2(u')) \\
&=(g_2(x), g_1(u), g_2(u'))
\end{align*}
for $(g_1, g_2) \in \widetilde{\GL}(\mathbb{V})$, 
 $x \in \qQ$, $u \in \widetilde{\fF}_0|_{x}$ and $u' \in 
\widetilde{\fF}^{-1}|_{x}$.
Here the latter equality holds as 
the diagonal torus $\mathbb{C}^{\ast} \subset \GL(\mathbb{V})$ 
acts on $\qQ$ trivially. 
The function $w$ in (\ref{funct:w}) is $\tau$ semi-invariant, 
where $\tau \colon \widetilde{\GL}(\mathbb{V}) \to \mathbb{C}^{\ast}$ 
is the character $(g_1, g_2) \mapsto g_1 g_2^{-1}$
in the exact sequence (\ref{exact:G}). 

We have the 
KN stratifications
\begin{align}\label{KN:Y}
\widetilde{\yY}=\widetilde{\yY}_{L_l^{\pm}\rm{\mathchar`-ss}}
\sqcup S_1^{\pm} \sqcup S_2^{\pm} \sqcup \cdots, \ 
\yY=\yY_{L_l^{\pm}\rm{\mathchar`-ss}}
\sqcup \sS_1^{\pm} \sqcup \sS_2^{\pm} \sqcup \cdots
\end{align}
with respect to the 
$\GL(\mathbb{V})$-linearizations 
$L_l^{\pm}$, and the 
associated 
window subcategories (see Definition~\ref{defi:window})
\begin{align*}
\cC_{\yY}^{\pm} \subset D_{\mathbb{C}^{\ast}}(\yY, 
w). 
\end{align*}
\begin{prop}\label{prop:equiv:CY}
We have equivalences of triangulated categories
\begin{align}\label{equiv:CY}
\cC_{\yY}^+ \stackrel{\sim}{\to} D^b(M^{\rm{T}}(r, d)), \ 
\cC_{\yY}^- \stackrel{\sim}{\to} D^b(M^{\rm{DT}}(r, d)). 
\end{align}
\end{prop}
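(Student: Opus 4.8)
\emph{Strategy.} The plan is to obtain each equivalence in (\ref{equiv:CY}) by composing the window theorem (Theorem~\ref{thm:window}) with a Kn\"orrer periodicity equivalence (Theorem~\ref{thm:knoer}), exactly along the lines of the construction of the functor $\Phi'$ in the proof of Theorem~\ref{thm:magic}. I treat the $+$ case; the $-$ case is identical after replacing $\widetilde{\fF}_0$, $\psi$ by $\widetilde{\fF}^{-1}$, $\psi^{\vee}$ and $\fF_0^{\ast}$ by $\fF^{-1\ast}$. First I would apply Theorem~\ref{thm:window} to the KN stratification (\ref{KN:Y}) with respect to $L_l^+$. Since $w$ in (\ref{funct:w}) is $\tau$-semi-invariant for the $\widetilde{\GL}(\mathbb{V})$-action, with $\tau$ as in (\ref{exact:G}), the restriction functor yields an equivalence
\[
\cC_{\yY}^+ \stackrel{\sim}{\to} D_{\mathbb{C}^{\ast}}(\yY_{L_l^+\rm{\mathchar`-ss}}, w),
\]
and it remains to identify the right-hand side with $D^b(M^{\rm{T}}(r,d))$.

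\emph{Reduction to a sub-bundle.} Recall from the proof of Proposition~\ref{prop:MW} that the anti-diagonal one-parameter subgroup gives $\yY_{L_l^+\rm{\mathchar`-ss}} \subseteq \fF_0^{\ast} \times_{\mM(r,d)} \fF^{-1}$, and that $\{dw=0\} \cap \yY_{L_l^+\rm{\mathchar`-ss}} = \wW_{L_l^+\rm{\mathchar`-ss}} = M^{\rm{T}}(r,d)$ lies in the zero section $\zZ \cap \fF_0^{\ast}$. Let $(\fF_0^{\ast})_{L_l^+\rm{\mathchar`-ss}} \subseteq \fF_0^{\ast}$ be the semistable locus for the restriction of $L_l^+$ via $i_0$. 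Because the $\mathrm{weight}_{\lambda}(L_l^+)$ at a limit point depends only on the image in $\qQ$ and on $\lambda$, while the existence of the limit becomes more restrictive once the extra coordinate $u'$ is present, one gets the inclusion
\[
(\fF_0^{\ast})_{L_l^+\rm{\mathchar`-ss}} \times_{\mM(r,d)} \fF^{-1} \subseteq \yY_{L_l^+\rm{\mathchar`-ss}},
\]
and the critical locus $M^{\rm{T}}(r,d)$, lying in the zero section over the semistable base, is contained in the smaller open. Since the derived factorization category depends only on an open neighborhood of its critical locus (the principle used for the second equivalence defining $\Phi'$ in Theorem~\ref{thm:magic}), the restriction functor induces an equivalence
\[
D_{\mathbb{C}^{\ast}}(\yY_{L_l^+\rm{\mathchar`-ss}}, w) \stackrel{\sim}{\to} D_{\mathbb{C}^{\ast}}([((\fF_0^{\ast})_{L_l^+\rm{\mathchar`-ss}} \times_{\mM(r,d)} \fF^{-1})/\GL(\mathbb{V})], w).
\]

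\emph{Kn\"orrer periodicity.} I would then apply Theorem~\ref{thm:knoer} on this last open, which is genuinely the total space of the vector bundle $\fF^{-1} \to (\fF_0^{\ast})_{L_l^+\rm{\mathchar`-ss}}$ with the fibrewise-linear potential $w$. By Lemma~\ref{lem:Zpair} the section $\psi$ is regular with smooth zero locus $\zZ \cap \fF_0^{\ast}$, so its restriction has smooth zero locus $M^{\rm{T}}(r,d)$; by Lemma~\ref{F:wone} the auxiliary $\mathbb{C}^{\ast}$ arising from $\widetilde{\GL}(\mathbb{V})$ acts on the fibres of $\widetilde{\fF}^{-1}$ with weight one, so the hypotheses of Theorem~\ref{thm:knoer} are met. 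This produces
\[
D^b(M^{\rm{T}}(r,d)) \stackrel{\sim}{\to} D_{\mathbb{C}^{\ast}}([((\fF_0^{\ast})_{L_l^+\rm{\mathchar`-ss}} \times_{\mM(r,d)} \fF^{-1})/\GL(\mathbb{V})], w),
\]
and composing the three equivalences gives $\cC_{\yY}^+ \stackrel{\sim}{\to} D^b(M^{\rm{T}}(r,d))$, as desired.

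\emph{Main obstacle.} The delicate step is the middle one, namely reconciling GIT semistability on $\yY$ with the Kn\"orrer picture on the bundle $\fF^{-1}$. The difficulty is that $\yY_{L_l^+\rm{\mathchar`-ss}}$ is in general strictly larger than the sub-bundle $(\fF_0^{\ast})_{L_l^+\rm{\mathchar`-ss}} \times_{\mM(r,d)} \fF^{-1}$ — the extra fibre directions can only enlarge the semistable locus — so Kn\"orrer periodicity cannot be applied directly on $\yY_{L_l^+\rm{\mathchar`-ss}}$. The neighborhood principle is precisely what bridges this gap, and its applicability hinges on verifying that every critical point of $w$ in $\yY_{L_l^+\rm{\mathchar`-ss}}$ already lies in the sub-bundle; this is guaranteed by Lemma~\ref{lem:Z} together with Proposition~\ref{prop:MW}.
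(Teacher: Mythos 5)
Your proposal is correct and follows essentially the same route as the paper: window theorem to pass to $D_{\mathbb{C}^{\ast}}(\yY_{L_l^{\pm}\rm{\mathchar`-ss}}, w)$, then the observation that the critical locus lies in the open sub-bundle $(\fF_0)_{L_l^+\rm{\mathchar`-ss}} \times_{\mM(r,d)} \fF^{-1}$ so that restriction is an equivalence, then Kn\"orrer periodicity. The only cosmetic differences are that you work with $\fF_0^{\ast}$ rather than $\fF_0$ (the same open, since the semistable locus avoids the zero section) and that the paper is explicit about using the two splittings $\gamma_0$, $\gamma_1$ for the $+$ and $-$ cases respectively.
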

\begin{proof}
By Theorem~\ref{thm:window}
and its generalization~\cite{HalpRem}, the 
following compositions are equivalences
\begin{align*}
\cC_{\yY}^{\pm} 
\subset D_{\mathbb{C}^{\ast}}(\yY, 
w)
\stackrel{\rm{res}}{\to}
D_{\mathbb{C}^{\ast}}(\yY_{L_l^{\pm}\rm{\mathchar`-ss}}, 
w).
\end{align*}
On the other hand, 
we have the open immersion
\begin{align*}
(\fF_0)_{L_l^+\rm{\mathchar`-ss}} \times_{\mM(r, d)}\fF^{-1}
\subset \yY_{L_l^{+}\rm{\mathchar`-ss}}
\end{align*}
by the definition of GIT stability.
By Proposition~\ref{prop:MW}
the critical locus of $w$
restricted to $\yY_{L_l^{+}\rm{\mathchar`-ss}}$
is $\zZ_{L_l^{+}\rm{\mathchar`-ss}}$, 
which is contained in 
$(\fF_0)_{L_l^+\rm{\mathchar`-ss}} 
\times_{\mM(r, d)}\fF^{-1}$. 
Therefore the restriction functor
\begin{align*}
\mathrm{res} \colon D_{\mathbb{C}^{\ast}}(\yY_{L_l^{+}\rm{\mathchar`-ss}}, 
w)
\to D_{\mathbb{C}^{\ast}}
((\fF_0)_{L_l^+\rm{\mathchar`-ss}} 
\times_{\mM(r, d)}\fF^{-1}, w)
\end{align*}
is an equivalence (see~\cite[Lemma~5.5]{HLKSAM}). 
Then using the splitting $\gamma_0$ in (\ref{split:G}), 
a version of {K}n\"orrer periodicity
in Theorem~\ref{thm:knoer}
implies the equivalence
\begin{align*}
D^b(M^{\rm{T}}(r, d))=D^b(\zZ_{L_l^{+}\rm{\mathchar`-ss}})
\stackrel{\sim}{\to}
D_{\mathbb{C}^{\ast}}
((\fF_0)_{L_l^+\rm{\mathchar`-ss}} 
\times_{\mM(r, d)}\fF^{-1}, w). 
\end{align*}
Therefore we obtain the first equivalence of (\ref{equiv:CY}). 
The second equivalence of (\ref{equiv:CY}) 
also holds by the same argument, using 
the splitting $\gamma_1$ in (\ref{split:G}) instead of $\gamma_0$. 
\end{proof}

Now Theorem~\ref{thm:Tpair} follows from the following 
theorem together with Proposition~\ref{prop:equiv:CY} 
and the isomorphism (\ref{isom:dual}). 
\begin{thm}\label{thm:pair:window}
For $d>0$, 
we have $\cC_{\yY}^- \subset \cC_{\yY}^{+}$. 
For $d=0$, we have $\cC_{\yY}^-=\cC_{\yY}^{+}$. 
\end{thm}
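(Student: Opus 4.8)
The plan is to reduce the global inclusion of window subcategories to the local analytic statement already established in Proposition~\ref{prop:window:analytic}, exploiting the fact that membership in a window subcategory is a condition that can be verified locally on the good moduli space $M(r, d)$. First I would record the elementary but crucial observation that, by Definition~\ref{defi:window}, an object $\eE \in D_{\mathbb{C}^{\ast}}(\yY, w)$ lies in $\cC_{\yY}^+$ precisely when, for every center $Z_\alpha^+$ of the KN stratification (\ref{KN:Y}) taken with respect to $L_l^+$, the derived restriction $\eE|_{Z_\alpha^+}$ is isomorphic to a factorization whose $\lambda_\alpha^+$-weights lie in the prescribed window. Each such condition involves $\eE$ only in a neighborhood of $Z_\alpha^+$, and every center maps under $\pi_{\yY}$ into $M(r, d)$; hence it suffices to verify the weight conditions chart by chart over an analytic cover of $M(r, d)$.

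Next, I would fix $p \in M(r, d)$ corresponding to a polystable bundle $E$ as in (\ref{polystable}) and pass to the analytic neighborhoods $p \in \vV$ and $0 \in \uU$ of Theorem~\ref{thm:analytic}. By Lemma~\ref{lem:decompose} together with the commutative isomorphism (\ref{dia:pair:analytic}), the data $(\pi_{\yY}^{-1}(\vV), w, L_l^{\pm})$ is identified with the quiver model $([h_Q^{-1}(\uU)/G_E], w^{\star}, \oO(\chi_0^{\pm \varepsilon}))$, where $Q = Q_E$ is the Ext-quiver and the sets $\mathbf{E}_{0i} \subset H^0(E_i)$, $\mathbf{E}_{i0} \subset H^1(E_i)$ are as fixed in Section~3. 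Since the KN stratification is the intrinsic $\Theta$-stratification, determined by the stack together with its linearization, this identification matches the centers $Z_\alpha^{\pm}$ lying over $\vV$ with the centers of the local model and identifies the associated weight windows. Consequently the restrictions of $\cC_{\yY}^{\pm}$ to $\pi_{\yY}^{-1}(\vV)$ are carried to the local window subcategories $\cC_{\delta, \uU}^{\pm}$ of (\ref{window:analytic}).

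With this identification in place the argument closes quickly. Given $\eE \in \cC_{\yY}^-$, its restriction to each chart lies in $\cC_{\delta, \uU}^-$, so by Proposition~\ref{prop:window:analytic} it also lies in $\cC_{\delta, \uU}^+$; translating back, $\eE$ satisfies the required $L_l^+$-weight condition at every center $Z_\alpha^+$ lying over $\vV$. Ranging over an analytic cover $\{\vV\}$ of $M(r, d)$ accounts for all centers of the $+$ stratification, so $\eE \in \cC_{\yY}^+$, giving $\cC_{\yY}^- \subset \cC_{\yY}^+$. When $d = 0$ one has $\chi(E_i) = 0$, hence $h^0(E_i) = h^1(E_i)$ and the inclusion $\mathbf{E}_{i0} \hookrightarrow \mathbf{E}_{0i}$ is a bijection, i.e. $\mathbf{E}_{0i} = \mathbf{E}_{i0}$ for all $i$; the equality clause of Proposition~\ref{prop:window:analytic} then yields $\cC_{\delta, \uU}^- = \cC_{\delta, \uU}^+$ locally and thus $\cC_{\yY}^- = \cC_{\yY}^+$ globally. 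I expect the main obstacle to be the second step: making rigorous that the analytic isomorphism (\ref{dia:pair:analytic}) transports the global KN stratification together with its weight-theoretic data to the local quiver stratification, so that window membership is genuinely detected locally and coincides with the quiver windows on each chart.
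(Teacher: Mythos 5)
Your proposal is correct and follows essentially the same route as the paper: both reduce the global statement to Proposition~\ref{prop:window:analytic} by using the analytic isomorphism $\xi$ of (\ref{dia:pair:analytic}) and the compatibility of linearizations (\ref{line:pullback}) to match the KN stratifications (\ref{KN:Y}) over $\vV$ with the quiver-model stratifications, so that window membership is checked chart by chart over $M(r,d)$. Your explicit justification of the $d=0$ case via $\chi(E_i)=0\Rightarrow \mathbf{E}_{0i}=\mathbf{E}_{i0}$ is a correct elaboration of the paper's terser ``similarly.''
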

\begin{proof}
Let us take $p \in \vV \subset M(r, d)$ and consider the diagram (\ref{dia:pair:analytic})
as before. 
Then by the compatibility of linearizations (\ref{line:pullback}), 
the KN stratifications (\ref{KN:Y}) restricted to $\pi_{\yY}^{-1}(\vV)$
coincides with the KN stratifications (\ref{KN:quiver})
for $Q^{\dag}=Q_{E}^{\dag}$ restricted to 
$[h_Q^{-1}(\uU)/G_E]$ under the isomorphism $\xi$. 
Therefore 
for an object $\eE \in D_{\mathbb{C}^{\ast}}(\yY, w)$, 
it is an object in $\cC_{\yY}^{\pm}$ if and only if 
for any point $p \in M(r, d)$ and 
an analytic open neighborhood $p \in \vV \subset M(r, d)$, 
we have 
\begin{align*}
\xi^{\ast}(\eE|_{\pi_{\yY}^{-1}(\vV)}) \in \cC_{\delta, \uU}^{\pm}
\subset D_{\mathbb{C}^{\ast}}([h_{Q}^{-1}(\uU)/G_E], w^{\star})
\end{align*}
where $w^{\star}$ is given by (\ref{wQ}) and 
$\cC_{\delta, \uU}^{\pm}$ are the window subcategories 
(\ref{window:analytic})
for the quiver $Q^{\dag}=Q_{E}^{\dag}$. 
Since $\cC_{\delta, \uU}^- \subset \cC_{\delta, \uU}^+$ 
by Proposition~\ref{prop:window:analytic}, we
conclude $\cC_{\yY}^- \subset \cC_{\yY}^+$
for $d>0$. Similarly we have 
$\cC_{\yY}^-=\cC_{\yY}^+$ for $d=0$. 
\end{proof}

\subsection{Geometry on the stable locus}
In what follows, 
we will describe the explicit form of the fully faithful functor 
constructed in Theorem \ref{thm:Tpair} on the stable locus. 

Let $p \in M(r, d)$ be a point corresponding to 
a stable vector bundle $E$, 
$Q=Q_{E}$ the associated Ext-quiver, 
and $G_{E}=T=\mathbb{C}^*$. 
Note that 
$Q$ has only one vertex with $c$-loops
for $c \cneq \ext^1(E, E)$.  
Let $W$ be a $\mathbb{C}^*$-representation 
appeared in Lemma \ref{lem:decompose}, 
which is of weight one by Lemma~\ref{F:wone}. 
Hence we have 
\[
\mathrm{Rep}(Q^{\dag})_W
=U \times V^+ \times V^-, 
\]
where $U$, 
$V^{+}$, $V^{-}$ are $\mathbb{C}^{\ast}$-representations 
of weights $0, +1, -1$, respectively, given by 
\begin{align*}
U \cneq \Ext^1(E, E), \ V^+ \cneq H^0(E) \times W, \
V^- \cneq H^1(E) \times W^{\vee}.
\end{align*}
Below we set $a:=\rk\mathcal{F}_{0}=\dim V^+$, 
$b:=\rk\mathcal{F}_{1}=\dim V^-$, 
and assume 
that $a \ge b$, which is equivalent to $d \ge 0$. 

We take analytic open neighborhoods
\begin{align*}
p \in \mathcal{V} \subset M(r, d), \ 
0 \in \mathcal{U} \subset \mathrm{Rep}(Q) \sslash G=U
\end{align*}
as in Theorem \ref{thm:analytic}. 
Then we have 
\[
[h_{Q}^{-1}(\mathcal{U})^{\pm}_{\rm{ss}}/G]=Y^{\pm}_{\mathcal{U}}
\subset [h_Q^{-1}(\uU)/G]=\left[(V^+ \times V^-)_{\uU}/\mathbb{C}^{\ast}\right]
\]
where 
$h_{Q}^{-1}(\mathcal{U})^{\pm}_{\rm{ss}} \subset h_Q^{-1}(\uU)$
are semistable locus with respect to 
the characters $\pm \colon \mathbb{C}^{\ast} \to \mathbb{C}^{\ast}$, 
$(-)_{\uU}$ means $(-) \times \uU$ and 
\begin{align*}
&Y^+:=\Tot_{\mathbb{P}(V^+)}(\mathcal{O}_{\mathbb{P}(V^+)}(-1) \otimes V^-), \\
&Y^-:=\Tot_{\mathbb{P}(V^-)}(\mathcal{O}_{\mathbb{P}(V^-)}(-1) \otimes V^+).
\end{align*}
The function $w^{\star}$ on $h_Q^{-1}(\uU)$ given in (\ref{wQ})
is written as 
\begin{align}\label{write:w}
w^{\star}=\sum_{1\le i \le 1, 1\le j \le b}x_i y_j
w_{ij}(\vec{u})
\end{align}
where $\vec{x}$, $\vec{y}$ are coordinates of 
$V^+$, $V^-$, and $w_{ij}(\vec{u}) \in \Gamma(\oO_{\uU})$.

We also set
\begin{align*}
Z:=(V^+\times V^-)\sslash \mathbb{C}^{\ast}=
\Spec \mathbb{C}[x_{i}y_{j} : 1 \leq i \leq a, 1 \leq i \leq b]. 
\end{align*}
Note that $Y^{\pm}$, $Z$
are GIT quotients of $V^+ \times V^-$
by the above $\mathbb{C}^{\ast}$-action, and 
we have the standard toric flip diagram
\begin{align}\label{diagram:flip}
\xymatrix{
Y^{+} \ar[rd] & & \ar[ld] Y^- \\
& Z. &
}
\end{align}


\subsection{Descriptions of the kernel object: local case}
In the case of the previous subsection, 
a pair of the KN stratifications (\ref{KN:quiver}) 
is an elementary wall crossing 
in the sense of \cite[Definition 3.5.1]{MR3895631}, 
and we can describe the corresponding window subcategories 
\[
\mathcal{C}_{\delta, \mathcal{U}}^{\pm} 
\subset 
D_{\mathbb{C}^*}([h_{Q}^{-1}(\mathcal{U})/\mathbb{C}^*], w^{\star})
\]
in the following way: 
For an interval $I \subset \mathbb{R}$, 
define the subcategory 
$\mathcal{W}_{I} \subset 
D_{\mathbb{C}^*}([h_{Q}^{-1}(\mathcal{U})/\mathbb{C}^*], w^{\star})$ 
to be the triangulated subcategory 
generated by the factorizations (\ref{factorization}) 
where $\mathcal{P}_{j}$ are of the form 
\[
\mathcal{P}_{j}
=\bigoplus_{i \in I \cap \mathbb{Z}} 
\mathcal{O}_{h_{Q}^{-1}(\mathcal{U})}(i)^{\oplus l_{i, j}}, 
\quad j=0, 1, 
\quad l_{i, j} \in \mathbb{Z}_{\geq0} 
\]
as $\mathbb{C}^*$-equivariant sheaves. 
Then we have 
$
\mathcal{C}_{\delta, \mathcal{U}}^{\pm}=\mathcal{W}_{I^{\pm}}, 
$
where the intervals $I^{\pm} \subset \mathbb{R}$ 
are defined as follows. 

\begin{align*}
I^-:=\left[-\ceil*{\frac{b}{2}}+1, 
\floor*{\frac{b}{2}}+1 \right),
\quad 
I^+:=\left[-\ceil*{\frac{a}{2}}+1, 
\floor*{\frac{a}{2}}+1 \right).
\end{align*}

As we assumed $a\ge b$, we have 
$\wW_{I^-} \subset \wW_{I^+}$. 
Let us consider the fully faithful functor 
$\Psi \colon 
D_{\mathbb{C}^*}(Y^-_{\mathcal{U}}, w^{\star})
\to D_{\mathbb{C}^*}(Y^+_{\mathcal{U}}, w^{\star})$ 
which fits into the following commutative diagram: 
\[
\xymatrix{
&D_{\mathbb{C}^*}(Y^-_{\mathcal{U}}, w^{\star}) 
\ar@{^{(}->}[r]^{\Psi} 
&D_{\mathbb{C}^*}(Y^+_{\mathcal{U}}, w^{\star}) \\
&\mathcal{W}_{I^-} \ar@{^{(}->}[r] \ar[u]^{\rm{res}}_{\cong} 
&\mathcal{W}_{I^+} \ar[u]_{\rm{res}}^{\cong}. 
}
\]

First we will describe the semi-orthogonal complement 
of the functor $\Psi$. 
Let 
\[
g \colon \mathbb{P}(V^+)_{\mathcal{U}} \to \mathcal{U}, 
\quad 
\iota^+ \colon \mathbb{P}(V^+)_{\mathcal{U}} \hookrightarrow
 Y^+_{\mathcal{U}} 
\]
be the projection, the inclusion as the zero section, 
respectively. 
For an integer $i \in \mathbb{Z}$, 
define the functor $\Upsilon^{i}$ as 
\[
\Upsilon^{i}
:=(- \otimes \mathcal{O}_{Y^+_{\mathcal{U}}}(i)) \circ \iota^+_{*} g^* \colon 
D_{\mathbb{C}^*}(\mathcal{U}, 0) \to 
D_{\mathbb{C}^*}(Y^+_{\mathcal{U}}, w^{\star}). 
\]
Then we have the following result due to 
\cite{MR3327537, MR3895631}. 

\begin{thm}[\cite{MR3327537, MR3895631}]\label{thm:windowsod}
For each integer $i \in \mathbb{Z}$, 
the functor $\Upsilon^{i}$ is fully faithful. 
Furthermore, we have the semi-orthogonal decomposition 
\[
D_{\mathbb{C}^*}(Y^+_{\uU}, w^{\star})
=\left\langle
\Upsilon^{-\lfloor \frac{a}{2} \rfloor}, \cdots, 
\Upsilon^{-\lfloor \frac{b}{2} \rfloor -1}, 
\Psi, 
\Upsilon^{-\lfloor \frac{b}{2} \rfloor}, \cdots, 
\Upsilon^{\lceil \frac{a}{2} \rceil -b-1}
\right\rangle. 
\]
\end{thm}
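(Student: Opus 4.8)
The plan is to read this off as the window semiorthogonal decomposition attached to the \emph{elementary wall-crossing} of the single $\mathbb{C}^*$-action on $V^+\oplus V^-$ over $\mathcal{U}$, in the sense of \cite[Definition~3.5.1]{MR3895631}. First I would record the Kempf--Ness data. The $\mathbb{C}^*$-fixed locus of $V^+\times V^-$ over $\mathcal{U}$ is the weight-zero part $Z=\mathcal{U}$, and along $Z$ the normal directions split as $V^+$ (weight $+1$, rank $a$) and $V^-$ (weight $-1$, rank $b$). Hence each of $\mathcal{O}(\chi_0^{\pm})$ cuts out a single KN stratum with the \emph{same} center $Z=\mathcal{U}$, and with $\eta^+=a$, $\eta^-=b$; these are exactly the widths of the windows $\mathcal{W}_{I^+}$ and $\mathcal{W}_{I^-}$. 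The inclusion $\mathcal{W}_{I^-}\subset\mathcal{W}_{I^+}$ defining $\Psi$ is then immediate from $I^-\cap\mathbb{Z}\subseteq I^+\cap\mathbb{Z}$ (valid since $a\ge b$), while the equivalences $\mathcal{W}_{I^\pm}\xrightarrow{\sim}D_{\mathbb{C}^*}(Y^\pm_{\mathcal{U}},w^{\star})$ are Theorem~\ref{thm:window}.

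Next I would prove full-faithfulness of $\Upsilon^i$ by a direct Koszul computation. Since twisting by $\mathcal{O}(i)$ is an autoequivalence, it suffices to treat $\iota^+_*g^*$. The zero section $\iota^+\colon\mathbb{P}(V^+)_{\mathcal{U}}\hookrightarrow Y^+_{\mathcal{U}}$ is the zero locus of the tautological section of the rank-$b$ bundle $N=\mathcal{O}(-1)\otimes V^-$, and because $w^{\star}$ vanishes on $\{v^-=0\}$ the computation is the same as in the $w^\star=0$ case. The derived self-intersection formula together with $(\iota^+_*,\iota^{+!})$-adjunction gives, for $A,B\in D_{\mathbb{C}^*}(\mathcal{U},0)$,
\[
\RHom_{Y^+_{\mathcal{U}}}(\iota^+_*g^*A,\iota^+_*g^*B)\cong\bigoplus_{j=0}^{b}\RHom_{\mathbb{P}(V^+)_{\mathcal{U}}}\!\bigl(g^*A,\ g^*B\otimes\textstyle\bigwedge^{j}V^-\otimes\mathcal{O}(-j)\bigr)[-j].
\]
Since $Rg_*\mathcal{O}_{\mathbb{P}(V^+)}(-j)=0$ for $1\le j\le a-1$ and the decomposition case is $a>b$, every term with $1\le j\le b\le a-1$ vanishes, so only $j=0$ survives and $Rg_*g^*B\cong B$ yields $\RHom_{Y^+_{\mathcal{U}}}(\iota^+_*g^*A,\iota^+_*g^*B)\cong\RHom_{\mathcal{U}}(A,B)$. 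Thus each $\Upsilon^i$ is fully faithful.

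For the decomposition itself I would apply the window-shift machinery of \cite{MR3327537, MR3895631}: widening the window one integer weight at a time from $I^-$ to $I^+$, each newly admitted weight contributes, via the grade-restriction triangles, a residual admissible subcategory which is the image of $D_{\mathbb{C}^*}(Z,0)=D_{\mathbb{C}^*}(\mathcal{U},0)$ under $\iota^+_*g^*(-)\otimes\mathcal{O}(i)$, that is, under some $\Upsilon^i$. The surviving middle factor is $\mathcal{W}_{I^-}=\Psi\bigl(D_{\mathbb{C}^*}(Y^-_{\mathcal{U}},w^{\star})\bigr)$, and semiorthogonality between consecutive pieces follows from the weight bounds defining the windows, which separate the $\mathbb{C}^*$-weights of the $Z$-restrictions of objects from distinct pieces; the same bounds dictate the left-to-right ordering of the $\Upsilon^i$ and the placement of $\Psi$.

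I expect the main obstacle to be the combinatorial bookkeeping that pins the residual twists to \emph{exactly} the stated range $-\lfloor a/2\rfloor,\dots,\lceil a/2\rceil-b-1$ with $\Psi$ inserted after the $\lfloor b/2\rfloor$-th slot. The twist $i$ attached to a residual category differs from the naive window weight by the contribution of $\det N=\det(\mathcal{O}(-1)\otimes V^-)$ through $\iota^+_*$, and reconciling this shift with the half-integer centering of the intervals $I^{\pm}$ is precisely where the ceilings and floors must be tracked carefully. A secondary point, which I would verify but do not expect to be serious, is that all identifications take place in the $\mathbb{C}^*$-equivariant derived factorization category rather than merely in $D^b$; this is legitimate because $w^{\star}$ vanishes on $\{v^-=0\}$ and, as the factorization category depends only on an analytic neighborhood of the critical locus, the residual objects are genuinely supported where the potential is trivial.
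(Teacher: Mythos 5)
The paper does not actually prove this statement: Theorem~\ref{thm:windowsod} is imported as a citation from \cite{MR3327537, MR3895631}, being exactly the semi-orthogonal decomposition attached to an elementary wall crossing in the sense of \cite[Definition~3.5.1]{MR3895631}, and no argument appears in the text. Your proposal reconstructs the intended argument from those references --- the KN data with a single stratum on each side and $\eta^{+}=a$, $\eta^{-}=b$, the window equivalences, and the window-shift mechanism producing $a-b$ residual copies of $D_{\mathbb{C}^*}(\mathcal{U},0)$ --- so you are matching the paper's route rather than replacing it, and your identification of the twist bookkeeping as the only genuinely delicate point is accurate.

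Two caveats on the full-faithfulness step. First, the claim that the computation ``is the same as in the $w^{\star}=0$ case'' because $w^{\star}$ vanishes on $\{v^-=0\}$ is not literally correct: in the factorization category, $\dL\iota^{+*}\iota^{+}_{*}$ is computed by the Koszul factorization of $w^{\star}=\langle s,t\rangle$ (with $s$ the tautological section of $N=\mathcal{O}(-1)\otimes V^-$ and $t=(\sum_i x_iw_{ij}(\vec{u}))_j$), so the derived self-intersection is a twisted complex on $\bigoplus_{j}\bigwedge^{j}N[-j]$ carrying an extra differential given by contraction with $t$, not a direct sum. Your conclusion survives because after applying $\dR g_{*}$ every term with $1\le j\le b$ already vanishes when $a>b$, so the extra differential acts on nothing; but the argument should be phrased as the degeneration of that spectral sequence. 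Second, the vanishing genuinely requires $a>b$: when $a=b$ the top term $\bigwedge^{b}N$ contributes (e.g.\ for $a=b=1$ one finds $\RHom(\Upsilon^0\oO_{\uU},\Upsilon^0\oO_{\uU})$ has $\hH^0\cong\oO_{\uU}/(w_{11})$), so $\Upsilon^i$ is not fully faithful there. This is harmless for the decomposition, which contains no $\Upsilon^i$ when $a=b$, but the first assertion of the theorem must be read with the implicit hypothesis $a>b$, and your write-up should say so explicitly.
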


Next we will determine the kernel of the functor $\Psi$. 
For this purpose, we set $W:=Y^+ \times_{Z}  Y^-$
in the diagram (\ref{diagram:flip}),  
and $q^{\pm} \colon W \to Y^{\pm}$ 
be the projections. 
We have the Fourier-Mukai functor
\begin{align*}
\Phi^{\oO_{W_{\uU}}} =\dR q^{+}_{\ast} \dL q^{-\ast} \colon 
D_{\mathbb{C}^*}(Y^-_{\mathcal{U}}, w^{\star})
\to D_{\mathbb{C}^*}(Y^+_{\mathcal{U}}, w^{\star}).
\end{align*}
Now we prove the following. 

\begin{prop}\label{lem:kernelpsi}
We have an isomorphism of functors
\[
\Psi \cong \left(
- \otimes \mathcal{O}_{Y^+_{\mathcal{U}}}\left(
-\ceil*{\frac{b}{2}} +1 
\right)
\right) \circ 
\Phi^{\mathcal{O}_{W_{\mathcal{U}}}} \circ 
\left(
- \otimes \mathcal{O}_{Y^-_{\mathcal{U}}}
\left(
-\ceil*{\frac{b}{2}}+1 
\right)
\right). 
\]
\end{prop}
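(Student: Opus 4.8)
The plan is to verify the claimed isomorphism on a set of generators and then upgrade object-wise agreement to an isomorphism of functors. Recall from the previous subsection that $\mathcal{C}^{-}_{\delta,\mathcal{U}}=\mathcal{W}_{I^-}$ is generated, as a triangulated subcategory, by the line bundles $\mathcal{O}(i)$ with $i\in I^-\cap\mathbb{Z}$, and that restriction $\mathcal{W}_{I^-}\xrightarrow{\sim}D_{\mathbb{C}^{\ast}}(Y^-_{\mathcal{U}},w^{\star})$ is an equivalence; hence $D_{\mathbb{C}^{\ast}}(Y^-_{\mathcal{U}},w^{\star})$ is generated by the $\mathcal{O}_{Y^-_{\mathcal{U}}}(i)$, $i\in I^-\cap\mathbb{Z}$. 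By the very definition of $\Psi$ as $\mathrm{res}^+\circ\iota\circ(\mathrm{res}^-)^{-1}$, where $\iota\colon\mathcal{W}_{I^-}\hookrightarrow\mathcal{W}_{I^+}$, one has $\Psi(\mathcal{O}_{Y^-_{\mathcal{U}}}(i))\cong\mathcal{O}_{Y^+_{\mathcal{U}}}(i)$, the common restriction of the global line bundle $\mathcal{O}(i)\in\mathcal{W}_{I^-}\subset\mathcal{W}_{I^+}$. Thus it suffices to compute the right-hand functor, which I denote $\Psi'$, on the same generators and to match the result with this window lift.

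First I would record that $\Phi^{\mathcal{O}_{W_{\mathcal{U}}}}$ is a well-defined functor of factorization categories: since $w^{\star}$ of (\ref{write:w}) is pulled back from $Z_{\mathcal{U}}$ and $W=Y^+\times_Z Y^-$, the two pullbacks $(q^+)^{\ast}w^{\star}$ and $(q^-)^{\ast}w^{\star}$ agree on $W_{\mathcal{U}}$, so $\mathcal{O}_{W_{\mathcal{U}}}$ is a legitimate weight-zero kernel. Then I would compute $\Phi^{\mathcal{O}_{W_{\mathcal{U}}}}$ on line bundles using the toric geometry of the flip (\ref{diagram:flip}): the map $q^+\colon W\to Y^+$ (resp.\ $q^-\colon W\to Y^-$) is the blow-up along the zero section $\mathbb{P}(V^+)\subset Y^+$ of codimension $b$ (resp.\ $\mathbb{P}(V^-)\subset Y^-$ of codimension $a$), with exceptional divisor $\mathbb{P}(V^+)\times\mathbb{P}(V^-)$, and $q^{+\ast}\mathcal{O}_{Y^+}(1)$ and $q^{-\ast}\mathcal{O}_{Y^-}(1)$ differ by a multiple of this divisor. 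By the projection formula together with the cohomology of the projective-space fibers of $q^+$ (equivalently, Orlov's blow-up computation), $\mathbf{R}q^+_{\ast}\mathbf{L}q^{-\ast}\mathcal{O}_{Y^-_{\mathcal{U}}}(j)$ is the line bundle $\mathcal{O}_{Y^+_{\mathcal{U}}}(j)$ precisely when $j$ lies in the range where all higher direct images vanish, and the two twists by $\mathcal{O}(-\lceil b/2\rceil+1)$ are chosen exactly so that the window range $I^-$ is carried into this vanishing range. Carrying out this cohomology-vanishing estimate and matching the numerology with $I^-$ is the main obstacle; it yields $\Psi'(\mathcal{O}_{Y^-_{\mathcal{U}}}(i))\cong\mathcal{O}_{Y^+_{\mathcal{U}}}(i)$ for $i\in I^-\cap\mathbb{Z}$, agreeing with $\Psi$.

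Finally I would promote this to an isomorphism of functors. Both $\Psi$ and $\Psi'$ are exact Fourier--Mukai (hence kernel) functors between the two factorization categories, so it is enough to see that they agree compatibly with the morphisms between the generators $\mathcal{O}_{Y^-_{\mathcal{U}}}(i)$, not merely on objects. The cleanest route is to show that $\Psi'$ also factors through the window: the isomorphisms $\Psi'(\mathcal{O}_{Y^-_{\mathcal{U}}}(i))\cong\mathcal{O}_{Y^+_{\mathcal{U}}}(i)$ produced above are restrictions of the global line bundles $\mathcal{O}(i)\in\mathcal{W}_{I^+}$, and the morphisms between these generators are multiplications by the coordinates on $V^{\pm}$, which are respected by $\Phi^{\mathcal{O}_{W_{\mathcal{U}}}}$ because its kernel is a genuine sheaf on $W_{\mathcal{U}}$. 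Hence $(\mathrm{res}^+)^{-1}\circ\Psi'\circ\mathrm{res}^-$ coincides with the inclusion $\iota\colon\mathcal{W}_{I^-}\hookrightarrow\mathcal{W}_{I^+}$ on generators and morphisms, and therefore equals it; composing with the restriction equivalences gives $\Psi'\cong\Psi$, as claimed. Alternatively one may identify the essential image of $\Psi'$ with the middle term of the semiorthogonal decomposition of Theorem~\ref{thm:windowsod} and invoke full faithfulness, but the generator-and-morphism comparison is the most direct.
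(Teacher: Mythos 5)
Your proposal is correct and follows essentially the same route as the paper: after normalizing by the twist so that the relevant window becomes $[0,b)$, one computes $\dR q^+_{\ast}\dL q^{-\ast}$ on the generating line bundles using that $q^{\pm}\colon W\to Y^{\pm}$ are blow-ups of the zero sections (the paper's Lemma~\ref{lem:sublem} is exactly your statement that $\dR q^+_{\ast}$ of $t^d\colon\mathcal{O}\to\mathcal{O}(dD)$ is an isomorphism for $d\in[0,b)$), and then checks compatibility with morphisms between these generators to assemble the object-wise identifications into an isomorphism of functors on factorizations. The only discrepancy is a harmless sign convention (the paper's window lift of $\mathcal{O}(i)$ restricts to $\mathcal{O}_{Y^-}(-i)$, not $\mathcal{O}_{Y^-}(i)$), which does not affect the argument.
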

\begin{proof}
A version of this result is proved for example in 
\cite[Proposition 5.3]{CIJS15}. 
For the readers' convenience, 
we include the proof here. 
We have the following commutative diagram 
\[
\xymatrix{
&D_{\mathbb{C}^*}(Y^-_{\mathcal{U}}, w^{\star}) 
\ar[rr]^{\otimes \mathcal{O}_{Y^-_{\mathcal{U}}}\left(
-\lceil \frac{b}{2} \rceil +1 \right)} 
\ar@/_24pt/[dd]_{\Psi} 
&
&D_{\mathbb{C}^*}(Y^-_{\mathcal{U}}, w^{\star}) 
\ar@/^24pt/[dd]^{\Psi'} \\
&\mathcal{W}_{I^-} 
\ar[rr]^{\otimes \mathcal{O}\left(
\lceil \frac{b}{2} \rceil -1 
\right)} 
\ar@{^{(}->}[d] \ar[u] 
& 
&\mathcal{W}_{[0, b)} \ar@{^{(}->}[d]  \ar[u] \\
&D_{\mathbb{C}^*}(Y^+_{\mathcal{U}}, w^{\star}) 
\ar[rr]_{\otimes \mathcal{O}_{Y^+_{\mathcal{U}}}\left(
\lceil \frac{b}{2} \rceil -1 \right)}
&
&D_{\mathbb{C}^*}(Y^+_{\mathcal{U}}, w^{\star}). 
} 
\]
We set $\Phi \cneq \Phi^{\mathcal{O}_{W_{\mathcal{U}}}}$
and show an isomorphism 
$\Psi' \cong \Phi$ of functors. 
Take a factorization 
\[
E=\left(
\mathcal{P}_{0} \xrightarrow{f} 
\mathcal{P}_{1} \xrightarrow{g} 
\mathcal{P}_{0}(1) 
\right) \in \mathcal{W}_{[0, b)}, 
\]
with 
\[
\mathcal{P}_{0}=\bigoplus_{i=1}^k \mathcal{O}(m_{i}), 
\quad \mathcal{P}_{1}=\bigoplus_{j=1}^l \mathcal{O}(n_{j}), 
\quad m_{i}, n_{j} \in [0, b), 
\]
and let 
$E^{\pm}:=\eta^{\pm*}_{\mathcal{U}}E 
\in D_{\mathbb{C}^*}(Y^{\pm}_{\mathcal{U}}, w^{\star})$, 
where 
$\eta^{\pm} \colon Y^{\pm} \hookrightarrow 
\left[
(V^+ \times V^-)/\mathbb{C}^*
\right]$ 
are the natural open immersions. 
By definition, we have 
$\Psi'(E^-)=E^+$. 
We need to construct a functorial isomorphism 
\[
\theta_{E} \colon E^+ \xrightarrow{\cong} \Phi(E^-). 
\]
For this, it is enough to prove Lemma~\ref{lem:sublem} below. 
\end{proof}

\begin{lem}\label{lem:sublem}
Take integers $m, n \in [0, b)$ 
and a $\mathbb{C}^*$-equivariant morphism 
$f \colon \mathcal{O}(m) \to \mathcal{O}(n)$
on $(V^+ \times V^-)_{\uU}$. 
Put $f^{\pm}:=\eta^{\pm*}_{\mathcal{U}}(f) \colon 
\mathcal{O}_{Y^{\pm}_{\mathcal{U}}}(\pm m) \to 
\mathcal{O}_{Y^{\pm}_{\mathcal{U}}}(\pm n)$. 
Then there exist isomorphisms 
$\theta_{i} \colon \mathcal{O}_{Y^+_{\mathcal{U}}}(i) \to 
\Phi(\mathcal{O}_{Y^-_{\mathcal{U}}}(-i))$ 
for $i=m, n$, fitting into the commutative diagram 
\begin{equation}\label{eq:functorial}
\xymatrix{
&\Phi(\mathcal{O}_{Y^-_{\mathcal{U}}}(-m)) \ar[r]^{\Phi(f^-)} 
&\Phi(\mathcal{O}_{Y^-_{\mathcal{U}}}(-n)) \\ 
&\mathcal{O}_{Y^+_{\mathcal{U}}}(m) \ar[r]_{f^+} \ar[u]^{\theta_{m}}_{\cong} 
&\mathcal{O}_{Y^+_{\mathcal{U}}}(n). \ar[u]_{\theta_{n}}^{\cong}  
}
\end{equation}
\end{lem}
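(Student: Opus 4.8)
The plan is to reduce Lemma~\ref{lem:sublem} to an explicit cohomological computation on the standard toric flip and then verify functoriality on a dense open locus. First I would make the fibre product $W=Y^+\times_Z Y^-$ explicit: it is the common (smooth) blow-up of the diagram (\ref{diagram:flip}), namely $W\cong\Tot_{\mathbb{P}(V^+)\times\mathbb{P}(V^-)}(\mathcal{O}(-1,-1))=\Bl_0 Z$, with $q^\pm\colon W\to Y^\pm$ the two blow-down morphisms and common exceptional divisor $E$ equal to the zero section $\mathbb{P}(V^+)\times\mathbb{P}(V^-)$. Writing $\rho\colon W\to\mathbb{P}(V^+)\times\mathbb{P}(V^-)$ for the projection and $\mathcal{O}_W(j,k)\cneq\rho^*\mathcal{O}(j,k)$, one has $q^{+*}\mathcal{O}_{Y^+}(j)=\mathcal{O}_W(j,0)$, $q^{-*}\mathcal{O}_{Y^-}(k)=\mathcal{O}_W(0,k)$ and $\mathcal{O}_W(E)=\mathcal{O}_W(-1,-1)$. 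Everything here is relative over $\mathcal{U}$ and $\mathbb{C}^*$-equivariant, which is carried along passively; the potential $w^{\star}$ plays no role in constructing the $\theta_i$, since these are built from the underlying equivariant line bundles in $D^b_{\mathbb{C}^*}(Y^\pm_{\mathcal{U}})$ and enter the factorization picture of Proposition~\ref{lem:kernelpsi} only when assembling $\theta_{E}$.

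Second, I would compute $\Phi(\mathcal{O}_{Y^-_{\mathcal{U}}}(-i))$ for $i\in[0,b)$. Since $\mathcal{O}_{Y^-}(-i)$ is invertible, $\dL q^{-*}\mathcal{O}_{Y^-}(-i)=\mathcal{O}_W(0,-i)$, and using the identity $\mathcal{O}_W(0,-i)=q^{+*}\mathcal{O}_{Y^+}(i)\otimes\mathcal{O}_W(iE)$ the projection formula gives $\Phi(\mathcal{O}_{Y^-}(-i))\cong\mathcal{O}_{Y^+}(i)\otimes\dR q^+_*\mathcal{O}_W(iE)$. Here $q^+$ is the blow-up of $Y^+$ along the codimension-$b$ centre $\mathbb{P}(V^+)$, with normal bundle $N=\mathcal{O}(-1)\otimes V^-$, and $\mathcal{O}_W(iE)$ restricts on each $\mathbb{P}^{b-1}$-fibre of $E=\mathbb{P}(N)$ to $\mathcal{O}_{\mathbb{P}^{b-1}}(-i)$. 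Filtering by the exact sequences $0\to\mathcal{O}_W((i-1)E)\to\mathcal{O}_W(iE)\to\mathcal{O}_{\mathbb{P}(N)}(-i)\to 0$ and using $\dR(q^+|_E)_*\mathcal{O}_{\mathbb{P}(N)}(-i)=0$ for $1\le i\le b-1$ yields $\dR q^+_*\mathcal{O}_W(iE)=\mathcal{O}_{Y^+}$, hence $\Phi(\mathcal{O}_{Y^-}(-i))\cong\mathcal{O}_{Y^+}(i)$. I would then define $\theta_i$ to be this isomorphism, induced by the adjunction unit $\mathcal{O}_{Y^+}\to\dR q^+_*\mathcal{O}_W\hookrightarrow\dR q^+_*\mathcal{O}_W(iE)$.

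Third, for the commutativity of (\ref{eq:functorial}) I would restrict to the dense common open locus $U_0\cneq Y^+\cap Y^-=[((V^+\setminus 0)\times(V^-\setminus 0))/\mathbb{C}^*]$ lying over $Z\setminus 0$. Over $U_0$ the map $q^+$ is an isomorphism and $E$ is disjoint from $q^{+,-1}(U_0)$, so $\Phi|_{U_0}=\mathrm{id}$ and $\theta_i|_{U_0}$ is the canonical identification; moreover $f^+|_{U_0}$ and $f^-|_{U_0}$ both coincide with $\eta^{0*}f$ for the inclusion $\eta^0\colon U_0\hookrightarrow[(V^+\times V^-)/\mathbb{C}^*]$, because $\mathcal{O}_{Y^+}(i)$ and $\mathcal{O}_{Y^-}(-i)$ are the two restrictions of the single equivariant bundle $\mathcal{O}(i)$ on the ambient stack. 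Thus (\ref{eq:functorial}) commutes after restriction to $U_0$. Both $f^+$ and $\theta_n^{-1}\circ\Phi(f^-)\circ\theta_m$ lie in $\Hom_{Y^+_{\mathcal{U}}}(\mathcal{O}(m),\mathcal{O}(n))=H^0(Y^+_{\mathcal{U}},\mathcal{O}(n-m))$, and since $U_0$ is dense in the variety $Y^+_{\mathcal{U}}$ the restriction map on global sections of a line bundle is injective; therefore agreement over $U_0$ forces the square to commute globally.

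The main obstacle I anticipate is not the cohomology vanishing (which is routine once $W$ is identified, and parallels \cite{CIJS15}) but justifying the explicit description of the scheme-theoretic fibre product $W=Y^+\times_Z Y^-$ as the smooth common blow-up, so that the kernel $\mathcal{O}_W$ is the structure sheaf of this resolution and $\dL q^{-*}$ reduces to ordinary pull-back of line bundles. I would settle this by the universal property: $\Bl_0 Z=\Tot_{\mathbb{P}(V^+)\times\mathbb{P}(V^-)}(\mathcal{O}(-1,-1))$ maps compatibly to $Y^+$, $Y^-$ and $Z$, inducing a morphism to $Y^+\times_Z Y^-$ which is an isomorphism over $Z\setminus 0$ and matches $\mathbb{P}^{a-1}\times\mathbb{P}^{b-1}$ over the vertex. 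A secondary point to keep honest is that all constructions are $\mathbb{C}^*$-equivariant and relative over $\mathcal{U}$, and that the chosen $\theta_i$ are compatible with the window equivalences of Theorem~\ref{thm:windowsod}, so that the assembled $\theta_{E}$ in Proposition~\ref{lem:kernelpsi} is a genuine morphism of factorizations.
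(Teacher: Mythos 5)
Your proposal is correct, but it reaches the conclusion by a genuinely different route from the paper at the key step. For the construction of $\theta_i$ you and the paper do essentially the same thing: both use the canonical map $\oO_{W_{\uU}}\to\oO_{W_{\uU}}(iE)$ (the paper's multiplication by $t^i$ in its explicit $(\mathbb{C}^*)^2$-GIT coordinates on $W$) together with the fact that $\dR q^+_*$ turns it into an isomorphism for $0\le i<b$; the paper merely asserts this last fact, whereas you supply the proof via the filtration $0\to\oO_W((i-1)E)\to\oO_W(iE)\to\oO_E(iE)\to 0$ and the vanishing of $\dR\pi_*\oO_{\mathbb{P}(N)}(-i)$ for $1\le i\le b-1$, which is a worthwhile addition. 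Where you genuinely diverge is the commutativity of (\ref{eq:functorial}): the paper exhibits an explicit commutative square of equivariant line bundles on $W_{\uU}$ (its diagram (\ref{eq:pullq}), with vertical arrows $t^m$, $t^n$ intertwining $q^{+*}(f^+)$ and $q^{-*}(f^-)$, read off directly from the coordinate descriptions of $\zeta^{\pm}$) and then simply applies the functor $\dR q^+_*$; you instead check commutativity on the dense common open $U_0=[(V^{+*}\times V^{-*})/\mathbb{C}^*]_{\uU}$, where $q^{\pm}$ are isomorphisms and everything is the canonical identification, and conclude globally because both composites lie in $H^0(Y^+_{\uU},\oO(n-m))$ and restriction of sections of a line bundle to a dense open of the reduced irreducible space $Y^+_{\uU}$ is injective. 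Both arguments are sound; the paper's buys the commutativity for free from functoriality and keeps the equivariant bookkeeping completely explicit, while yours avoids coordinates on $W$ at the cost of the density observation and of the identification $Y^+\times_Z Y^-\cong\Tot_{\mathbb{P}(V^+)\times\mathbb{P}(V^-)}(\oO(-1,-1))=\Bl_0 Z$. On that last point, be aware that ``isomorphism over $Z\setminus 0$ plus matching fibres over the vertex'' does not by itself prove a morphism of schemes is an isomorphism; you should either verify directly that the scheme-theoretic fibre product is smooth (it is the standard rank-one determinantal correspondence $\{([\vec x],[\vec y],M):M\in\langle\vec x\rangle\otimes\langle\vec y\rangle\}$), or simply adopt the paper's GIT presentation of $W$, which sidesteps the issue.
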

\begin{proof}
We first recall the descriptions of 
$W$ and the morphisms $q^{\pm} \colon W \to Y^{\pm}$
in terms of GIT quotients. 
We have  
\[
W=\left[
(V^{+*} \times V^{-*} \times \mathbb{C})/(\mathbb{C}^*)^2
\right], 
\]
where $(s_{1}, s_{2}) \in (\mathbb{C}^*)^2$ acts on 
$V^+ \times V^- \times \mathbb{C}$ as 
\[
(s_{1}, s_{2}) \cdot (\vec{x}, \vec{y}, t)
:=(s_{1}\vec{x}, s_{2}^{-1}\vec{y}, s_{1}^{-1}s_{2}t), 
\]
and the morphisms $q^{\pm} \colon W \to Y^{\pm}$ 
are induced by the homomorphisms 
\[
\zeta^{\pm} \colon V^+ \times V^- \times \mathbb{C} 
\to V^+ \times V^-, \quad 
(\mathbb{C}^*)^2 \to \mathbb{C}^* 
\]
defined by 
\begin{align*}
&\zeta^+ \colon (\vec{x}, \vec{y}, t) \mapsto (\vec{x}, t\vec{y}), 
\quad (s_{1}, s_{2}) \mapsto s_{1}, \\
&\zeta^- \colon (\vec{x}, \vec{y}, t) \mapsto (t\vec{x}, \vec{y}), 
\quad (s_{1}, s_{2}) \mapsto s_{2}, 
\end{align*}
respectively.

By the descriptions of the morphisms $\zeta^{\pm}$, 
we have the following commutative diagram: 
\begin{equation}\label{eq:pullz}
\xymatrix{
&\mathcal{O}(0, -m) \ar[r]^{\zeta^{-*}(f)} 
&\mathcal{O}(0, -n) \\
&\mathcal{O}(m, 0) \ar[r]_{\zeta^{+*}(f)} \ar[u]^{t^m} 
&\mathcal{O}(n, 0) \ar[u]_{t^n}. 
}
\end{equation}
Here, $\mathcal{O}(i, j)$ 
denotes the $(\mathbb{C}^*)^2$-equivariant trivial line bundle 
on $\mathcal{U} \times V^+ \times V^- \times \mathbb{C}$ 
of weight $(i, j) \in \mathbb{Z}^2$. 
Pulling back the diagram (\ref{eq:pullz}) to $W_{\mathcal{U}}$, 
we get 
\begin{equation}\label{eq:pullq}
\xymatrix{
&\mathcal{O}_{W_{\mathcal{U}}}(0, -m) \ar[rr]^{q^{-*}(f^-)} 
& &\mathcal{O}_{W_{\mathcal{U}}}(0, -n) \\
&\mathcal{O}_{W_{\mathcal{U}}}(m, 0) \ar[rr]_{q^{+*}(f^+)} \ar[u]^{t^m} 
& &\mathcal{O}_{W_{\mathcal{U}}}(n, 0) \ar[u]_{t^n}. 
}
\end{equation}
In the above diagram, 
the vertical morphisms are induced by the morphism 
\[
\mathcal{O}_{W_{\mathcal{U}}} \xrightarrow{t^d} 
\mathcal{O}_{W_{\mathcal{U}}}(-d, -d)
=\mathcal{O}_{W_{\mathcal{U}}}(dD), 
\]
where $d \in \mathbb{Z}$ is an integer 
and $D \subset W_{\mathcal{U}}$ 
is the exceptional divisor of $q^{\pm}$. 
Furthermore, if $d \in [0, b)$, 
then the morphism 
$\dR q^+_{*}(t^d) \colon \mathcal{O}_{Y^+_{\mathcal{U}}} 
\to \dR p^+_{*}\mathcal{O}_{W_{\mathcal{U}}}(dD)$ 
is an isomorphism. 
Since we assume $m, n \in [0, b)$, 
by applying the functor $\dR q^+_{*}$ 
to the diagram (\ref{eq:pullq}), 
we get the commutative diagram as in (\ref{eq:functorial}). 
\end{proof}

Let us consider the d-critical loci
\begin{align*}
M^{\pm}_{(Q^{\dag}, w), \uU}=\{dw^{\star}=0\}
\subset Y^{\pm}_{\uU}.
\end{align*}
By the diagram (\ref{dia:pair:analytic}) together with 
Proposition~\ref{prop:MW}, 
the above d-critical loci are smooth 
and contained in $\mathbb{P}(Y^{\pm})_{\uU}$. 
Explicitly, using the functions $w_{ij}(\vec{u})$ in (\ref{write:w}), 
we have
\begin{align*}
&M^{+}_{(Q^{\dag}, w), \uU}=\left\{
(\vec{x}, \vec{u}) \in 
\mathbb{P}(V^+)_{\uU} : 
\sum_{i=1}^{a} x_{i}w_{ij}(\vec{u})=0 
\mbox{ for all } 1 \leq j \leq b
\right\}, \\
&M^{-}_{(Q^{\dag}, w), \uU}=\left\{
(\vec{y}, \vec{u}) \in 
\mathbb{P}(V^+)_{\uU} : 
\sum_{j=1}^{b} x_{i}w_{ij}(\vec{u})=0 
\mbox{ for all } 1 \leq i \leq a
\right\}.
\end{align*}
Let us consider the following diagram 
\[
\xymatrix{
&A^{\pm} \ar@{^{(}->}[r]^{i^{\pm}} \ar[d]_{p^{\pm}} \ar@{}[dr]|\square
&Y^{\pm}_{\mathcal{U}} \ar[d] \ar[r]^{w^{\star}} & \mathbb{C} \\ 
&M^{\pm}_{(Q^{\dagger}, w), \mathcal{U}} \ar@{^{(}->}[r] 
&\mathbb{P}(V^{\pm}_{\mathcal{U}}). & 
}
\]
By Theorem~\ref{thm:knoer}, we have the equivalences
\begin{align*}
i_{\ast}^{\pm} p^{\pm \ast} \colon 
D^b(M^{\pm}_{(Q^{\dag}, w), \mathcal{U}}) 
\stackrel{\sim}{\to}
D_{\mathbb{C}^{\ast}}(Y_{\uU}^{\pm}, w^{\star}).
\end{align*}
Now let us consider the functor
$\Theta_M$ defined by the following 
commutative diagram. 
\[
\xymatrix{
&D^b(M^-_{(Q^{\dag}, w), \mathcal{U}}) 
\ar[r]^{\Theta_{M}} \ar[d]_{i^-_{*}p^{-*}}^{\cong} 
&D^b(M^+_{(Q^{\dag}, w), \mathcal{U}}) \ar[d]^{i^+_{*}p^{+*}}_{\cong} \\
&D_{\mathbb{C}^*}(Y^-_{\mathcal{U}}, w^{\star}) 
\ar[r]_{\Psi}
&D_{\mathbb{C}^*}(Y^+_{\mathcal{U}}, w^{\star}). 
}
\]

To determine the kernel of the functor $\Theta_{M}$, 
we recall the results from \cite{Todsemi}. 
Let $B \subset 
\left( 
\mathbb{P}(V^+) \times \mathbb{P}(V^-)
\right)_{\mathcal{U}}$ be the subvariety defined by 
\[
B:=\left\{
(\vec{x}, \vec{y}, \vec{u}) \in 
\left( 
\mathbb{P}(V^+) \times \mathbb{P}(V^-)
\right)_{\mathcal{U}} : 
\sum_{i, j} x_{i}y_{j}w_{ij}(\vec{u})=0
\right\}. 
\]
We have the following diagram: 
\[
\xymatrix{
&E  \ar@{^{(}->}[r]^{\tilde{i}} \ar[d]_{\tilde{p}}\ar@{}[dr]|\square
 & W_{\mathcal{U}} \ar[d]^{\rm{pr}} \ar[r]^{\widetilde{w}} &\mathbb{C}\\ 
&B \ar@{^{(}->}[r] 
&\left( 
\mathbb{P}(V^+) \times \mathbb{P}(V^-)
\right)_{\mathcal{U}}. &
}
\]
Here $\widetilde{w}$ is the pull-back of 
$w^{\star}$ by 
$q^{\pm} \colon W_{\uU} \to Y_{\uU}^{\pm}$. 
We also define subvarieties 
$F^{\pm} \subset \left( 
\mathbb{P}(V^+) \times \mathbb{P}(V^-)
\right)_{\mathcal{U}}$ 
as follows. 
\begin{align*}
&F^+:=\left\{
(\vec{x}, \vec{y}, \vec{u}) \in 
\left( 
\mathbb{P}(V^+) \times \mathbb{P}(V^-)
\right)_{\mathcal{U}} : 
\sum_{i=1}^{a} x_{i}w_{ij}(\vec{u})=0 
\mbox{ for all } 1 \leq j \leq b
\right\}, \\
&F^-:=\left\{
(\vec{x}, \vec{y}, \vec{u}) \in 
\left( 
\mathbb{P}(V^+) \times \mathbb{P}(V^-)
\right)_{\mathcal{U}} : 
\sum_{j=1}^{b} y_{j}w_{ij}(\vec{u})=0 
\mbox{ for all } 1 \leq i \leq a
\right\}. 
\end{align*}
We obtain the following diagram: 
\[
\xymatrix{
&F^{\pm} \ar@{^{(}->}[r]^{k^{\pm}} \ar[d]_{r^{\pm}} &B \\
&M^{\pm}_{(Q^{\dag}, w), \mathcal{U}}. 
}
\]
Here $r^{\pm}$ are the projections and $k^{\pm}$ are
the closed immersions. 
Define functors 
$\Theta^{\pm}$ as 
\[
\Theta^{\pm}:=k^{\pm}_{*} \circ r^{\pm*} \colon 
D^b(M^{\pm}_{(Q^{\dag}, w), \mathcal{U}}) \to D^b(B), 
\]
and let 
$\Theta^{\pm}_{\dR}:=\dR r^{\pm}_{*} \circ (k^{\pm})^{!}$ 
be the right adjoints of the functors $\Theta^{\pm}$. 
We have the following results: 

\begin{lem}[{\cite[Lemma~3.4, Lemma~3.5]{Todsemi}}]\label{lem:commpull}
The following diagram is commutative: 
\begin{align}\label{eq:commpull}
\xymatrix{
D^b(M^{\pm}_{(Q^{\dag}, w), \mathcal{U}}) 
\ar[r]^-{\Theta^{\pm}} \ar[d]_-{i^{\pm}_{*}p^{\pm*}} 
&D^b(B) \ar[d]^-{\tilde{i}_{*}\tilde{p}^*} \ar[r]^-{\Theta^{\pm}_{\dR}} & 
D^b(M^{\pm}_{(Q^{\dag}, w), \mathcal{U}}) \ar[d]^-{i^{\pm}_{*}p^{\pm*}} \\
D_{\mathbb{C}^*}(Y^{\pm}, w^{\star}) 
\ar[r]_-{\dL q^{\pm*}} 
&D_{\mathbb{C}^*}(W_{\mathcal{U}}, \tilde{w}) \ar[r]_-{\dR q^{\pm}_{\ast}} &
D_{\mathbb{C}^*}(Y^{\pm}, w^{\star}). 
}, 
\end{align}
\end{lem}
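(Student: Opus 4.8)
The plan is to prove the left-hand square directly and then deduce the right-hand square formally by passing to right adjoints. Both vertical arrows $i^{\pm}_{*}p^{\pm*}$ and $\tilde{i}_{*}\tilde{p}^{*}$ are the Kn\"orrer periodicity equivalences of Theorem~\ref{thm:knoer}, so they are invertible; moreover $\Theta^{\pm}_{\dR}=\dR r^{\pm}_{*}(k^{\pm})^{!}$ is by construction the right adjoint of $\Theta^{\pm}=k^{\pm}_{*}r^{\pm*}$, and $\dR q^{\pm}_{\ast}$ is the right adjoint of $\dL q^{\pm*}$ since $q^{\pm}$ is proper. Hence, taking right adjoints in the isomorphism $\tilde{i}_{*}\tilde{p}^{*}\circ\Theta^{\pm}\cong \dL q^{\pm*}\circ i^{\pm}_{*}p^{\pm*}$ furnished by the left square and cancelling the invertible Kn\"orrer functors yields exactly $i^{\pm}_{*}p^{\pm*}\circ\Theta^{\pm}_{\dR}\cong \dR q^{\pm}_{\ast}\circ\tilde{i}_{*}\tilde{p}^{*}$, which is the right square. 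It therefore suffices to treat the left square, and by the symmetry $V^{+}\leftrightarrow V^{-}$, $\vec{x}\leftrightarrow\vec{y}$ it is enough to do the case of the sign $+$.

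The heart of the argument is a geometric identification inside $W_{\mathcal{U}}$. The Kn\"orrer fibre $A^{+}=Y^{+}_{\mathcal{U}}\times_{\mathbb{P}(V^{+})_{\mathcal{U}}}M^{+}_{(Q^{\dag},w),\mathcal{U}}$ is cut out in $Y^{+}_{\mathcal{U}}$ by the $b$ equations $\sum_{i}x_{i}w_{ij}(\vec{u})=0$ defining $M^{+}_{(Q^{\dag},w),\mathcal{U}}$, and these involve only the $\vec{x}$ and $\vec{u}$ coordinates. Since the contraction $q^{+}\colon(\vec{x},\vec{y},t)\mapsto(\vec{x},t\vec{y})$ does not alter $\vec{x}$, the scheme-theoretic preimage $q^{+-1}(A^{+})\subset W_{\mathcal{U}}$ is cut out by the same equations, and a direct comparison in the GIT coordinates shows $q^{+-1}(A^{+})=\tilde{p}^{-1}(F^{+})=E|_{F^{+}}$. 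This produces a Cartesian square with horizontal maps $q^{+}$ and its base change $\bar{q}^{+}$, and with vertical regular closed immersions $i^{+}$ and $\bar{i}^{+}\colon E|_{F^{+}}\hookrightarrow W_{\mathcal{U}}$, the latter factoring as $\tilde{i}$ composed with $E|_{F^{+}}\hookrightarrow E$. The two resulting projections $E|_{F^{+}}\to M^{+}_{(Q^{\dag},w),\mathcal{U}}$, namely $p^{+}\circ\bar{q}^{+}$ and $r^{+}\circ\tilde{p}|_{E|_{F^{+}}}$, both send $([\vec{x}],[\vec{y}],t,\vec{u})$ to $[\vec{x}]$, hence coincide.

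Granting this, I would run the two sides through base change. For an object $\mathcal{G}\in D^{b}(M^{+}_{(Q^{\dag},w),\mathcal{U}})$, base change along the Cartesian square for $q^{+}$ gives $\dL q^{+*}i^{+}_{*}p^{+*}\mathcal{G}\cong\bar{i}^{+}_{*}(p^{+}\circ\bar{q}^{+})^{*}\mathcal{G}$; on the other side, flat base change along the Cartesian square for the vector bundle projection $\tilde{p}$ computes $\tilde{i}_{*}\tilde{p}^{*}\Theta^{+}\mathcal{G}\cong\bar{i}^{+}_{*}(r^{+}\circ\tilde{p}|_{E|_{F^{+}}})^{*}\mathcal{G}$. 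Since the two projections to $M^{+}_{(Q^{\dag},w),\mathcal{U}}$ agree by the previous step, the two objects are isomorphic, giving the left square. Everything respects the factorization structure because $\tilde{w}=q^{+*}w^{\star}$, so pullback and pushforward along $q^{+}$ carry factorizations of $w^{\star}$ to factorizations of $\tilde{w}$ and back.

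The one point requiring care, and the main obstacle, is the base-change isomorphism for $q^{+}$: since $q^{+}$ is a small, non-flat contraction, the relevant square must be shown to be Tor-independent before base change can be applied without correction terms. This is exactly where the explicit equations are used. The defining equations of $M^{+}_{(Q^{\dag},w),\mathcal{U}}$ form a regular sequence, the codimension of the Kn\"orrer fibre $A^{+}$ being equal to the rank of the relevant bundle, which is precisely the hypothesis making $i^{+}_{*}p^{+*}$ an equivalence in Theorem~\ref{thm:knoer}; and because they do not involve the $\vec{y},t$ directions contracted by $q^{+}$, they remain a regular sequence after pullback to $W_{\mathcal{U}}$. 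Thus the square is Tor-independent and base change applies. This verification is the local analytic counterpart of the computation behind \cite[Lemma~3.4, Lemma~3.5]{Todsemi}, and apart from it every step is formal.
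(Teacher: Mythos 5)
Your argument is correct: the identification $q^{\pm-1}(A^{\pm})=\tilde{p}^{-1}(F^{\pm})=E|_{F^{\pm}}$, the Tor-independence coming from the equations $\sum_i x_iw_{ij}(\vec{u})$ remaining a regular sequence after pullback (since they do not involve the contracted directions), and the passage to right adjoints for the second square all go through, and the compatibility with the factorization structure via $\tilde{w}=q^{\pm*}w^{\star}$ is as routine as you indicate. The paper itself gives no proof here — it cites \cite[Lemma~3.4, Lemma~3.5]{Todsemi} — and your base-change argument is essentially the one carried out there, so this matches the intended proof.
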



\begin{prop}[{\cite[Proposition 3.6]{Todsemi}}]\label{prop:tkernel}
We have an isomorphism of functors
\begin{align}\label{eq:tkernel}
\Theta^+_{\dR} \circ \Theta^- 
\cong \otimes \mathcal{O}_{M^+_{(Q^{\dag}, w), \mathcal{U}}}(b-1) \circ 
\Phi^{\mathcal{P}} \circ 
\otimes \mathcal{O}_{M^-_{(Q^{\dag}, w), \mathcal{U}}}(-1)[1-b], 
\end{align}
where we put $
\mathcal{P}:=
\mathcal{O}_{M^-_{(Q^{\dag}, w), \mathcal{U}} \times_{\mathcal{U}} 
M^+_{(Q^{\dag}, w), \mathcal{U}}}$, 
and $\Phi^{\pP}$ is the Fourier-Mukai functor with kernel $\pP$. 
\end{prop}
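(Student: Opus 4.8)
The plan is to compute the composite $\Theta^+_{\dR}\circ\Theta^-$ as an integral functor on $B$ by base change, and to identify its kernel with $\mathcal{P}$ up to the stated twist and shift. Throughout I abbreviate $M^{\pm}\cneq M^{\pm}_{(Q^{\dag},w),\mathcal{U}}$ and work relatively over $\mathcal{U}$. The first observation is purely geometric: since $F^+$ is cut out by the conditions $\sum_i x_i w_{ij}(\vec{u})=0$ involving only $(\vec{x},\vec{u})$, one has $F^+=M^+\times_{\mathcal{U}}\mathbb{P}(V^-)$ with $r^+$ the projection, and likewise $F^-=\mathbb{P}(V^+)_{\mathcal{U}}\times_{\mathcal{U}}M^-$; consequently $F^+\cap F^-=M^+\times_{\mathcal{U}}M^-$, which is exactly the support of the predicted kernel $\mathcal{P}=\mathcal{O}_{M^-\times_{\mathcal{U}}M^+}$.

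Next I would record the two kernels. The functor $\Theta^-=k^-_{*}r^{-*}$ is the integral transform $D^b(M^-)\to D^b(B)$ with kernel $\mathcal{O}_{F^-}$, pushed onto $M^-\times_{\mathcal{U}}B$ along the closed immersion $(r^-,k^-)$. Dually, $\Theta^+=k^+_{*}r^{+*}$ has kernel $\mathcal{O}_{F^+}$, and Grothendieck--Serre duality identifies the kernel of its right adjoint $\Theta^+_{\dR}$ as $\mathcal{O}_{F^+}$ twisted by the relative dualizing complex of $r^+$ and the data of $(k^+)^{!}$. Since $r^+$ is the $\mathbb{P}^{b-1}$-bundle $M^+\times_{\mathcal{U}}\mathbb{P}(V^-)\to M^+$, its relative dualizing sheaf is $\mathcal{O}_{\mathbb{P}(V^-)}(-b)[b-1]$, and $F^+\hookrightarrow B$ has codimension $b-1$, so $(k^+)^{!}$ contributes a shift by $[1-b]$.

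I would then convolve the kernels over $B$: on the triple product $M^-\times_{\mathcal{U}}B\times_{\mathcal{U}}M^+$ the composite kernel is $\dR p_{13*}\bigl(p_{12}^{*}\mathcal{O}_{F^-}\otimes^{\dL}_{\mathcal{O}_B}p_{23}^{*}(\text{kernel of }\Theta^+_{\dR})\bigr)$, and the derived tensor product over $\mathcal{O}_B$ is governed by the derived fibre product $F^-\times^{\dL}_B F^+$ sitting over $M^+\times_{\mathcal{U}}M^-$. Resolving $\mathcal{O}_{F^+}$ on $B$ by the Koszul complex of the $b$ linear forms $\ell_j=\sum_i x_i w_{ij}$, restricting to $F^-=\{m_i=0\}$, and applying the projection formula for $\dR p_{13*}$ then reduces everything to identifying the cohomology of this restricted Koszul complex, out of which the line-bundle twists $\mathcal{O}_{M^+}(b-1)$ and $\mathcal{O}_{M^-}(-1)$ emerge while the remaining tautological contributions collapse to the reduced structure sheaf $\mathcal{O}_{M^-\times_{\mathcal{U}}M^+}$.

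The main obstacle is precisely this Koszul/excess-intersection bookkeeping: the intersection $F^+\cap F^-$ is \emph{not} transverse in $B$, because the $b$ forms $\ell_j$ already satisfy the relation $\sum_j y_j\ell_j=0$ (the defining equation of $B$), so the naive kernel $\mathcal{O}_{F^-}\otimes\mathcal{O}_{F^+}$ must be corrected by the derived structure, and one must check that this correction is exactly absorbed into the shift $[1-b]$ and the two line-bundle twists with no residual term. Once this is verified one obtains $\Theta^+_{\dR}\circ\Theta^-\cong(-\otimes\mathcal{O}_{M^+}(b-1))\circ\Phi^{\mathcal{P}}\circ(-\otimes\mathcal{O}_{M^-}(-1))[1-b]$ as claimed. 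As an independent check and alternative packaging, one can instead apply Lemma~\ref{lem:commpull}, under which $i^+_{*}p^{+*}\circ\Theta^+_{\dR}\circ\Theta^-\cong\dR q^+_{*}\dL q^{-*}\circ i^-_{*}p^{-*}=\Phi^{\mathcal{O}_{W_{\mathcal{U}}}}\circ i^-_{*}p^{-*}$; transporting $\Phi^{\mathcal{O}_{W_{\mathcal{U}}}}$ through the Kn\"orrer periodicity equivalences $i^{\pm}_{*}p^{\pm*}$ bottoms out at the same excess computation and yields the same twists and shift.
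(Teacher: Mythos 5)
First, note that the paper itself gives no proof of Proposition~\ref{prop:tkernel}: it is imported verbatim from \cite[Proposition~3.6]{Todsemi}, so the only ``in-paper proof'' is the citation. Your proposal is therefore being measured against the argument in \emph{loc.~cit.}, which does carry out the Koszul-type computation you describe. Your geometric setup is correct and well chosen: $F^{+}=M^{+}\times_{\mathcal{U}}\mathbb{P}(V^{-})$, $F^{-}=\mathbb{P}(V^{+})_{\mathcal{U}}\times_{\mathcal{U}}M^{-}$, $F^{+}\cap F^{-}=M^{+}\times_{\mathcal{U}}M^{-}$, the kernels of $\Theta^{-}$ and $\Theta^{+}_{\dR}$ are as you say (modulo one slip: the relative dualizing complex of $r^{+}$ does not enter, since the right adjoint of $k^{+}_{*}\circ r^{+*}$ is $\dR r^{+}_{*}\circ (k^{+})^{!}$ and only $\omega_{F^{+}/B}[1-b]$ appears), and you correctly locate the source of all difficulty in the relation $\sum_{j}y_{j}\ell_{j}=0$ on $B$.

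The gap is that the step you defer is the entire content of the proposition, and it is not routine bookkeeping. Your sentence ``Resolving $\mathcal{O}_{F^{+}}$ on $B$ by the Koszul complex of the $b$ linear forms $\ell_{j}$'' is literally false: on $B$ that Koszul complex is not a resolution, precisely because of the relation you identify, and replacing it by the resolution on $\overline{B}\cneq(\mathbb{P}(V^{+})\times\mathbb{P}(V^{-}))_{\mathcal{U}}$ forces you to control the passage from $\otimes^{\dL}_{\mathcal{O}_{\overline{B}}}$ to $\otimes^{\dL}_{\mathcal{O}_{B}}$, where a hypersurface a priori produces $2$-periodic, unbounded $\mathrm{Tor}$. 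What actually saves the computation is that the ideal of $F^{+}$ in $\mathcal{O}_{B}$ is locally generated by $b-1$ of the forms $\ell_{j}$ (on the chart $y_{j_{0}}\neq 0$ one has $\ell_{j_{0}}\in(\ell_{j})_{j\neq j_{0}}$ modulo $h$), so that $k^{+}$ is a regular embedding of codimension $b-1$ and $\dL k^{+*}k^{-}_{*}$ is bounded; one must then verify that the resulting finite complex has cohomology only in degree $0$, equal to $\mathcal{O}_{M^{+}\times_{\mathcal{U}}M^{-}}$, and that the twist $\det N_{F^{+}/B}=\mathcal{O}(b,0)\otimes\mathcal{O}(-1,-1)|_{F^{+}}=\mathcal{O}(b-1,-1)|_{F^{+}}$ produces exactly the stated line bundles. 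None of this is done in your write-up; you assert that the correction ``is exactly absorbed \ldots with no residual term,'' which is the statement to be proved. Your fallback via Lemma~\ref{lem:commpull} does not close the gap either: it reduces to the same computation under Kn\"orrer periodicity, and recovering the kernel of $\Theta^{+}_{\dR}\circ\Theta^{-}$ from the induced functor would additionally require a uniqueness-of-kernels statement in the spirit of Lemma~\ref{lem:uniquekernel}, whose hypothesis (full faithfulness) is only available a posteriori. So the proposal is a reasonable plan that matches the strategy of \cite{Todsemi}, but it is not yet a proof.
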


Let $\pi^{\pm} \colon M^{\pm}_{(Q^{\dagger}, w), \mathcal{U}} \to \mathcal{U}$ 
be the projections. 
For every integer $i \in \mathbb{Z}$, 
we define the functor 
$\overline{\Upsilon}^i$ as 
\[
\overline{\Upsilon}^i:=
\otimes \mathcal{O}_{M^+_{(Q^{\dagger}, w), \mathcal{U}}}(i) \circ \dL\pi^{+*} 
\colon D^b(\mathcal{U}) \to D^b(M^+_{(Q^{\dagger}, w), \mathcal{U}}). 
\]

\begin{lem}[{\cite[Lemma 3.7]{Todsemi}}]\label{lem:commu}
For each integer $i \in \mathbb{Z}$, 
the following diagram is commutative 
\begin{equation}\label{eq:commu}
\xymatrix{
&D^b(\mathcal{U}) \ar[rr]^-{\overline{\Upsilon}^{i+b}[-b]} \ar@{=}[d] 
& &D^b(M^+_{(Q^{\dagger}, w), \mathcal{U}}) \ar[d]^-{i^{+}_{*}p^{+*}}  \\
&D^b(\mathcal{U}) \ar[rr]_-{\Upsilon^{i}} 
& &D_{\mathbb{C}^*}(Y^+_{\mathcal{U}}, w^{\star}). 
}
\end{equation}
\end{lem}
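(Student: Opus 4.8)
The plan is to prove the two composite functors agree by first reducing to the case $i=0$ via a twist, then using the projection formula to rewrite each side, and finally reducing the whole statement to a single Koszul-duality identity inside $D_{\mathbb{C}^{\ast}}(Y^+_{\mathcal{U}}, w^{\star})$. First I would fix the geometry: write $\rho \colon Y^+_{\mathcal{U}} \to \mathbb{P}(V^+)_{\mathcal{U}}$ for the bundle projection (so $Y^+_{\mathcal{U}}=\Tot(N)$ with $N=\mathcal{O}_{\mathbb{P}(V^+)}(-1)\otimes V^-$), let $\iota^+$ be its zero section, and let $j \colon M^+_{(Q^{\dag},w),\mathcal{U}} \hookrightarrow \mathbb{P}(V^+)_{\mathcal{U}}$ be the closed immersion. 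Since $A^+=\rho^{-1}(M^+_{(Q^{\dag},w),\mathcal{U}})$ by the cartesian square, one has $\rho\circ i^+=j\circ p^+$, $\rho\circ\iota^+=\mathrm{id}$, and hence $\pi^+\circ p^+=g\circ\rho\circ i^+$. As $\mathcal{O}(1)$ on $M^+_{(Q^{\dag},w),\mathcal{U}}$ is the restriction of $\mathcal{O}(1)$ on $Y^+_{\mathcal{U}}$, the projection formula shows $i^+_{*}p^{+*}$ intertwines $\otimes\mathcal{O}(i)$ on source and target, so it suffices to treat $i=0$.

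Second, I would apply the projection formula on each side. On one hand $\Upsilon^0(\mathcal{G})=\iota^+_{*}g^{*}\mathcal{G}\cong \rho^{*}g^{*}\mathcal{G}\otimes \iota^+_{*}\mathcal{O}_{\mathbb{P}(V^+)_{\mathcal{U}}}$, using $\rho\circ\iota^+=\mathrm{id}$. On the other hand, using $p^{+*}\pi^{+*}\mathcal{G}=i^{+*}\rho^{*}g^{*}\mathcal{G}$ and the projection formula for $i^+$,
\begin{align*}
i^+_{*}p^{+*}\big(\pi^{+*}\mathcal{G}\otimes\mathcal{O}_{M^+}(b)\big)[-b]\cong \rho^{*}g^{*}\mathcal{G}\otimes\mathcal{O}_{Y^+}(b)\otimes i^+_{*}\mathcal{O}_{A^+}[-b].
\end{align*}
Both $w^{\star}$ restricts to $0$ on the zero section (where $\vec y=0$) and on $A^+$ (where $\sum_i x_i w_{ij}(\vec u)=0$ forces $w^{\star}=\sum_j y_j\sum_i x_i w_{ij}=0$), so $\iota^+_{*}\mathcal{O}$ and $i^+_{*}\mathcal{O}$ are honest factorizations, and both expressions are $\rho^{*}g^{*}\mathcal{G}\otimes(-)$ with a $\mathcal{G}$-independent second factor. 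Thus the statement reduces to the single identity
\begin{align}\label{eq:star}
\iota^+_{*}\mathcal{O}_{\mathbb{P}(V^+)_{\mathcal{U}}}\cong \mathcal{O}_{Y^+}(b)\otimes i^+_{*}\mathcal{O}_{A^+}[-b]\quad\text{in }D_{\mathbb{C}^{\ast}}(Y^+_{\mathcal{U}}, w^{\star}).
\end{align}

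Third, I would prove (\ref{eq:star}) by Koszul duality. The zero section is the vanishing locus of the tautological section $\tau$ of $\rho^{*}N$, while $A^+$ is the vanishing locus of $\rho^{*}s$, where $s$ is the section of $N^{\vee}=\mathcal{O}(1)\otimes(V^-)^{\vee}$ cutting out $M^+_{(Q^{\dag},w),\mathcal{U}}$; by construction $w^{\star}=\langle\tau,\rho^{*}s\rangle$. Hence the Koszul resolutions $(\bigwedge^{\bullet}\rho^{*}N^{\vee},\,\delta_{\tau})$ and $(\bigwedge^{\bullet}\rho^{*}N,\,\delta_{\rho^{*}s})$, with differentials given by contraction against $\tau$ and $\rho^{*}s$ respectively, are the two halves of the Koszul matrix factorization of $w^{\star}=\langle\tau,\rho^{*}s\rangle$. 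The identity (\ref{eq:star}) is exactly the self-duality of this Koszul factorization, i.e.\ the rank-$b$ analogue of the comparison between $\{\mathcal{O}\xrightarrow{x}\mathcal{O}\xrightarrow{y}\mathcal{O}\}$ and $\{\mathcal{O}\xrightarrow{y}\mathcal{O}\xrightarrow{x}\mathcal{O}\}$; the shift $[-b]$ and the twist $\det\rho^{*}N^{\vee}\cong\mathcal{O}(b)$ record the rank. Tensoring (\ref{eq:star}) by $\rho^{*}g^{*}\mathcal{G}$ and combining with the second step then yields the commutativity of (\ref{eq:commu}).

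The hard part will be the third step: pinning down (\ref{eq:star}) with the correct shift and twist requires careful bookkeeping of the two $\mathbb{C}^{\ast}$-gradings in play---the gauge torus $G_E=\mathbb{C}^{\ast}$ defining $\mathcal{O}(1)$ and the auxiliary $\mathbb{C}^{\ast}$ giving the factorization grading $\langle 1\rangle$---so that the factor $\det(V^-)^{\vee}$ occurring in $\det\rho^{*}N^{\vee}$ is correctly matched with the equivariant twist in the statement. The rest is routine projection-formula and base-change manipulation. Alternatively, (\ref{eq:star}) is precisely the computation of the image of the zero-section object under the Kn\"orrer equivalence of Theorem~\ref{thm:knoer}, so one may instead invoke the Koszul factorization results already used for (\ref{Koszul:fact}) in \cite{MR3270588}; either route completes the proof (and recovers \cite[Lemma~3.7]{Todsemi}).
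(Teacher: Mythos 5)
The paper gives no argument for this lemma beyond the citation of \cite[Lemma~3.7]{Todsemi}, and your proposal reconstructs that proof along essentially the same lines: reduce by the projection formula to the single identity $\iota^+_*\mathcal{O}\cong\mathcal{O}(b)\otimes i^+_*\mathcal{O}_{A^+}[-b]$, and obtain it by identifying both sides with the two Koszul factorizations of $w^\star=\langle\tau,\rho^*s\rangle$ built from $\bigwedge^\bullet\rho^*N^\vee$ and $\bigwedge^\bullet\rho^*N$, which differ exactly by $\det\rho^*N^\vee\cong\mathcal{O}(b)$ and the shift $[-b]$. This is correct (regularity of $s$, needed for the Koszul resolution of $\mathcal{O}_{A^+}$, is supplied by Proposition~\ref{prop:MW}); the only delicate point is the one you already flag, namely matching the shift against the $\langle\cdot\rangle$-twist in the equivariant factorization category.
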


Now we can describe the functor $\Theta_{M}$ explicitly. 

\begin{prop}\label{prop:localkernel}
We have an isomorphism of functors 
\begin{equation}\label{eq:localkernel}
\Theta_{M}\cong \otimes \mathcal{O}_{M^+_{(Q^{\dag}, w), \mathcal{U}}}
\left(
\floor*{\frac{b}{2}}
\right) \circ 
\Phi^{\mathcal{P}} \circ 
\otimes \mathcal{O}_{M^-_{(Q^{\dag}, w), \mathcal{U}}}
\left( 
-\ceil*{\frac{b}{2}} 
\right)[1-b]. 
\end{equation}
\end{prop}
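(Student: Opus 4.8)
The plan is to obtain the formula \eqref{eq:localkernel} by a purely formal chaining of the three preceding results, transporting everything through the Kn\"orrer periodicity equivalences $i^{\pm}_{\ast}p^{\pm\ast}$ of Theorem~\ref{thm:knoer}. By the defining commutative square of $\Theta_M$ we have $i^+_{\ast}p^{+\ast}\circ\Theta_M\cong\Psi\circ i^-_{\ast}p^{-\ast}$, so it suffices to compute the right-hand side and cancel the equivalence $i^+_{\ast}p^{+\ast}$ at the very end. First I would substitute the description of $\Psi$ from Proposition~\ref{lem:kernelpsi}, writing the Fourier--Mukai functor as $\Phi^{\oO_{W_{\uU}}}=\dR q^+_{\ast}\,\dL q^{-\ast}$, to get
\begin{align*}
i^+_{\ast}p^{+\ast}\circ \Theta_M
\cong \left(-\otimes \oO_{Y^+_{\uU}}\left(-\ceil*{\frac{b}{2}}+1\right)\right)
\circ \dR q^+_{\ast}\,\dL q^{-\ast}
\circ \left(-\otimes \oO_{Y^-_{\uU}}\left(-\ceil*{\frac{b}{2}}+1\right)\right)
\circ i^-_{\ast}p^{-\ast}.
\end{align*}

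The conceptual heart of the argument is the identity
\begin{align*}
\dR q^+_{\ast}\,\dL q^{-\ast}\circ i^-_{\ast}p^{-\ast}
\cong \dR q^+_{\ast}\circ \tilde{i}_{\ast}\tilde{p}^{\ast}\circ \Theta^-
\cong i^+_{\ast}p^{+\ast}\circ \Theta^+_{\dR}\circ \Theta^-,
\end{align*}
which I would deduce by applying the left square of the diagram \eqref{eq:commpull} of Lemma~\ref{lem:commpull} for the sign $-$ (turning $\dL q^{-\ast}\circ i^-_{\ast}p^{-\ast}$ into $\tilde{i}_{\ast}\tilde{p}^{\ast}\circ\Theta^-$), followed by the right square for the sign $+$ (turning $\dR q^+_{\ast}\circ\tilde{i}_{\ast}\tilde{p}^{\ast}$ into $i^+_{\ast}p^{+\ast}\circ\Theta^+_{\dR}$). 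This is where the geometry of the flip diagram \eqref{diagram:flip} and the fibered structure $W=Y^+\times_Z Y^-$ actually enter; everything else is bookkeeping once this is available.

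It then remains to move the two line-bundle twists past the functors $i^{\pm}_{\ast}p^{\pm\ast}$. Since $i^{\pm}$ are closed immersions and $p^{\pm}$ are the bundle projections, the projection formula together with $i^{\pm\ast}\oO_{Y^{\pm}_{\uU}}(j)\cong p^{\pm\ast}\oO_{M^{\pm}}(j)$ (the latter being the restriction of the former to the zero section) gives, writing $M^{\pm}$ for $M^{\pm}_{(Q^{\dag},w),\uU}$,
\begin{align*}
\left(-\otimes\oO_{Y^{\pm}_{\uU}}(j)\right)\circ i^{\pm}_{\ast}p^{\pm\ast}
\cong i^{\pm}_{\ast}p^{\pm\ast}\circ\left(-\otimes\oO_{M^{\pm}}(j)\right),
\end{align*}
with the same integer $j$. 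Applying this on both ends and cancelling the equivalence $i^+_{\ast}p^{+\ast}$ yields $\Theta_M\cong(-\otimes\oO_{M^+}(-\ceil*{\frac{b}{2}}+1))\circ\Theta^+_{\dR}\circ\Theta^-\circ(-\otimes\oO_{M^-}(-\ceil*{\frac{b}{2}}+1))$. Finally I would substitute the identification of $\Theta^+_{\dR}\circ\Theta^-$ from Proposition~\ref{prop:tkernel} and collect the twist exponents, using that twist functors commute with homological shifts: on $M^+$ one has $(-\ceil*{\frac{b}{2}}+1)+(b-1)=\floor*{\frac{b}{2}}$, on $M^-$ one has $(-1)+(-\ceil*{\frac{b}{2}}+1)=-\ceil*{\frac{b}{2}}$, and the shift $[1-b]$ survives, producing exactly \eqref{eq:localkernel}.

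The step I expect to be most error-prone is the last one: keeping the equivariant line-bundle conventions on $Y^{\pm}$ and $M^{\pm}$ aligned across Propositions~\ref{lem:kernelpsi} and \ref{prop:tkernel} so that the exponents combine correctly, and verifying that the shift $[1-b]$ commutes harmlessly through the twists (which it does, as twisting is exact). This is genuinely routine but must be carried out carefully, since the entire content of the proposition lies in the precise arithmetic of these twists rather than in any new geometric input.
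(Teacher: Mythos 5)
Your proposal is correct and follows essentially the same route as the paper's proof: substitute Proposition~\ref{lem:kernelpsi} for $\Psi$, pass the twists through via the projection formula, apply the two squares of Lemma~\ref{lem:commpull} to convert $\dR q^+_{\ast}\dL q^{-\ast}\circ i^-_{\ast}p^{-\ast}$ into $i^+_{\ast}p^{+\ast}\circ\Theta^+_{\dR}\circ\Theta^-$, and finish with Proposition~\ref{prop:tkernel}. The exponent bookkeeping $(-\lceil b/2\rceil+1)+(b-1)=\lfloor b/2\rfloor$ and $(-1)+(-\lceil b/2\rceil+1)=-\lceil b/2\rceil$ matches the paper exactly.
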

\begin{proof}
By Proposition~\ref{lem:kernelpsi}
and the commutativity of the diagrams 
(\ref{eq:commpull}),
we have 
\begin{align*}
&\quad \Psi \circ i^-_{*}p^{-*}\\
&=\otimes \mathcal{O}_{Y^+_{\mathcal{U}}}\left(
-\ceil*{\frac{b}{2}} +1
\right) \circ 
\Phi^{\mathcal{O}_{W_{\mathcal{U}}}} \circ 
\otimes \mathcal{O}_{Y^-_{\mathcal{U}}}
\left(
-\ceil*{\frac{b}{2}} +1
\right) \circ i^-_{*}p^{-*} \\
&=\otimes \mathcal{O}_{Y^+_{\mathcal{U}}}\left(
-\ceil*{\frac{b}{2}} +1 
\right) 
\circ \dR q^+_* \circ \dL q^{-*} \circ 
\left(\mathcal{O}_{Y^-_{\mathcal{U}}}\left(
-\ceil*{\frac{b}{2}} +1 
\right) \otimes 
i^-_*p^{-*}(-) 
\right) \\
&\cong \otimes \mathcal{O}_{Y^+_{\mathcal{U}}}\left(
-\ceil*{\frac{b}{2}} +1 
\right) \circ 
\dR q^+_*\left(
\mathcal{O}_{W_{\mathcal{U}}}\left(
0, -\ceil*{\frac{b}{2}} +1
\right) \otimes 
\dL q^{-*} \circ i^-_*p^{-*}(-) 
\right) \\
&\cong \otimes \mathcal{O}_{Y^+_{\mathcal{U}}}\left(
-\ceil*{\frac{b}{2}} +1 
\right) \circ \dR q^+_*\left(
\mathcal{O}_{W_{\mathcal{U}}}\left(
0, -\ceil*{\frac{b}{2}} +1
\right) \otimes 
\tilde{i}_*\tilde{p}^* \circ \Theta^-(-)
\right) \\
&\cong \otimes \mathcal{O}_{Y^+_{\mathcal{U}}}\left(
-\ceil*{\frac{b}{2}} +1 
\right) \circ \dR q^+_*\tilde{i}_*\tilde{p}^*\left(
\mathcal{O}_{B}\left(
0, -\ceil*{\frac{b}{2}} +1
\right) \otimes 
\Theta^-(-)
\right) \\ 
&\cong \otimes \mathcal{O}_{Y^+_{\mathcal{U}}}\left(
-\ceil*{\frac{b}{2}} +1 
\right) \circ 
i^+_*p^{+*} \circ \Theta^+_{\dR} \circ \Theta^- 
\circ \otimes \mathcal{O}_{M^-_{(Q^{\dag}, w), \mathcal{U}}}
\left(-\ceil*{\frac{b}{2}} +1 \right) \\
&\cong i^+_*p^{+*} \circ \left( 
\otimes \mathcal{O}_{M^-_{(Q^{\dag}, w), \mathcal{U}}}
\left(-\ceil*{\frac{b}{2}} +1 \right) \circ 
\Theta^+_{\dR} \circ \Theta^- 
\circ \otimes \mathcal{O}_{M^-_{(Q^{\dag}, w), \mathcal{U}}}
\left(-\ceil*{\frac{b}{2}} +1 \right)
\right). 
\end{align*}
Hence the assertion follows from the equation 
(\ref{eq:tkernel}). 
\end{proof}

\subsection{Descriptions of the kernel object: global case}
Denote by 
\begin{align*}
\Phi_{M} \colon D^b(M^{\rm{DT}}(r, d)) \hookrightarrow D^b(M^T(r, d))
\end{align*}
the fully faithful functor given in Theorem \ref{thm:Tpair}. 
We consider the base change of the diagram (\ref{diagram:MT})
to the stable locus 
$M^{\rm{st}}(r, d) \subset M(r, d)$: 
\[
\xymatrix{
&M^{\rm{T}, \rm{st}}(r, d) \ar[rd]_{\pi^+} & &M^{\rm{DT, st}}(r, d) \ar[ld]^{\pi^-} \\
& &M^{\rm{st}}(r, d) &. 
}
\]
By the construction of $\Phi_M$, 
it restricts to the fully-faithful functor
\begin{align*}
\Phi_{M^{\rm{st}}} \colon D^b(M^{\rm{DT}, \rm{st}}(r, d)) 
\hookrightarrow D^b(M^{T, \rm{st}}(r, d)). 
\end{align*}
Let 
$W^{\rm{st}}$ be the fiber product
\begin{align*}
W^{\rm{st}}:=
M^{\rm{T}, \rm{st}}(r, d) \times_{M^{\rm{st}}(r, d)} 
M^{\rm{DT}, \rm{st}}(r, d).
\end{align*}
Suppose that there exists a universal bundle on 
$C \times M^{\rm{st}}(r, d)$, or equivalently
\begin{align*}
\iota_M^{-1}(M^{\rm{st}}(r, d))= M^{\rm{st}}(r, d)
\times B\mathbb{C}^{\ast}
\end{align*}
where $\iota_M$ is the map (\ref{M:good}). 
For an integer $i \in \mathbb{Z}$, 
let 
$\oO(i)$ be the line bundle on $B\mathbb{C}^{\ast}$
given by a one dimensional $\mathbb{C}^{\ast}$-representation
with weight $i$, and denote by 
$\mathcal{O}_{M^{\rm{T, st}}(r, d)}(i)$
its pull-back to $M^{\rm{T, st}}(r, d)$
by the map in the diagram (\ref{diagram:MT}). 
We define 
$\Upsilon^i_{M^{\rm{st}}}$ to be the functor 
\[
\Upsilon^i_{M^{\rm{st}}}:=
\otimes \mathcal{O}_{M^{\rm{T, st}}(r, d)}(i) \circ \dL\pi^{+*} 
\colon D^b(M^{\rm{st}}(r, d)) \to D^b(M^{\rm{T, st}}(r, d)). 
\]
As a summary of the discussions in this subsection, 
we have the following result. 
\begin{thm}\label{globalkernel}
The following statements hold. 
\begin{enumerate}
\item There exists a line bundle $L$ on $W^{\rm{st}}$ 
and an isomorphism of functors 
\[
\Phi_{M^{\rm{st}}}\cong
\Phi^{i_{W*}L}[1-b], 
\]
where 
$i_{W} \colon W^{\rm{st}} \hookrightarrow 
M^{\rm{T}, \rm{st}}(r, d) \times M^{\rm{DT}, \rm{st}}(r, d)$ 
is the inclusion. 
\item For each integer $i \in \mathbb{Z}$, 
the functor 
\[
\Upsilon^i_{M^{\rm{st}}} \colon 
D^b(M^{\rm{st}}(r, d)) \to D^b(M^{\rm{T, st}}(r, d))
\]
is fully faithful. 

\item We have the semi-orthogonal decomposition 
\[
D^b(M^{\rm{T, st}}(r, d))=\left\langle
\Upsilon^{-\lfloor \frac{a}{2} \rfloor+b}_{M^{\rm{st}}}, \cdots, 
\Upsilon^{\ceil*{\frac{b}{2}}-1}_{M^{\rm{st}}}, 
\Phi_{M^{\rm{st}}}, 
\Upsilon^{\ceil*{\frac{b}{2}}}_{M^{\rm{st}}}, \cdots, 
\Upsilon^{\lceil \frac{a}{2} \rceil-1}_{M^{\rm{st}}}
\right\rangle. 
\]
\end{enumerate}
\end{thm}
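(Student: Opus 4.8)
The plan is to globalize, over the stable locus $M^{\rm{st}}(r,d)$, the chart-wise computations of the previous subsection (Proposition~\ref{prop:localkernel}, Lemma~\ref{lem:commu}, Theorem~\ref{thm:windowsod}), using the analytic charts of Theorem~\ref{thm:dflip}, under which—by the construction of $\Phi_{M}$ in Theorem~\ref{thm:pair:window} together with the diagram (\ref{dia:pair:analytic})—the restriction of $\Phi_{M}$ agrees with the local functor $\Theta_{M}$. Two preliminary remarks make this work. First, $a=\rk\mathcal{F}_{0}$ and $b=\rk\mathcal{F}_{1}$ are globally constant, so the shift $[1-b]$ and the twists $\mathcal{O}(\lfloor b/2\rfloor)$, $\mathcal{O}(-\lceil b/2\rceil)$ are globally meaningful. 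Second, the hypothesis that $C\times M^{\rm{st}}(r,d)$ carries a universal bundle trivializes the gerbe $\iota_{M}^{-1}(M^{\rm{st}}(r,d))\to M^{\rm{st}}(r,d)$, so each weight-$i$ line bundle on $B\mathbb{C}^{\ast}$ descends to honest line bundles $\mathcal{O}_{M^{\rm{T,st}}(r,d)}(i)$ and $\mathcal{O}_{M^{\rm{DT,st}}(r,d)}(i)$; it is these that glue the chart-wise twists.

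For (1) I would first note that under the analytic identifications the local d-critical loci $M^{\pm}_{(Q^{\dag},w),\uU}$ match $(\pi^{\pm})^{-1}(\mathcal{V})$, and the local fibre products $M^{-}_{(Q^{\dag},w),\uU}\times_{\uU}M^{+}_{(Q^{\dag},w),\uU}$ glue to $W^{\rm{st}}$. Absorbing the two one-sided twists in (\ref{eq:localkernel}) into the kernel by the projection formula, Proposition~\ref{prop:localkernel} says that over each $\mathcal{V}$ the functor $\Phi_{M^{\rm{st}}}$ is the Fourier--Mukai functor with kernel
\[
i_{W\ast}\!\left(\mathcal{O}_{W^{\rm{st}}}\otimes \mathrm{pr}_{+}^{\ast}\mathcal{O}_{M^{\rm{T,st}}(r,d)}\!\left(\lfloor b/2\rfloor\right)\otimes \mathrm{pr}_{-}^{\ast}\mathcal{O}_{M^{\rm{DT,st}}(r,d)}\!\left(-\lceil b/2\rceil\right)\right)[1-b].
\]
Setting $L$ to be the restriction to $W^{\rm{st}}$ of the line bundle in parentheses, the chart-wise kernels are visibly restrictions of the single global object $i_{W\ast}L$, and the chart-wise isomorphisms of functors patch because every ingredient—the structure sheaf of the fibre product, the descended weight line bundles, and the Kn\"orrer equivalences $i^{\pm}_{\ast}p^{\pm\ast}$—is canonical and compatible on overlaps. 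This yields $\Phi_{M^{\rm{st}}}\cong\Phi^{i_{W\ast}L}[1-b]$.

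For (2) and (3) I would use that full faithfulness, semi-orthogonality and generation are all local on the base $M^{\rm{st}}(r,d)$. Full faithfulness of $\Upsilon^{i}_{M^{\rm{st}}}$ reduces by the projection formula to $\dR\pi^{+}_{\ast}\mathcal{O}_{M^{\rm{T,st}}(r,d)}=\mathcal{O}_{M^{\rm{st}}(r,d)}$, which geometrically reflects that the fibres of $\pi^{+}$ are the projective spaces $\mathbb{P}(H^{0}(E))$; rigorously this is the global shadow of the full faithfulness of $\overline{\Upsilon}^{i}$ supplied by Lemma~\ref{lem:commu} and Theorem~\ref{thm:windowsod} over each chart. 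For the decomposition I would transport the local semi-orthogonal decomposition of Theorem~\ref{thm:windowsod} along the equivalence $i^{+}_{\ast}p^{+\ast}\colon D^{b}(M^{+}_{(Q^{\dag},w),\uU})\xrightarrow{\sim}D_{\mathbb{C}^{\ast}}(Y^{+}_{\uU},w^{\star})$: by Lemma~\ref{lem:commu} the factor $\Upsilon^{i}$ corresponds to $\overline{\Upsilon}^{i+b}$ (the shift $[-b]$ being irrelevant for the generated subcategory) while $\Psi$ corresponds to $\Theta_{M}$, so after the reindexing $-\lfloor b/2\rfloor+b=\lceil b/2\rceil$ the local decomposition becomes precisely the asserted one with $\Upsilon^{i}_{M^{\rm{st}}}$ and $\Phi_{M^{\rm{st}}}$ in place of $\overline{\Upsilon}^{i}$ and $\Theta_{M}$. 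Since $\Hom$-vanishing and generation can be tested after base change to the analytic cover $\{\mathcal{V}\}$, they propagate from the charts to $M^{\rm{T,st}}(r,d)$.

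The hard part will be the gluing underlying (1): checking that the chart-wise kernels, and more importantly the chart-wise natural isomorphisms $\Phi_{M^{\rm{st}}}|_{\mathcal{V}}\cong\Phi^{i_{W\ast}L}[1-b]|_{\mathcal{V}}$, patch on double overlaps. This relies on the canonicity of the window-to-kernel comparison (Proposition~\ref{lem:kernelpsi} and Proposition~\ref{prop:localkernel}) and, crucially, on the universal bundle being global, so that the weight line bundles entering the twists are themselves globally defined rather than only defined chart-wise. Once this patching is in place, parts (2) and (3) follow comparatively formally from the local results together with the local-on-the-base nature of full faithfulness, semi-orthogonality and generation.
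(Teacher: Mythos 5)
Your treatment of parts (2) and (3) is essentially the paper's: the paper also deduces them from the chart-wise Theorem~\ref{thm:windowsod} via Lemma~\ref{lem:commu} (citing the argument of \cite[Theorem~4.5]{Todsemi}), and your reindexing $-\lfloor b/2\rfloor+b=\lceil b/2\rceil$ is correct. The problem is part (1). You correctly identify the gluing of the chart-wise kernels and, more importantly, of the chart-wise natural isomorphisms of functors as ``the hard part,'' but your proposed mechanism for it --- that everything is ``canonical and compatible on overlaps'' --- does not hold up. The analytic charts of Theorem~\ref{thm:analytic} are not canonical, and the identification of the local tautological $\mathcal{O}(1)$ on $M^{\pm}_{(Q^{\dag},w),\mathcal{U}}\subset\mathbb{P}(V^{\pm})_{\mathcal{U}}$ with the globally descended weight-one line bundle $\mathcal{O}_{M^{\rm{T,st}}(r,d)}(1)$ is only determined up to a line bundle pulled back from the base $\mathcal{V}$ on each chart; likewise an isomorphism of Fourier--Mukai functors determines the kernel only up to the ambiguities that Lemma~\ref{lem:uniquekernel} is designed to control, so chart-wise isomorphisms of functors do not automatically patch on double overlaps. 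As written, your argument for (1) therefore has a genuine gap, and it also purports to identify $L$ explicitly as a product of descended weight line bundles, which is stronger than what the theorem asserts and exactly what the unproved patching would be needed for.

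The paper's proof of (1) takes a different route that sidesteps the gluing entirely: it starts from the \emph{global} kernel object $\mathcal{E}\in D^b(M^-\times M^+)$ of $\Phi_{M^{\rm{st}}}[b-1]$, uses Proposition~\ref{prop:localkernel} only to produce, on each chart $\mathcal{U}_i$, a line bundle $L_i$ with $\Phi^{\mathcal{E}|_{\rho^{-1}(\mathcal{U}_i)}}\cong\Phi^{L_i}$, and then invokes the uniqueness-of-kernel statement for fully faithful functors supported on fiber products (Lemma~\ref{lem:uniquekernel}) to conclude $\mathcal{E}|_{\rho^{-1}(\mathcal{U}_i)}\cong L_i$. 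Since being a line bundle supported on $W^{\rm{st}}$ is a local condition on the object $\mathcal{E}$, this yields $\mathcal{E}\cong i_{W*}L$ for some line bundle $L$ without ever gluing the $L_i$ or the functor isomorphisms. If you want to keep your construction, you would need to either prove the cocycle compatibility of the local identifications directly (including pinning down the base-line-bundle ambiguity on overlaps) or, better, adopt the uniqueness-of-kernel argument, which reduces the global statement to the local computation you already have.
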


Before the proof, we need a preparation. 
Let $Y$ be a smooth quasi-projective variety, 
$X_{i}$ be smooth varieties with projective morphisms 
$X_{i} \to Y$ ($i=1, 2, 3$). 
For objects $\mathcal{E} \in D^b(X_{1} \times X_{2})$, 
$\mathcal{F} \in D^b(X_{2} \times X_{3})$, 
supported on the fiber products 
$X_{1} \times_{Y} X_{2}$, $X_{2} \times_{Y} X_{3}$, 
we define 
\[\mathcal{F} \circ \mathcal{E}:=
\dR p_{13*}(p_{12}^*\mathcal{E} \otimes^{\dL} p_{23}^*\mathcal{F}) 
\in D^b(X_{1} \times X_{3}). 
\]
Here, $p_{ij} \colon X_{1} \times X_{2} \times X_{3} \to X_{i} \times X_{j}$ 
denotes the projections ($i, j=1, 2, 3$). 
Then we have an isomorphism of functors 
\[
\Phi^{\mathcal{F}} \circ \Phi^{\mathcal{E}} 
\cong \Phi^{\mathcal{F} \circ \mathcal{E}} 
\colon D^b(X_{1}) \to D^b(X_{3}). 
\]

\begin{lem}\label{lem:uniquekernel}
Let $Y$ be a smooth quasi-projective variety, 
$X_{1}, X_{2}$ smooth varieties, projective over $Y$. 
Let $\mathcal{E}, \mathcal{F} \in D^b(X_{1} \times X_{2})$ 
be objects supported on the fiber product 
$X_{1} \times_{Y} X_{2}$. 
Assume that we have an isomorphism of functors 
\[
\Phi^{\mathcal{E}} \cong \Phi^{\mathcal{F}} 
\colon D^b(X_{1}) \hookrightarrow D^b(X_{2})
\] 
and that they are fully faithful functors. 
Then we have an isomorphism 
$\mathcal{E} \cong \mathcal{F}$. 
\end{lem}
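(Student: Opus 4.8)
The plan is to deduce the lemma from the stronger assertion that the Fourier--Mukai kernel of an exact functor is unique up to isomorphism; granting this, the hypothesis $\Phi^{\mathcal{E}} \cong \Phi^{\mathcal{F}}$ gives $\mathcal{E} \cong \mathcal{F}$ at once. First I would check that the Fourier--Mukai formalism applies in this relative setting: since $X_1, X_2$ are smooth and projective over $Y$ and the kernels are supported on $X_1 \times_Y X_2$, the projections from the support to each factor are proper, so pullback, derived tensor with the kernel, and proper pushforward all preserve $D^b$, and the standard manipulations go through verbatim. In particular the identity functor of $D^b(X_1)$ is the Fourier--Mukai functor with kernel $\mathcal{O}_{\Delta_{X_1}}$.

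The key device is the recovery of a kernel from its functor via the diagonal. For a kernel $\mathcal{K} \in D^b(X_1 \times X_2)$ I would introduce the relative functor
\[
\id_{X_1} \boxtimes \Phi^{\mathcal{K}} \colon D^b(X_1 \times X_1) \to D^b(X_1 \times X_2)
\]
acting on the second factor, i.e. the Fourier--Mukai functor with kernel $\mathcal{O}_{\Delta_{X_1}} \boxtimes \mathcal{K}$, the two copies of $X_1$ being matched along the diagonal. A direct computation with the projections from the triple product shows
\[
(\id_{X_1} \boxtimes \Phi^{\mathcal{K}})(\mathcal{O}_{\Delta_{X_1}}) \cong \mathcal{K},
\]
so that $\mathcal{K}$ is recovered by evaluating this relative functor on the structure sheaf of the diagonal. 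Applying this with $\mathcal{K} = \mathcal{E}$ and $\mathcal{K} = \mathcal{F}$, it then suffices to produce an isomorphism $\id_{X_1} \boxtimes \Phi^{\mathcal{E}} \cong \id_{X_1} \boxtimes \Phi^{\mathcal{F}}$ and evaluate on $\mathcal{O}_{\Delta_{X_1}}$.

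The hard part is precisely this last step: upgrading the given isomorphism $\Phi^{\mathcal{E}} \cong \Phi^{\mathcal{F}}$ on $D^b(X_1)$ to an isomorphism of the relative functors on $D^b(X_1 \times X_1)$, since at the purely triangulated level external products of natural transformations are not automatically functorial (this is the usual non-functoriality-of-cones issue). I would resolve it in one of two equivalent ways: either invoke the uniqueness of Fourier--Mukai kernels as a black box, which is exactly the present statement and is valid in the relative smooth projective setting with relatively proper support, or pass to a functorial dg/stable enhancement of $D^b$, in which the external product $(-) \boxtimes (-)$, and hence the assignment $\Phi^{\mathcal{K}} \mapsto \id_{X_1} \boxtimes \Phi^{\mathcal{K}}$, becomes functorial; the isomorphism then lifts and evaluation on $\mathcal{O}_{\Delta_{X_1}}$ yields $\mathcal{E} \cong \mathcal{F}$. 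The full faithfulness hypothesis is not strictly needed for uniqueness, but it guarantees that all objects in play remain in $D^b$ and that the adjoints used in the enhancement argument exist, so it is harmless to assume.
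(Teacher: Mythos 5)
There is a genuine gap, and you have in fact put your finger on it yourself: the entire content of the lemma is the step you defer. Recovering a kernel $\mathcal{K}$ as $(\id_{X_1}\boxtimes\Phi^{\mathcal{K}})(\mathcal{O}_{\Delta_{X_1}})$ is standard, but the hypothesis only gives an isomorphism $\Phi^{\mathcal{E}}\cong\Phi^{\mathcal{F}}$ of \emph{triangulated} functors, and such an isomorphism does not induce an isomorphism $\id_{X_1}\boxtimes\Phi^{\mathcal{E}}\cong\id_{X_1}\boxtimes\Phi^{\mathcal{F}}$ of the relative functors. Your first proposed fix (invoke uniqueness of Fourier--Mukai kernels as a black box) is circular: that black box \emph{is} the statement being proved. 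Your second fix (pass to a dg enhancement where $\boxtimes$ is functorial) requires lifting the given isomorphism of triangulated functors to an isomorphism of dg functors, which is exactly the non-formal difficulty, not a way around it. Your closing remark that ``full faithfulness is not strictly needed for uniqueness'' is false and shows why the argument as written cannot be completed: Canonaco and Stellari produced non-isomorphic kernels whose Fourier--Mukai functors are isomorphic as triangulated functors, so any proof that never uses full faithfulness in an essential way must be wrong. In your sketch full faithfulness is used only to assert boundedness and existence of adjoints, i.e.\ nowhere essential.

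The paper's proof is different and avoids the lifting problem by working with convolution of kernels throughout. Writing $\mathcal{F}^{*}$ for the adjoint kernel, full faithfulness together with the earlier result (\cite[Lemma~4.1]{Todwidth}, adapted from the equivalence case) yields an isomorphism of \emph{objects} $\mathcal{F}^{*}\circ\mathcal{E}\cong\mathcal{O}_{\Delta_{X_{1}}}$. One then completes the counit morphism of kernels to an exact triangle $\mathcal{F}\circ\mathcal{F}^{*}\to\mathcal{O}_{\Delta_{X_{2}}}\to\mathcal{G}$, observes that $\Phi^{\mathcal{G}}$ is the projection onto the right orthogonal complement of the image of $\Phi^{\mathcal{F}}$ so that $\mathcal{G}\circ\mathcal{E}\cong 0$, and convolves the triangle with $\mathcal{E}$ to obtain $\mathcal{F}\cong\mathcal{F}\circ\mathcal{F}^{*}\circ\mathcal{E}\to\mathcal{E}\to 0$, hence $\mathcal{F}\cong\mathcal{E}$. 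Note that every step here is an isomorphism or triangle of kernels, constructed directly; at no point is a natural isomorphism of functors ``upgraded'' to the kernel level, which is precisely what your argument needs and cannot supply.
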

\begin{proof}
A similar statement was proved in \cite[Lemma 4.1]{Todwidth} 
when the functors are equivalences. 
In particular, by {\it loc. cit.}, we have 
an isomorphism 
\[
\mathcal{F}^* \circ \mathcal{E} \cong \mathcal{O}_{\Delta_{X_{1}}}. 
\]
Here, the object $\mathcal{F}^* \in D^b(X_{1} \times X_{2})$ 
is defined as 
\[
\mathcal{F}^*:=\dR \mathcal{H}om_{X_{1} \times X_{2}}\left(
\mathcal{F}, \mathcal{O}_{X_{1} \times X_{2}} 
\right) \otimes p_{1}^*\omega_{X_{1}}[\dim X_{1}], 
\]
where $p_{1} \colon X_{1} \times X_{2} \to X_{1}$ is the projection. 

On the other hand, there exists an object 
$\mathcal{G} \in D^b(X_{2} \times X_{2})$ and an exact triangle 
\begin{equation}\label{eq:adjoint}
\mathcal{F} \circ \mathcal{F}^* \to \mathcal{O}_{\Delta_{X_{2}}} \to 
\mathcal{G}
\end{equation}
which induces the adjoint counit 
$\Phi^{\mathcal{F}} \circ \Phi^{\mathcal{F}^*} \to \rm{Id}_{D^b(X_{2})}$ 
(cf. \cite[Theorem 3.1]{AL12}). 
By the construction, the functor 
$\Phi^{\mathcal{G}} \colon D^b(X_{2}) \to D^b(X_{2})$ 
coincides with the projection functor 
$D^b(X_{2})\to (\Phi^{\mathcal{F}}(D^b(X_{1})))^{\perp}$ 
to the right orthogonal complement. 
In particular, we have 
\[
\Phi^{\mathcal{G} \circ \mathcal{E}} 
\cong \Phi^{\mathcal{U}} \circ \Phi^{\mathcal{E}}=0 
\colon D^b(X_{1}) \to (\Phi^{\mathcal{F}}(D^b(X_{1})))^{\perp} 
\]
and hence $\mathcal{G} \circ \mathcal{E} \cong 0$. 
Applying $(-) \circ \mathcal{E}$ 
to the exact triangle (\ref{eq:adjoint}), 
we get the exact triangle 
\[
\mathcal{F} \cong \mathcal{F} \circ \mathcal{F}^* \circ \mathcal{E}
\to \mathcal{E} \to \mathcal{G} \circ \mathcal{E} \cong 0
\]
in $D^b(X_{1} \times X_{2})$, i.e., 
$\mathcal{F} \cong \mathcal{E}$. 
\end{proof}

\begin{proof}[Proof of Theorem \ref{globalkernel}]
The second and third statements hold 
by its analytic local version Theorem \ref{thm:windowsod}
(see the argument of~\cite[Theorem~4.5]{Todsemi}). 
We prove the first statement. 
To simplify the notation,
we set
\begin{align*}
M:=M^{\rm{st}}(r, d), \
M^+ := M^{\rm{T, st}}(r, d), \ 
M^-:=M^{\rm{DT, st}}(r, d), \  
W:=W^{\rm{st}}.
\end{align*}
Denote by $\rho \colon W \to M$ the projection. 
Let $\mathcal{E} \in D^b(M^- \times M^+)$ 
be the kernel object of the functor $\Phi_{M^{\rm{st}}}[b-1]$. 
By Proposition \ref{prop:localkernel}, there exist 
an analytic open covering 
$\{\mathcal{U}_{i}\}_{i \in \mathbb{Z}}$ of $M$ 
and line bundles $L_{i}$ on $\rho^{-1}(\mathcal{U}_{i})$ 
satisfying the following property: 
For each $i \in \mathbb{Z}$, we have the commutative diagram 
\[
\xymatrix{
&D^b\left(\left(\pi^-\right)^{-1}\left(\mathcal{U}_{i}\right)\right) 
\ar@{^{(}->}[r]^{\Phi^{L_{i}}} 
&D^b\left(\left(\pi^+\right)^{-1}\left(\mathcal{U}_{i}\right)\right) \\
&D^b(M^-) \ar@{^{(}->}[r]^{\Phi^{\mathcal{E}}} \ar[u]^{\rm{res}} 
&D^b(M^+) \ar[u]_{\rm{res}}. 
}
\]
From this, we can see that the object 
$\mathcal{E}$ is supported on $W$, 
and we have an isomorphism of functors 
\[
\Phi^{\mathcal{E}_{i}} \cong \Phi^{L_{i}} \colon 
D^b\left(\left(\pi^-\right)^{-1}\left(\mathcal{U}_{i}\right)\right) \to 
D^b\left(\left(\pi^-\right)^{-1}\left(\mathcal{U}_{i}\right)\right) 
\]
for each $i \in \mathbb{Z}$, 
where we put 
$\mathcal{E}_{i}:=\mathcal{E}|_{\rho^{-1}(\mathcal{U}_{i})}$. 
Hence by Lemma \ref{lem:uniquekernel}, 
we conclude that the object $\mathcal{E}$ 
is isomorphic to some line bundle $L$ on $W$. 
\end{proof}

\section{Comparison with ADHM sheaves}\label{sec:ADHM}
In this section, we will explain about 
the hidden Calabi-Yau three structure 
behind the d-critical structure 
on the moduli spaces of Thaddeus 
pairs constructed in Theorem \ref{thm:dflip}.

\subsection{ADHM sheaves}
In this subsection, we recall the definition 
and the basic facts about ADHM sheaves 
studied in \cite{Dia, Dia1, CDP}. 
Let $C$ be a smooth projective curve 
of genus $g$, and fix line bundles 
$M_{1}, M_{2}$ on $C$ such that 
$M_{1} \otimes M_{2} \cong \omega_{C}^{\vee}$. 
First we define the notion of Higgs bundles. 

\begin{defi}\label{def:H}
\begin{enumerate}
\item A {\it Higgs bundle} is a triplet 
$(E, \Phi_{1}, \Phi_{2})$, 
where $E$ is a locally free sheaf on $C$, 
and 
\[
\Phi_{i} \colon E \otimes M_{i} \to E, 
\]
are morphisms of coherent sheaves 
satisfying the relation 
\[
\Phi_{1} \circ (\Phi_{2} \otimes \id_{M_{1}}) 
-\Phi_{2} \circ (\Phi_{1} \otimes \id_{M_{2}}) 
=0. 
\]
\item Let $(E, \Phi_{1}, \Phi_{2})$ be a Higgs bundle. 
A subsheaf $F \subset E$ is called 
{\it $\Phi$-invariant} if for $i=1, 2$, 
we have 
$\Phi_{i}(F \otimes M_{i}) \subset F$. 
\item A Higgs bundle $(E, \Phi_{1}, \Phi_{2})$ is called 
{\it semistable} if for every $\Phi$-invariant subsheaf $F \subset E$, 
we have $\mu(F) \leq \mu(E)$. 
\end{enumerate}
\end{defi}

We denote by $H(r, d)$ the good 
moduli space of 
semistable Higgs bundles $(E, \Phi_{1}, \Phi_{2})$ 
satisfying the condition 
\[
(\rk(E), \chi(E))=(r, d). 
\]

Similarly as in Definition \ref{def:T}, 
we define the following notions: 

\begin{defi}\label{def:adhm}
\begin{enumerate}
\item A semistable ADHM sheaf is 
a quadruplet 
\begin{align*}
(E, \Phi_{1}, \Phi_{2}, \psi)
\end{align*}
 consisting of 
a semistable Higgs bundle $(E, \Phi_{1}, \Phi_{2})$ 
and a morphism $\psi \colon \mathcal{O}_{C} \to E$ 
such that there is no non-zero subbundle $F \subsetneq E$ 
with $\mu(F)=\mu(E)$ and $\psi(\mathcal{O}_{C}) \subset F$. 
\item A semistable dual ADHM sheaf is 
a quadruplet $(E', \Phi'_{1}, \Phi'_{2}, \psi')$ consisting of 
a semistable Higgs bundle $(E', \Phi'_{1}, \Phi'_{2})$ 
and a morphism $\psi' \colon E \to \omega_{C}$ 
such that there is no non-zero subbundle $F' \subsetneq E'$ 
with $\mu(F)=\mu(E)$ and $\psi'(F')=0$. 
\end{enumerate}
\end{defi}

\begin{rmk}\label{rmk:adhm}
In \cite[Definition 2.1]{Dia1}, the notion of 
{\it $\delta$-semistable ADHM sheaves} are introduced 
for a real number $\delta \in \mathbb{R}$. 
Our notion of semistable (resp. dual) ADHM sheaves 
is equivalent to the notion of $\delta$-semistable ADHM sheaves 
for $0 < \delta \ll 1$ (resp. $-1 \ll \delta <0$). 
\end{rmk}

We denote by $M^{\rm{ADHM}}(r, d)$ 
(resp. $M^{\rm{DADHM}}(r, d)$) 
the moduli space of semistable (resp. dual) ADHM sheaves 
$(E, \Phi_{1}, \Phi_{2}, \psi)$ with 
$[E, \Phi_{1}, \Phi_{2}] \in H(r, d)$. 

We have the following diagram 
of quasi-projective schemes over $\mathbb{C}$: 
\begin{equation}\label{diagram:adhm}
\xymatrix{
&M^{\rm{ADHM}}(r, d) \ar_{\sigma^{+}}[rd] & 
&M^{\rm{DADHM}}(r, d) \ar^{\sigma^{-}}[ld] \\
& &H(r,d). & 
}
\end{equation}

\begin{rmk}\label{rmk:wccy}
Let $X:=\Tot_C(M_{1}^{\vee} \oplus M_{2}^{\vee})$ 
be a non-compact Calabi-Yau threefold. 
It is well-known that giving a Higgs bundle 
is equivalent to giving a compactly supported 
one dimensional pure sheaf on $X$. 
Hence we can think the moduli spaces 
$M^{\rm{ADHM}}(r, d), M^{\rm{DADHM}}(r, d)$, and $H(r, d)$ 
as the moduli spaces of objects in $D^b(X)$. 
In fact, the diagram (\ref{diagram:adhm}) can be described as 
the wall crossing diagram of the moduli spaces of 
objects in $D^b(X)$, which was studied in \cite{Tolim2}. 
\end{rmk}

We have the following result. 

\begin{thm}[{\cite[Theorem 9.13]{Toddbir}}]
For $d>0$, 
the diagram (\ref{diagram:adhm}) is a d-critical flip. 
For $d=0$, it is a d-critical flop. 
\end{thm}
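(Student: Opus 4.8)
The plan is to follow the same route as the proof of Theorem~\ref{thm:dflip}: realize the three moduli spaces as d-critical loci, reduce to the local quiver-with-superpotential model of Sections~\ref{subsec:quiver}--\ref{subsec:analytic}, and then invoke Proposition~\ref{prop:dcrit}. The starting point is Remark~\ref{rmk:wccy}. Since $M_1 \otimes M_2 \cong \omega_C^{\vee}$, the total space $X=\Tot_C(M_1^{\vee}\oplus M_2^{\vee})$ has trivial canonical bundle, so it is a non-compact Calabi--Yau threefold, and a Higgs bundle $(E,\Phi_1,\Phi_2)$ is the same datum as a compactly supported pure one-dimensional sheaf $F$ on $X$ with $\pi_{\ast}F=E$, the maps $\Phi_1,\Phi_2$ recording the action of the two fiber coordinates and their commutation relation being exactly the $\oO_X$-module condition. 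Consequently $H(r,d)$, $M^{\rm{ADHM}}(r,d)$, and $M^{\rm{DADHM}}(r,d)$ are moduli of objects in $D^b(X)$, and the $(-1)$-shifted symplectic structure carried by the Calabi--Yau threefold endows them with canonical d-critical structures; the framings $\psi\colon \oO_C \to E$ and $\psi'\colon E\to\omega_C$ are the pair data distinguishing the two sides of the $\delta=0$ wall, matching the wall-crossing of~\cite{Tolim2}.

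First I would produce the analytic-local model. Fix a point of $H(r,d)$ given by a polystable Higgs bundle $\bigoplus_i V_i\otimes F_i$, equivalently a polystable one-dimensional sheaf on $X$. By Calabi--Yau threefold formality, an analytic neighbourhood of this point in the Higgs moduli stack is modelled on the critical locus of a superpotential on the representation space $\mathrm{Rep}(Q)$ of the associated Ext-quiver $Q$, which is symmetric because the Calabi--Yau pairing identifies $\Ext^1(F_i,F_j)$ with $\Ext^1(F_j,F_i)^{\vee}$. Adjoining the framing vertex $0$, with edge sets recording the relevant $\Hom$ and $\Ext$ groups of the constituents against the framing object, produces the extended quiver $Q^{\dag}$ together with the induced function $w$ of (\ref{w:repQ}), exactly as in the Thaddeus pair case. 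A standard application of the Hilbert--Mumford criterion together with King's correspondence~\cite{Kin} then identifies ADHM (resp. dual ADHM) stability with $\oO(\chi_0^{+1})$- (resp. $\oO(\chi_0^{-1})$-) semistability, i.e. with $\theta^{\pm}$-stability of (\ref{theta}).

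The remaining point is to verify the inclusion hypothesis (\ref{M:inclu}): that every critical point of $w^{+}$ has vanishing dual-framing component, and symmetrically for $w^{-}$. This follows from the explicit shape of the superpotential together with the stability condition, in the same way as in Proposition~\ref{prop:MW}: a $\theta^{+}$-stable ADHM sheaf cannot have its $\psi'$-type coordinate survive at a critical point. Granting this, Proposition~\ref{prop:dcrit} applies and shows the diagram (\ref{diagram:adhm}) is a d-critical flip precisely when $\sharp\mathbf{E}_{i0}<\sharp\mathbf{E}_{0i}$ for all $i$, and a d-critical flop when equality holds for all $i$. Since the framing edge counts satisfy $\sharp\mathbf{E}_{0i}-\sharp\mathbf{E}_{i0}=h^0(E_i)-h^1(E_i)=\chi(E_i)=r_i\cdot d/r$, this dichotomy is exactly $d>0$ versus $d=0$, giving the theorem. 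The main obstacle is the first step: establishing the symmetric Ext-quiver-with-potential presentation of the Higgs moduli (the Calabi--Yau threefold formality and the precise identification of the potential and the framing couplings) and checking that the resulting charts glue to the global d-critical structure in the sense of Definition~\ref{def:A3}. Once this local model is in place, the flip/flop statement is a formal consequence of the quiver results of Section~\ref{subsec:quiver}.
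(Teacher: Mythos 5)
The paper offers no proof of this statement: it is imported verbatim as \cite[Theorem~9.13]{Toddbir}, so there is no internal argument to compare yours against. Judged on its own terms, your outline has the right overall shape --- it mirrors the strategy of Theorem~\ref{thm:dflip} (local quiver-with-potential model, identification of ADHM stability with $\theta^{\pm}$-stability, then Proposition~\ref{prop:dcrit}), and your edge count $\sharp\mathbf{E}_{0i}-\sharp\mathbf{E}_{i0}=h^0(E_i)-h^1(E_i)=\chi(E_i)=r_i\cdot d/r$ correctly produces the $d>0$ versus $d=0$ dichotomy.

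The genuine gap is the step you yourself flag as ``the main obstacle'' and then do not fill. For Theorem~\ref{thm:dflip} the paper can lean on \cite{MR3811778}, which gives the analytic local description of $\mM(r,d)$ --- a \emph{smooth} moduli stack of sheaves on a curve, with no superpotential needed at the level of $\mathrm{Rep}(Q_E)$ --- and the critical-locus structure is then built by hand from the explicit two-term resolution of $\dR\mathrm{pr}_{\mM\ast}\eE$. For $H(r,d)$ the situation is essentially harder: the moduli of compactly supported one-dimensional sheaves on the Calabi--Yau threefold $X$ is obstructed, and the claim that it is analytically-locally a critical locus of a potential on $\mathrm{Rep}(Q)$ for the symmetric Ext-quiver, with the framing couplings entering in the specific form (\ref{mor:s2}) needed to apply Proposition~\ref{prop:dcrit}, is precisely the technical content of the cited \cite[Theorem~9.13]{Toddbir}; invoking ``Calabi--Yau threefold formality'' names the issue rather than resolving it. Relatedly, your verification of the hypothesis (\ref{M:inclu}) --- that critical points of $w^{\pm}$ kill the opposite framing coordinate --- is asserted ``as in Proposition~\ref{prop:MW},'' but that proposition's proof uses the concrete bilinear form $w(x,u,u')=\langle\widetilde{\psi}|_x(u),u'\rangle$ coming from the resolution of $\dR\mathrm{pr}_{\mM\ast}\eE$ together with Lemma~\ref{lem:Z} (regularity of the section and smoothness of its zero locus); in the ADHM setting the superpotential also contains the cubic Higgs-field terms, the pair moduli need not be smooth, and none of the corresponding inputs are established in your sketch. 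Until the local model and the inclusion (\ref{M:inclu}) are actually proved in the Higgs/ADHM setting, the reduction to Proposition~\ref{prop:dcrit} does not go through.
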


\subsection{Torus action on ADHM sheaves}
In this subsection, we consider the natural $\mathbb{C}^*$-action 
on the moduli space of semistable (dual) ADHM sheaves. 
We will see that the moduli space of (dual) Thaddeus pairs appear 
as one of the connected components of $\mathbb{C}^*$-fixed locus. 

\begin{defi}
We define a $\mathbb{C}^*$-action 
on $M^{\rm{ADHM}}(r, d)$ as follows: 
for $t \in \mathbb{C}^*$ and 
$[E, \Phi_{1}, \Phi_{2}, \psi] \in M^{\rm{ADHM}}(r, d)$, 
\[
t \cdot (E, \Phi_{1}, \Phi_{2}, \psi)
:=(E, t\Phi_{1}, t^{-1}\Phi_{2}, \psi). 
\]
We define a $\mathbb{C}^*$-action on 
$M^{\rm{DADHM}}(r, d)$ similarly. 
\end{defi}

We have the following theorem due to \cite{Dia1}: 

\begin{thm}[{\cite[Theorem 1.5]{Dia1}}]\label{thm:torusfix}
The $\mathbb{C}^*$-fixed loci 
$M^{\rm{ADHM}}(r, d)^{\mathbb{C^*}}$ and 
$M^{\rm{DADHM}}(r, d)^{\mathbb{C^*}}$ 
are projective schemes over $\mathbb{C}$. 
\end{thm}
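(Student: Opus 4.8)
The plan is to show that each fixed locus, which is automatically a closed subscheme of the quasi-projective scheme $M^{\rm{ADHM}}(r,d)$ (cf. the diagram (\ref{diagram:adhm})) and hence separated and of finite type, is in addition \emph{complete}; completeness together with quasi-projectivity yields projectivity. I treat $M^{\rm{ADHM}}(r,d)^{\mathbb{C}^*}$ in detail, the case of $M^{\rm{DADHM}}(r,d)^{\mathbb{C}^*}$ being entirely symmetric under the duality $(E,\Phi_1,\Phi_2,\psi)\mapsto (E^\vee\otimes\omega_C,\ldots)$ analogous to Lemma~\ref{lem:dual}.

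First I would analyze the fixed points. A point $[E,\Phi_1,\Phi_2,\psi]$ is $\mathbb{C}^*$-fixed precisely when, for every $t$, the scaled quadruple $(E,t\Phi_1,t^{-1}\Phi_2,\psi)$ is isomorphic to $(E,\Phi_1,\Phi_2,\psi)$. Since $\psi\neq 0$ rigidifies the automorphisms, these isomorphisms assemble into a cocharacter $\lambda\colon\mathbb{C}^*\to\Aut(E)$ fixing $\psi$ and conjugating $\Phi_1,\Phi_2$ with weights $+1,-1$. This produces a weight decomposition $E=\bigoplus_{n\in\mathbb{Z}}E_n$ with
\[
\Phi_1\colon E_n\otimes M_1\to E_{n+1},\quad
\Phi_2\colon E_n\otimes M_2\to E_{n-1},\quad
\psi\colon\oO_C\to E_0,
\]
so that the fixed data is exactly a semistable holomorphic chain framed in degree $0$. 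As $\sum_n\rk(E_n)=r$, only finitely many $E_n$ are nonzero, whence $\Phi_1$ and $\Phi_2$ are nilpotent as shift operators.

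The decisive point is then completeness. Nilpotency of $\Phi_1,\Phi_2$ means that the associated compactly supported sheaf on $X=\Tot_C(M_1^\vee\oplus M_2^\vee)$ is set-theoretically supported on the zero section $C$; equivalently, all the spectral invariants of $\Phi_1,\Phi_2$ (the coefficients of their characteristic polynomials, which are sections of symmetric powers of $M_1^\vee$ and $M_2^\vee$) vanish. Hence the fixed locus lies in the fibre over the origin of the proper support morphism from $H(r,d)$ to the affine base of these invariants, i.e. in the nilpotent cone. I would make this rigorous via the valuative criterion applied directly on the fixed locus: given a family of fixed ADHM sheaves over a punctured disc $\Spec K$, properness of the support morphism for $M_i$-twisted Higgs bundles extends it to a flat family of semistable ADHM sheaves over $\Spec R$, and it remains to verify that the limit is again fixed.

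The expected main obstacle is precisely this last verification, together with the properness of the support morphism in the twisted ADHM setting (which itself follows from the usual boundedness-plus-valuative-criterion argument). Concretely, one must show that the cocharacters $\lambda$, equivalently the gradings on the fibres, extend across the special point so that the limiting object carries a compatible $\mathbb{C}^*$-action. This holds because the fixed locus is cut out inside the proper family by the closed condition of admitting a $\Phi$-compatible grading: the weight filtration specializes, so the limit inherits the grading and lies in the fixed locus. Once the specialization of this equivariant structure is controlled, separatedness and finite type being clear, projectivity of both $M^{\rm{ADHM}}(r,d)^{\mathbb{C}^*}$ and $M^{\rm{DADHM}}(r,d)^{\mathbb{C}^*}$ follows.
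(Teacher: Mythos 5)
The paper does not prove Theorem~\ref{thm:torusfix} at all: it is imported verbatim from \cite[Theorem~1.5]{Dia1} and used as a black box (its only role here is in the proof of Proposition~\ref{prop:torusfix}, where projectivity of both sides reduces the comparison of $M^{\rm{T}}(r,d)$ with a component of the fixed locus to a tangent-space computation). So there is no internal argument to compare yours against; what can be assessed is whether your sketch is a viable reconstruction of Diaconescu's proof.

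The overall shape is right: the fixed locus is closed in the quasi-projective $M^{\rm{ADHM}}(r,d)$, so projectivity reduces to completeness; a fixed point is rigidified by $\psi$, hence carries a canonical cocharacter of $\Aut(E)$, hence is a framed chain with $\Phi_1,\Phi_2$ shifting the grading in opposite directions; finiteness of the grading forces $\Phi_1$, $\Phi_2$ (and, via the ADHM relation, $\Phi_1\circ(\Phi_2\otimes\mathrm{id})$) to be nilpotent, so all spectral invariants vanish and the fixed locus sits over the origin of a Hitchin-type base. The gap is that the entire weight of the theorem then rests on the properness over that origin, i.e.\ on semistable reduction for $\delta$-stable ADHM sheaves over a DVR, and you dispose of this with ``the usual boundedness-plus-valuative-criterion argument.'' That is not routine here: one must run a Langton-type elementary-modification procedure that preserves the framing $\psi$ and the two twisted Higgs fields, show it terminates at a $\delta$-semistable limit, and use genericity of small $\delta>0$ (where semistable $=$ stable) to get uniqueness of the limit; without this the valuative criterion is not verified, and this step is precisely the content of Diaconescu's proof rather than a standard quotable fact. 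By contrast, your closing worry about whether the limiting object is again fixed is a non-issue: once the fixed locus is known to be a closed subscheme of a complete subscheme (the fibre over $0$), its completeness is automatic, so the discussion of specializing the grading can be deleted.
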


Now we can prove the following: 
\begin{prop}\label{prop:torusfix}
The moduli spaces 
$M^{T}(r, d), M^{\rm{DT}}(r, d)$ 
are one of the connected components 
of the torus fixed loci 
$M^{\rm{ADHM}}(r, d)^{\mathbb{C^*}}$, 
$M^{\rm{DADHM}}(r, d)^{\mathbb{C^*}}$, 
respectively. 
\end{prop}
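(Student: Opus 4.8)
The plan is to realize $M^{\rm{T}}(r,d)$ inside $M^{\rm{ADHM}}(r,d)$ as the locus where the Higgs fields vanish, and to show that this locus is open and closed in the $\mathbb{C}^{\ast}$-fixed part. First I would define the morphism
\[
\iota\colon M^{\rm{T}}(r,d)\to M^{\rm{ADHM}}(r,d), \quad (E,s)\mapsto (E,0,0,s),
\]
sending a Thaddeus pair to the ADHM sheaf with $\Phi_1=\Phi_2=0$. The commutation relation in Definition~\ref{def:adhm} is then trivially satisfied, and since every subsheaf of $E$ is $\Phi$-invariant when $\Phi_1=\Phi_2=0$, the ADHM semistability condition for $(E,0,0,s)$ reduces exactly to the Thaddeus pair condition of Definition~\ref{def:T}. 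Thus $\iota$ is well defined, and because $t\cdot(E,0,0,s)=(E,0,0,s)$ for every $t$, it factors through the fixed locus $M^{\rm{ADHM}}(r,d)^{\mathbb{C}^{\ast}}$. The analogous construction $(E',s')\mapsto(E',0,0,s')$ gives the dual statement for $M^{\rm{DT}}(r,d)\subset M^{\rm{DADHM}}(r,d)^{\mathbb{C}^{\ast}}$, so I would only treat the first case in detail.

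Next I would argue that $\iota$ is a closed immersion onto an open and closed subscheme of the fixed locus. Injectivity on points is clear, and $\iota$ is proper because its source is projective while the fixed locus is projective by Theorem~\ref{thm:torusfix}; a proper monomorphism is a closed immersion, so it remains to see that $\iota$ induces an isomorphism on Zariski tangent spaces at each $x=\iota(E,s)$. Here I would use that the tangent space to the fixed locus is the $\mathbb{C}^{\ast}$-invariant part of the tangent space of $M^{\rm{ADHM}}(r,d)$, and that the latter is computed by the deformation complex of the ADHM sheaf, assembled from $\RHom(E,E)$, the two Higgs-field terms $\RHom(E,E\otimes M_i^{\vee})$, and the section term $\dR\Gamma(E)$. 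The $\mathbb{C}^{\ast}$-action carries weight $0$ on the first and last terms and weights $+1$, $-1$ on the two Higgs-field terms. At a point with $\Phi_1=\Phi_2=0$ the differentials joining the weight-$0$ pieces to the weight-$\pm1$ pieces are given by bracketing with $\Phi_i$ and hence vanish, so the complex splits along the grading and its weight-$0$ part is precisely the deformation complex of the pair $(E,s)$. Therefore the invariant tangent space equals $T_{(E,s)}M^{\rm{T}}(r,d)$ and $d\iota$ is an isomorphism.

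A closed immersion that is an isomorphism on tangent spaces at every point has vanishing conormal sheaf, hence is also an open immersion; equivalently $\iota$ is \'etale onto its image, so its image is open and closed. Consequently $\iota$ identifies $M^{\rm{T}}(r,d)$ with a union of connected components of $M^{\rm{ADHM}}(r,d)^{\mathbb{C}^{\ast}}$, which, as $M^{\rm{T}}(r,d)$ is connected, is a single component; the dual case is identical. Concretely, the remaining fixed components correspond to ADHM sheaves whose underlying bundle carries a nontrivial weight decomposition $E=\bigoplus_n E_n$ with $\Phi_1,\Phi_2$ of weights $+1,-1$ and $\psi$ of weight $0$, and $\iota(M^{\rm{T}}(r,d))$ is exactly the component where $E$ sits in a single weight.

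The main obstacle I anticipate is the tangent-space identification at \emph{strictly} semistable points, where $M^{\rm{ADHM}}(r,d)$ is only a good (coarse) moduli space and its Zariski tangent space is not literally the $\Ext^1$ of the object. The clean way around this is to run the weight-decomposition argument on the moduli stack, where the deformation theory and the equivariant splitting are unambiguous, and then descend the open-and-closed conclusion to the good moduli space; alternatively one restricts to the stable locus and propagates the statement by properness. A secondary point to check carefully is that bracketing with $\Phi_i$ is the only source of cross-weight differentials, so that the weight-$0$ subcomplex really is the Thaddeus pair deformation complex and no spurious invariant deformations appear.
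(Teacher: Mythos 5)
Your proposal is correct and follows essentially the same route as the paper: both realize $M^{\rm{T}}(r,d)$ inside the fixed locus via $(E,s)\mapsto(E,0,0,s)$, use projectivity (Theorem~\ref{thm:torusfix}) to reduce to a local statement, and then identify the invariant part of the ADHM tangent space at a point with $\Phi_1=\Phi_2=0$ with $T_{(E,s)}M^{\rm{T}}(r,d)$ by observing that the Higgs-field directions carry weights $\pm 1$ and hence contribute nothing invariant. The only cosmetic difference is that the paper packages this computation as $\Hom_X(J^\bullet,\iota_*E)^{\mathbb{C}^*}\cong\Hom_C(\dL\iota^*J^\bullet,E)^{\mathbb{C}^*}$ on the Calabi--Yau threefold $X$ and evaluates it via the cohomology sheaves of $\dL\iota^*J^\bullet$, whereas you work directly with the weight decomposition of the ADHM deformation complex; your worry about strictly semistable points does not materialize since Thaddeus pairs and $\delta$-(semi)stable ADHM sheaves for $0<\delta\ll 1$ have no strictly semistable objects.
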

\begin{proof}
We only prove the assertion for $M^T(r, d)$. 
By the definitions of stability and the $\mathbb{C}^*$-action, 
we have the inclusion 
$\phi \colon M^T(r, d) \hookrightarrow M^{\rm{ADHM}}(r, d)^{\mathbb{C}^*}$. 
Furthermore, both of these schemes are projective 
by Theorem \ref{thm:torusfix}. 
Hence it is enough to show that the inclusion $\phi$ 
is an open immersion. 

Take an element 
\[
p=I^{\bullet}=(\mathcal{O}_{C} \xrightarrow{s} E) \in M^T(r, d). 
\]
We think $I^{\bullet}$ as an object 
in the derived category $D^b(C)$ so that 
$\mathcal{O}_{C}$ is located in degree $0$. 
As mentioned in Remark \ref{rmk:wccy}, 
the moduli space $M^{\rm{ADHM}}(r, d)$ 
can be described as the moduli space of objects in $D^b(X)$. 
Then we have 
$J^{\bullet}:=\phi(p)
=(\mathcal{O}_{X} \xrightarrow{t} \iota_{*}E) \in D^b(X)$, 
where $\iota \colon C \to X$ is the zero section 
and the map $t$ is given by the composition 
\[
t \colon \mathcal{O}_{X} \twoheadrightarrow \iota_{*}\mathcal{O}_{C} 
\xrightarrow{\iota_{*}(s)} \iota_{*}E. 
\]
We will show that the tangent map 
\[
T_{p}\phi \colon T_{p}M^T(r, d) \to 
T_{\phi(p)}M^{\rm{ADHM}}(r, d)^{\mathbb{C}^*} 
\]
is an isomorphism. 
By the deformation-obstruction theory for pairs, 
the tangent spaces are given as follows: 
\begin{align*}
&T_{p}M^T(r, d)=\Hom_{C}(I^{\bullet}, E), \\
&T_{\phi(p)}M^{\rm{ADHM}}(r, d)^{\mathbb{C}^*}
=\Hom_{X}(J^{\bullet}, \iota_{*}E)^{\mathbb{C}^*}
\cong \Hom_{C}(\dL \iota^*J^{\bullet}, E)^{\mathbb{C}^*}. 
\end{align*}
Note that by the stability condition, 
the section $s \in H^0(C, E)$ is non-zero, and 
we have the short exact sequence 
\[
0 \to \mathcal{O}_{C} \to E \to Q \to 0 
\]
for some coherent sheaf $Q \in \Coh(C)$. 
Hence we have 
\[
\Hom_{C}(I^{\bullet}, E) \cong \Ext^1_{C}(Q, E). 
\]

To compute the vector space 
$\Hom_{C}(\dL \iota^*J^{\bullet}, E)^{\mathbb{C}^*}$, 
let us apply the functor $\dL \iota^*$ 
to the exact triangle 
\[
\iota_{*}E[-1] \to J^{\bullet} \to \mathcal{O}_{X}. 
\]
We get the long exact sequence 
\[
\xymatrix{
&0 \ar[r] &E \otimes (M_{1} \oplus M_{2}) \ar[r] 
&\mathcal{H}^0(\dL \iota^*J^{\bullet}) \ar[r] &\mathcal{O}_{C} \\
&\ar[r]^{\iota^*(t)=s} &E \ar[r] &\mathcal{H}^1(\dL \iota^*J^{\bullet}) \ar[r] &0. 
}
\]
Hence we have 
\[
\mathcal{H}^0(\dL \iota^*J^{\bullet}) \cong E \otimes (M_{1} \oplus M_{2}), 
\quad \mathcal{H}^1(\dL \iota^*J^{\bullet}) \cong Q, 
\]
and $\mathcal{H}^i(\dL \iota^*J^{\bullet})=0$ 
for $i \geq 2$. 
Furthermore, by the definition of the $\mathbb{C}^*$-action, 
we have the vanishing 
$\Hom_{C}(E \otimes (M_{1} \oplus M_{2}), E)^{\mathbb{C}^*}=0$. 
We conclude that 
\[
\Hom_{C}(\dL \iota^*J^{\bullet}, E)^{\mathbb{C}^*} \cong \Ext^1_{C}(Q, E) 
\]
as required. 
\end{proof}

Recall that our $\mathbb{C}^*$-action 
on ADHM sheaves are defined so that 
its weights are $1, -1$ on $M_{1}, M_{2}$, respectively. 
In particular, the action preserves the isomorphism 
$M_{1} \otimes M_{2} \cong \omega_{C}^{\vee}$. 
Hence by taking fixed loci, 
the relative d-critical charts on the diagram (\ref{diagram:adhm}) induce 
the relative d-critical charts on the wall crossing diagram of 
moduli spaces of Thaddeus pairs considered in Theorem \ref{thm:dflip}.

\appendix
\section{Review on d-critical birational geometry}
Here we recall the basic notions in d-critical birational geometry 
introduced by the second author \cite{Toddbir, Todsemi}. 

\subsection{D-critical loci}
In this subsection, we quickly review about 
the notion of (analytic) d-critical loci introduced by Joyce. 
For more detail, see his original paper \cite{JoyceD}. 
Let $M$ be a complex analytic space. 
Then there exists a sheaf $\mathcal{S}_{M}$ 
of $\mathbb{C}$-vector spaces on $M$ 
satisfying the following property: 
for any analytic open subset $U \subset M$ 
and a closed immersion $i \colon U \hookrightarrow Y$ 
into a smooth complex manifold $Y$, 
there exists an exact sequence 
\[
0 \to \mathcal{S}_{M}|_{U} \to \mathcal{O}_{Y}/I^2 \to \Omega_{Y}/I^2, 
\]
where $I \subset \mathcal{O}_{Y}$ is 
the ideal sheaf defining $U \subset Y$. 
Furthermore, there exists a direct sum decomposition 
\[
\mathcal{S}_{M}=\mathbb{C}_{M} \oplus \mathcal{S}_{M}^0. 
\]

The following is the basic example. 
\begin{exam}\label{exam:locald}
Let $Y$ be a complex manifold, 
$U \subset Y$ a closed analytic subspace. 
Assume that there exists a holomorphic function 
$w \colon Y \to \mathbb{C}$ satisfying 
\begin{equation}\label{eq:crit}
U=\{dw=0\} \subset Y, \quad 
w|_{U^{\rm{red}}}=0. 
\end{equation}
Then the section 
\begin{equation}\label{eq:sec}
s:=w+I^2 \in H^0(U, \mathcal{O}_{Y}/I^2) 
\end{equation}
is in fact an element of $H^0(U, \mathcal{S}^0_{U})$. 
Here, $I=(dw) \subset \mathcal{O}_{Y}$ 
is the ideal sheaf defining $U \subset Y$. 
\end{exam}

\begin{defi}
A {\it d-critical locus} is a pair $(M, s)$ 
consisting of a complex analytic space $M$ 
and a section $s \in H^0(M, \mathcal{S}^0_{M})$ 
satisfying the following property: 
for every point $x \in M$, 
there exist an open neighborhood $x \in U \subset M$ of $x$, 
a closed embedding $i \colon U \hookrightarrow Y$ 
into a complex manifold $Y$, 
and a holomorphic function $w \colon Y \to \mathbb{C}$ 
satisfying the conditions (\ref{eq:crit}) such that 
the restriction $s|_{U}$ can be written as (\ref{eq:sec}) 
in Example \ref{exam:locald}. 
We call the data 
$(U, Y, w, i)$ as a {\it d-critical chart}, 
and the section $s$ as a {\it d-critical structure}. 
\end{defi}

The following definition is a relative version of d-critical charts. 
\begin{defi}\label{def:A3}
Let $(M, s)$ be a d-critical locus, 
$\pi \colon M \to A$ a morphism of analytic spaces. 
Let $U \subset A$ be an open subset. 
Assume that there exists a following commutative diagram 
\[
\xymatrix{
&\pi^{-1}(U) \ar@{^{(}->}[r]^{i} \ar[d]_{\pi} &Y \ar[d]^{f} \ar[rd]^{w} &\\
&U \ar@{^{(}->}[r]_{j} &Z \ar[r]_{g} &\mathbb{C}, 
}
\]
where $f$ is a morphism of analytic spaces with $Y$ smooth, 
$i, j$ are closed embeddings, and $g$ is a holomorphic function. 
If the data $(\pi^{-1}(U), Y, w, i)$ defines a 
d-critical chart of $(M, s)$, 
then we call it as a {\it $\pi$-relative d-critical chart}. 
\end{defi}

\subsection{D-critical birational transforms}
We define the d-critical analogue of birational transforms. 
For the standard terminologies in birational geometry, 
we refer to~\cite{KMM, KM}. See also \cite[Section 2]{Toddbir}. 

\begin{defi}\label{defi:A4}
Let $(M^{\pm}, s^{\pm})$ be d-critical loci, 
$\pi^{\pm} \colon M^{\pm} \to A$ be morphisms of analytic spaces. 
Then the diagram 
\begin{equation}\label{eq:ddiagram}
\xymatrix{
&M^+ \ar[rd]_{\pi^+} & &M^- \ar[ld]^{\pi^-} \\
& &A &
}
\end{equation}
is called a 
{\it d-critical flip, d-critical flop}, at a point $p \in A$ 
if there exist an open neighborhood 
$p \in U \subset A$ and $\pi^{\pm}$-relative d-critical charts 
\[
\xymatrix{
&(\pi^{\pm})^{-1}(U) \ar@{^{(}->}[r]^-{i^{\pm}} \ar[d]_-{\pi^{\pm}} 
&Y^{\pm} \ar[d]^-{f^{\pm}} \ar[rd]^-{w^{\pm}} &\\
&U \ar@{^{(}->}[r]_-{j} &Z \ar[r]_-{g} &\mathbb{C}, 
}
\]
with $g$ and $j$ are independent on $\pm$,  
such that the diagram 
\[
Y^+ \xrightarrow{f^+} Z \xleftarrow{f^-} Y^- 
\]
is a (usual) flip, flop, 
respectively. 

We call the digram (\ref{eq:ddiagram}) as a 
{\it d-critical d-critical flip, d-critical flop}, 
if it satisfies the corresponding condition 
at any point $p \in A$. 
\end{defi}

We can see that d-critical birational transforms defined above 
decrease {\it virtual canonical line bundles} on d-critical loci. 
See \cite[Section 3]{Toddbir} for more detail.

\bibliographystyle{amsalpha}
\bibliography{math}

\end{document}